\documentclass[a4paper,12pt]{article}
\usepackage{amssymb}
\usepackage{amsthm}
\usepackage{amsmath}
\usepackage[T1]{fontenc}
\usepackage[cp1250]{inputenc}
\usepackage{enumerate}
\usepackage{graphicx}
\usepackage[all]{xy}

\usepackage{tikz}
\usetikzlibrary{shapes}

\newtheorem{theorem}{Theorem}[section]
\newtheorem{prop}[theorem]{Proposition}
\newtheorem{lem}[theorem]{Lemma}
\theoremstyle{remark}
\newtheorem{rem}[theorem]{Remark} 
\theoremstyle{remark}
\newtheorem{df}[theorem]{\textsc{Definition}}
\theoremstyle{remark}
\newtheorem{assm}[theorem]{\textsc{Assumption}}
\theoremstyle{remark}
\newtheorem{exmp}[theorem]{Example}

\theoremstyle{plain}
\newtheorem*{mthm}{Main Theorem}
\newtheorem*{cor1}{Corollary 1}

\renewcommand{\mod}{\operatorname{mod}}
\newcommand{\umod}{\operatorname{\underline{mod}}}
\newcommand{\add}{\operatorname{add}}
\newcommand{\Hom}{\operatorname{Hom}}
\newcommand{\End}{\operatorname{End}}
\newcommand{\soc}{\operatorname{soc}}
\newcommand{\rad}{\operatorname{rad}}

\begin{document}

\begin{center}
\Large{\textbf{\textsc{Virtual mutations of weighted surface algebras}}} \\ 
\end{center}
\begin{center}
by
\end{center}

\begin{center}
Thorsten Holm$^a$, Andrzej Skowro\'nski$^b$, and Adam Skowyrski$^b$
\end{center}
\medskip

$^a$ \textit{Fakult\"at f\"ur Mathematik und Physik, Institut f\"ur Algebra, Zahlentheorie und Diskrete Mathematik, Leibniz Universit\"at Hannover, Welfengarten 1, 30167 Hannover, Germany}

$^b$ \textit{Faculty of Mathematics and Computer Science, Nicolaus Copernicus University, Chopina 12/18, 87-100 Toru\'n, Poland} 

\medskip\medskip

\textbf{Abstract.} The finite-dimensional symmetric algebras over an algebraically closed field, based on surface 
triangulations, motivated by the theory of cluster algebras, have been extensively investigated and applied. In particular, 
the weighted surface algebras and their deformations were introduced and studied in \cite{ES2}-\cite{ES6}, and it was 
shown that all these algebras, except few singular cases, are symmetric tame periodic algebras of period $4$. In this 
article, using the general form of a weighted surface algebra from \cite{ES5}, we introduce and study so called virtual 
mutations of weighted surface algebras, which constitute a new large class of symmetric tame periodic algebras of period $4$. 
We prove that all these algebras are derived equivalent but not isomorphic to weighted surface algebras. We associate such 
algebras to any triangulated surface, first taking blow-ups of a family of edges to $2$-triangle discs, and then virtual 
mutations of their weighted surface algebras. The results of this paper form an essential step towards a classification of 
all tame symmetric periodic algebras. 
\medskip

Keywords: Symmetric algebra, Tame algebra, Periodic algebra, Weighted surface algebra, Derived equivalence, Mutation
\medskip

MSC: 16D50, 16E30, 16G60, 18E30. 
\medskip

E-mail adresses: holm@math.uni-hannover.de (T. Holm), skowron@mat.umk.pl (A. Skowro\'nski), skowyr@mat.umk.pl (A. Skowyrski)

\section{Introduction and the main result}\label{sec:0} 

Throughout this paper, $K$ will denote a fixed algebraically closed field. By an algebra we mean an associative 
finite-dimensional $K$-algebra with an identity. For an algebra $A$, we denote by $\mod A$ the category of finite-dimensional 
right $A$-modules and by $D$ the standard duality $\Hom_K(-,K)$ on $\mod A$. An algebra $A$ is called \emph{self-injective} 
if $A_A$ is injective in $\mod A$, or equivalently, the projective modules in $\mod A$ are injective. A prominent class 
of self-injective algebras is formed by the \emph{symmetric algebras} for which there exists an associative, non-degenerate 
symmetric $K$-bilinear form $(-,-):A\times A\to K$. Classical examples of symmetric algebras are provided by blocks of 
group algebras of finite groups and the Hecke algebras of finite Coxeter groups. In fact, any algebra $A$ is the quotient 
algebra of its trivial extension algebra $T(A)=A\ltimes D(A)$, which is a symmetric algebra. 

Let $A$ be an algebra. Given a module $M$ in $\mod A$, its \emph{syzygy} is defined to be the kernel of a minimal projective 
cover of $M$ in $\mod A$. The syzygy operator $\Omega_A$ is a very important tool to construct modules in $\mod A$ and relate 
them. For a self-injective algebra $A$, it induces an equivalence of the stable module category $\umod A$, and its inverse 
is the shift of a triangulated structure on $\umod A$ \cite{Ha1}. A module $M$ in $\mod A$ is said to be \emph{periodic} if 
$\Omega_A^n(M)\cong M$, for some $n\geqslant 1$, and minimal such $n$ is called the \emph{period} of $M$. The action of 
$\Omega_A$ on $\mod A$ can effect the algebra structure of $A$. For example, if all simple modules in $\mod A$ are periodic, 
then $A$ is a self-injective algebra. An algebra $A$ is defined to be \emph{periodic} if it is periodic viewed as a module 
over the enveloping algebra $A^e=A^{op}\otimes_K A$, or equivalently, as an $A$-$A$-bimodule. It is known that if $A$ is 
a periodic algebra of period $n$, then for any indecomposable non-projective module $M$ in $\mod A$ the syzygy $\Omega_A^n(M)$ 
is isomorphic to $M$. Periodic algebras are self-injective and have periodic Hochschild cohomology. Periodicity of an algebra, 
as well as its period, are invariant under derived equivalences. Therefore, to study periodic algebras we may assume that 
the algebras are basic and indecomposable. 

Finding or possibly classifying periodic algebras is an important problem as it has connections with group theory, topology, 
singularity theory, cluster algebras and algebraic combinatorics. For details, we refer to the survey article \cite{ES1} 
and the introductions of \cite{BES, ES2, ES4}. 

We are concerned with the classification of all periodic tame symmetric algebras. Dugas proved in \cite{Du1} that every 
representation-finite self-injective algebra, without simple blocks, is a periodic algebra. The representation-infinite, 
indecomposable, periodic algebras of polynomial growth were classified in \cite{BES}. It is conjectured in \cite[Problem]{ES2} 
that every indecomposable symmetric periodic tame algebra of non-polynomial growth is of period $4$. The large class of 
tame symmetric algebras of period $4$ is provided by the weighted surface algebras associated to compact real surfaces 
(these and their deformations are investigated in \cite{ES2}-\cite{ES6}). Surface triangulations have been also used to 
study cluster algebraic structures in Teichm\"uller theory \cite{FG, GSV}, cluster algebras of topological origin \cite{FST2}, 
and the classification of all cluster algebras of finite mutation type with skew symmetric exchange matrices \cite{FST1}. 
We also mention that there exist wild symmetric periodic algebras of period $4$, with arbitrary large number (at least 
$4$) of pairwise non-isomorphic simple modules. These wild periodic algebras arise as stable endomorphism rings of 
clustr-tilting Cohen-Macaulay modules over one-dimensional hypersurface singularities (see \cite{BIKR} and 
\cite[Corollary 2]{ES4}). 

Periodic algebras based on surface triangulations lead also to interesting non-periodic symmetric tame algebras. Namely, 
we get new symmetric tame algebras by taking the idempotent algebras $e\Lambda e$ of periodic surface algebras $\Lambda$. 
In particular, every Brauer graph algebra is of this form (see \cite[Theorem 4]{ES7}). We refer also to \cite{ES8} for 
the related classification of all algebras of generalized dihedral type. Summing up, the classification of all symmetric 
tame periodic algebras of period $4$ is currently an important problem. 

In order to present our main result we recall briefly the nature of weighted surface algebras, introduced and investigated 
in \cite{ES2, ES5, ES9}. By a surface we mean a connected, compact, $2$-dimensional real manifold $S$, with or without 
boundary. Then $S$ admits a structure of a finite $2$-dimensional triangular cell complex, and hence a triangulation. We 
say that $(S,\overrightarrow{T})$ is a directed triangulated surface if $S$ is a surface, $T$ a triangulation of $S$ 
with at least two edges, and $\overrightarrow{T}$ an arbitrary choice of orientations of triangles in $T$. To such 
$(S,\overrightarrow{T})$ one associates a triangulation quiver $(Q(S,\overrightarrow{T}),f)$, where $Q(S,\overrightarrow{T})$ 
is a $2$-regular quiver, that is every vertex is a source and target of exactly two arrows. The vertices of this quiver are 
the edges of $T$, and $f$ is a permutation of the arrows of $Q(S,\overrightarrow{T})$ reflecting the orientation 
$\overrightarrow{T}$ of triangles in $T$. Since $Q(S,\overrightarrow{T})$ is $2$-regular there is a second permutation, 
denoted by $g$, of the arrows of $Q(S,\overrightarrow{T})$. For the set $\mathcal{O}(g)$ of $g$-orbits of arrows in 
$Q(S,\overrightarrow{T})$, two functions $m_\bullet:\mathcal{O}(g)\to\mathbb{N}^*$ and $c_\bullet:\mathcal{O}(g)\to K^*$, 
called weight and parameter functions, are considered. Then, under some restrictions on $m_\bullet$ and $c_\bullet$, the 
weighted surface algebra $\Lambda(S,\overrightarrow{T},m_\bullet,c_\bullet)$ is defined as a quotient algebra 
$KQ(S,\overrightarrow{T})/I(S,\overrightarrow{T},m_\bullet,c_\bullet)$ of the path algebra $KQ(S,\overrightarrow{T})$ of 
$Q(S,\overrightarrow{T})$ over $K$ by an ideal $I(S,\overrightarrow{T},m_\bullet,c_\bullet)$ of $KQ(S,\overrightarrow{T})$. 
It has been proved in \cite[Theorems 1.1-1.3]{ES5} that, if $\Lambda=\Lambda(S,\overrightarrow{T},m_\bullet,c_\bullet)$ is 
a weighted surface algebra other than a singular disc, triangle, tetrahedral or spherical algebra, then $\Lambda$ is a 
tame symmetric periodic algebra of period $4$. We mention that the Gabriel quiver $Q_\Lambda$ of such an algebra is at 
most $2$-regular, which means that every vertex is a source and target of at most two arrows. Moreover, $Q_\Lambda$ is 
$2$-regular if and only if $Q_\Lambda=Q(S,\overrightarrow{T})$. This holds exactly when $m_{\mathcal{O}}|\mathcal{O}| 
\geqslant 3$ for any orbit $\mathcal{O}$ in $\mathcal{O}(g)$. In general, it is assumed only $m_\mathcal{O}|\mathcal{O}| 
\geqslant 2$ for any orbit $\mathcal{O}$ in $\mathcal{O}(g)$, and some other minor restrictions (see Section \ref{sec:2}). 
Conversly, it has been shown in \cite[Main Theorem]{ES4} that a basic indecomposable algebra $A$ with $2$-regular Gabriel 
quiver $Q_A$ having at least three vertices is a tame symmetric periodic algebra of period $4$ (more generally, algebra 
of generalized quaterion type) if and only if $A$ is socle equivalent to a weighted surface algebra 
$\Lambda(S,\overrightarrow{T},m_\bullet,c_\bullet)$, different from a singular tetrahedral algebra, or is isomorphic 
to a higher tetrahedral algebra $\Lambda(m,\lambda)$, $m\geqslant 2$, $\lambda\in K^*$ (investigated in \cite{ES3}). 
Recently, new exotic families of algebras derived equivalent to the algebras $\Lambda(m,\lambda)$ were discovered: 
the higher spherical algebras $S(m,\lambda)$ from \cite{ES6}, and the families $E(m,\lambda)$ and $F(m,\lambda)$ 
from \cite{Sk}. These four exotic families may be regarded as higher deformations of the known four families of the 
trivial extensions algebras of the tubular algebras of tubular type $(2,2,2,2)$, presented long time ago in 
\cite[Example 3.3]{S1}. Furthermore, new relevant exotic families occured in the recent authors work concerning 
the classification of algebras derived equivalent to higher tetrahedral algebras. We also note that some relevant 
exotic families of algebras with $2$ and $3$ simple modules occured in the Erdmann's classification of algebras of 
quaternion type \cite{E} (see also \cite{BIKR, ES0, Ho}). 

The aim of this article is to show that there are numerous tame symmetric periodic algebras of period $4$, with arbitrary 
large ranks of the Grothendieck group, which are not isomorphic to weighted surface algebras. In particular, this leads 
the study of derived equivalences of weighted surface algebras into new exciting directions. The idea is as follows. The 
general version of weighted surface algebras introduced in \cite{ES5} allows to consider the algebras $\Lambda=
\Lambda(S,\overrightarrow{T},m_\bullet,c_\bullet)$ whose triangulation quiver $(Q(S,\overrightarrow{T}),f)$ admits arrows 
which do not occur in their Gabriel quivers $Q_\Lambda$, called virtual arrows. For example, the directed triangulated 
surface $(S,\overrightarrow{T})$ may contain $2$-triangle discs 
$$\begin{tikzpicture}
\draw[thick] (0,-1)--(0,1); 
\draw[thick](0,0) circle (1);
\node() at (0,0){$\bullet$};
\node() at (0,-1){$\bullet$}; 
\node() at (0,1){$\bullet$}; 
\node() at (1.2,0){$b$};
\node() at (-1.2,0){$a$};
\node() at (-0.2,0.5){$c$};
\node() at (-0.2,-0.5){$d$};
\end{tikzpicture}$$
with coherent orientation $(a\mbox{ }c\mbox{ }d)$ and $(c\mbox{ }b\mbox{ }d)$ of triangles, which provide in 
$Q(S,\overrightarrow{T})$ $g$-orbits of arrows $\xymatrix{c\ar@<0.1cm>[r]&d\ar@<0.1cm>[l]}$ of length $2$. Assume that 
$\mathcal{O}_1,\dots,\mathcal{O}_r$ is a family of pairwise different orbits in $\mathcal{O}(g)$ of length $2$ with 
$m_{\mathcal{O}_1}=\dots=m_{\mathcal{O}_r}=1$, and $\xi=(\xi_1,\dots,\xi_r)$ is an element in 
$\mathcal{O}_1\times\dots\times\mathcal{O}_r$. Then we define a new algebra $\Lambda(\xi)=\Lambda(S,\overrightarrow{T},m_\bullet,c_\bullet,\xi)$ 
by a quiver $Q(S,\overrightarrow{T},\xi)$ and relations, which we call a \emph{virtual mutation} of $\Lambda$ with respect 
to the chosen sequence $\xi$ of virtual arrows of $Q(S,\overrightarrow{T})$. 

The following theorem is the main result of this paper. 

\begin{mthm} Let $\Lambda(\xi)=\Lambda(S,\overrightarrow{T},m_\bullet,c_\bullet,\xi)$ be a virtual mutation of a 
weighted surface algebra $\Lambda=\Lambda(S,\overrightarrow{T},m_\bullet,c_\bullet)$. Then the following statements 
hold.
\begin{enumerate}[$(1)$]
\item $\Lambda(\xi)$ is a finite-dimensional symmetric algebra. 

\item $\Lambda(\xi)$ is derived equivalent to $\Lambda$.

\item $\Lambda(\xi)$ is not isomorphic to a weighted surface algebra.

\item $\Lambda(\xi)$ is a representation-infinite tame algebra.

\item $\Lambda(\xi)$ is a periodic algebra of period $4$.
\end{enumerate}
\end{mthm}

The quiver $Q(S,\overrightarrow{T},\xi)$ describing $\Lambda(\xi)$ has the same vertices as $Q(S,\overrightarrow{T})$, 
which are the edges of $T$. All arrows of the quiver $Q(S,\overrightarrow{T})$ which are not connected to the sources 
of the chosen virtual arrows $\xi_1,\dots,\xi_r$ are the arrows of $Q(S,\overrightarrow{T},\xi)$. On the other hand, 
the quiver $Q(S,\overrightarrow{T},\xi)$ contains (with few exceptions) vertices which are sources or targets of $3$ 
or $4$ arrows, so this explains the statement (3). We describe a canonical basis for the algebra $\Lambda(\xi)$ and 
then provide formula for its dimension over $K$ (see Section \ref{sec:3}). For the proof of (2) we construct in Section 
\ref{sec:5} a tilting complex $T^\xi$ in the homotopy category $K^b(P_\Lambda)$ of bounded complexes of projective 
modules in $\mod\Lambda$ and prove that the endomorphism algebra $\End_{K^b(P_\Lambda)}(T^\xi)$ is isomorphic to $\Lambda(\xi)$. 
Then the remaining statements of the above theorem follow from known general results. 

We note that in general the directed triangulated surface $(S,\overrightarrow{T})$ of a weighted surface algebra $\Lambda = 
\Lambda(S,\overrightarrow{T},m_\bullet,c_\bullet)$ may not contain $2$-triangle discs, and then $\Lambda$ does not admit 
a virtual mutation. In Section \ref{sec:6} we discuss the following construction of weighted surface algebras which admit 
virtual mutations. 

Let $\Lambda=\Lambda(S,\overrightarrow{T},m_\bullet,c_\bullet)$ be a weighted surface algebra, say with $S$ an orientable 
surface and $\overrightarrow{T}$ a coherent orientation of triangles in $T$ (due to \cite[Theorem 3.1]{ES7} it is not 
restriction of generality). We take a non-empty set $I$ of edges in $T$ (possibly all edges of $T$) and a function $\epsilon: 
I\to\{-1,1\}$. Then we may associate to $(S,\overrightarrow{T})$, in a unique way, a directed triangulated surface 
$(S,\overrightarrow{T_I})$, where $T_I$ is the new triangulation of $S$ obtained from $T$ by blowing-up of every edge 
of $I$ to a $2$-triangle disc, and in such a manner that $\overrightarrow{T}$ is extended to a coherent orientation 
$\overrightarrow{T_I}$ of triangles in $T_I$. This creates canonically a new weighted surface algebra $\Lambda_I=
\Lambda(S,\overrightarrow{T_I},m^I_\bullet,c^I_\bullet)$ and a sequence $\underline{\epsilon}=(\epsilon_i)_{i\in I}$ of 
virtual arrows of $Q(S,\overrightarrow{T_I})$, defined by the function $\epsilon$. Then the virtual mutation $\Lambda_I^\epsilon=
\Lambda_I(\underline{\epsilon})$ of $\Lambda_I$ with respect to $\underline{\epsilon}$ is defined as a \emph{deformation} 
of $\Lambda$ at the set of edges $I$, with respect to function $\epsilon$. 

We have the following consequence of the Main Theorem. 

\begin{cor1} Let $\Lambda=\Lambda(S,\overrightarrow{T},m_\bullet,c_\bullet)$ be a weighted surface algebra, $I$ a set of 
edges of $T$, $\epsilon:I\to\{-1,1\}$ a function, and $\Lambda_I^\epsilon$ the associated deformation of $\Lambda$. Then 
$\Lambda_I^\epsilon$ is a finite-dimensional, tame, symmetric and periodic algebra of period $4$. Moreover, $\Lambda_I^\epsilon$ 
is not isomorphic to a weighted surface algebra.
\end{cor1} 

The paper is organized as follows. In Sections \ref{sec:1} and \ref{sec:2} we present necessary background on derived 
equivalences of algebras and weighted triangulation (surface) algebras, needed for further parts of the article. In 
Section \ref{sec:3} we introduce and study the virtual mutations of weighted triangulation (surface) algebras. 
Section \ref{sec:4} presents several examples illustrating the concept of a virtual mutation of a weighted surface algebra. 
Section \ref{sec:5} is devoted to the proof of Main Theorem, and two complementary results justifying the name and definition 
of the main object investigated in the paper. In final Section \ref{sec:6} we introduce deformations of weighted surface 
algebras with respect to collections of edges of the underlying surface triangulation. 

For general background on the relevant representation theory we refer to the books \cite{ASS, E, Ha2, SS, SY} and the 
survey article \cite{S2}. 

\section{Derived equivalences of algebras}\label{sec:1}

In this section, we recall basic facts on derived equivalences of algebras, needed in our article. 

For an algebra $A$, we denote by $K^b(\mod A)$ the homotopy category of bounded complexes of modules 
in $\mod A$ and by $K^b(P_A)$ its subcategory formed by bounded complexes of projective modules. The 
derived category $D^b(\mod A)$ of $A$ is the localization of $K^b(\mod A)$ with respect to quasi-isomorphisms, 
and admits structure of a triangulated category, where the suspension functor is given by left shift 
$(-)[1]$ (see \cite{Ha1}). Two algebras $A$ and $B$ are called \emph{derived equivalent} provided 
their derived categories $D^b(\mod A)$ and $D^b(\mod B)$ are equivalent as triangulated categories. 
Moreover, a complex $T$ in $K^b(P_A)$ is called a \emph{tilting complex} \cite{Ric1}, if the 
following conditions are satisfied: 
\begin{enumerate}
\item[(T1)] $\Hom_{K^b(P_A)}(T,T[i])=0$, for all integers $i\neq 0$, 
\item[(T2)] $\add(T)$ generates $K^b(P_A)$ as triangulated category. 
\end{enumerate} 

We have the following handy criterion for verifying derived equivalence of algebras, due to Rickard 
\cite[Theorem 6.4]{Ric1}.

\begin{theorem}\label{thm:1.1} Two algebras $A$ and $B$ are derived equivalent if and only if there exists 
a tilting complex $T$ in $K^b(P_A)$ such that $\End_{K^b(P_A)}\cong B$. 
\end{theorem}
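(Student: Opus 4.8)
The plan is to prove the two implications separately. The forward (``only if'') direction is essentially formal. Given a triangle equivalence $F\colon D^b(\mod B)\to D^b(\mod A)$, I would set $T:=F(B)$, regarding $B$ as a stalk complex concentrated in degree $0$. Over $B$ itself, $B$ is a tilting complex: it has no higher self-extensions, so $\Hom_{D^b(\mod B)}(B,B[i])=0$ for $i\neq0$, and the smallest thick subcategory of $D^b(\mod B)$ containing $B$ is $K^b(P_B)$, since every bounded complex of finitely generated projectives is built from the indecomposable summands of $B$ by finitely many mapping cones and direct summands. As $K^b(P_A)$ is characterised intrinsically inside $D^b(\mod A)$ --- for instance as the full subcategory of objects $X$ with $\Hom(X,Y[i])=0$ for $|i|\gg0$ and every $Y$ --- the equivalence $F$ carries $K^b(P_B)$ onto $K^b(P_A)$. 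Replacing $T$ by a quasi-isomorphic bounded complex of projectives, I obtain $T\in K^b(P_A)$ with $\Hom_{K^b(P_A)}(T,T[i])\cong\Hom_{D^b(\mod B)}(B,B[i])=0$ for $i\neq0$, giving (T1); with $\operatorname{thick}(T)=F(\operatorname{thick}(B))=K^b(P_A)$, giving (T2); and with $\End_{K^b(P_A)}(T)=\Hom_{D^b(\mod A)}(T,T)\cong\Hom_{D^b(\mod B)}(B,B)\cong B$, as required.

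The ``if'' direction carries the content, and I would prove it in two stages. First, the functor $\Hom_{K^b(P_A)}(T,-)$ restricts on $\add(T)$ to a fully faithful additive functor onto the category $\operatorname{proj}B$ of finitely generated projective $B$-modules, so $\add(T)\simeq\operatorname{proj}B$; extending to complexes and forming total complexes yields a triangulated functor $\Phi\colon K^b(\add T)\to K^b(P_A)$, where $K^b(\add T)$ is triangle equivalent to $K^b(\operatorname{proj}B)$. Using (T1) in the form $\Hom_{D^b(\mod A)}(T,T[i])=\Hom_{K^b(P_A)}(T,T[i])=0$ for $i\neq0$, a routine induction on the length of complexes together with the five lemma shows that $\Phi$ is fully faithful; its essential image is then a thick subcategory of $K^b(P_A)$ --- here I use that $\add(T)$, and hence $K^b(\add T)$, is idempotent complete --- containing $T$, so it equals $K^b(P_A)$ by (T2). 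This produces a triangle equivalence $K^b(P_B)\simeq K^b(P_A)$ of perfect derived categories. In the second stage I would upgrade this to an equivalence $D^b(\mod B)\simeq D^b(\mod A)$ by repeating the construction with bounded-above complexes having bounded cohomology, using the standard identifications $D^b(\mod A)\simeq K^{-,b}(P_A)$ and $D^b(\mod B)\simeq K^{-,b}(\operatorname{proj}B)$, and checking that the equivalence respects the condition of bounded cohomology.

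The main obstacle is this second stage, and more precisely essential surjectivity at the unbounded level: one must show that every bounded complex of $A$-modules lies in the essential image of the extended $\Phi$, equivalently that it admits an $\add(T)$-resolution whose total complex has bounded cohomology. Constructing these resolutions is the technical heart of the argument, and it is exactly here that the full force of (T1) and (T2) enters. A cleaner conceptual route, which I would also mention, is to pass to the DG endomorphism algebra $\mathcal E=\operatorname{RHom}_A(T,T)$: by (T1) its cohomology is concentrated in degree $0$, so $\mathcal E$ is formal and quasi-isomorphic to $B$; by (T2) the object $T$ is a compact generator of the unbounded derived category $D(\operatorname{Mod}A)$; and then the standard DG Morita theorem shows $\operatorname{RHom}_A(T,-)$ is a triangle equivalence $D(\operatorname{Mod}A)\xrightarrow{\sim}D(\operatorname{Mod}B)$ which restricts to the bounded derived categories. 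Either way the asserted derived equivalence follows; the complete argument is in \cite[Theorem 6.4]{Ric1}.
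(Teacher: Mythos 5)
This statement is quoted in the paper as a known theorem of Rickard, with a bare citation to \cite[Theorem~6.4]{Ric1}; the paper contains no proof of it, so there is nothing internal to compare your argument against. Judged on its own terms, your outline is a faithful sketch of the standard proof. The forward direction is correct and complete in all essentials: $T:=F(B)$, the intrinsic characterisation of $K^b(P_A)$ inside $D^b(\mod A)$ as the homologically finite objects, and the transport of (T1), (T2) and the endomorphism ring. For the converse, your architecture --- identify $\add(T)$ with $\operatorname{proj}B$, totalise to get $\Phi\colon K^b(\add T)\to K^b(P_A)$, prove full faithfulness by induction on the length of complexes using (T1) and the five lemma, and get density from (T2) plus idempotent completeness --- is exactly the skeleton of Rickard's argument (and of the later streamlined treatments). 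You correctly locate the genuine difficulty in passing from the equivalence of perfect derived categories to $D^b(\mod A)\simeq D^b(\mod B)$, i.e.\ in producing $\add(T)$-resolutions with controlled cohomology on $K^{-,b}$; as written this step is asserted rather than carried out, so the proposal is a proof sketch rather than a proof, but you flag this honestly and the alternative route you mention (formality of $\operatorname{RHom}_A(T,T)$ by (T1), compact generation of $D(\operatorname{Mod}A)$ by $T$ via (T2), and the DG Morita theorem) is a legitimate way to close that gap. Since the paper only ever uses this theorem as a black box, deferring the technical core to \cite{Ric1} is entirely in the spirit of the text.
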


We recall also the following two theorems (see \cite[Corollary 5.3]{Ric3} and \cite[Theorem 2.9]{ES1}).

\begin{theorem}\label{thm:1.2} Let $A$ and $B$ be derived equivalent algebras. Then $A$ is symmetric if 
and only if $B$ is symmetric.
\end{theorem}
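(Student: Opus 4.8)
The plan is to characterise symmetric algebras in terms of the derived category alone, and then transport this characterisation along the derived equivalence; the relevant tool is the Nakayama functor. For an algebra $A$ and a complex $P$ in $K^b(P_A)$, put $\nu_A(P)=D\Hom_A(P,A)$; this is again a bounded complex, computed termwise, and on $K^b(P_A)$ one has a natural isomorphism $\nu_A(P)\cong P\otimes_A D(A)$, arising termwise from the classical isomorphism for finitely generated projective modules. The first step is to record the Serre-type duality
\[
\Hom_{D^b(\mod A)}(M,\nu_A(P))\;\cong\;D\,\Hom_{D^b(\mod A)}(P,M),
\]
natural in $P\in K^b(P_A)$ and $M\in D^b(\mod A)$; it follows by applying termwise the standard tensor--hom and duality isomorphisms valid for finitely generated projective modules and then passing to cohomology. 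By the Yoneda lemma this property determines $\nu_A$ uniquely, up to natural isomorphism, among all functors $K^b(P_A)\to D^b(\mod A)$.

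Second, I would record the derived-categorical characterisation of symmetry: $A$ is symmetric if and only if $D(A)\cong A$ as $A$-$A$-bimodules, if and only if $\nu_A$ is naturally isomorphic to the inclusion functor $K^b(P_A)\hookrightarrow D^b(\mod A)$. The equivalence of the first two conditions is classical (a symmetrising form is exactly a bimodule isomorphism $A\to D(A)$, and self-injectivity then comes for free). For the second equivalence, a bimodule isomorphism $D(A)\cong A$ turns the termwise formula $\nu_A(P)\cong P\otimes_A D(A)$ into a natural isomorphism $\nu_A(P)\cong P\otimes_A A=P$; conversely, restricting a natural isomorphism $\nu_A\cong(\text{incl})$ to the subcategory of complexes concentrated in degree $0$ lying in $\add(A)$, and using the standard dictionary between natural transformations of tensor functors and bimodule homomorphisms (evaluate at $A$; naturality with respect to $\End_A(A_A)$ supplies the missing one-sided linearity), yields a bimodule isomorphism $D(A)\cong A$.

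Third, I would show that a triangle equivalence $F\colon D^b(\mod A)\to D^b(\mod B)$ intertwines the two Nakayama functors. It first restricts to an equivalence $K^b(P_A)\to K^b(P_B)$, because the perfect complexes form an intrinsic subcategory of $D^b$: a complex $X$ lies in $K^b(P_A)$ exactly when $\Hom_{D^b(\mod A)}(X,Y[n])=0$ for $|n|\gg 0$ and every $Y\in D^b(\mod A)$. Then, for $P\in K^b(P_A)$ and $N\in D^b(\mod B)$,
\begin{align*}
\Hom_{D^b(\mod B)}(N,F\nu_A(P)) &\cong \Hom_{D^b(\mod A)}(F^{-1}N,\nu_A(P))\\
&\cong D\,\Hom_{D^b(\mod A)}(P,F^{-1}N)\;\cong\; D\,\Hom_{D^b(\mod B)}(FP,N),
\end{align*}
naturally in $N$ and $P$; comparing the last term with the defining Serre duality for $\nu_B$ applied to $FP\in K^b(P_B)$ and invoking the Yoneda lemma in $D^b(\mod B)$ gives a natural isomorphism $F\circ\nu_A\cong\nu_B\circ F$.

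Combining the three steps completes the proof. If $A$ is symmetric then $\nu_A\cong(\text{incl})$ by Step~2, so $\nu_B\circ F\cong F\circ\nu_A\cong F$ by Step~3; since $F$ restricts to an equivalence on perfect complexes, composing with a quasi-inverse of $F$ yields $\nu_B\cong(\text{incl})$, whence $B$ is symmetric by Step~2 again. The situation being symmetric in $A$ and $B$, this is what we want. The main obstacle is Step~3 — more precisely, the point that the Nakayama functor, though defined via the concrete bimodule $D(A)$, is nevertheless an invariant of the triangulated structure of $D^b(\mod A)$; this is exactly what the Serre-type duality of Step~1 makes possible, and everything else (exactness of $D$, the isomorphisms for finitely generated projectives, and the various naturality checks) is routine bookkeeping.
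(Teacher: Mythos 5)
Your argument is correct and is essentially the standard proof of the result the paper imports from Rickard \cite[Corollary 5.3]{Ric3}: the paper itself gives no proof, only the citation, and what you have written is the usual packaging of that proof via the Nakayama functor. The key points all hold up: the Serre-type duality $\Hom_{D^b(\mod A)}(M,\nu_A(P))\cong D\Hom_{D^b(\mod A)}(P,M)$ for perfect $P$, the Yoneda characterisation of $\nu_A$, the intrinsic description of $K^b(P_A)$ inside $D^b(\mod A)$ (where the real content is vanishing of $\Hom(X,Y[n])$ for $n\gg 0$; the negative range is automatic for bounded complexes), and the recovery of a bimodule isomorphism $D(A)\cong A$ from a natural isomorphism $\nu_A\cong\mathrm{incl}$ by evaluating at $A$ and using naturality over $\End_A(A_A)\cong A$. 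The only difference from Rickard's own treatment is one of formulation: he deduces the invariance of the Nakayama functor from the existence of a two-sided tilting complex, whereas you deduce it directly from the abstract triangle equivalence via Serre duality; your route is self-contained and avoids the two-sided tilting complex machinery, at the cost of carrying the naturality bookkeeping explicitly, and both yield exactly the same conclusion.
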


\begin{theorem}\label{thm:1.3} Let $A$ and $B$ be derived equivalent algebras. Then $A$ is periodic if 
and only if $B$ is periodic. Moreover, if this is the case, then $A$ and $B$ have the same period. 
\end{theorem}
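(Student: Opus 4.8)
The plan is to reduce periodicity --- which is a statement about the regular bimodule over the enveloping algebra --- to a statement about stable module categories, where derived equivalences are well understood. First I would pass to the enveloping algebras. Over the field $K$, every derived equivalence between $A$ and $B$ is \emph{standard} (Rickard): it has the form $-\otimes^{\mathbf{L}}_A\Delta\colon D^b(\mod A)\to D^b(\mod B)$ for a two-sided tilting complex $\Delta$, with quasi-inverse $-\otimes^{\mathbf{L}}_B\Delta^{-1}$ given by the inverse two-sided tilting complex $\Delta^{-1}$, so that $\Delta^{-1}\otimes^{\mathbf{L}}_A\Delta\cong B$ and $\Delta\otimes^{\mathbf{L}}_B\Delta^{-1}\cong A$. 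Since derived equivalences are compatible with $-\otimes_K-$, the complex $\Delta^{-1}\otimes_K\Delta$ induces a derived equivalence between the enveloping algebras $A^e=A^{op}\otimes_K A$ and $B^e=B^{op}\otimes_K B$, acting on objects by $X\mapsto\Delta^{-1}\otimes^{\mathbf{L}}_A X\otimes^{\mathbf{L}}_A\Delta$. Evaluating this functor on the regular bimodule $A$ gives $\Delta^{-1}\otimes^{\mathbf{L}}_A A\otimes^{\mathbf{L}}_A\Delta\cong\Delta^{-1}\otimes^{\mathbf{L}}_A\Delta\cong B$, the regular $B$-bimodule. Thus a derived equivalence between $A$ and $B$ yields one between $A^e$ and $B^e$ carrying the $A$-$A$-bimodule $A$ to the $B$-$B$-bimodule $B$.

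Next, assume $A$ is periodic. Then $A$ is self-injective, hence so is $A^e$, and since self-injectivity is invariant under derived equivalence, so are $B^e$ and $B$. By Rickard's theorem relating derived and stable equivalences of self-injective algebras, the derived equivalence $A^e\sim B^e$ restricts to a triangle equivalence $\umod A^e\simeq\umod B^e$ compatible with it; in particular it sends $A$ to $B$, since as stalk complexes in degree $0$ these are just the modules themselves. In the stable category of a self-injective algebra the suspension is the inverse syzygy $\Omega^{-1}$, and a triangle equivalence commutes with suspension, so for every $n\geqslant 1$ we have $\Omega^n_{A^e}(A)\cong A$ in $\umod A^e$ if and only if $\Omega^n_{B^e}(B)\cong B$ in $\umod B^e$.

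Finally, we may assume throughout (the algebras being basic and indecomposable) that $A$ and $B$ are indecomposable as algebras; then $A$ is an indecomposable $A^e$-module, because $\End_{A^e}(A)\cong Z(A)$ is local, and $A$ is not $A^e$-projective, since a separable algebra cannot be periodic. Hence $A$, and likewise each syzygy $\Omega^n_{A^e}(A)$ taken without projective summands, is an indecomposable non-projective $A^e$-module, so the stable isomorphisms above are genuine isomorphisms in $\mod A^e$, and similarly over $B^e$. Therefore $A$ is periodic if and only if $B$ is, and the period --- the least $n$ with $\Omega^n(A)\cong A$ --- is the same; the two implications follow from each other by symmetry.

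The main obstacle is not the transport of structure carried out above but the two substantial inputs of Rickard on which it relies: that every derived equivalence over a field is standard and compatible with tensor products, so that it lifts to the enveloping algebras while matching up the regular bimodules; and that a derived equivalence between self-injective algebras descends to a compatible stable equivalence. Granting these, the only delicate point is the passage from stable to honest module isomorphisms, which is handled by the indecomposability and non-projectivity of the regular bimodule.
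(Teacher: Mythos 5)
The paper does not prove this statement; it only cites it (as \cite[Theorem 2.9]{ES1}), and your argument is essentially the standard proof underlying that reference: pass to a two-sided tilting complex, tensor it with its inverse over $K$ to obtain a derived equivalence of the enveloping algebras carrying the regular bimodule $A$ to $B$, descend to the stable categories of the self-injective enveloping algebras, and read off periodicity there. The argument is correct; the only quibbles are cosmetic: Rickard's theorem gives the \emph{existence} of a standard equivalence (whether every derived equivalence is standard is open, but existence is all you use), and the final upgrade from stable to genuine isomorphisms also needs $B$ to be non-projective over $B^e$, which follows since the stable equivalence sends the nonzero object $A$ of $\umod A^e$ to $B$.
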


Beacause in the class of self-injective algebras, derived equivalence implies stable equivalence (see 
\cite[Corollary 2.2]{Ric2} and \cite[Corollary 5.3]{Ric3}), one may conclude from \cite[Theorems 4.4 and 5.6]{CB} 
and \cite[Corollary 2]{KZ} that the following theorem holds.

\begin{theorem}\label{thm:1.4} Let $A$ and $B$ be derived equivalent self-injective algebras. Then the 
following equivalences are valid.
\begin{enumerate}[$(1)$]
\item $A$ is tame if and only if $B$ is tame.
\item $A$ is of polynomial growth if and only if $B$ is of polynomial growth. 
\end{enumerate}
\end{theorem}

We present now a simple construction of tilting complexes of lenght 2 over symmetric algebras, observed 
first by Okuyama \cite{O} and Rickard \cite{Ric2}. These tilting complexes have been used extensively to 
verify various cases of Brou\'e's abelian defect group conjecture \cite{CR}, as well as in realizing 
derived equivalences between symmetric algebras (see \cite{BiHS}, \cite{BoHS}, \cite{Ho}, \cite{Ka}, 
\cite{MS}, \cite{Ric2}). 

Let $A$ be a basic, indecomposable, symmetric algebra with the Grothendieck group $K_0(A)$ of rank at 
least 2 and $A=A_A=P\oplus Q$ a proper decomposition in $\mod A$. Consider a left $\add(Q)$-approximation 
$f:P\to Q'$ of $P$, that is $Q'$ is a module in $\add(Q)$ and $f$ induces surjective map 
$$\Hom_A(f,Q''):\Hom_A(Q',Q'')\to\Hom(P,Q''),$$ 
for any module $Q''$ in $\add(Q)$. 

In this case, we may consider two complexes 
$$T_1:\qquad \xymatrix{0\ar[r]& Q \ar[r] & 0}\quad\mbox{and}\quad T_2:
\xymatrix{0\ar[r]&P\ar[r]^{f}&Q'\ar[r]&0}$$ 
concentrated, respectively in degree $0$ and in degrees $1$ and $0$. Then we have the following proposition 
(for a proof we refer to \cite[Proposition 2.1]{Du2}). 

\begin{prop}\label{pro:1.5}
$T:=T_1\oplus T_2$ is a tilting complex in $K^b(P_A)$. 
\end{prop}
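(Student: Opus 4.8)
The plan is to verify the two tilting-complex axioms (T1) and (T2) from Theorem~\ref{thm:1.1} directly for $T = T_1 \oplus T_2$, exploiting the symmetry of $A$ to move freely between $\Hom_A(-,-)$ and $\Hom_A(-,-)^{\ast}$-type dualities. First I would unwind the condition (T1), namely $\Hom_{K^b(P_A)}(T, T[i]) = 0$ for $i \neq 0$. Since $T_1$ is concentrated in degree $0$ and $T_2$ lives in degrees $1$ and $0$, the only potentially nonzero shifts are $i = \pm 1$, so it suffices to compute $\Hom_{K^b(P_A)}(T_i, T_j[\pm 1])$ for $i,j \in \{1,2\}$. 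Most of these groups vanish for trivial degree reasons; the genuine content is in showing $\Hom_{K^b(P_A)}(T_2, T_2[1]) = 0$ and $\Hom_{K^b(P_A)}(T_2, T_1[1]) = 0$ (equivalently $\Hom_{K^b(P_A)}(T_1, T_2[-1]) = 0$). The first of these unwinds to: every chain map $P \to Q'$ (viewed in the appropriate degree) is null-homotopic, which is exactly the statement that any $h \colon P \to Q'$ factors through $f \colon P \to Q'$ up to the differential — here one uses that $f$ is a left $\add(Q)$-approximation to produce the homotopy $Q' \to Q'$. The second reduces, again via the approximation property of $f$, to the surjectivity of $\Hom_A(f, Q'') \colon \Hom_A(Q', Q'') \to \Hom_A(P, Q'')$ applied with $Q'' = Q$.

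Next I would establish (T2), that $\add(T)$ generates $K^b(P_A)$ as a triangulated category. The key observation is that from the two defining complexes one recovers all the indecomposable projectives. Indeed, the stalk complexes of the indecomposable summands of $Q$ are already direct summands of $T_1$, so they lie in $\add(T) \subseteq \langle T \rangle$. For the indecomposable summands of $P$: the complex $T_2$ sits in a distinguished triangle
$$\xymatrix{P \ar[r]^{f} & Q' \ar[r] & T_2 \ar[r] & P[1]}$$
in $K^b(P_A)$ (taking $T_2$ as the mapping cone of $f$), and since $Q'$ and $P$-free... more precisely since the stalk complex $Q'$ lies in $\langle T_1 \rangle$, the triangle forces the stalk complex $P$ (in degree $0$) to lie in the triangulated subcategory generated by $T_1$ and $T_2$. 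Hence every indecomposable projective stalk complex lies in $\langle T \rangle$, and since these generate $K^b(P_A)$, so does $\add(T)$. This gives (T2).

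The main obstacle — though it is more bookkeeping than conceptual — is the careful verification that the chain maps in (T1) are null-homotopic, since one must set up the homotopies explicitly and check they have the right source and target degrees; the symmetry of $A$ enters precisely here, guaranteeing that a left $\add(Q)$-approximation of $P$ behaves well enough (in particular that $\add(Q)$-approximations interact correctly with the relevant $\Hom$-spaces) for the homotopies to exist. I would also remark that one should check $T$ is genuinely a complex of projectives with $P, Q, Q'$ all projective, which is immediate since $P$ and $Q$ are summands of $A_A$ and $Q' \in \add(Q)$. Once (T1) and (T2) are in place, Proposition~\ref{pro:1.5} follows directly, and in fact this is the content of \cite[Proposition 2.1]{Du2}, whose argument I would reproduce in this form.
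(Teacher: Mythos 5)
Your argument is correct and is exactly the standard Okuyama--Rickard computation; the paper itself gives no proof of Proposition~\ref{pro:1.5} and simply refers to \cite[Proposition 2.1]{Du2}, which is the argument you reproduce. One small bookkeeping point: the group $\Hom_{K^b(P_A)}(T_2,T_2[-1])$ does \emph{not} vanish for degree reasons --- it identifies with $\ker\bigl(\Hom_A(Q',P)\to\Hom_A(Q',Q')\bigr)$, the kernel of postcomposition with $f$ --- so it should be listed alongside $\Hom_{K^b(P_A)}(T_1,T_2[-1])$ as one of the groups whose vanishing requires the symmetric-algebra duality $\Hom_A(M,N)\cong D\Hom_A(N,M)$ for projectives (equivalently, it is dual to $\Hom_{K^b(P_A)}(T_2,T_2[1])$, which you do treat). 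Since you already invoke precisely this duality for the parenthetical equivalence with $\Hom_{K^b(P_A)}(T_1,T_2[-1])$, this is a one-line addition rather than a gap; note also that this is the only place symmetry (rather than the bare approximation property) is genuinely used, and your (T2) argument via the triangle $P[0]\to Q'[0]\to T_2\to P[1]$ is the standard and correct one.
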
 

\section{Weighted triangulation algebras}\label{sec:2} 

A quiver is a quadruple $Q=(Q_0,Q_1,s,t)$ consisting of a finite set $Q_0$ of vertices, a finite set $Q_1$ of arrows 
and two maps $s,t:Q_1\to Q_0$ assigning to each arrow $\alpha\in Q_1$ its source $s(\alpha)\in Q_0$ and target 
$t(\alpha)\in Q_0$. We denote by $KQ$ the path algebra of $Q$ over $K$, where underlying $K$-vector space has as 
its basis the set of all paths in $Q$ of length $\geqslant 0$, and by $R_Q$ the arrow ideal of $KQ$ generated by all 
paths in $Q$ of length $\geqslant 1$. For a vertex $i\in Q_0$, let $e_i$ be the path of length $0$ at $i$, and then 
$e_i$ are pairwise orthogonal idempotents, which sum up to identity of $KQ$. We will consider algebras of the form 
$A=KQ/I$, where $I$ is an ideal of $KQ$ such that $R_Q^m\subseteq I\subseteq R_Q$ for some $m\geqslant 2$, so that 
$A$ will be a basic finite-dimensional algebra. Then the \emph{Gabriel quiver} $Q_A$ of $A$ is the full subquiver 
of $Q$ obtained by removing all arrows $\alpha$ with $\alpha+I\in R_Q^2+I$.

A quiver $Q$ is $2$-regular if for each vertex $i\in Q_0$ there are precisely two arrows with source $i$ and precisely 
two arrows with target $i$. Such a quiver $Q$ has \emph{involution} $\bar{ }:Q_1\to Q_1$, which is a function 
assigning to each arrow $\alpha\in Q_1$ the unique arrow $\bar{\alpha}\neq \alpha$ with $s(\alpha)=s(\bar{\alpha})$. 

Following \cite{ES2, La}, a triangulation quiver is a pair $(Q,f)$, where $Q=(Q_0,Q_1,s,t)$ is a connected $2$-regular 
quiver and $f:Q_1\to Q_1$ is a permutation such that $s(f(\alpha))=t(\alpha)$, for any arrow $\alpha\in Q_1$, and 
$f^3$ is the identity on $Q_1$. In particular, all cycles in $Q_1$ induced by $f$ (that is the orbits of $f$) have length 
$1$ or $3$. We also assume that $|Q_0|\geqslant 2$. It was shown in \cite[Theorem 4.11]{ES2} (see also 
\cite[Theorem 3.1]{ES7}) that every triangulation quiver $(Q,f)$ is the triangulation quiver 
$(Q(S,\overrightarrow{T}),f)$ coming from a triangulation $T$ of a compact connected real surface $S$, with or without 
boundary, and where $\overrightarrow{T}$ is an arbitrary choice of orientation of triangles in $T$. We may even 
assume that $S$ is an orientable surface and $\overrightarrow{T}$ is a coherent orientation of triangles in $T$. We refer 
to Section \ref{sec:6} for more details.

Let $(Q,f)$ be a triangulation quiver. Then we have the composed permutation $g:Q_1\to Q_1$, where $g(\alpha)=\overline{f(\alpha)}$, 
if $\alpha\in Q_1$. For each arrow $\alpha\in Q_1$, we denote by $\mathcal{O}(\alpha)$ the $g$-orbit of $\alpha$ in $Q_1$, 
and set $n_\alpha = n_{\mathcal{O}(\alpha)}=|\mathcal{O}(\alpha)|$. Hence the $g$-orbit $\mathcal{O}(\alpha)$ is of the 
form $\mathcal{O}(\alpha)=(\alpha\mbox{ }g(\alpha)\mbox{ }\dots g^{n_\alpha-1}(\alpha))$. We note that $n_\alpha$ may be 
arbitrary large natural number. We write $\mathcal{O}(g)$ for the set of all $g$-orbits in $Q_1$. Following \cite{ES2}, we 
call a function 
$$m_\bullet:\mathcal{O}(g)\to \mathbb{N}^*:=\mathbb{N}\setminus\{0\}$$ 
a \emph{weight function} of $(Q,f)$, and a function 
$$c_\bullet:\mathcal{O}(g)\to K^*:=K\setminus\{0\}$$
a \emph{parameter function} of $(Q,f)$. We write briefly $m_\alpha:=m_{\mathcal{O}(\alpha)}$ and $c_\alpha:=c_{\mathcal{O}(\alpha)}$, 
for $\alpha\in Q_1$.

\begin{df}\label{def:2.1} We say that an arrow $\alpha\in Q_1$ is \emph{virtual} if $m_\alpha n_\alpha=2$. Note that this 
condition is preserved under permutation of $g$, and the virtual arrows form $g$-orbits of size $1$ or $2$. 
\end{df}

We have also the following general assumption \cite[Assumption 2.7]{ES5}. 

\begin{assm}\label{ass} We assume that a weight function $m_\bullet$ of $(Q,f)$ satisfies the following conditions: 
\begin{enumerate}[(1)]
\item $m_\alpha n_\alpha\geqslant 2$ for all arrows $\alpha$,
\item $m_\alpha n_\alpha\geqslant 3$ for all arrows $\alpha$ such that $\bar{\alpha}$ is virtual but not a loop, 
\item $m_\alpha n_\alpha\geqslant 4$ for all arrows $\alpha$ such that $\bar{\alpha}$ is a virtual loop. 
\end{enumerate}
\end{assm}

For each arrow $\alpha\in Q_1$, we consider the path 
$$A_\alpha:=\alpha g(\alpha) \dots g^{m_\alpha n_\alpha-2}(\alpha)$$
along the $g$-cycle of $\alpha$ of length $m_\alpha n_\alpha-1$, 
and the $g$-cycle 
$$B_\alpha:=\alpha g(\alpha) \dots g^{m_\alpha n_\alpha-1}(\alpha)$$ 
of $\alpha$ of length $m_\alpha n_\alpha$. We observe that $B_\alpha=A_\alpha g^{n_{\alpha}-1}(\alpha)$. Moreover, if 
$n_\alpha\geqslant 3$, we consider also the path 
$$A'_\alpha:=\alpha g(\alpha) \dots g^{m_\alpha n_\alpha-3}(\alpha)$$ 
along the $g$-cycle of $\alpha$ of length $m_\alpha n_\alpha-2$. Let us only mention that $f^2=g^{n_\alpha-1}(\bar{\alpha})$, 
for any arrow $\alpha\in Q_1$ (see \cite[Lemma 5.3]{ES2}). 

The definition of a weighted triangulation algebra looks as follows. 

\begin{df}\label{def:2.3} The algebra $\Lambda=\Lambda(Q,f,m_\bullet,c_\bullet)=KQ/I$ is a said to be a \emph{weighted triangulation 
algebra} if $(Q,f)$ is a triangulation quiver and $I=I(Q,f,m_\bullet,c_\bullet)$ is the ideal of $KQ$ generated by the following 
relations:
\begin{enumerate}[(1)]
\item $\alpha f(\alpha)-c_{\bar{\alpha}}A_{\bar{\alpha}}$, for any arrow $\alpha\in Q_1$, 
\item $\alpha f(\alpha)g(f(\alpha))$, for all arrows $\alpha\in Q_1$ unless $f^2(\alpha)$ is virtual or unless $f(\bar{\alpha})$ 
is virtual with $m_{\bar{\alpha}}=1$ and $n_{\bar{\alpha}}=3$,
\item $\alpha g(\alpha)f(g(\alpha))$, for all arrows $\alpha\in Q_1$ unless $f(\alpha)$ is virtual or unless $f^2(\alpha)$ 
is virtual with $m_{f(\alpha)}=1$ and $n_{f(\alpha)}=3$.
\end{enumerate}
\end{df}

We note that the relations (2) and (3) are corrections of the relations (2) and (3) in \cite[Definition 2.8]{ES5} (see \cite{ES9} 
for a discussion). 

The following theorem is a consequence of the main results proved in \cite{ES5}. 

\begin{theorem}\label{thm:2.4} Let $\Lambda=\Lambda(Q,f,m_\bullet,c_\bullet)$ be a weighted triangulation algebra other than a 
singular disc, triangle, tetrahedral or spherical algebra. Then the following statements hold:
\begin{enumerate}[$(1)$]
\item $\Lambda$ is a finite-dimensional algebra with $\dim_K\Lambda=\sum_{\mathcal{O}\in\mathcal{O}(g)}m_\mathcal{O}n^2_\mathcal{O}$. 
\item $\Lambda$ is a symmetric algebra. 
\item $\Lambda$ is a tame algebra.
\item $\Lambda$ is a periodic algebra of period $4$. 
\end{enumerate}
\end{theorem}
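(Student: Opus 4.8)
The plan is to derive Theorem \ref{thm:2.4} from the deep results of \cite{ES5} together with the auxiliary facts collected above, rather than re-proving anything from scratch. Statements (2), (3), (4) are — as the excerpt says — literally among the main theorems of \cite{ES5} (Theorems 1.1–1.3 there), so for these the only work is bookkeeping: one must check that the slightly modified relations (2)–(3) in Definition \ref{def:2.3} (the corrections noted after the definition, see \cite{ES9}) do not affect the conclusions. I would argue that the corrected relations define the \emph{same} ideal $I$ in all cases where both sets of relations are legitimate, and that in the finitely many exceptional configurations the correction is precisely what is needed to keep $\Lambda$ in the good class; hence the tameness, symmetry, and period-$4$ statements carry over verbatim. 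The cleanest route is to cite \cite[Theorems 1.1--1.3]{ES5} for (2)--(4), noting the caveat about singular disc, triangle, tetrahedral, and spherical algebras which are explicitly excluded in the hypothesis.

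For part (1), the finiteness and the dimension formula, I would give a direct basis argument, since this is the one statement most sensitive to the corrected relations and the one most useful later in the paper (Section \ref{sec:3} builds a canonical basis of $\Lambda(\xi)$ along the same lines). First I would fix a vertex $i \in Q_0$ and describe the indecomposable projective $P_i = e_i\Lambda$ by listing a spanning set of paths starting at $i$: using relation (1) of Definition \ref{def:2.3}, $\alpha f(\alpha) = c_{\bar\alpha} A_{\bar\alpha}$, every path can be rewritten, modulo $I$, as a subpath of some $g$-cycle $B_\beta$ or as one of the socle elements. More precisely, for each of the two arrows $\alpha$ with $s(\alpha) = i$ one gets the paths $e_i, \alpha, \alpha g(\alpha), \dots$ along the $g$-cycle of $\alpha$, of lengths $0$ up to $m_\alpha n_\alpha - 1$, giving $B_\alpha$ at the top; relations (2)–(3) kill the paths that would leave the $g$-cycle, with the documented exceptions for virtual arrows. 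Counting: summing $n_\alpha$ (the number of paths of each fixed "type" emanating along an orbit, accounting for the two arrows at each vertex and the length of the orbit) over the structure yields $\dim_K e_i\Lambda$, and summing over $i$ regroups the count by $g$-orbits as $\sum_{\mathcal O} m_\mathcal{O} n_\mathcal{O}^2$. The factor $n_\mathcal{O}^2$ arises because each orbit $\mathcal O$ of length $n_\mathcal O$ contributes paths to $n_\mathcal O$ different projectives, each contributing roughly $m_\mathcal{O} n_\mathcal{O}$ basis paths. I would then check that this spanning set is linearly independent by exhibiting the symmetrizing form: relation (1) forces $A_\alpha$-type top paths to coincide across the two arrows at a vertex (up to the nonzero scalars $c_\bullet$), and the socle of each $P_i$ is one-dimensional, spanned by $c_\alpha B_\alpha = c_{\bar\alpha} B_{\bar\alpha}$; a non-degenerate symmetric form pairing a path with its "complementary" path then certifies both that $\Lambda$ is symmetric and that the proposed basis is a genuine basis, pinning down the dimension.

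The main obstacle I anticipate is precisely the interaction with virtual arrows and the exceptional cases in relations (2)–(3): when $m_\alpha n_\alpha = 2$ the $g$-cycle $B_\alpha$ is very short, $A_\alpha$ is a single arrow, and Assumption \ref{ass}(2)–(3) is exactly what guarantees the neighbouring projectives are still large enough for the symmetric form to be non-degenerate and for the relations not to collapse the algebra further than intended. One must treat separately (i) virtual loops, where $f^2(\alpha) = g^{n_\alpha-1}(\bar\alpha)$ degenerates, and (ii) virtual orbits of length $2$ with $m = 1$, where the extra clauses "unless $f(\bar\alpha)$ is virtual with $m_{\bar\alpha}=1$ and $n_{\bar\alpha}=3$" in Definition \ref{def:2.3}(2) kick in. In each such case the local path count at the affected vertices must be done by hand, and one verifies that the total still reorganizes into $\sum_{\mathcal O} m_\mathcal{O} n_\mathcal{O}^2$ — the dimension formula is uniform even though the local combinatorics is not. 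Since the singular disc, triangle, tetrahedral and spherical algebras are excluded by hypothesis, the Assumption \ref{ass} bounds are available throughout, so no genuinely degenerate case survives; the argument is therefore a finite case check layered on top of the uniform counting, and then (2)–(4) are quoted from \cite{ES5}.
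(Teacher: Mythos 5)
Your proposal is correct and matches the paper, which proves this theorem purely by citation: it is stated as "a consequence of the main results proved in [ES5]" (Theorems 1.1--1.3 there), with the basis description you sketch for part (1) likewise imported as Proposition \ref{prop:2.6} from [ES5, Lemma 4.7]. The extra work you propose for the dimension formula is a sound reconstruction of that cited lemma (the count $m_\alpha n_\alpha + m_{\bar\alpha}n_{\bar\alpha}$ per vertex, uniform across the virtual cases), but the paper itself does not redo it.
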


\begin{df}\label{def:2.5} Let $(S,\overrightarrow{T})$ be a directed triangulated surface, $(Q(S,\overrightarrow{T}),f)$ the 
associated triangulation quiver and $m_\bullet,c_\bullet$  weight and parameter functions of $(Q(S,\overrightarrow{T}),f)$. Then 
the weighted triangulation algebra 
$$\Lambda(Q(S,\overrightarrow{T}),f,m_\bullet,c_\bullet)$$ 
is called a \emph{weighted surface algebra}, and it is denoted by $\Lambda(S,\overrightarrow{T},m_\bullet,c_\bullet)$.
\end{df}

We recall also the following description of bases of indecomposable projective modules over a weighted triangulation algebra, 
established in \cite[Lemma 4.7]{ES5}.

\begin{prop}\label{prop:2.6} Let $\Lambda=\Lambda(Q,f,m_\bullet,c_\bullet)$ be a weighted triangulation algebra, $i$ a vertex 
of $Q$ and $\alpha,\bar{\alpha}$ the two arrows in $Q_1$ starting at $i$. Then the following statements hold. 
\begin{enumerate}[$(1)$]
\item Assume that $\alpha$ is virtual. Then the module $e_i\Lambda$ has basis $\mathcal{B}_i$ formed by all initial submonomials of 
$B_{\bar{\alpha}}$ together with $e_i$ and $\bar{\alpha}f(\bar{\alpha})$.

\item If $\alpha$ and $\bar{\alpha}$ are not virtual, then $e_i\Lambda$ has basis $\mathcal{B}_i$ formed by all proper initial submonomials 
of $B_\alpha$ and $B_{\bar{\alpha}}$ together with $e_i$ and $B_\alpha$. 

\item We have the equalities 
$$\alpha f(\alpha)f^2(\alpha)=c_\alpha B_\alpha=c_{\bar{\alpha}}B_{\bar{\alpha}}=\bar{\alpha}f(\bar{\alpha})f^2(\bar{\alpha})$$ 
and this element generates the socle of $e_i\Lambda$.
\end{enumerate}
\end{prop}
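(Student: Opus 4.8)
The statement to prove is Proposition~\ref{prop:2.6}, describing bases of indecomposable projective modules $e_i\Lambda$ over a weighted triangulation algebra, together with the identification of the socle. Here is how I would approach it.

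\medskip

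\textbf{General strategy.} The plan is to work directly with the defining relations of $\Lambda=KQ/I$ from Definition~\ref{def:2.3}, reducing an arbitrary path starting at $i$ to a linear combination of the claimed monomials, and then separately proving linear independence by a dimension count (or by exhibiting enough of the bilinear form). First I would observe that every path starting at $i$ begins with either $\alpha$ or $\bar\alpha$, so $e_i\Lambda = \alpha\Lambda + \bar\alpha\Lambda + Ke_i$ (as right modules, modulo $e_i$). The key reduction tool is relation~(1): $\alpha f(\alpha) = c_{\bar\alpha} A_{\bar\alpha}$, which lets me rewrite any occurrence of the ``wrong turn'' $\alpha f(\alpha)$ (a length-two path not following the $g$-cycle) in terms of a path $A_{\bar\alpha}$ that does follow the $g$-cycle of $\bar\alpha$. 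Iterating this, any sufficiently long path starting at $i$ can be pushed onto the $g$-cycles $B_\alpha$ or $B_{\bar\alpha}$; relations~(2) and~(3) (the zero-relations $\alpha f(\alpha)g(f(\alpha))$ and $\alpha g(\alpha)f(g(\alpha))$, with their exceptions for virtual arrows) then kill the genuinely ``too long'' or ``forbidden'' monomials. This shows the claimed sets $\mathcal B_i$ span $e_i\Lambda$.

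\medskip

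\textbf{Case analysis on virtuality.} The two cases (1) and (2) differ because of the exceptions in relations~(2),(3). In case~(2), where neither $\alpha$ nor $\bar\alpha$ is virtual, the relations~(2),(3) are in force at $i$ in their plain form, so the only monomials surviving along the $g$-cycle of $\alpha$ are its proper initial submonomials plus the full cycle $B_\alpha$ itself (which by Proposition~\ref{prop:2.6}(3), yet to be proved, coincides up to scalar with $B_{\bar\alpha}$), and likewise one keeps the proper initial submonomials of $B_{\bar\alpha}$; one must check that $B_\alpha$ and $B_{\bar\alpha}$ are not separately needed, i.e.\ $c_\alpha B_\alpha = c_{\bar\alpha}B_{\bar\alpha}$, which is exactly part~(3). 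In case~(1), $\alpha$ virtual means $m_\alpha n_\alpha = 2$; then the exceptional clauses in Definition~\ref{def:2.3}(2),(3) apply to arrows ending/starting near $i$, so an extra monomial $\bar\alpha f(\bar\alpha)$ (which in case~(2) would be a proper initial submonomial of $B_{\bar\alpha}$, but here needs separate listing because $B_\alpha$ is short) persists, and one checks using Assumption~\ref{ass} (which forces $m_{\bar\alpha}n_{\bar\alpha}\ge 3$, resp.\ $\ge 4$) that no further collapse occurs. Part~(3) itself I would prove by repeatedly applying relation~(1): starting from $\alpha f(\alpha)f^2(\alpha)$, rewrite $\alpha f(\alpha)$ as $c_{\bar\alpha}A_{\bar\alpha}$, so $\alpha f(\alpha)f^2(\alpha) = c_{\bar\alpha}A_{\bar\alpha}f^2(\alpha)$; since $f^2(\alpha) = g^{n_\alpha-1}(\bar\alpha)$ (the identity recalled just before Definition~\ref{def:2.3}) and $A_{\bar\alpha} = \bar\alpha g(\bar\alpha)\cdots g^{m_{\bar\alpha}n_{\bar\alpha}-2}(\bar\alpha)$, one recognizes $A_{\bar\alpha}g^{n_{\bar\alpha}-1}(\bar\alpha) = B_{\bar\alpha}$, giving $\alpha f(\alpha)f^2(\alpha) = c_{\bar\alpha}B_{\bar\alpha}$; the symmetric computation starting from the other arrow gives $c_\alpha B_\alpha$, and the two agree. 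That this common element generates the socle follows once the basis is established, since it is the unique longest monomial, annihilated by every arrow by relations~(2),(3) (or by length, using part~(1) of Theorem~\ref{thm:2.4} that $\Lambda$ is finite-dimensional).

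\medskip

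\textbf{Linear independence and the main obstacle.} Spanning is essentially bookkeeping; the real content is that the listed monomials are linearly independent in $\Lambda$, i.e.\ that the rewriting above introduces no hidden relations. The cleanest route is to invoke the dimension formula $\dim_K\Lambda = \sum_{\mathcal O\in\mathcal O(g)} m_{\mathcal O}n_{\mathcal O}^2$ from Theorem~\ref{thm:2.4}(1): I would count $|\mathcal B_i|$ over all vertices $i$ and check the total equals this sum, which forces each $\mathcal B_i$ to be a basis. In case~(2), $|\mathcal B_i| = (m_\alpha n_\alpha - 1) + (m_{\bar\alpha}n_{\bar\alpha}-1) + 2 = m_\alpha n_\alpha + m_{\bar\alpha}n_{\bar\alpha}$ (proper initial submonomials of $B_\alpha$ number $m_\alpha n_\alpha - 1$, those of $B_{\bar\alpha}$ likewise, plus $e_i$ and $B_\alpha$), and summing each $m_{\mathcal O}n_{\mathcal O}$ once per vertex it meets — a $g$-orbit of length $n_{\mathcal O}$ passes through $n_{\mathcal O}$ vertices — yields $\sum m_{\mathcal O}n_{\mathcal O}^2$; case~(1) contributes the same count because the short cycle loses initial submonomials but gains the extra element $\bar\alpha f(\bar\alpha)$ and the bookkeeping balances. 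The main obstacle is precisely making this counting airtight across the virtual/non-virtual boundary: one must verify that no monomial is counted at two vertices and that the exceptional relations in Definition~\ref{def:2.3} remove exactly the monomials the count assumes are absent — in particular that in the virtual case the relations~(1) do not force $\bar\alpha f(\bar\alpha)$ into the span of shorter monomials, which is where Assumption~\ref{ass} is indispensable. Alternatively, if one prefers not to rely on the dimension formula, linear independence can be obtained by constructing explicitly the symmetrizing bilinear form pairing each $\mathcal B_i$-monomial with its complementary monomial to the socle element, but this duplicates the harder half of the proof in \cite{ES5}, so I would cite Theorem~\ref{thm:2.4}(1) and proceed by the count.
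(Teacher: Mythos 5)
The paper itself contains no proof of this proposition: it is recalled verbatim from \cite[Lemma 4.7]{ES5}, so there is no in-text argument to compare yours against, only the citation. Judged on its own terms, your outline has the right overall shape: the reduction of arbitrary paths via the commutativity relations (1) combined with the zero relations (2),(3); the computation $\alpha f(\alpha)f^2(\alpha)=c_{\bar{\alpha}}A_{\bar{\alpha}}f^2(\alpha)=c_{\bar{\alpha}}B_{\bar{\alpha}}$ using $f^2(\alpha)=g^{-1}(\bar{\alpha})$; and the per-vertex count $|\mathcal{B}_i|=m_\alpha n_\alpha+m_{\bar{\alpha}}n_{\bar{\alpha}}$ in both cases, which sums over vertices to $\sum_{\mathcal{O}}m_{\mathcal{O}}n_{\mathcal{O}}^2$, are all correct.

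The genuine gap is the linear-independence step. You rest it on the dimension formula of Theorem \ref{thm:2.4}(1), but that formula is not an independent input: in \cite{ES5} it is obtained precisely by counting the basis elements of the very lemma you are proving, so the argument is circular as written. The non-circular route, which you mention and then set aside, cannot be avoided: one must either run a genuine normal-form argument on the rewriting system or prove directly that each $B_\alpha$ is a nonzero element of the socle and pair every monomial in $\mathcal{B}_i$ with a complementary one. This connects to a second soft spot: the socle claim in (3) is not simply "by relations (2),(3)". The product $B_\alpha\bar{\alpha}$ ends in $g^{-2}(\alpha)\,g^{-1}(\alpha)\,f(g^{-1}(\alpha))$ and so dies by a relation of type (3) only when that relation is actually imposed, i.e.\ outside the virtual exceptions; while $B_\alpha\alpha$ ends in $f^2(\alpha)\,f(f^2(\alpha))$, which relation (1) first converts into a longer path along another $g$-cycle before anything vanishes. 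Handling these, and verifying that the exceptional clauses of Definition \ref{def:2.3}(2),(3) remove exactly the monomials your count assumes absent (this is where Assumption \ref{ass} is genuinely used, e.g.\ to see that in case (1) the element $\bar{\alpha}f(\bar{\alpha})=c_\alpha A_\alpha=c_\alpha\alpha$ does not collapse into the span of the $g$-cycle monomials), is the actual content of \cite[Lemma 4.7]{ES5}. Until that case analysis is carried out, the proposal is a plan rather than a proof.
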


\begin{exmp}\label{ex:2.7} Following \cite{ES8}, by a \emph{$2$-triangle disc} we mean the unit disc $D=D^2$ in $\mathbb{R}^2$ with the 
triangulation $T$ given by two triangles 
$$\begin{tikzpicture}
\draw[thick] (0,-1)--(0,1); 
\draw[thick](0,0) circle (1);
\node() at (0,0){$\bullet$};
\node() at (0,-1){$\bullet$}; 
\node() at (0,1){$\bullet$}; 
\node() at (1.2,0){$3$};
\node() at (-1.2,0){$1$};
\node() at (-0.2,0.5){$2$};
\node() at (-0.2,-0.5){$4$};
\end{tikzpicture}$$
with $1$ and $3$ boundary edges, and the coherent orientation $\overrightarrow{T}$: $(1\mbox{ }2\mbox{ }4)$ and $(2\mbox{ }3\mbox{ }4)$ 
of these two triangles. Then the associated triangulation quiver $(Q(D,\overrightarrow{T}),f)$ is the following quiver 

$$\qquad\qquad\qquad\xymatrix@R=0.01cm@C=1.5cm{
&&2\ar[rddd]^{\beta}\ar@<-0.1cm>[dddddd]_{\xi}&& \\
&&&& \\
&\ar@(ul,dl)[d]_{\rho}&&\ar@(ur,dr)[d]^{\gamma} & \\ 
&1\ar[ruuu]^{\alpha}&&3\ar[lddd]^{\nu}&
&&&& \\
&&&& \\ 
&&&& \\
&&4\ar[luuu]^{\delta}\ar@<-0.1cm>[uuuuuu]_{\mu}&&}$$ 
with $f$-orbits $(\alpha\mbox{ }\xi\mbox{ }\delta)$, $(\beta\mbox{ }\nu\mbox{ }\mu)$, $(\rho)$ and $(\gamma)$. We mention that 
the associated permutation $g=\bar{f}$ has two orbits $\mathcal{O}(\alpha)=
(\alpha\mbox{ }\beta\mbox{ }\gamma\mbox{ }\nu\mbox{ }\delta\mbox{ }\rho)$ and $\mathcal{O}(\xi)=(\xi\mbox{ }\mu)$. 

Let $m\in\mathbb{N}^*$ and $\lambda\in K^*$. Consider the weight function $m_\bullet:\mathcal{O}(g)\to\mathbb{N}^*$ and the 
parameter function $c_\bullet:\mathcal{O}(g)\to K^*$ such that $m_{\mathcal{O}(\alpha)}=m$, $m_{\mathcal{O}(\xi)}=1$, 
$c_{\mathcal{O}(\alpha)}=\lambda$, $c_{\mathcal{O}(\xi)}=1$, and the associated weighted triangulation algebra 
$$D(m,\lambda)=\Lambda(D,\overrightarrow{T},m_\bullet,c_\bullet)=\Lambda(Q(D,\overrightarrow{T}),f,m_\bullet,c_\bullet),$$ 
which we call the \emph{disc algebra of degree $m$}. 

We note that $\xi$ and $\mu$ are virtual arrows, and $\xi=\beta\nu$ and $\mu=\delta\alpha$ in $D(m,\lambda)$, so 
the disc algebra $D(m,\lambda)$ is given by its Gabriel quiver 
$$\qquad\qquad\qquad\xymatrix@R=0.01cm@C=1.5cm{
&&2\ar[rddd]^{\beta}&& \\
&&&& \\
&\ar@(ul,dl)[d]_{\rho}&&\ar@(ur,dr)[d]^{\gamma} & \\ 
&1\ar[ruuu]^{\alpha}&&3\ar[lddd]^{\nu}&
&&&& \\
&&&& \\ 
&&&& \\
&&4\ar[luuu]^{\delta}&&}$$
and the relations 
$$\alpha\beta\nu=\lambda(\rho\alpha\beta\gamma\nu\delta)^{m-1}\rho\alpha\beta\gamma\nu,\mbox{ }\beta\nu\delta= 
\lambda(\beta\gamma\nu\delta\rho\alpha)^{m-1}\beta\gamma\nu\delta\rho,$$
$$\delta\alpha\beta=\lambda(\delta\rho\alpha\beta\gamma\nu)^{m-1}\delta\rho\alpha\beta\gamma,\mbox{ }\nu\delta\alpha= 
\lambda(\gamma\nu\delta\rho\alpha\beta)^{m-1}\gamma\nu\delta\rho\alpha,$$
$$\rho^2=\lambda(\alpha\beta\gamma\nu\delta\rho)^{m-1}\alpha\beta\gamma\nu\delta,\mbox{ }\gamma^2= 
\lambda(\nu\delta\rho\alpha\beta\gamma)^{m-1}\nu\delta\rho\alpha\beta,$$
$$\alpha\beta\nu\delta\alpha=0,\mbox{ } \beta\nu\delta\rho=0,\mbox{ } \rho^2\alpha=0,$$
$$\nu\delta\alpha\beta\nu=0,\mbox{ } \delta\alpha\beta\gamma=0,\mbox{ }\gamma^2\nu=0,$$
$$\beta\nu\delta\alpha\beta=0,\mbox{ } \rho\alpha\beta\nu=0,\mbox{ } \delta\rho^2=0,$$
$$\delta\alpha\beta\nu\delta=0,\mbox{ }\gamma\nu\delta\alpha=0,\mbox{ }\beta\gamma^2=0.$$

Observe also that we have no zero relations of the forms: 
$$\delta\alpha\beta=\delta f(\delta)g(f(\delta))=0,\quad\mbox{because }f^2(\delta)=\xi\mbox{ is virtual,}$$
$$\beta\nu\delta=\beta f(\beta) g(f(\beta))=0,\quad\mbox{because }f^2(\beta)=\mu \mbox{ is virtual,}$$
$$\alpha\beta\nu=\alpha g(\alpha) f(g(\alpha))=0,\quad\mbox{because }f(\alpha)=\xi\mbox{ is virtual, and }$$
$$\nu\delta\alpha=\nu g(\nu) f(g(\nu))=0,\quad\mbox{because }f(\nu)=\mu\mbox{ is virtual.}$$

Moreover, $\dim_K D(m,\lambda)=m\cdot 6^2+2^2=36m+4$ and, according to Proposition \ref{prop:2.6}, $D(m,\lambda)$ admits basis 
$\mathcal{B}=\mathcal{B}_1\cup \mathcal{B}_2\cup \mathcal{B}_3\cup \mathcal{B}_4$, where 
$$\mathcal{B}_1=\left\{(\alpha\beta\gamma\nu\delta\rho)^r\alpha,(\alpha\beta\gamma\nu\delta\rho)^r\alpha\beta,(\alpha\beta\gamma\nu\delta\rho)^r\alpha\beta\gamma,(\alpha\beta\gamma\nu\delta\rho)^r\alpha\beta\gamma\nu,\right.$$ 
$$\left.(\alpha\beta\gamma\nu\delta\rho)^r\alpha\beta\gamma\nu\delta;\,0\leqslant r\leqslant m-1\right\}\cup
\left\{(\rho\alpha\beta\gamma\nu\delta)^r\rho,(\rho\alpha\beta\gamma\nu\delta)^r\rho\alpha\right.,$$
$$\left.(\rho\alpha\beta\gamma\nu\delta)^r\rho\alpha\beta,(\rho\alpha\beta\gamma\nu\delta)^r\rho\alpha\beta\gamma,(\rho\alpha\beta\gamma\nu\delta)^r\rho\alpha\beta\gamma\nu;\, 0\leqslant r\leqslant m-1\right\}$$
$$\cup\left\{e_1,(\alpha\beta\gamma\nu\delta\rho)^s,(\rho\alpha\beta\gamma\nu\delta)^t;\, 1\leqslant s\leqslant m, 1\leqslant t\leqslant m-1\right\},$$ \smallskip
$$\mathcal{B}_2=\left\{(\beta\gamma\nu\delta\rho\alpha)^r\beta,(\beta\gamma\nu\delta\rho\alpha)^r\beta\gamma,
(\beta\gamma\nu\delta\rho\alpha)^r\beta\gamma\nu,(\beta\gamma\nu\delta\rho\alpha)^r\beta\gamma\nu\delta,\right.$$
$$\left.(\beta\gamma\nu\delta\rho\alpha)^r\beta\gamma\nu\delta\rho;\,0\leqslant r\leqslant m-1 \right\}
\cup\left\{e_2,\beta\nu,(\beta\gamma\nu\delta\rho\alpha)^s;\,1\leqslant s\leqslant m\right\},$$ \smallskip
$$\mathcal{B}_3=\left\{(\nu\delta\rho\alpha\beta\gamma)^t\nu,(\nu\delta\rho\alpha\beta\gamma)^t\nu\delta, 
(\nu\delta\rho\alpha\beta\gamma)^t\nu\delta\rho,(\nu\delta\rho\alpha\beta\gamma)^t\nu\delta\rho\alpha,\right.$$
$$\left.(\nu\delta\rho\alpha\beta\gamma)^t\nu\delta\rho\alpha\beta;\,0\leqslant r\leqslant m-1\right\}\cup 
\left\{(\gamma\nu\delta\rho\alpha\beta)^t\gamma,(\gamma\nu\delta\rho\alpha\beta)^t\gamma\nu, \right.$$
$$\left.(\gamma\nu\delta\rho\alpha\beta)^t\gamma\nu\delta,(\gamma\nu\delta\rho\alpha\beta)^t\gamma\nu\delta\rho,(\gamma\nu\delta\rho\alpha\beta)^t\gamma\nu\delta\rho\alpha;\, 0\leqslant r\leqslant m-1 \right\}$$
$$\cup\left\{e_3,(\nu\delta\rho\alpha\beta\gamma)^s,(\gamma\nu\delta\rho\alpha\beta)^t;\, 1\leqslant s\leqslant m, 1\leqslant t\leqslant m-1\right\},$$ \smallskip
$$\mathcal{B}_4=\left\{(\delta\rho\alpha\beta\gamma\nu)^r\delta,(\delta\rho\alpha\beta\gamma\nu)^r\delta\rho, 
(\delta\rho\alpha\beta\gamma\nu)^r\delta\rho\alpha,(\delta\rho\alpha\beta\gamma\nu)^r\delta\rho\alpha\beta, 
\right.$$
$$\left.(\delta\rho\alpha\beta\gamma\nu)^r\delta\rho\alpha\beta\gamma;\,0\leqslant r\leqslant m-1\right\}\cup
\left\{e_4,\delta\alpha,(\delta\rho\alpha\beta\gamma\nu)^s;\, 1\leqslant s\leqslant m \right\}.$$
In particular, the Cartan matrix $C_{D(m,\lambda)}$ of $D(m,\lambda)$ is of the form
$$\left[\begin{array}{cccc} 4m & 2m & 4m & 2m \\ 2m & m+1 & 2m & m+1 \\ 4m & 2m & 4m & 2m \\ 2m & m+1 & 2m & m+1
\end{array}\right].$$ \end{exmp}

\section{Virtual mutations of weighted surface algebras}\label{sec:3}

The aim of this section is to introduce the main objective of the paper, namely the virtual mutations of weighted triangulation 
(surface) algebras, and describe their linear bases. In particular, we will show that these are finite-dimensional algebras and 
determine their dimensions. 

Let $(Q,f)$ be a triangulation quiver, $Q=(Q_0,Q_1,s,t)$, $g=\bar{f}$ the associated permutation of $Q_1$, and $\mathcal{O}(g)$ 
the set of all $g$-orbits in $Q_1$. Moreover, let $m_\bullet:\mathcal{O}(g)\to\mathbb{N}^*$ be a weight function, $c_\bullet: 
\mathcal{O}(g)\to K^*$ a parameter function, and $\Lambda=\Lambda(Q,f,m_\bullet,c_\bullet)$ the associated weighted triangulation 
algebra. We keep the notation introduced in the previous section. We also assume that $|Q_0|\geqslant 4$ and that $\Lambda$ is 
not a singular spherical algebra, introduced in \cite[Example 3.6]{ES5}. 

Assume that $\mathcal{O}(g)$ contains a family $\mathcal{O}_1,\dots,\mathcal{O}_r$ of orbits with $|\mathcal{O}_i|=2$ and 
$m_{\mathcal{O}_i}=1$, for any $i\in\{1,\dots,r\}$ (we mention that $\mathcal{O}(g)$ may contain other virtual orbits 
besides the chosen ones). For a given element 
$$\xi=(\xi_1,\dots,\xi_r)\in\mathcal{O}_1\times\dots\times\mathcal{O}_r$$ 
we shall define a virtual mutation 
$$\Lambda(\xi)=\Lambda(Q,f,m_\bullet,c_\bullet,\xi)$$  
of $\Lambda$ with respect to the sequence $\xi$ of virtual arrows.

Observe that, for each $i\in\{1,\dots,r\}$, the triangulation quiver $(Q,f)$ contains a subquiver of the form 
$$\xymatrix@R=0.5cm@C=1.2cm{\ar[rd]^{\sigma_i}&&c_i\ar[rd]^{\beta_i}\ar@<-0.1cm>[dd]_{\xi_i}&& \\ 
&a_i\ar[ld]^{\rho_i}\ar[ru]^{\alpha_i}&&b_i\ar[ld]^{\nu_i}\ar[ru]^{\gamma_i} 
& \\ &&d_i\ar[lu]^{\delta_i}\ar@<-0.1cm>[uu]_{\mu_i}&&\ar[lu]^{\omega_i} }$$
with $f$-orbits $(\alpha_i\mbox{ }\xi_i\mbox{ }\delta_i)$ and $(\beta_i\mbox{ }\nu_i\mbox{ }\mu_i)$ and $f(\sigma_i)=\rho_i$, 
$f(\omega_i)=\gamma_i$, $g(\sigma_i)=\alpha_i$, $g(\alpha_i)=\beta_i$, $g(\beta_i)=\gamma_i$, $g(\omega_i)=\nu_i$, $g(\nu_i)=\delta_i$ and 
$g(\delta_i)=\rho_i$. We also note that, because of assumption $|Q_0|\geqslant 4$, the $g$-orbits 
$\mathcal{O}(\alpha_i)=(\alpha_i\mbox{ }\beta_i\mbox{ }\gamma_i\dots\sigma_i)$ 
and $\mathcal{O}(\nu_i)=(\nu_i\mbox{ }\delta_i\mbox{ }\rho_i\dots\omega_i)$ are of length at least $3$, and may coincide (as 
shown in Example \ref{ex:2.7}). But we have always $|\mathcal{O}(\alpha_i)|\geqslant 4$ or $|\mathcal{O}(\nu_i)|\geqslant 4$, since 
otherwise, due to $2$-regularity of $Q$, there is an $f$-orbit of length $2$, which is impossible. The special cases 
$|\mathcal{O}(\alpha_i)|=3$ or $|\mathcal{O}(\nu_i)|=3$ are as follows 
$$\xymatrix@R=1.5cm@C=2cm{&{}^{{}^{\dots}\searrow}\bullet{}^{\nearrow^{\dots}} \ar[rd]^{\omega_i}& \\
a_i\ar[ru]^{\rho_i}\ar[rd]^{\alpha_i}& &  b_i\ar[ll]_{\gamma_i=\sigma_i}\ar[ldd]^{\nu_i} \\ 
&c_i\ar[ru]^{\beta_i}\ar@<-0.07cm>[d]_{\xi_i}& \\ 
&d_i\ar@<-0.07cm>[u]_{\mu_i}\ar[luu]^{\delta_i}&}\qquad
\xymatrix@R=1.5cm@C=2cm{&c_i\ar[rdd]^{\beta_i}\ar@<-0.07cm>[d]_{\xi_i} & \\ 
& d_i\ar[ld]^{\delta_i}\ar@<-0.07cm>[u]_{\mu_i} \\ 
a_i\ar[ruu]^{\alpha_i}\ar[rr]_{\rho_i=\omega_i} & & \ar[ld]^{\gamma_i} b_i\ar[lu]^{\nu_i} \\ 
& \ar[lu]^{\sigma_i} {}_{{}_{\dots}\nearrow}\bullet_{\searrow_{\dots}} &}$$ 

Further, since the arrows $\xi_i$ and $\mu_i$ are virtual, we have in $\Lambda$ the equalities
$$\delta_i\alpha_i=c_{\mu_i}\mu_i\mbox{ and }\beta_i\nu_i=c_{\xi_i}\xi_i,\mbox{ with }c_{\xi_i}=c_{\mu_i}=c_{\mathcal{O}_i}.$$
Replacing $\xi_i$ by $c_{\xi_i}\xi_i$ and $\mu_i$ by $c_{\mu_i}\mu_i$, we may assume that $c_{\mathcal{O}_i}=1$, for all $i\in 
\{1,\dots,r\}$. We also note that in the presentation of $\Lambda$ by its Gabriel quiver $Q_\Lambda$ and the induced relations, 
the virtual arrows $\xi_i$ and $\mu_i$, $i\in\{1,\dots,r\}$, are removed (as well as all other virtual arrows of $Q$). 

We will define the algebra $\Lambda(\xi)=\Lambda(Q,f,m_\bullet,c_\bullet,\xi)$ by a quiver $Q(\xi)$ and a set of relations, 
keeping triangle nature of the most of the relations defining $\Lambda$. There are also added some new types of the relations. 

The quiver $Q(\xi)=(Q(\xi)_0,Q(\xi)_1,s,t)$ is defined in the following way. We take $Q(\xi)_0=Q_0$ and the set $Q(\xi)_1$ of 
arrows is obtained from the set of arrows $Q_1$ by three types of operations:
\begin{itemize}
\item removing the virtual arrows $\xi_i$ and $\mu_i$, for $i\in\{1,\dots,r\}$, 

\item replacing the arrows $\xymatrix{a_i\ar[r]^{\alpha_i}&c_i}$ and $\xymatrix{c_i\ar[r]^{\beta_i}&b_i}$ by arrows 
$\xymatrix{c_i\ar[r]^{\alpha_i}&a_i}$ and $\xymatrix{b_i\ar[r]^{\beta_i}&c_i}$, for $i\in\{1,\dots,r\}$, 

\item adding the arrows $\xymatrix{a_i\ar[r]^{\tau_i}&b_i}$, for $i\in\{1,\dots,r\}$.
\end{itemize}
Therefore, for any $i\in\{1,\dots,r\}$, the quiver $Q(\xi)$ contains a subquiver of one of the forms:
\begin{enumerate}[(1)]
\item $$\xymatrix@R=0.5cm@C=1.2cm{\ar[rd]^{\sigma_i}&&c_i\ar[ld]_{\alpha_i}&& \\ 
&a_i\ar[rr]_{\tau_i}\ar[ld]^{\rho_i}&&b_i\ar[ld]^{\nu_i}\ar[ru]^{\gamma_i}\ar[lu]_{\beta_i} & \\ 
&&d_i\ar[lu]^{\delta_i}&&\ar[lu]^{\omega_i} }$$ 
if $|\mathcal{O}(\alpha_i)|\geqslant 4$ and $|\mathcal{O}(\nu_i)|\geqslant 4$, 

\item $$\xymatrix@R=1.5cm@C=2cm{&\bullet \ar[rd]^{\omega_i}& \\
a_i\ar[ru]^{\rho_i}\ar@<-0.1cm>[rr]_{\tau_i}& &  b_i\ar@<-0.1cm>[ll]_{\gamma_i=\sigma_i}\ar[ldd]^{\nu_i}\ar[ld]_{\beta_i}\\ 
&c_i\ar[lu]_{\alpha_i}& \\ 
&d_i\ar[luu]^{\delta_i}&}$$ 
if $|\mathcal{O}(\alpha_i)|=3$, and 

\item $$\xymatrix@R=1.5cm@C=2cm{&c_i\ar[ldd]_{\alpha_i} & \\ 
& d_i\ar[ld]^{\delta_i} \\ 
a_i\ar@<0.1cm>[rr]^{\tau_i}\ar@<-0.1cm>[rr]_{\rho_i=\omega_i} & & \ar[ld]^{\gamma_i} 
b_i\ar[luu]_{\beta_i}\ar[lu]^{\nu_i} \\ 
& \ar[lu]^{\sigma_i} \bullet &}$$ 
for $|\mathcal{O}(\nu_i)|=3$.
\end{enumerate}

We consider also the quiver $Q(\xi)^*$ obtained from $Q(\xi)$ by removing all vertices $c_i$, $i\in\{1,\dots,r\}$, and the arrows 
attached to them (that is the arrows $\alpha_i,\beta_i$). Note that every vertex of $Q(\xi)^*$ except $d_i$, $i\in\{1,\dots,r\}$, 
is $2$-regular. Consequently, for each arrow $\eta$ of $Q(\xi)^*$ different from $\delta_i$, $i\in\{1,\dots,r\}$, there is the 
second arrow $\tilde{\eta}$ with $s(\tilde{\eta})=s(\eta)$. Setting $\tilde{\delta_i}:=\delta_i$, for each $i\in\{1,\dots,r\}$, 
we obtain an involution $\tilde{\mbox{ }}:Q(\xi)^*_1\to Q(\xi)^*_1$. 

The quiver $Q(\xi)^*$ also admits a triangulation-like structure given by the permutation $f^*:Q(\xi)^*_1\to Q(\xi)^*_1$ 
such that $s(f^*(\eta))=t(\eta)$, for each arrow $\eta\in Q(\xi)^*_1$, and $(f^*)^3$ is the identity on the set of arrows. 
Indeed, for any arrow $\eta\in Q(\xi)^*_1$ different from $\nu_i,\delta_i,\tau_i$, $i\in\{1,\dots,r\}$, we set 
$f^*(\eta)=f(\eta)$, whereas for any $i\in\{1,\dots,r\}$, we put $f^*(\nu_i)=\delta_i$, $f^*(\delta_i)=\tau_i$ and 
$f^*(\tau_i)=\nu_i$. Composing $f^*$ with the involution $\tilde{\mbox{ }}:Q(\xi)^*_1\to Q(\xi)^*_1$, we obtain also the 
permutation $g^*=\widetilde{f^*}:Q(\xi)^*_1\to Q(\xi)^*_1$ on the set of arrows of $Q(\xi)^*$.   

For each arrow $\eta\in Q(\xi)^*_1$, we denote by $\mathcal{O}^*(\eta)$ the $g^*$-orbit of $\eta$ in $Q(\xi)^*_1$, and put 
$n^*_\eta=|\mathcal{O}^*(\eta)|$. By $\mathcal{O}(g^*)$ we denote the set of all $g^*$-orbits in $Q(\xi)^*_1$. We note that for any arrow 
$\eta\in Q_1$ with $\mathcal{O}(\eta)$ different from $\mathcal{O}(\alpha_1),\dots,\mathcal{O}(\alpha_r)$, we have 
$\mathcal{O}^*(\eta)=\mathcal{O}(\eta)$. 

The following lemma describes the orbits in $\mathcal{O}(g^*)$ without arrows in $Q_1$. 

\begin{lem}\label{lem:3.1} Let $\mathcal{O}^*$ be an orbit in $\mathcal{O}(g^*)$. The following statements are equivalent.
\begin{enumerate}[$(1)$]
\item $\mathcal{O}^*$ does not contain an arrow $\eta\in Q_1$.

\item $(Q,f)$ is of the form
$$\xymatrix@R=0.5cm@C=0.25cm{&c_1\ar[rd]^{\beta_1}\ar@<-0.1cm>[dd]_{\xi_1}& &c_2\ar[rd]^{\beta_2}
\ar@<-0.1cm>[dd]_{\xi_2}& &\dots\ar[rd]& &c_r\ar[rd]^{\beta_r}\ar@<-0.1cm>[dd]_{\xi_r}& \\
\quad a_1\quad \ar[ru]^{\alpha_1}&&\mbox{ }b_1=a_2\mbox{ }\ar[ld]^{\nu_1}\ar[ru]^{\alpha_2}&&\mbox{ }b_2=a_3\mbox{ }\ar[ld]^{\nu_2} 
\ar[ru]& \dots &\mbox{ }b_{r-1}=a_r\ar[ld] \ar[ru]^{\alpha_r}&&\mbox{ }\mbox{ }b_r=a_1\ar[ld]^{\nu_r} \\
&d_1\ar[lu]^{\delta_1}\ar@<-0.1cm>[uu]_{\mu_1}& &d_2\ar[lu]^{\delta_2}\ar@<-0.1cm>[uu]_{\mu_2}&&\dots
\ar[lu]& &d_r\ar[lu]^{\delta_r}\ar@<-0.1cm>[uu]_{\mu_r}&} $$ 
with $r\geqslant 2$, the $f$-orbits $(\alpha_i\mbox{ }\xi_i\mbox{ }\delta_i)$ and $(\beta_i\mbox{ }\nu_i\mbox{ }\mu_i)$, 
$i\in\{1,\dots,r\}$, the $g$-orbits $\mathcal{O}(\alpha_1)=
(\alpha_1\mbox{ }\beta_1\mbox{ }\alpha_2\mbox{ }\beta_2\dots\alpha_r\mbox{ }\beta_r)$, $\mathcal{O}(\nu_r)=
(\nu_r\mbox{ }\delta_r\dots\nu_2\mbox{ }\delta_2\mbox{ }\nu_1\mbox{ }\delta_1)$ and $\mathcal{O}(\xi_i)=(\xi_i\mbox{ }\mu_i)=
\mathcal{O}(\mu_i)$, $i\in\{1,\dots,r\}$, $m_{\mathcal{O}(\xi_i)}=1$, for any $i\in\{1,\dots,r\}$, and $\xi=(\xi_1,\dots,\xi_r)$.

\item $Q(\xi)$ is of the form
$$\xymatrix@R=0.5cm@C=0.4cm{&c_1\ar[ld]_{\alpha_1}& &c_2\ar[ld]_{\alpha_2}
& &\dots\ar[ld]& &c_r\ar[ld]_{\alpha_r}& \\
\quad a_1\ar[rr]^{\tau_1}&&b_1=a_2\ar[ld]^{\nu_1}\ar[lu]_{\beta_1}\ar[rr]^{\tau_2}&&b_2=a_3\ar[ld]^{\nu_2}
\ar[lu]_{\beta_2} & \dots &b_{r-1}=a_r\ar[rr]^{\tau_r}\ar[ld]\ar[lu] &&b_r=a_1\ar[ld]^{\nu_r}\ar[lu]_{\beta_r} \\
&d_1\ar[lu]^{\delta_1}& &d_2\ar[lu]^{\delta_2}&&\dots
\ar[lu]& &d_r\ar[lu]^{\delta_r}&} $$
where $r\geqslant 2$ and $\mathcal{O}^*=\mathcal{O}^*(\tau_1)=(\tau_1\mbox{ }\tau_2\dots\tau_r)$. 
\end{enumerate}
\end{lem}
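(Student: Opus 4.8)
The plan is to prove the equivalence $(1)\Leftrightarrow(2)\Leftrightarrow(3)$ by a direct combinatorial analysis of the permutation $g^*$ on the arrows of $Q(\xi)^*$. First I would unwind what it means for a $g^*$-orbit $\mathcal{O}^*$ to contain no arrow of $Q_1$. Recall that $Q(\xi)^*_1$ is obtained from $Q_1$ by deleting the virtual arrows $\xi_i,\mu_i$ and the arrows $\alpha_i,\beta_i$ (since the vertices $c_i$ are removed), and by adjoining the new arrows $\tau_i$. Hence the only arrows of $Q(\xi)^*_1$ that do not lie in $Q_1$ are exactly $\tau_1,\dots,\tau_r$. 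So condition $(1)$ says precisely that $\mathcal{O}^*$ is contained in $\{\tau_1,\dots,\tau_r\}$.

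Next I would compute the $g^*$-orbit of a given $\tau_i$ using the definition $g^*=\widetilde{f^*}$, together with $f^*(\delta_i)=\tau_i$, $f^*(\tau_i)=\nu_i$, $f^*(\nu_i)=\delta_i$. Since $\tilde{\tau_i}$ is by definition the second arrow out of $a_i=s(\tau_i)$ in $Q(\xi)^*$, I need to identify that arrow. In the generic case (subquiver (1) of the definition of $Q(\xi)$) the two arrows out of $a_i$ are $\tau_i$ and $\rho_i$; in the degenerate cases (2) and (3) one has to read off the picture likewise. The key computation is then $g^*(\tau_i)=\widetilde{f^*(\tau_i)}=\tilde{\nu_i}$, and one must check when $\tilde{\nu_i}$ is again some $\tau_j$. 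The second arrow out of $s(\nu_i)=b_i$ is $\beta_i$ in $Q$, but $\beta_i$ has been deleted; in $Q(\xi)^*$ the arrows out of $b_i$ are $\nu_i$ together with whichever arrow replaces the role of $\beta_i$. Tracing this, $g^*(\tau_i)$ lands on $\tau_j$ precisely when the target $b_i$ of $\tau_i$ equals the source $a_j$ of some $\tau_j$, i.e.\ when $b_i=a_j$; and for the orbit to \emph{close up} entirely inside $\{\tau_1,\dots,\tau_r\}$ one needs the $a_i$'s and $b_i$'s to be linked cyclically, $b_1=a_2$, $b_2=a_3,\dots,b_r=a_1$, which forces exactly the cyclic configuration drawn in $(2)$ and $(3)$. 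Conversely, if $(Q,f)$ has that form, I would verify directly from the formulas for $f^*$ and $\tilde{\ }$ that $g^*(\tau_i)=\tau_{i+1}$ (indices mod $r$), so $\mathcal{O}^*(\tau_1)=(\tau_1\,\tau_2\dots\tau_r)$ is a single orbit avoiding $Q_1$, and this is the unique such orbit; simultaneously one reads off that $(Q,f)$ must look like the displayed quiver and that the $g$- and $f$-orbits are forced, giving $(2)$, and that $Q(\xi)$ then takes the displayed "necklace" form, giving $(3)$. I would organize this as: $(2)\Rightarrow(3)$ (apply the mutation recipe to the explicit quiver), $(3)\Rightarrow(1)$ (read the orbit off the picture), $(1)\Rightarrow(2)$ (the closing-up argument above).

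The main obstacle I anticipate is the case analysis around the three shapes of $Q(\xi)$ near each mutated edge, especially the degenerate cases $|\mathcal{O}(\alpha_i)|=3$ and $|\mathcal{O}(\nu_i)|=3$, where some of the vertices $a_i,b_i,c_i,d_i$ and some of the arrows $\sigma_i,\gamma_i,\rho_i,\omega_i$ coincide; one must check that the identification of $\tilde{\nu_i}$ and hence of $g^*(\tau_i)$ still behaves as claimed, and that these degenerate local pictures are in fact incompatible with the global cyclic configuration of $(2)$ unless they degenerate further in a consistent way — so part of the work is ruling out spurious hybrid configurations. A secondary point to handle carefully is that a priori the chain $b_1=a_2,\dots$ could "close up early" (e.g.\ $b_i=a_j$ for $j\le i$), and one must argue, using that the $\mathcal{O}_i$ are pairwise distinct orbits and $|Q_0|\ge4$, that the only way to get an orbit disjoint from $Q_1$ is the full cycle of length $r$ through all of $\tau_1,\dots,\tau_r$; here the constraint that $g^*$ on arrows of $Q_1$ outside $\mathcal{O}(\alpha_1),\dots,\mathcal{O}(\alpha_r)$ agrees with $g$, hence eventually feeds back into some $\delta_j$ and then some arrow of $Q_1$, is what prevents a shorter closed orbit unless every "exit" is blocked, i.e.\ unless every $b_i$ is some $a_j$.
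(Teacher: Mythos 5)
Your proposal is correct and follows essentially the same route as the paper, whose own proof is only a one-sentence sketch of exactly this argument: condition $(1)$ forces $\mathcal{O}^*\subseteq\{\tau_1,\dots,\tau_r\}$, and iterating $g^*(\tau_i)=\widetilde{f^*(\tau_i)}=\tilde{\nu_i}$ shows the orbit stays among the $\tau_j$ only if each $b_i$ is identified with some $a_j$, which chains into the displayed necklace. The one step worth tightening is your mechanism for excluding a shorter closed sub-cycle: the clean reason is that a closed sub-cycle makes the corresponding necklace a $2$-regular closed subquiver of $Q$, hence all of $Q$ by connectedness, and then the pairwise distinctness of $\mathcal{O}_1,\dots,\mathcal{O}_r$ forces every index $1,\dots,r$ to occur in it, rather than the ``feeds back into some $\delta_j$'' argument you sketch.
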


\begin{proof} If $\mathcal{O}^*$ does not contain an arrow from $Q_1$, then by definition of $Q(\xi)^*$ and $g^*$, 
$\mathcal{O}^*$ consists only of arrows of type $\tau_j$, and this yields that $(Q,f)$ and $Q(\xi)$ have shapes 
required in (2) and (3), respectively. The remaining implications are obvious from the definition of $g^*$. 
\end{proof}

\begin{rem}\label{remark} The triangulation quiver $(Q,f)$ occuring in the above lemma is the triangulation quiver 
$(Q(S^2,\overrightarrow{T(r)}),f)$ of the following triangulation $T(r)$ of the unit sphere $S^2$ in $\mathbb{R}^3$ 
$$\begin{tikzpicture}
\draw[thick](0,0) circle [x radius = 6, y radius = 2];
\draw[thick](0,0) circle [x radius = 4, y radius = 2];
\draw[thick](0,0) circle [x radius = 2, y radius = 2];
\draw[thick](0,0) circle [x radius = 1, y radius = 2];
\node() at (-4,0){$\bullet$};
\node() at (-1,0){$\bullet$};
\node() at (4,0){$\bullet$};
\node() at (0,2){$\bullet$};
\node() at (0,-2){$\bullet$};
\node() at (-6.3,0){$a_1$};
\node() at (-2.3,0){$a_2$};
\node() at (1.1,0){$a_3$ $\dots$};
\node() at (2.3,0){$a_r$};
\node() at (7,0){$a_{r+1}=a_1$};
\node() at (-3.7,1.2){$c_1$};
\node() at (-3.7,-1.2){$d_1$};
\node() at (-0.5,1){$c_2$};
\node() at (-0.5,-1){$d_2$};
\node() at (3.7,1.2){$c_r$};
\node() at (3.7,-1.2){$d_r$};

\end{tikzpicture}$$ 
with $r\geqslant 2$ and coherent orientation $\overrightarrow{T(r)}$ of triangles in $T(r)$: $(a_i\mbox{ }c_i\mbox{ }d_i)$ 
and $(c_i\mbox{ }a_{i+1}\mbox{ }d_i)$, $i\in\{1,\dots,r\}$ (see \cite[Example 7.5]{ES8}).
\end{rem}

We also have the following lemma.

\begin{lem}\label{lem:3.3} Let $\eta$ be an arrow in $Q(\xi)^*_1$. Then the following equivalences hold.
\begin{enumerate}[$(1)$]
\item $|\mathcal{O}^*(\eta)|=1$ if and only if $\eta\in Q_1$ and $|\mathcal{O}(\eta)|=1$. In this case, we also have 
$\mathcal{O}^*(\eta)=\mathcal{O}(\eta)$.

\item $|\mathcal{O}^*(\eta)|=2$ if and only if one of the cases holds: 
\begin{enumerate}[$(a)$]
\item $\eta\in Q_1$ with $|\mathcal{O}(\eta)|=2$;  

\item $\mathcal{O}^*(\eta)=\mathcal{O}^*(\tau_i)$ for some $i\in\{1,\dots,r\}$ with $|\mathcal{O}(\alpha_i)|=3$;

\item $\mathcal{O}^*(\eta)=(\tau_1\tau_2)$ for the case $r=2$ described in the previous lemma.\end{enumerate}
\end{enumerate}
\end{lem}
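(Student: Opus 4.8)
The plan is to prove Lemma \ref{lem:3.3} by analyzing how the permutation $g^*$ acts on $g^*$-orbits, carefully separating the arrows inherited from $Q_1$ from the newly introduced arrows $\tau_i$, $i\in\{1,\dots,r\}$. The key structural fact, established just before the lemma, is that for any arrow $\eta\in Q_1$ whose $g$-orbit is different from $\mathcal{O}(\alpha_1),\dots,\mathcal{O}(\alpha_r)$ we have $\mathcal{O}^*(\eta)=\mathcal{O}(\eta)$, so the action of $g^*$ on such orbits is identical to that of $g$. Thus the only orbits that can change are $\mathcal{O}(\alpha_i)$ and the orbits involving $\tau_i$, and these are precisely the ones described via the local pictures (1)--(3) of the definition of $Q(\xi)$ (the cases $|\mathcal{O}(\alpha_i)|\geqslant 4$, $|\mathcal{O}(\alpha_i)|=3$, $|\mathcal{O}(\nu_i)|=3$). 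The heart of the argument is a case distinction according to whether $\eta\in Q_1$ or $\eta=\tau_i$ for some $i$.

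First I would handle part (1). If $\eta\in Q_1$ with $\mathcal{O}(\eta)$ not among the $\mathcal{O}(\alpha_j)$, then $\mathcal{O}^*(\eta)=\mathcal{O}(\eta)$ and the claim is immediate. If $\mathcal{O}(\eta)=\mathcal{O}(\alpha_j)$ for some $j$, then since $|Q_0|\geqslant 4$ forces $|\mathcal{O}(\alpha_j)|\geqslant 3$, and the new orbit $\mathcal{O}^*(\tau_j)$ obtained after the mutation still has length $\geqslant 2$ (it contains at least $\tau_j$ together with the surviving arrows $\nu_j,\delta_j$, or merges with other $\tau$'s), one checks from pictures (1)--(3) that $|\mathcal{O}^*(\eta)|\geqslant 2$. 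Finally, if $\eta=\tau_i$, then by definition $f^*(\delta_i)=\tau_i$ and $f^*(\tau_i)=\nu_i$, so $g^*(\tau_i)=\widetilde{f^*(\tau_i)}=\widetilde{\nu_i}=\delta_i$ (using that at the vertex $b_i=s(\nu_i)=s(\tau_i)$ the two arrows are $\nu_i$ and $\tau_i$, hence $\widetilde{\tau_i}=\nu_i$ and the relevant computation gives $g^*(\delta_i)=\widetilde{f^*(\delta_i)}=\widetilde{\tau_i}=\nu_i$ only in degenerate situations) — so $\mathcal{O}^*(\tau_i)$ contains $\tau_i$ and $\delta_i$ and thus has length $\geqslant 2$. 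This proves that singleton $g^*$-orbits consist exactly of loops inherited from $Q_1$ with $|\mathcal{O}(\eta)|=1$, and $\mathcal{O}^*(\eta)=\mathcal{O}(\eta)$ for these, giving (1).

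For part (2), the ``if'' direction is a direct inspection: in case (a) the orbit is unchanged by the preliminary remark; in case (b), when $|\mathcal{O}(\alpha_i)|=3$ one reads off from picture (2) that the $g^*$-orbit of $\tau_i$ is exactly $(\tau_i\ \delta_i)$ or similar of length $2$, by tracing $g^*=\widetilde{f^*}$ through the three arrows $\nu_i,\delta_i,\tau_i$ and the (now length-$2$) orbit $\mathcal{O}(\alpha_i)$; in case (c), $r=2$ with the sphere picture of Remark \ref{remark}, one computes directly $g^*(\tau_1)=\tau_2$, $g^*(\tau_2)=\tau_1$. For the ``only if'' direction, suppose $|\mathcal{O}^*(\eta)|=2$. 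If $\eta\in Q_1$ with $\mathcal{O}(\eta)\neq\mathcal{O}(\alpha_j)$ for all $j$, then $|\mathcal{O}(\eta)|=2$ and we are in (a). If $\mathcal{O}(\eta)=\mathcal{O}(\alpha_j)$ or $\eta=\tau_j$, then $\mathcal{O}^*(\eta)=\mathcal{O}^*(\tau_j)$ and we must show $|\mathcal{O}^*(\tau_j)|=2$ forces either $|\mathcal{O}(\alpha_j)|=3$ or the sphere case $r=2$. This is where I would combine the length count $|\mathcal{O}^*(\tau_j)| = |\mathcal{O}(\alpha_j)| - 1 + (\text{contribution of }\nu_j,\delta_j\text{ and possible merging of }\tau\text{'s})$ with the observation that when $|\mathcal{O}(\alpha_j)|\geqslant 4$ and we are not in the total-sphere configuration, the orbit $\mathcal{O}^*(\tau_j)$ picks up $\nu_j,\delta_j$ and the remaining $\geqslant 2$ arrows of $\mathcal{O}(\alpha_j)$ other than $\alpha_j,\beta_j$, hence has length $\geqslant 3$; the only way to collapse to length $2$ is that $\mathcal{O}(\alpha_j)$ has exactly $3$ arrows, or that the configuration is the degenerate sphere $T(2)$ where all $\tau$-arrows form one orbit of length exactly $r=2$.

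The main obstacle will be the bookkeeping in the ``only if'' direction of (2): one has to rule out, via the $2$-regularity of $Q$ and Assumption \ref{ass}, all the ways a $g^*$-orbit through a $\tau_j$ could have length $2$ without $|\mathcal{O}(\alpha_j)|=3$, including the subtle possibility that several of the $\tau_i$ merge into a single orbit (handled by Lemma \ref{lem:3.1}, which identifies precisely when a $g^*$-orbit consists only of $\tau$'s — and that forces the sphere configuration with $\mathcal{O}^*(\tau_1)=(\tau_1\cdots\tau_r)$ of length $r$, so length $2$ means $r=2$). I would therefore invoke Lemma \ref{lem:3.1} directly to dispose of the ``pure-$\tau$'' orbits and then treat the ``mixed'' orbits by the explicit length computation above, using the local pictures (1)--(3). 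All of the remaining verifications are routine traces of the permutations $f^*$ and $g^*=\widetilde{f^*}$ through the finitely many local shapes, so I would not spell them out in full.
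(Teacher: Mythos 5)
Your overall plan --- reduce to the orbits meeting $\{\mathcal{O}(\alpha_1),\dots,\mathcal{O}(\alpha_r)\}$, observe that each such $g$-orbit is contracted by replacing subwords $\alpha_k\beta_k$ with $\tau_k$, dispose of the pure-$\tau$ orbits via Lemma \ref{lem:3.1}, and finish with a length count --- is exactly the intended ``straightforward from the definition'' verification, and the paper offers nothing more. But your concrete computation of $g^*$ at the new arrows is wrong, and since that computation is the entire content of the lemma, the argument as written does not stand. You assert $s(\tau_i)=b_i$ and $\tilde{\tau_i}=\nu_i$; in fact $\tau_i\colon a_i\to b_i$, so $s(\tau_i)=a_i$, the two arrows of $Q(\xi)^*_1$ with source $a_i$ are $\tau_i$ and $\rho_i$, and hence $\tilde{\tau_i}=\rho_i$ (this is used explicitly in the proof of Lemma \ref{lem:3.7}). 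Likewise $\tilde{\nu_i}$ is the second arrow with source $b_i$, namely $\gamma_i=g(\beta_i)$ (or $\tau_j$ when $\gamma_i=\alpha_j$), and certainly not $\delta_i$, whose source is $d_i$. Consequently $g^*(\sigma_i)=\tilde{\rho_i}=\tau_i$ and $g^*(\tau_i)=\tilde{\nu_i}=\gamma_i$ (or $\tau_j$), so $\mathcal{O}^*(\tau_i)$ is obtained from $\mathcal{O}(\alpha_i)=(\alpha_i\ \beta_i\ \gamma_i\dots\sigma_i)$ by contracting the pairs $\alpha_k\beta_k$; it does not contain $\nu_i$ or $\delta_i$ in general, since those lie in $\mathcal{O}^*(\nu_i)$, the contraction of $\mathcal{O}(\nu_i)$ (here $g^*(\nu_i)=\tilde{\delta_i}=\delta_i$ and $g^*(\delta_i)=\tilde{\tau_i}=\rho_i$).

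These errors propagate. In (1) your stated reason that $|\mathcal{O}^*(\tau_j)|\geqslant 2$ (``it contains $\tau_j$ together with $\nu_j,\delta_j$'') is false, though the conclusion survives because $g^*(\tau_j)$ is either $\gamma_j$ or some $\tau_k$ with $k\neq j$, never $\tau_j$ itself. In (2)(b) the length-two orbit is $(\tau_i\ \gamma_i)$ --- compare Lemma \ref{lem:3.6}, where $\tilde{\nu_i}=\gamma_i$ and $\mathcal{O}^*(\gamma_i)=\mathcal{O}^*(\tau_i)$ --- not ``$(\tau_i\ \delta_i)$ or similar''; note that if $\mathcal{O}^*(\tau_i)$ really contained $\tau_i,\nu_i,\delta_i$ it could never have length two, so your description is internally inconsistent with the case you are proving. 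With the corrected contraction one gets the clean count $|\mathcal{O}^*(\tau_j)|=|\mathcal{O}(\alpha_j)|-m_j$, where $m_j$ is the number of $k$ with $\mathcal{O}(\alpha_k)=\mathcal{O}(\alpha_j)$; if the orbit contains an arrow of $Q_1$ then $|\mathcal{O}(\alpha_j)|\geqslant 2m_j+1$, so the length is at least $m_j+1$ and equals $2$ exactly when $m_j=1$ and $|\mathcal{O}(\alpha_j)|=3$, while the pure-$\tau$ case is Lemma \ref{lem:3.1} and forces $r=2$. So the architecture of your argument is sound, but the bookkeeping of the involution $\tilde{\ }$ must be redone before the proof is correct.
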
 

\begin{proof} This is straightforward from the definition of $Q(\xi)^*$. \end{proof}

We define now two functions 
$$m^*_\bullet:\mathcal{O}(g^*)\to\mathbb{N}^*\mbox{ and }c^*_\bullet:\mathcal{O}(g^*)\to K^*$$ 
which assign to each orbit $\mathcal{O}^*$ in $\mathcal{O}(g^*)$ the elements 
$$m^*_{\mathcal{O}^*}=\left\{\begin{array}{cc}
m_{\mathcal{O}(\eta)}, & \mbox{if }\mathcal{O}^*=\mathcal{O}^*(\eta)\mbox{ for some arrow }\eta\in Q_1,\\
m_{\mathcal{O}(\alpha_1)}, & \mbox{otherwise (see Lemma \ref{lem:3.1})},
\end{array}\right.$$
$$c^*_{\mathcal{O}^*}=\left\{\begin{array}{cc}
c_{\mathcal{O}(\eta)}, & \mbox{if }\mathcal{O}^*=\mathcal{O}^*(\eta)\mbox{ for some arrow }\eta\in Q_1,\\
c_{\mathcal{O}(\alpha_1)}, &  \mbox{otherwise (see Lemma \ref{lem:3.1}).}
\end{array}\right.$$
Note that the two conditions exclude in both definitions. We abbreviate $m^*_\eta=m^*_{\mathcal{O}^*(\eta)}$ and 
$c^*_\eta=c^*_{\mathcal{O}^*(\eta)}$, for any arrow $\eta\in Q(\xi)^*_1$. Using assumptions imposed on $m_\bullet$ 
one can deduce from Lemma \ref{lem:3.1} that $m^*_\eta n^*_\eta \geqslant 2$, for any arrow $\eta\in Q(\xi)^*_1$. 
The description of all arrows $\eta\in Q(\xi)^*_1$ with $m^*_\eta n^*_\eta =2$ follows from Lemma \ref{lem:3.3}, and 
we call these arrows $\eta$ \emph{virtual arrows}. 

For each $\eta\in Q(\xi)^*_1$, we define the path $A^*_\eta$ as follows: 
\begin{itemize}
\item $A^*_\eta=(\eta g^*(\eta)\dots (g^*)^{n^*_\eta-1}(\eta))^{m^*_\eta-1}\eta 
g^*(\eta)\dots (g^*)^{n^*_\eta-2}(\eta)$, if $n^*_\eta\geqslant 2$,

\item $A^*_\eta=A_\eta=\eta^{m_\eta-1}$, if $n^*_\eta=1$ (equivalently, $\eta\in Q_1$ with $n_\eta=1$). 
\end{itemize}
Moreover, we set 
$$B^*_\eta=A^*_\eta(g^*)^{n^*_\eta-1}(\eta)=(\eta g^*(\eta)\dots (g^*)^{n^*_\eta-1}(\eta))^{m^*_\eta}.$$

For each $i\in\{1,\dots,r\}$, we denote by $C^*_{\delta_i}$ the subpath of $A^*_{\delta_i}$ such that 
$A^*_{\delta_i}=\delta_iC^*_{\delta_i}$, and let 
\begin{itemize}
\item $A^*_{\alpha_i}=\alpha_i C^*_{\delta_i}$, and 
\item $B^*_{\alpha_i}=A^*_{\alpha_i}\beta_i=\alpha_i C^*_{\delta_i}\beta_i$.
\end{itemize}
We note that $C^*_{\delta_i}$ is of length $\geqslant 1$, since $|\mathcal{O}(\delta_i)|\geqslant 3$. 

The definition of a virtual mutation of a weighted triangulation algebra is now as follows. 

\begin{df}\label{def:3.4} The algebra $\Lambda(\xi)=\Lambda(Q,f,m_\bullet,c_\bullet,\xi):=KQ(\xi)/I(\xi)$ is called a \emph{virtual 
mutation of a weighted triangulation algebra} $\Lambda=\Lambda(Q,f,m_\bullet,c_\bullet)$ with respect to a sequence 
$\xi=(\xi_1,\dots,\xi_r)$ of virtual arrows if $I(\xi)$ is the ideal of the path algebra $KQ(\xi)$ generated by the elements: 
\begin{enumerate}[(1)]
\item $\nu_i\delta_i-\beta_i\alpha_i-c_{\tilde{\nu_i}}^* A^*_{\tilde{\nu_i}}$, $\alpha_i\tau_i$ and $\tau_i\beta_i$, for all 
$i\in\{1,\dots,r\}$, 

\item $\eta f^*(\eta)-c^*_{\tilde{\eta}}A^*_{\tilde{\eta}}$, for all arrows $\eta\in Q(\xi)^*_1$ different from $\nu_i$, $i\in
\{1,\dots,r\}$,

\item $\eta f^*(\eta)g^*(f^*(\eta))$, for all arrows $\eta=\delta_i$, $i\in\{1,\dots,r\}$, and all arrows $\eta\in Q(\xi)^*_1\cap Q_1$ different 
from $\nu_i,\delta_i$, for $i\in\{1,\dots,r\}$, and unless $f^2(\eta)$ is virtual or unless $f(\bar{\eta})$ is virtual with $m_{\bar{\eta}}=1$ and 
$n_{\bar{\eta}}=3$, 

\item $\eta g^*(\eta)f^*(g^*(\eta))$ for all arrows $\eta\in Q(\xi)^*_1$ different from $\tau_i$ with $m_{\nu_i}=1$ and $n_{\nu_i}=3$, $\nu_i$, 
$(g^*)^{-1}(\nu_i)$, for $i\in\{1,\dots,r\}$, and $\eta\in Q(\xi)^*_1\cap Q_1$ such that $f(\eta)$ is virtual or $f^2(\eta)$ is virtual with $m_{f(\eta)}=1$ and 
$n_{f(\eta)}=3$. 

\end{enumerate}
\end{df}

We will identify an arrow $\eta$ of $Q(\xi)$ with the corresponding element of $\Lambda(\xi)=KQ(\xi)/I(\xi)$. 

\begin{df}\label{def:3.5} Let $(S,\overrightarrow{T})$ be a directed triangulated surface, 
$(Q(S,\overrightarrow{T}),f)$ the associated triangulation quiver, $m_\bullet$ and $c_\bullet$ be weight 
and parameter functions of $(Q(S,\overrightarrow{T}),f)$, and $\xi=(\xi_1,\dots,\xi_r)$ a sequence of virtual arrows 
from pairwise different orbits in $\mathcal{O}(g)$ of length $2$ (and trivial weights). Then the virtual mutation 
$\Lambda(Q(S,\overrightarrow{T}),f,m_\bullet,c_\bullet,\xi)$ is said to be a \emph{virtual mutation of the weighted surface algebra} 
$\Lambda(S,\overrightarrow{T},m_\bullet,c_\bullet)$, and denoted by $\Lambda(S,\overrightarrow{T},m_\bullet,c_\bullet,\xi)$. 
\end{df}

We shall present now some consequences of the relations defining a virtual mutation $\Lambda(\xi)$ of a weighted 
triangulation algebra $\Lambda=\Lambda(Q,f,m_\bullet,c_\bullet)$. 

\begin{lem}\label{lem:3.6} Assume that $i\in\{1,\dots,r\}$ and $m_{\alpha_i}=1$ and $n_{\alpha_i}=
|\mathcal{O}(\alpha_i)|=3$. Then the following relations hold in $\Lambda(\xi)$.
\begin{enumerate}[$(1)$] 
\item $\nu_i\delta_i=\beta_i\alpha_i+c_{\gamma_i}\gamma_i$ and $\rho_i\omega_i=c_{\gamma_i}\tau_i$.

\item $\omega_i\nu_i\delta_i=\omega_i\beta_i\alpha_i+c_{\gamma_i}c_{\rho_i}A^*_{g(\rho_i)}$.

\item $\nu_i\delta_i\rho_i=\beta_i\alpha_i\rho_i+c_{\gamma_i}c_{\nu_i}A^*_{\nu_i}$.

\item $\rho_i\omega_i\nu_i=c_{\gamma_i}c_{\rho_i}A^*_{\rho_i}$ and $\rho_i\omega_i\nu_i\delta_i=c_{\rho_i}B^*_{\rho_i}$.

\item $\delta_i\rho_i\omega_i=c_{\gamma_i}c_{\delta_i}A^*_{\delta_i}$ and $\delta_i\rho_i\omega_i\nu_i=c_{\delta_i}B^*_{\delta_i}$.

\item $\alpha_i\rho_i\omega_i=0$ and $\rho_i\omega_i\beta_i=0$. 
\end{enumerate} 
In particular, the arrows $\gamma_i$ and $\tau_i$ do not occur in the Gabriel quiver of $\Lambda(\xi)$.
\end{lem}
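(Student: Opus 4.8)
The plan is to work entirely inside the algebra $\Lambda(\xi)$, using the defining relations from Definition \ref{def:3.4} specialised to the case $m_{\alpha_i}=1$, $n_{\alpha_i}=3$, and to chase the consequences by repeated substitution. First I would pin down the local shape of $Q(\xi)$ at such an index $i$: this is case (2) of the list following Definition \ref{def:3.4}, so $\gamma_i=\sigma_i$ (hence the two arrows out of $b_i$ in $Q(\xi)$ toward the ``outside'' are $\gamma_i$ and $\nu_i$, and toward the disc they are $\beta_i$), $\rho_i=f(\sigma_i)$, and the $f^*$- and $g^*$-orbits of the relevant arrows are the ones read off from $Q(\xi)^*$. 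In particular one records the key fact that, since $\mathcal O(\alpha_i)=(\alpha_i\ \beta_i\ \gamma_i)$ has length $3$ with trivial weight, the arrow $\tilde\nu_i$ (the partner of $\nu_i$ at $b_i$ in $Q(\xi)^*$) is $\gamma_i$, and $A^*_{\tilde\nu_i}=A^*_{\gamma_i}$ is the path $\gamma_i$ itself followed by $\dots$ up to length $m_{\gamma_i}n_{\gamma_i}-1$; but since $c^*_{\gamma_i}=c_{\gamma_i}$ and, in the $n_{\alpha_i}=3$ situation, $A^*_{\gamma_i}$ reduces (because $m^*_{\mathcal O^*(\tau_i)}=m_{\alpha_i}=1$ along the $\tau$-direction) exactly to the arrow $\gamma_i$. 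That identification is what turns relation (1) of Definition \ref{def:3.4} into statement (1) of the lemma: $\nu_i\delta_i-\beta_i\alpha_i-c_{\gamma_i}\gamma_i=0$ in $\Lambda(\xi)$.

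For the second half of Lemma~(1), namely $\rho_i\omega_i=c_{\gamma_i}\tau_i$, I would use relation (2) of Definition \ref{def:3.4} applied to the arrow $\eta=\delta_i$ together with $f^*(\delta_i)=\tau_i$ and $\tilde{\eta}=\tilde{\delta_i}=\delta_i$ — wait, more carefully: $\rho_i$ is an arrow of $Q(\xi)^*\cap Q_1$ with $f^*(\rho_i)=f(\rho_i)$ and partner; here the relevant instance of relation (2) is the one whose left-hand monomial is $\rho_i\omega_i$ (note $\omega_i=f^*(\rho_i)$ because in this degenerate configuration $f(\sigma_i)=\rho_i$ forces $f^*(\rho_i)$ to land on the arrow previously called $\omega_i$ after the re-labelling), and whose right-hand side is $c^*_{\tilde\rho_i}A^*_{\tilde\rho_i}$; one then checks $\tilde\rho_i$ equals (the relabelled) $\tau_i$-type arrow and that $A^*$ of it collapses to $\tau_i$ because that orbit has trivial weight in the collapsed quiver, giving the scalar $c_{\gamma_i}$. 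This bookkeeping of which arrow is which after the three relabelling operations is exactly where I expect the main friction, so I would do it once, carefully, with the picture in case (2) drawn, and then reuse it.

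With Lemma~(1) in hand, statements (2)–(6) are pure left/right multiplication and substitution. For (2) I left-multiply the first identity in (1) by $\omega_i$: $\omega_i\nu_i\delta_i=\omega_i\beta_i\alpha_i+c_{\gamma_i}\omega_i\gamma_i$, and then rewrite $\omega_i\gamma_i$ using relation (2) of Definition \ref{def:3.4} (now for $\eta=\omega_i$, with $f^*(\omega_i)=\gamma_i$ and $\tilde{\omega_i}$ the partner whose $g^*$-orbit gives $A^*_{g(\rho_i)}$), producing the stated $c_{\gamma_i}c_{\rho_i}A^*_{g(\rho_i)}$. Symmetrically, (3) comes from right-multiplying the first identity in (1) by $\rho_i$ and rewriting $\gamma_i\rho_i$ via the appropriate relation (2) instance with scalar $c_{\gamma_i}c_{\nu_i}$ and path $A^*_{\nu_i}$. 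Statement (4) is obtained by further multiplying (3) (or (2)) and collapsing using the definition $B^*_{\rho_i}=A^*_{\rho_i}(g^*)^{n^*-1}(\rho_i)$ together with the fact that the $\beta_i\alpha_i\rho_i\dots$ tail dies — here one invokes relation (3) of Definition \ref{def:3.4}, the zero relation $\eta f^*(\eta)g^*(f^*(\eta))$ for $\eta=\delta_i$, i.e.\ $\delta_i\tau_i(\dots)=0$ and its translates, to kill the spurious long monomials; likewise $\alpha_i\tau_i=0$ and $\tau_i\beta_i=0$ from relation (1) are what force (6), $\alpha_i\rho_i\omega_i=0$ and $\rho_i\omega_i\beta_i=0$, after substituting $\rho_i\omega_i=c_{\gamma_i}\tau_i$. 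Statement (5) is the $\delta_i$-analogue of (4), proved the same way by multiplying on the left by $\delta_i$ and using $B^*_{\delta_i}=A^*_{\delta_i}(g^*)^{n^*-1}(\delta_i)$. Finally, the last sentence of the lemma is immediate: relations (1) shows $\tau_i=c_{\gamma_i}^{-1}\rho_i\omega_i\in R^2$ and $\gamma_i=c_{\gamma_i}^{-1}(\nu_i\delta_i-\beta_i\alpha_i)\in R^2$, so by the description of the Gabriel quiver ($Q_A$ obtained by deleting arrows lying in $R_Q^2+I$) neither $\gamma_i$ nor $\tau_i$ survives. I expect no conceptual obstacle beyond the relabelling bookkeeping; the only care needed is consistently distinguishing, in the degenerate $n_{\alpha_i}=3$ picture, the arrow names before and after the mutation and making sure each invoked instance of Definition \ref{def:3.4} is actually present (i.e.\ not excluded by one of its ``unless'' clauses).
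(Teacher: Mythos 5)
Your proposal is correct and follows essentially the same route as the paper: identify that in the case $m_{\alpha_i}n_{\alpha_i}=3$ the orbit $\mathcal{O}^*(\tau_i)=(\tau_i\ \gamma_i)$ has length $2$ and weight $1$, so $A^*_{\gamma_i}=\gamma_i$, $A^*_{\tau_i}=\tau_i$ and $\tilde{\nu_i}=\gamma_i$, $\tilde{\rho_i}=\tau_i$, whence (1) is a direct specialization of the defining relations, and (2)--(6) follow by multiplying and substituting the relations $\omega_i\gamma_i$, $\gamma_i\rho_i$, $\tau_i\nu_i$, $\delta_i\tau_i$, $\alpha_i\tau_i=0$, $\tau_i\beta_i=0$. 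The only cosmetic difference is that for (4)--(5) the paper substitutes $\rho_i\omega_i=c_{\gamma_i}\tau_i$ and then uses the relations for $\tau_i\nu_i$ and $\delta_i\tau_i$ directly, rather than further multiplying (2)--(3) and killing tails, but the two manipulations are equivalent.
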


\begin{proof} For (1), we note that $\tilde{\nu_i}=\gamma_i$, $\tilde{\rho_i}=\tau_i$ and $A^*_{\gamma_i}=\gamma_i$ and 
$A^*_{\tau_i}=\tau_i$, because $\omega_i=f(\rho_i)=f^*(\rho_i)$ and $\mathcal{O}^*=\mathcal{O}^*(\gamma_i)=
\mathcal{O}^*(\tau_i)$ is an orbit of length $2$ with weight $m^*_{\mathcal{O}^*}=m_{\mathcal{O}(\alpha_i)}=1$. 
Then required relations follow from (1) and (2) in Definition \ref{def:3.4}.

For (2) and (3), observe that $\omega_i\gamma_i=\omega_i f^*(\omega_i)=c^*_{\tilde{\omega_i}}A^*_{\tilde{\omega_i}}$ and 
$\gamma_i\rho_i=\gamma_i f^*(\gamma_i)=c_{\tilde{\gamma_i}}A^*_{\tilde{\gamma_i}}$, by relations (2) defining 
$\Lambda(\xi)$, so the required relations follow, since $\tilde{\gamma}_i=\nu_i$ and $\tilde{\omega}_i=g(\rho_i)$. 

The equalities in (4) and (5) follow from (1) and the relations $\tau_i\nu_i=c^*_{\tilde{\tau_i}}A^*_{\tilde{\tau_i}}$ and 
$\delta_i\tau_i=c^*_{\tilde{\delta_i}}A^*_{\tilde{\delta_i}}$, because $\tilde{\tau_i}=\rho_i$ and 
$\tilde{\delta_i}=\delta_i$. 

Finally, (6) is a consequence of (1) and the relations $\alpha_i\tau_i=0$ and $\tau_i\beta_i=0$. 
\end{proof} 

\begin{lem}\label{lem:3.7} We have $A^*_{\alpha_i}\nu_i=0$ and $A^*_{\delta_i}\beta_i=0$, for all $i\in\{1,\dots,r\}$ for all $i\in\{1,\dots,r\}$. \end{lem}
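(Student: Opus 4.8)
The plan is to read off both vanishings directly from the generating relations~(1) and~(2) of the ideal $I(\xi)$ in Definition~\ref{def:3.4}, avoiding any appeal to socle information and any case distinction between the three possible shapes of $Q(\xi)$ around the $i$-th virtual arrow. The whole argument should be two short computations inside $\Lambda(\xi)=KQ(\xi)/I(\xi)$.

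For $A^*_{\delta_i}\beta_i=0$ I would argue as follows. Since $\delta_i\ne\nu_j$ for all $j$, relation~(2) applies to $\eta=\delta_i$, and because $f^*(\delta_i)=\tau_i$ and $\widetilde{\delta_i}=\delta_i$ it reads, in $\Lambda(\xi)$, as $\delta_i\tau_i=c^*_{\delta_i}A^*_{\delta_i}$. Multiplying this on the right by $\beta_i$ and using $\tau_i\beta_i=0$ from relation~(1) gives $c^*_{\delta_i}\,A^*_{\delta_i}\beta_i=\delta_i(\tau_i\beta_i)=0$, and as $c^*_{\delta_i}\in K^*$ this forces $A^*_{\delta_i}\beta_i=0$.

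For $A^*_{\alpha_i}\nu_i=0$ I would first record the monomial identity $C^*_{\delta_i}\nu_i=A^*_{\rho_i}$ in $KQ(\xi)$. This is pure bookkeeping with the definitions: $A^*_{\delta_i}=\delta_i C^*_{\delta_i}$ by definition, and since $g^*(\nu_i)=\widetilde{f^*(\nu_i)}=\widetilde{\delta_i}=\delta_i$ we have $\nu_i=(g^*)^{-1}(\delta_i)=(g^*)^{n^*_{\delta_i}-1}(\delta_i)$; hence appending $\nu_i$ to $C^*_{\delta_i}$ yields the path $g^*(\delta_i)\,(g^*)^2(\delta_i)\cdots(g^*)^{m^*_{\delta_i}n^*_{\delta_i}-1}(\delta_i)$, and because $\rho_i=g^*(\delta_i)$ lies in the same $g^*$-orbit as $\delta_i$, this path is precisely $A^*_{\rho_i}$. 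Together with $A^*_{\alpha_i}=\alpha_i C^*_{\delta_i}$ this gives $A^*_{\alpha_i}\nu_i=\alpha_i A^*_{\rho_i}$. Now relation~(2) applied to $\eta=\tau_i$ (again $\ne\nu_j$), using $f^*(\tau_i)=\nu_i$ and $\widetilde{\tau_i}=\rho_i$, gives $\tau_i\nu_i=c^*_{\rho_i}A^*_{\rho_i}$; multiplying on the left by $\alpha_i$ and using $\alpha_i\tau_i=0$ from relation~(1), we get $c^*_{\rho_i}\,\alpha_i A^*_{\rho_i}=(\alpha_i\tau_i)\nu_i=0$, whence $A^*_{\alpha_i}\nu_i=\alpha_i A^*_{\rho_i}=0$.

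I do not expect a genuine obstacle. The only points that need care are verifying, from the construction of $f^*$ and of the involution $\eta\mapsto\widetilde{\eta}$ on $Q(\xi)^*_1$, that relation~(2) specialises exactly to $\delta_i\tau_i=c^*_{\delta_i}A^*_{\delta_i}$ and to $\tau_i\nu_i=c^*_{\rho_i}A^*_{\rho_i}$, and checking the combinatorial identity $C^*_{\delta_i}\nu_i=A^*_{\rho_i}$. Both are routine once the definitions of Section~\ref{sec:3} are unwound, and — the key point — they are insensitive to whether $|\mathcal{O}(\alpha_i)|$ or $|\mathcal{O}(\nu_i)|$ equals $3$, so a single uniform computation covers all cases.
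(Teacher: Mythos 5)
Your proposal is correct and follows essentially the same route as the paper's proof: both vanishings are read off from relation (2) of Definition \ref{def:3.4} applied to $\eta=\delta_i$ (giving $\delta_i\tau_i=c^*_{\delta_i}A^*_{\delta_i}$) and to $\eta=\tau_i$ (giving $\tau_i\nu_i=c^*_{\rho_i}A^*_{\rho_i}=c_{\delta_i}C^*_{\delta_i}\nu_i$), combined with $\alpha_i\tau_i=0$ and $\tau_i\beta_i=0$ from relation (1). Your explicit verification of the bookkeeping identity $C^*_{\delta_i}\nu_i=A^*_{\rho_i}$ is a point the paper passes over silently, but it is the same argument.
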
 

\begin{proof} Fix $i\in\{1,\dots,r\}$. We have the equalities 
$$c_{\delta_i}A^*_{\alpha_i}\nu_i=\alpha_i(c_{\delta_i}C^*_{\delta_i}\nu_i)=\alpha_i(c^*_{g^*(\delta_i)A^*_{g^*(\delta_i)}})=\alpha_i\tau_i\nu_i=0,$$ 
$$c_{\delta_i}A^*_{\delta_i}\beta_i=c^*_{\tilde{\delta_i}}A^*_{\tilde{\delta_i}}=\delta_i\tau_i\beta_i=0,$$ 
because $\alpha_i\tau_i=0$ and $\tau_i\beta_i=0$. Since $c_{\delta_i}\in K^*$, we conclude that the required relations hold. \end{proof}

\begin{prop}\label{prop:3.7} Let $\eta$ be an arrow in $Q(\xi)^*_1$. Then the following statements hold.
\begin{enumerate}[$(1)$]
\item $B^*_\eta$ is a non-zero element of the right (respectively, left) socle of $\Lambda(\xi)$.

\item $c^*_\eta B^*_\eta=\eta f^*(\eta)(f^*)^2(\eta)=\tilde{\eta}f^*(\tilde{\eta})(f^*)^2(\tilde{\eta})=c^*_{\tilde{\eta}}B^*_{\tilde{\eta}}$.
\end{enumerate}
\end{prop}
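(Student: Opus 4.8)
The plan is to reduce everything to local computations inside the subquivers displayed in the definition of $Q(\xi)$, using the relations of Definition \ref{def:3.4} together with the already-established Lemmas \ref{lem:3.6} and \ref{lem:3.7}, and to import socle information from the underlying weighted triangulation algebra $\Lambda$ via Proposition \ref{prop:2.6}. First I would split the arrows $\eta \in Q(\xi)^*_1$ into three groups according to how $\eta$ relates to the modified region: (i) arrows $\eta \in Q_1$ whose $g$-orbit is not among $\mathcal{O}(\alpha_1),\dots,\mathcal{O}(\alpha_r)$ — here $\mathcal{O}^*(\eta)=\mathcal{O}(\eta)$, $f^*(\eta)=f(\eta)$, and $B^*_\eta=B_\eta$, so the statement is literally Proposition \ref{prop:2.6}(3) for $\Lambda$, provided one checks that the defining relations of $\Lambda(\xi)$ restricted to this part of the quiver coincide with those of $\Lambda$ (which they do, by construction — relations (2),(3),(4) of Definition \ref{def:3.4} are the $f^*$-translates of the original triangle and zero relations); (ii) the arrows $\delta_i,\nu_i,\tau_i$ and the arrows $\alpha_i,\beta_i$ attached to the removed vertices $c_i$; and (iii) arrows $\eta\in Q_1$ lying in $\mathcal{O}(\alpha_i)$ but not equal to any $\alpha_i,\beta_i$ — for these one must track how the $g$-orbit was rerouted through $\tau_i$ and verify the identity $c^*_\eta B^*_\eta = \eta f^*(\eta)(f^*)^2(\eta)$ by the same telescoping argument as in $\Lambda$, now using relation (2) of Definition \ref{def:3.4} to rewrite each $\eta f^*(\eta)$ as $c^*_{\tilde\eta}A^*_{\tilde\eta}$ and then reassembling.

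For part (2), the key point is the "cycle identity" $c^*_\eta B^*_\eta = c^*_{\tilde\eta}B^*_{\tilde\eta}$, expressing that the full $g^*$-cycle through a vertex, read from either of the two outgoing arrows, gives the same element. For arrows away from the modified region this is inherited from $\Lambda$. For the delicate arrows I would argue vertex by vertex: at a vertex $a_i$ the two outgoing arrows (in the generic case of subquiver (1)) are $\rho_i$ and $\tau_i$, and one needs $c^*_{\rho_i}B^*_{\rho_i}=c^*_{\tau_i}B^*_{\tau_i}$; using relation (2), $\rho_i f^*(\rho_i) = \rho_i\omega_i = c^*_{\tilde\rho_i}A^*_{\tilde\rho_i}$ and $\tau_i f^*(\tau_i) = \tau_i\nu_i = c^*_{\tilde\tau_i}A^*_{\tilde\tau_i}$, and since $\tilde{\rho_i}=\tau_i$ and $\tilde{\tau_i}=\rho_i$ one gets two ways of writing the element $\rho_i\omega_i\nu_i\delta_i$ (in the $n_{\alpha_i}=3$ case compare with Lemma \ref{lem:3.6}(4),(5)). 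The remaining vertices $b_i$ and $d_i$ are handled the same way, with the definitions $f^*(\nu_i)=\delta_i$, $f^*(\delta_i)=\tau_i$, $f^*(\tau_i)=\nu_i$ feeding the telescoping; at $c_i$ (which has disappeared in $Q(\xi)^*$) there is nothing to check. Throughout, the identities $B^*_{\alpha_i}=\alpha_iC^*_{\delta_i}\beta_i$ and $B^*_{\delta_i}=\delta_iC^*_{\delta_i}\tau_i$ (or the analogous ones) let me match the two expressions monomial by monomial.

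For part (1), once (2) is known it suffices to show $B^*_\eta \neq 0$ and that it is annihilated by every arrow on the right (and, by the symmetric argument using $B^*_\eta = c^*_\eta{}^{-1}\eta f^*(\eta)(f^*)^2(\eta)$ and the analogous left-module relations, on the left). Nonvanishing: $B^*_\eta$ is, up to the nonzero scalar $c^*_\eta$, equal to a product $\eta f^*(\eta)(f^*)^2(\eta)$ which either is a basis element of the relevant $e_{s(\eta)}\Lambda(\xi)$ or, in the virtual-orbit cases of Lemma \ref{lem:3.3}, is identified via relation (1) with a nonzero shorter element such as $\nu_i\delta_i - \beta_i\alpha_i$; I would confirm this against the explicit basis I expect to be produced in Section \ref{sec:3} (the analogue of Proposition \ref{prop:2.6} for $\Lambda(\xi)$). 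Right-annihilation: for any arrow $\zeta$ with $s(\zeta)=t((f^*)^2(\eta))$, the product $B^*_\eta\zeta = c^*_\eta{}^{-1}\eta f^*(\eta)(f^*)^2(\eta)\zeta$ contains a subpath of the form $\theta f^*(\theta)g^*(f^*(\theta))$ or $\theta g^*(\theta)f^*(g^*(\theta))$, which is zero by relations (3)–(4) of Definition \ref{def:3.4}; the finitely many exceptional configurations excluded in those relations (when $f^2$ is virtual, or a virtual $3$-orbit is adjacent, or the $\tau_i$ with $n_{\nu_i}=3$) must be checked by hand, and these — together with verifying that no spurious cancellation occurs in group (iii) — are where I expect the real work to lie. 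Finally I would record the consequence that $B^*_\eta$ spans both the left and the right socle of the corresponding indecomposable projective, which is what makes $\Lambda(\xi)$ a candidate symmetric algebra in the proof of the Main Theorem.
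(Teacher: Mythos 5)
Your part (2) is essentially the paper's argument: for arrows of $Q_1$ away from the modified region the identity is inherited from Proposition \ref{prop:2.6}(3), and for $\tau_i,\nu_i,\delta_i$ one computes $\eta f^*(\eta)(f^*)^2(\eta)$ directly from relations (1) and (2) of Definition \ref{def:3.4}, using $\tilde{\tau_i}=\rho_i$, $\tilde{\delta_i}=\delta_i$, $g^*(\tau_i)=\tilde{\nu_i}$ and $\tau_i\beta_i=0$; your vertex-by-vertex telescoping is the same computation. (A minor slip: $f^*(\rho_i)=f(\rho_i)$ need not equal $\omega_i$ outside the $n_{\alpha_i}=3$ case of Lemma \ref{lem:3.6}, but this does not affect the structure of the argument.)

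For part (1) you take a genuinely different route, and it has a gap. You reduce right-annihilation to the observation that $B^*_\eta\zeta$ contains a subpath of type (3) or (4) of Definition \ref{def:3.4}. This can be made to work (modulo the exceptional configurations you flag) only for $\zeta\in Q(\xi)^*_1$, because $f^*$ and $g^*$ are not defined on $\alpha_i,\beta_i$. But the vertex $b_i$ is the source of three arrows $\nu_i,\gamma_i,\beta_i$ and $a_i$ is the target of $\alpha_i$, so one must separately prove $B^*_{\nu_i}\beta_i=0$ and $\alpha_iB^*_{g^*(\delta_i)}=0$, and no zero relation of Definition \ref{def:3.4} produces these: they are exactly the content of Lemma \ref{lem:3.7} ($A^*_{\alpha_i}\nu_i=0$ and $A^*_{\delta_i}\beta_i=0$), which you list among your tools but never deploy at the one place where it is indispensable. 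The paper instead obtains the annihilation by all arrows of $Q(\xi)^*_1$ in one stroke, by transferring the socle property of $B_\eta$ in $\Lambda$ (Proposition \ref{prop:2.6}(3)) along the substitution $\alpha_j\beta_j\mapsto\tau_j$ that turns $B_\eta$ into $B^*_\eta$ --- thereby also avoiding the case-by-case treatment of the exceptions in relations (3)--(4) --- and then invokes Lemma \ref{lem:3.7} for the two products above. Finally, your non-vanishing argument appeals to ``the explicit basis I expect to be produced in Section \ref{sec:3}''; that basis (Proposition \ref{prop:3.9}) is established after, and by means of, the present proposition, so it cannot be used here. Non-vanishing must instead be imported from the corresponding statement $B_\eta\neq 0$ in $\Lambda$, as the paper does.
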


\begin{proof}
(1) For $\eta\in Q_1$, the cycle $B^*_\eta$ is obtained from the cycle $B_\eta$ of $Q$ by replacing all subpaths $\alpha_j\beta_j$, 
$j\in\{1,\dots,r\}$, by arrows $\tau_j$. Similarily, if $\eta=\tau_i$ for some $i\in\{1,\dots,r\}$, then $B^*_\eta$ is obtained 
from the cycle $B_{\alpha_i}$ of $Q$ in the same way. We know from Proposition \ref{prop:2.6}(3) that every cycle $B_{\alpha_i}$ 
in $Q$, $\alpha\in Q_1$, is a non-zero element of $\soc\Lambda=\soc(\Lambda_\Lambda)=\soc({}_\Lambda\Lambda)$, and hence is a 
non-zero element with $\gamma B_\alpha=0$ and $B_\alpha\sigma=0$ for all arrows $\gamma,\sigma\in Q_1$. Hence it follows from 
the relations defining $\Lambda(\xi)$ that $B^*_\eta$ is a non-zero element of $\Lambda(\xi)$ satisfying $\phi B^*_\eta=0$ and 
$B^*_\eta \psi=0$ for all arrows $\phi,\psi\in Q(\xi)^*_1$. Since the only arrows in $Q(\xi)_1$ which do not belong to $Q(\xi)^*_1$ 
are the arrows $\alpha_i,\beta_i$, for $i\in\{1,\dots,r\}$, it is now sufficient to observe that the equalities  
$$\alpha_i B^*_{g^*(\delta_i)}=A^*_{\alpha_i}\nu_i\delta_i=0 \mbox{ and } B^*_{\nu_i}\beta_i=\nu_i A^*_{\delta_i}\beta_i=0,$$ 
are consequences of Lemma \ref{lem:3.7}. Summing up, we conclude that $B^*_\eta$ is a non-zero element of 
the right (or left) socle of $\Lambda(\xi)$, for any arrow $\eta\in Q(\xi)^*_1$.\smallskip

(2) It follows from Proposition \ref{prop:2.6} that for any arrow $\alpha\in Q_1$ we have the equalities in $\Lambda$ 
$$\alpha f(\alpha) f^2(\alpha)=c_\alpha B_\alpha=c_{\bar{\alpha}} B_{\bar{\alpha}}=\bar{\alpha} f(\bar{\alpha}) f^2(\bar{\alpha}).$$ 
Hence, if $\eta$ is an arrow of $Q_1$ different from $\nu_i$, $\delta_i$, $i\in\{1,\dots,r\}$, then the following equalities hold 
in $\Lambda(\xi)$
$$\eta f^*(\eta)(f^*)^2(\eta)=c^*_\eta B^*_\eta=c^*_{\tilde{\eta}}B^*_{\tilde{\eta}}.$$ 
We note that in this case $\tilde{\eta}=\bar{\eta}$. 

Assume now that $\eta\in\{\tau_i,\nu_i,\delta_i\}$, for some $i\in\{1,\dots,r\}$. Then, by definition of $I(\xi)$, we have 
the equalities in $\Lambda(\xi)$ 
$$\tau_i f^*(\tau_i) (f^*)^2(\tau_i)=\tau_i\nu_i\delta_i=c^*_{\tilde{\tau_i}}A^*_{\tilde{\tau_i}}\delta_i=c^*_{\tilde{\tau_i}}
B^*_{\tilde{\tau_i}},$$
$$\tau_i\nu_i\delta_i=\tau_i\beta_i\alpha_i+c^*_{\tilde{\nu_i}}\tau_i A^*_{\tilde{\nu_i}}=c^*_{\tau_i}B^*_{\tau_i},$$
$$\nu_i f^*(\nu_i) (f^*)^2(\nu_i)=\nu_i\delta_i\tau_i=c_{\tilde{\delta_i}}\nu_i A^*_{\tilde{\delta_i}}=c^*_{\nu_i}B^*_{\nu_i},$$
$$\nu_i\delta_i\tau_i=\beta_i\alpha_i\tau_i+c^*_{\tilde{\nu_i}}A^*_{\tilde{\nu_i}}\tau_i =c^*_{\tilde{\nu_i}}B^*_{\tilde{\nu_i}},$$
$$\delta_i f^*(\delta_i)(f^*)^2(\delta_i)=\delta_i\tau_i\nu_i=c^*_{\tilde{\delta_i}}A^*_{\tilde{\delta_i}}\nu_i=
c^*_{\tilde{\delta_i}}B^*_{\tilde{\delta_i}},$$
$$\delta_i\tau_i\nu_i=c^*_{\tilde{\tau_i}}\delta_i A^*_{\tilde{\tau_i}}=c^*_{g^*(\delta_i)}B^*_{\delta_i}=c^*_{\delta_i}
B^*_{\delta_i}.$$

This proves the statement (2).\end{proof}

\begin{lem}\label{lem:3.8} If $i\in\{1,\dots,r\}$, then $B^*_{\alpha_i}$ is a non-zero element of the right (respectively, left) 
socle of $\Lambda(\xi)$.
\end{lem}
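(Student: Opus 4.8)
**Proof plan for Lemma 3.9 ($B^*_{\alpha_i}$ lies in the socle).**

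The strategy is to reduce the statement for $B^*_{\alpha_i}$ to the already-established statement for $B^*_\eta$ with $\eta\in Q(\xi)^*_1$ (Proposition 3.8), by exploiting the factorizations $B^*_{\alpha_i}=\alpha_i C^*_{\delta_i}\beta_i$ and the relation $\nu_i\delta_i=\beta_i\alpha_i+c^*_{\tilde\nu_i}A^*_{\tilde\nu_i}$ from Definition 3.5(1). First I would record that $B^*_{\alpha_i}\neq 0$: since $\alpha_i C^*_{\delta_i}=A^*_{\alpha_i}$ and $A^*_{\alpha_i}\beta_i=B^*_{\alpha_i}$, one can rewrite $B^*_{\alpha_i}$ in terms of the cycle $B_{\alpha_i}$ of $Q$ exactly as in the proof of Proposition 3.8(1) — it is obtained from the nonzero socle element $B_{\alpha_i}\in\soc\Lambda$ by replacing the subpaths $\alpha_j\beta_j$ by $\tau_j$ — hence it is a nonzero element of the canonical basis of $\Lambda(\xi)$. (Strictly, nonvanishing should be deferred to, or read off from, the explicit basis description of $\Lambda(\xi)$ given in this section.)

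For the socle property one must check $\phi B^*_{\alpha_i}=0$ and $B^*_{\alpha_i}\psi=0$ for every arrow of $Q(\xi)_1$. The arrows $\phi$ with $t(\phi)=s(B^*_{\alpha_i})=s(\alpha_i)=c_i$ are precisely those arrows $\eta$ of $Q(\xi)^*_1$ with $t(\eta)=c_i$ — but note no arrow of $Q(\xi)^*$ ends at $c_i$ since $c_i$ was deleted in passing to $Q(\xi)^*$; the only arrows of $Q(\xi)_1$ ending at $c_i$ are $\beta_i$ and $\alpha_{i'}$ for the appropriate index — so one computes $\beta_i B^*_{\alpha_i}$ and, in the degenerate configurations, the remaining incoming arrow, and shows each is $0$ using the relations $\tau_i\beta_i=0$, $\alpha_i\tau_i=0$ together with $B^*_{\nu_i}\beta_i=\nu_i A^*_{\delta_i}\beta_i=0$ and $A^*_{\alpha_i}\nu_i=0$ from Lemma 3.8. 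Dually, the arrows $\psi$ with $s(\psi)=t(B^*_{\alpha_i})=t(\beta_i)$: here $t(\beta_i)=c_i$ as well in $Q(\xi)$ (the arrow $\beta_i$ now points $b_i\to c_i$), so $\psi\in\{\alpha_i\}$ plus possibly a degenerate extra arrow, and $B^*_{\alpha_i}\alpha_i=A^*_{\alpha_i}\beta_i\alpha_i=A^*_{\alpha_i}(\nu_i\delta_i-c^*_{\tilde\nu_i}A^*_{\tilde\nu_i})$ vanishes because $A^*_{\alpha_i}\nu_i=0$ (Lemma 3.8) and $A^*_{\alpha_i}A^*_{\tilde\nu_i}$ has length $\ge m^*n^*+1>\dim$ of the relevant projective, hence lies in the annihilated top degree. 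Here one uses Proposition 3.8(2) to identify that long path with a multiple of a socle cycle and conclude it is killed by any further arrow, or simply that it is already zero by a length count against the basis.

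I expect the main obstacle to be the bookkeeping in the special configurations $|\mathcal{O}(\alpha_i)|=3$ or $|\mathcal{O}(\nu_i)|=3$ (the quivers (2) and (3) in the section, and the "no arrow of $Q_1$" case of Lemma 3.6), where some of the arrows $\sigma_i,\rho_i,\omega_i,\gamma_i,\tau_i$ coincide, and where Lemma 3.7 supplies the extra identities ($\nu_i\delta_i=\beta_i\alpha_i+c_{\gamma_i}\gamma_i$, $\rho_i\omega_i=c_{\gamma_i}\tau_i$, $\alpha_i\rho_i\omega_i=0$, $\rho_i\omega_i\beta_i=0$, etc.) needed to see that the remaining products $\phi B^*_{\alpha_i}$ and $B^*_{\alpha_i}\psi$ still vanish. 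In all cases the underlying principle is the same: $B^*_{\alpha_i}$ is, up to the substitution $\alpha_j\beta_j\mapsto\tau_j$, the socle cycle $B_{\alpha_i}$ of $\Lambda$, and every relation of $\Lambda(\xi)$ restricts on paths through $c_i$ to a relation of $\Lambda$ after this substitution — so the nonvanishing and the annihilation by arrows are inherited from $\Lambda$. Once these cases are dispatched, combining the left and right computations gives $B^*_{\alpha_i}\in\soc(\Lambda(\xi)_{\Lambda(\xi)})\cap\soc({}_{\Lambda(\xi)}\Lambda(\xi))$, which is the assertion.
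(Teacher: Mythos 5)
Your skeleton is the same as the paper's: since $\alpha_i$ is the only arrow of $Q(\xi)$ leaving $c_i$ and $\beta_i$ the only one entering it, the socle claim reduces to $B^*_{\alpha_i}\alpha_i=0$ and $\beta_i B^*_{\alpha_i}=0$, to be checked by expanding $\beta_i\alpha_i=\nu_i\delta_i-c^*_{\tilde{\nu_i}}A^*_{\tilde{\nu_i}}$ and invoking Lemma \ref{lem:3.7} and Proposition \ref{prop:3.7}. (A small slip: $c_i$ has exactly one incoming arrow, namely $\beta_i$, in every configuration, so there is never a ``remaining incoming arrow'' to treat.) The genuine gap is that you never deal with the correction term coming from relation (1). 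Writing $\beta_i B^*_{\alpha_i}=\beta_i\alpha_i C^*_{\delta_i}\beta_i=\nu_i\delta_i C^*_{\delta_i}\beta_i-c^*_{\tilde{\nu_i}}A^*_{\tilde{\nu_i}}C^*_{\delta_i}\beta_i$, the first summand is $B^*_{\nu_i}\beta_i=0$ as you say, but the vanishing of $A^*_{\tilde{\nu_i}}C^*_{\delta_i}\beta_i$ is where essentially all the work of this lemma lies: the paper proves it by a three-case analysis on $(g^*)^{-1}(\tau_i)$ (it is some $\tau_j$; or an arrow of $Q_1$ whose companion $\bar{\sigma_i}$ is virtual but not among $\mathcal{O}_1,\dots,\mathcal{O}_r$; or an arrow with non-virtual companion), in each case exhibiting inside $A^*_{\tilde{\nu_i}}C^*_{\delta_i}$ three consecutive arrows forming either a socle element $\eta f^*(\eta)(f^*)^2(\eta)$ or a zero relation $\eta g(\eta)f(g(\eta))$. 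The relations $\alpha_i\tau_i=0$, $\tau_i\beta_i=0$ that you cite do not reach this product, and the ``special configurations'' you flag ($|\mathcal{O}(\alpha_i)|=3$, etc.) are not where this case distinction lives.

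The right-hand computation has a parallel problem. You reduce to $A^*_{\alpha_i}A^*_{\tilde{\nu_i}}=0$, but neither of your two suggestions closes it: a ``length count against the basis'' is circular, since the basis description of Proposition \ref{prop:3.9} is derived \emph{from} this lemma, and Proposition \ref{prop:3.7}(2) only identifies triples $\eta f^*(\eta)(f^*)^2(\eta)$ with socle cycles, which is not the shape of this product. The paper instead groups the factors differently, rewriting $B^*_{\alpha_i}\alpha_i=\alpha_i\bigl(C^*_{\delta_i}\beta_i\alpha_i\bigr)=\alpha_i B^*_{g^*(\delta_i)}$ and killing it with Proposition \ref{prop:3.7}(1), since $g^*(\delta_i)\in Q(\xi)^*_1$. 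So the route is the right one, but the proposal is incomplete precisely at the steps that require the actual argument.
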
 

\begin{proof}
The fact that $B^*_{\alpha_i}$ is a non-zero element of $\Lambda(\xi)$ follows from the defining relations. Moreover, $\alpha_i$ 
is the unique arrow in $Q(\xi)$ with source $c_i$ and $\beta_i$ is the unique arrow in $Q(\xi)$ with target $c_i$. It follows 
from Proposition \ref{prop:3.7}(1) that 
$$B^*_{\alpha_i}\alpha_i=\alpha_i C^*_{\delta_i}\beta_i\alpha_i=\alpha_i B^*_{g^*(\delta_i)}=0,$$
because $g^*(\delta_i)\in Q(\xi)^*_1$. Moreover, we have 
$$\beta_i B^*_{\alpha_i}=\beta_i\alpha_i C^*_{\delta_i}\beta_i=\nu_i\delta_i C^*_{\delta_i}\beta_i-c^*_{\tilde{\nu_i}} 
A^*_{\tilde{\nu_i}}C^*_{\delta_i}\beta_i.$$ 
Observe also that $\nu_i\delta_iC^*_{\delta_i}\beta_i=\nu_i A^*_{\delta_i}\beta_i=B^*_{\nu_i}\beta_i=0$, again by (1) in 
Proposition \ref{prop:3.7}, since $\nu_i\in Q(\xi)^*_1$. Now we claim that also $A^*_{\tilde{\nu_i}}C^*_{\delta_i}\beta_i=0$, and 
consequently $\beta_i B^*_{\alpha_i}=0$. By definition of $g^*$ we have $(g^*)^{-1}(\tilde{\nu_i})=\tau_i$, and there are three 
cases to consider.
\begin{enumerate}[(a)]
\item Assume that $(g^*)^{-1}(\tau_i)=\tau_j$ for some $j\in\{1,\dots,r\}$. Then $A^*_{\tilde{\nu_i}}=\tilde{\nu_i}\dots \tau_j$ and 
$C^*_{\delta_i}=\nu_j\delta_j\dots (g^*)^{-1}(\nu_i)$. Moreover, it follows from Proposition \ref{prop:3.7} that the element 
$\tau_j\nu_j\delta_j=\tau_j f^*(\tau_j)(f^*)^2(\tau_j)=c^*_{\tau_j}B^*_{\tau_j}$ belongs to the right socle of $\Lambda(\xi)$, and 
so $A^*_{\tilde{\nu_i}}C^*_{\delta_i}\beta_i=0$.

\item Assume that $(g^*)^{-1}(\tau_i)=\sigma_i$ is an arrow in $Q_1$ with $\bar{\sigma_i}$ virtual but not in one of the orbits 
$\mathcal{O}_1,\dots,\mathcal{O}_r$. Then $Q(\xi)$ contains a subquiver of the form 
$$\xymatrix@R=0.5cm@C=0.8cm{ &c_i\ar[rd]^{\alpha_i}&&u_i\ar[ld]_{\sigma_i}\ar@<-0.1cm>[dd]_{\zeta_i}& \\ 
b_i\ar[ru]^{\beta_i}\ar[rd]_{\nu_i} &&a_i\ar[ll]^{\tau_i}\ar[rd]_{\rho_i}&& w_i\ar[lu]_{\phi_i} \\ 
&d_i\ar[ru]_{\delta_i}&&v_i\ar[ru]_{\psi_i}\ar@<-0.1cm>[uu]_{\theta_i}&}$$
where $\bar{\sigma_i}=\zeta_i$ and $\bar{\psi_i}=\theta_i$ are virtual arrows of $Q_1$, and $g^*(\phi_i)=\sigma_i$, $g^*(\sigma_i) 
=\tau_i$, $g^*(\delta_i)=\rho_i$ and $g^*(\rho_i)=\psi_i$. Hence we have $A^*_{\tilde{\nu_i}}=\tilde{\nu_i}\dots\phi_i\sigma_i$ and 
$C^*_{\delta_i}=\rho_i\psi_i\dots(g^*)^{-1}(\nu_i)$. Further, the following equalities hold 
$$\phi_i\sigma_i\rho_i\psi_i=c_{\zeta_i}\phi_i\zeta_i\psi_i=c_{\zeta_i}\phi_if(\phi_i)f^2(\phi_i)=c_{\zeta_i}c_{\phi_i}B^*_{\phi_i}.$$ 
Since $B^*_{\phi_i}$ is in the right socle of $\Lambda(\xi)$, we conclude that $A^*_{\tilde{\nu_i}}C^*_{\delta_i}\beta_i=0$. 

\item Finally, let $(g^*)^{-1}(\tau_i)=\sigma_i$ be an arrow in $Q_1$ for which $\bar{\sigma_i}$ is not virtual. Let also $\phi_i:= 
(g^*)^{-1}(\sigma_i)$ and $\rho_i:=g^*(\delta_i)$. We note that $\rho_i=f(\sigma_i)$ and $f(\phi_i)=\bar{\sigma_i}$ is not virtual 
in $(Q,f)$, and hence $\phi_i\sigma_i\rho_i=\phi_i g(\phi_i)f(g(\phi_i))=0$. Finally, we have $A^*_{\tilde{\nu_i}}=\tilde{\nu_i} 
\dots\phi_i\sigma_i$, while $C^*_{\delta_i}=\rho_i\dots (g^*)^{-1}(\nu_i)$, and therefore $A^*_{\tilde{\nu_i}}C^*_{\delta_i}
\beta_i=0$.
\end{enumerate}
\end{proof}

We shall now describe a basis of the virtually mutated weighted triangulation algebra $\Lambda(\xi)$.

Let $\eta$ be an arrow in $Q(\xi)_1\setminus\{\beta_1,\dots,\beta_r\}$. Then we denote by $\mathcal{B}_\eta$ the set of all proper 
initial submonomials of $B^*_\eta$, and by $\mathcal{B}^c_\eta$ the set of all paths of the form $u\beta_i$, for all paths $u$ such 
that $u\nu_i$ is a subpath of $B^*_\eta$ for some $i\in\{1,\dots,r\}$. 

\begin{prop}\label{prop:3.9} Let $x$ be a vertex in $Q(\xi)_0$. The following statements hold. 
\begin{enumerate}[$(1)$]
\item Assume $x$ is the starting vertex of two arrows $\eta$ and $\tilde{\eta}$ in $Q(\xi)^*_1$, which are not virtual. Then the 
module $e_x\Lambda(\xi)$ has basis of the form 
$$\mathcal{B}_x=\mathcal{B}_\eta\cup\mathcal{B}^c_\eta\cup\mathcal{B}_{\tilde{\eta}}\cup\mathcal{B}^c_{\tilde{\eta}}\cup 
\{e_x,B^*_\eta\}.$$

\item Assume $x$ is the starting vertex of two arrows $\eta$ and $\tilde{\eta}$ in $Q(\xi)^*_1$, and $\tilde{\eta}$ is virtual. 
Then the module $e_x\Lambda(\xi)$ has basis 
$$\mathcal{B}_x=\mathcal{B}_\eta\cup\mathcal{B}^c_\eta\cup\{e_x,B^*_\eta,\eta f(\eta)\}.$$ 

\item Assume that $x=c_i$ for some $i\in\{1,\dots,r\}$. Then the module $e_x\Lambda(\xi)$ has basis 
$$\mathcal{B}_x=\mathcal{B}_{\alpha_i}\cup\mathcal{B}^c_{\alpha_i}\cup\{e_x,B^*_{\alpha_i}\}.$$

\item Assume that $x=d_i$ for some $i\in\{1,\dots,r\}$. Then the module $e_x\Lambda(\xi)$ has basis  
$$\mathcal{B}_{\delta_i}\cup\mathcal{B}^c_{\delta_i}\cup\{e_x,B^*_{\delta_i}\}.$$
\end{enumerate}
\end{prop}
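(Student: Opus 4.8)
The plan is to prove each of the four formulas in two stages: first that the stated set $\mathcal{B}_x$ spans $e_x\Lambda(\xi)$, and then that it is linearly independent. The spanning stage is a systematic path-rewriting argument using the relations of Definition~\ref{def:3.4}; linear independence is the genuinely hard part and will follow from a cardinality count together with a normal-form (Gr\"obner basis) argument for the defining ideal.

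For spanning, fix $x\in Q(\xi)_0$. Since $e_x\Lambda(\xi)$ is spanned by $e_x$ and the images of paths of $Q(\xi)$ of length $\geqslant 1$ starting at $x$, it suffices to rewrite an arbitrary such path $p$ modulo $I(\xi)$. If $p$ begins with an arrow $\eta$ of $Q(\xi)^*$, I would use the relations (1) and (2) of Definition~\ref{def:3.4} --- those of the shape $\eta'f^*(\eta')=c^*_{\tilde{\eta'}}A^*_{\tilde{\eta'}}$, together with $\nu_i\delta_i=\beta_i\alpha_i+c^*_{\tilde{\nu_i}}A^*_{\tilde{\nu_i}}$ --- to straighten every ``bend'' of $p$ into a segment of a power of a $g^*$-cycle, which reduces $p$ up to a nonzero scalar to a proper initial submonomial of $B^*_\eta$ or of $B^*_{\tilde\eta}$, or to $B^*_\eta$ itself. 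This fails only when at some stage $p$ enters one of the vertices $c_i$; but the unique arrow of $Q(\xi)$ ending at $c_i$ is $\beta_i$ and the unique one leaving it is $\alpha_i$, so such a passage has the form $\cdots u\beta_i\alpha_i\cdots$, and here the relations $\alpha_i\tau_i=0$ and $\tau_i\beta_i=0$ together with $\alpha_iC^*_{\delta_i}=A^*_{\alpha_i}$ (which feeds the path back along the $\nu_i$--$\delta_i$ cycle) show that the only surviving new monomials are exactly those of $\mathcal{B}^c_\eta$. That no monomial properly longer than $B^*_\eta$ (respectively $B^*_{\alpha_i}$, $B^*_{\delta_i}$) survives is precisely the content of the zero relations (3) and (4) of Definition~\ref{def:3.4} combined with Lemmas~\ref{lem:3.7} and~\ref{lem:3.8} and Proposition~\ref{prop:3.7}(1), while $B^*_\eta$ spans the socle and coincides up to a scalar with $B^*_{\tilde\eta}$ by Proposition~\ref{prop:3.7}(2). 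Carrying this out in the four cases --- $x$ a source of two non-virtual arrows of $Q(\xi)^*$; $x$ a source of a non-virtual arrow $\eta$ and a virtual arrow $\tilde\eta$ of $Q(\xi)^*$ (where, exactly as in Proposition~\ref{prop:2.6}(1), and using Lemma~\ref{lem:3.3} to recognise the virtual branch, that branch contributes only the extra monomial $\eta f(\eta)$); $x=c_i$, where the unique arrow leaving $c_i$ is $\alpha_i$ and $B^*_{\alpha_i}=\alpha_iC^*_{\delta_i}\beta_i$; and $x=d_i$, where the unique arrow leaving $d_i$ in $Q(\xi)^*$ is $\delta_i$ --- then yields the spanning sets exactly as stated.

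For linear independence, I would first compute $|\mathcal{B}_x|$ from the $g^*$-orbit combinatorics: $\mathcal{B}_\eta$ has $m^*_\eta n^*_\eta-1$ elements, $\mathcal{B}^c_\eta$ contributes one monomial for (essentially) each occurrence of an arrow $\nu_i$ along $B^*_\eta$, and in each case the four families making up $\mathcal{B}_x$ are pairwise disjoint, since the monomials in $\mathcal{B}^c_\eta$ and $\mathcal{B}^c_{\tilde\eta}$ end in some $\beta_i$ but begin with $\eta$, respectively $\tilde\eta$, and contain no other occurrence of a $\beta_j$. Summing over $x\in Q(\xi)_0$ gives a closed formula for the putative dimension of $\Lambda(\xi)$, to be compared with $\dim_K\Lambda=\sum_{\mathcal{O}\in\mathcal{O}(g)}m_\mathcal{O}n_\mathcal{O}^2$ from Theorem~\ref{thm:2.4}(1); since this dimension is \emph{not} obviously preserved by the mutation, independence cannot be inherited from $\Lambda$ and must be established directly. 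The plan is then to fix a length-compatible admissible order on $KQ(\xi)$, check that the relations of Definition~\ref{def:3.4} form a noncommutative Gr\"obner basis of $I(\xi)$, and conclude that the normal (irreducible) monomials are precisely the members of the sets $\mathcal{B}_x$.

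The step I expect to be the main obstacle is verifying confluence in this last argument in the degenerate configurations. When $|\mathcal{O}(\alpha_i)|=3$ or $|\mathcal{O}(\nu_i)|=3$ the arrows $\gamma_i$ and $\tau_i$ disappear from the Gabriel quiver (Lemma~\ref{lem:3.6}), and in the spherical shape of Lemma~\ref{lem:3.1} arbitrarily many of the vertices $c_i$ occur consecutively; in precisely these situations one must check that all the new overlap relations --- products such as $\alpha_i\rho_i\omega_i$, $\rho_i\omega_i\beta_i$, $A^*_{\alpha_i}\nu_i$, $A^*_{\delta_i}\beta_i$, and the various reductions of $B^*_{\alpha_i}$ --- resolve consistently to zero, which is exactly what Lemmas~\ref{lem:3.6}, \ref{lem:3.7}, \ref{lem:3.8} and Proposition~\ref{prop:3.7} are tailored to guarantee. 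Once confluence is secured in all cases, the spanning sets are bases and the proof is complete.
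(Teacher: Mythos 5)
Your spanning argument is sound and matches what the paper intends (rewriting along the relations of Definition~\ref{def:3.4}, with the vertices $c_i$ handled by $\alpha_i\tau_i=0$, $\tau_i\beta_i=0$ and the identity $A^*_{\alpha_i}=\alpha_iC^*_{\delta_i}$, and the length bound supplied by Proposition~\ref{prop:3.7}(1) and Lemmas~\ref{lem:3.7}, \ref{lem:3.8}). The gap is in the linear independence half, which you yourself identify as the crux and then leave as a plan whose key step (confluence in the degenerate configurations) you explicitly do not resolve. Worse, the plan as stated cannot work: you propose a \emph{length-compatible} admissible order, but the commutativity relations $\eta f^*(\eta)-c^*_{\tilde{\eta}}A^*_{\tilde{\eta}}$ have the long path $A^*_{\tilde{\eta}}$ (of length $m^*_{\tilde{\eta}}n^*_{\tilde{\eta}}-1\geqslant 2$, and typically much larger) on one side and the length-two path $\eta f^*(\eta)$ on the other. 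Under any length-compatible order the leading term is $A^*_{\tilde{\eta}}$, so the irreducible monomials would exclude long initial submonomials of the $B^*$-cycles rather than the short products $\eta f^*(\eta)$ --- i.e.\ the normal monomials would not be the sets $\mathcal{B}_x$ you want. A simple counting argument (summing the required inequalities over all arrows of a $g^*$-orbit) shows no weighted admissible order can reverse this orientation once $m^*_\eta n^*_\eta\geqslant 4$, so the Gr\"obner route needs a non-length-compatible order together with a separate termination proof, none of which is supplied.

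The paper's mechanism for independence is different and is exactly what the citations in its (terse) proof point to: Proposition~\ref{prop:3.7}(1),(2) and Lemma~\ref{lem:3.8} show that the elements $B^*_\eta$ and $B^*_{\alpha_i}$ are \emph{non-zero} elements of the socle, one for each indecomposable projective. Given that, independence of the spanning set $\mathcal{B}_x$ follows by the standard socle-pairing argument for these algebras (as in Proposition~\ref{prop:2.6} for $\Lambda$ itself): every proper initial submonomial $u$ of $B^*_\eta$, and every element $u\beta_i$ of $\mathcal{B}^c_\eta$, admits a complementary path $v$ with $uv$ a non-zero scalar multiple of the socle generator while $u'v=0$ for the other spanning monomials $u'$ of the same degree at the same target; a putative linear relation is then killed coefficient by coefficient by right multiplication. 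You should replace the Gr\"obner-basis plan by this argument (your disjointness observation about the $\mathcal{B}^c$-monomials ending in $\beta_i$ is then exactly what you need to separate the four families). As written, the proposal establishes spanning but not that $\mathcal{B}_x$ is a basis.
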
 

\begin{proof} It follows from the relations defining $\Lambda(\xi)$, Proposition \ref{prop:3.7} and Lemmas \ref{lem:3.6} and 
\ref{lem:3.8}. \end{proof} 

The next aim is to determine the dimension of $\Lambda(\xi)$. 

For each arrow $\eta\in Q(\xi)^*_1$, we denote 
$$n^\nu_\eta=|\{i\in\{1,\dots,r\}:\, \mathcal{O}^*(\nu_i)=\mathcal{O}^*(\eta)\}|.$$ 

\begin{lem}\label{lem:3.10} For each $i\in\{1,\dots,r\}$, we have $|\mathcal{B}_{c_i}|=|\mathcal{B}_{d_i}|=m^*_{\delta_i} 
(n^*_{\delta_i}+n^\nu_{\delta_i})$. \end{lem}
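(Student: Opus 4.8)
The plan is to read off both numbers from the bases exhibited in Proposition~\ref{prop:3.9} and to count them. By parts (3) and (4) of that proposition,
$$\mathcal{B}_{c_i}=\mathcal{B}_{\alpha_i}\cup\mathcal{B}^c_{\alpha_i}\cup\{e_{c_i},B^*_{\alpha_i}\}\quad\text{and}\quad\mathcal{B}_{d_i}=\mathcal{B}_{\delta_i}\cup\mathcal{B}^c_{\delta_i}\cup\{e_{d_i},B^*_{\delta_i}\}$$
are bases of $e_{c_i}\Lambda(\xi)$ and $e_{d_i}\Lambda(\xi)$; hence all the listed monomials are non-zero and pairwise distinct, and in each union the four pieces are pairwise disjoint (only the members of $\mathcal{B}^c_{(-)}$ are proper monomials ending in some arrow $\beta_j$; $B^*_{\alpha_i}$ ends in $\beta_i$ but is the full cycle, and a direct check shows $\alpha_iC^*_{\delta_i}\nu_i$ is not an initial submonomial of $B^*_{\alpha_i}$, so $B^*_{\alpha_i}\notin\mathcal{B}^c_{\alpha_i}$). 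Thus it suffices to count $\mathcal{B}_{\delta_i}$, $\mathcal{B}^c_{\delta_i}$, $\mathcal{B}_{\alpha_i}$ and $\mathcal{B}^c_{\alpha_i}$.

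First I would treat the vertex $d_i$. The cyclic path $B^*_{\delta_i}$ has length $m^*_{\delta_i}n^*_{\delta_i}$ and is non-zero in $\Lambda(\xi)$ by Proposition~\ref{prop:3.7}(1), hence so are all its initial submonomials; so $\mathcal{B}_{\delta_i}$, consisting of the proper ones, has $m^*_{\delta_i}n^*_{\delta_i}-1$ elements. The set $\mathcal{B}^c_{\delta_i}$ consists of the monomials $u\beta_j$ where $u\nu_j$ ranges over the initial submonomials of $B^*_{\delta_i}$ whose last arrow has the form $\nu_j$ with $\mathcal{O}^*(\nu_j)=\mathcal{O}^*(\delta_i)$. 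Since $B^*_{\delta_i}$ traverses the orbit $\mathcal{O}^*(\delta_i)$ exactly $m^*_{\delta_i}$ times and, by the definition of $n^\nu_{\delta_i}$, one traversal contains exactly $n^\nu_{\delta_i}$ arrows of that form, there are $m^*_{\delta_i}n^\nu_{\delta_i}$ such $u\nu_j$. Exactly one of them is the whole cycle: since $B^*_{\delta_i}=A^*_{\delta_i}\nu_i$, it is $u\nu_j=B^*_{\delta_i}$ with $u=A^*_{\delta_i}$, $j=i$, and then $u\beta_j=A^*_{\delta_i}\beta_i=0$ by Lemma~\ref{lem:3.7}; for every other such $u\nu_j$ the monomial $u\beta_j$ is one of the distinct non-zero basis elements. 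Hence $|\mathcal{B}^c_{\delta_i}|=m^*_{\delta_i}n^\nu_{\delta_i}-1$, so
$$|\mathcal{B}_{d_i}|=(m^*_{\delta_i}n^*_{\delta_i}-1)+(m^*_{\delta_i}n^\nu_{\delta_i}-1)+2=m^*_{\delta_i}(n^*_{\delta_i}+n^\nu_{\delta_i}).$$

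For the vertex $c_i$ I would avoid recounting and instead build a bijection $\mathcal{B}_{c_i}\to\mathcal{B}_{d_i}$. Writing $A^*_{\delta_i}=\delta_iC^*_{\delta_i}$ and $B^*_{\alpha_i}=A^*_{\alpha_i}\beta_i=\alpha_iC^*_{\delta_i}\beta_i$, the cycle $B^*_{\alpha_i}$ has the same length $m^*_{\delta_i}n^*_{\delta_i}$ as $B^*_{\delta_i}$ and is obtained from it by replacing the leading $\delta_i$ by $\alpha_i$ and the terminal $\nu_i$ by $\beta_i$. The rule $\alpha_iv\mapsto\delta_iv$, for $v$ an initial submonomial of $C^*_{\delta_i}$ (including the empty one and $C^*_{\delta_i}$ itself), identifies $\mathcal{B}_{\alpha_i}$ with $\mathcal{B}_{\delta_i}$, and $\alpha_iv\beta_j\mapsto\delta_iv\beta_j$ identifies $\mathcal{B}^c_{\alpha_i}$ with the part of $\mathcal{B}^c_{\delta_i}$ coming from the $\nu_j$-arrows inside $C^*_{\delta_i}$; but that part is all of $\mathcal{B}^c_{\delta_i}$, since the only $\nu_j$-arrow of $B^*_{\delta_i}$ outside $C^*_{\delta_i}$ is the terminal $\nu_i$, whose contribution was discarded above. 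Adjoining $e_{c_i}\mapsto e_{d_i}$ and $B^*_{\alpha_i}\mapsto B^*_{\delta_i}$ gives a bijection, so $|\mathcal{B}_{c_i}|=|\mathcal{B}_{d_i}|=m^*_{\delta_i}(n^*_{\delta_i}+n^\nu_{\delta_i})$.

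The only delicate point is the pair of off-by-one corrections that make the two dimensions agree: on the $d_i$-side one $\nu_j$-occurrence of $B^*_{\delta_i}$, namely the terminal $\nu_i$, contributes the zero element $A^*_{\delta_i}\beta_i$ of Lemma~\ref{lem:3.7}, while on the $c_i$-side $B^*_{\alpha_i}$ simply contains one $\nu_j$-arrow fewer than $B^*_{\delta_i}$; these two effects cancel. Everything else is routine bookkeeping, Proposition~\ref{prop:3.9} being used only to guarantee that the monomials it lists are non-zero and pairwise distinct.
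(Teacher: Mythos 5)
Your proof is correct and follows the same route as the paper: decompose $\mathcal{B}_{d_i}$ via Proposition \ref{prop:3.9}(4), count $|\mathcal{B}_{\delta_i}|=m^*_{\delta_i}n^*_{\delta_i}-1$ and $|\mathcal{B}^c_{\delta_i}|=m^*_{\delta_i}n^\nu_{\delta_i}-1$, and note $|\mathcal{B}_{c_i}|=|\mathcal{B}_{d_i}|$. You merely supply details the paper leaves implicit -- the explicit bijection $\mathcal{B}_{c_i}\to\mathcal{B}_{d_i}$ and the identification of the terminal $\nu_i$ of $B^*_{\delta_i}$ (giving $A^*_{\delta_i}\beta_i=0$ by Lemma \ref{lem:3.7}) as the source of the off-by-one -- both of which are accurate.
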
 

\begin{proof} Clearly $|\mathcal{B}_{c_i}|=|\mathcal{B}_{d_i}|$. Moreover, $m^*_{\delta_i}=m_{\delta_i}$ and 
$|\mathcal{B}_{\delta_i}|=m_{\delta_i}n^*_{\delta_i}-1$, and $|\mathcal{B}^c_{\delta_i}|=m_{\delta_i}n^\nu_{\delta_i}-1$, 
hence the required equality follows.
\end{proof}

\begin{lem}\label{lem:3.11} Let $\eta$ be an arrow in $Q(\xi)^*_1$ such that $\tilde{\eta}$ is virtual, and let $x=s(\eta)$. Then 
we have $$|\mathcal{B}_x|=m^*_\eta(n^*_\eta+n^\nu_\eta)+2.$$ \end{lem}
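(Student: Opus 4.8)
\textbf{Plan for the proof of Lemma \ref{lem:3.11}.}
The plan is to compute $|\mathcal{B}_x|$ directly from the description of $e_x\Lambda(\xi)$ given in Proposition \ref{prop:3.9}(2). Since $\tilde{\eta}$ is virtual, $x$ is the starting vertex of the two arrows $\eta$ and $\tilde{\eta}$ in $Q(\xi)^*_1$, and Proposition \ref{prop:3.9}(2) tells us that
$$\mathcal{B}_x=\mathcal{B}_\eta\cup\mathcal{B}^c_\eta\cup\{e_x,B^*_\eta,\eta f(\eta)\}.$$
So I would first argue that this is a disjoint union and then count each piece. The two distinguished extra elements $e_x$ and $B^*_\eta$ contribute $2$, and I will have to check that $\eta f(\eta)$ really is a genuinely new basis element, i.e.\ that it is not already a proper initial submonomial of $B^*_\eta$ nor of the form $u\beta_i$; this is where the hypothesis that $\tilde\eta$ is virtual is used, exactly as in the virtual-arrow case of Proposition \ref{prop:2.6}(1), where the analogous element $\bar\alpha f(\bar\alpha)$ appears precisely because $\alpha$ is virtual. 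That accounts for the ``$+2$'' in the formula once we know $\eta f(\eta)\notin\mathcal{B}_\eta\cup\mathcal{B}^c_\eta$ and $\eta f(\eta)\ne B^*_\eta$.

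Next I would count $|\mathcal{B}_\eta|$ and $|\mathcal{B}^c_\eta|$. By definition $\mathcal{B}_\eta$ is the set of proper initial submonomials of $B^*_\eta=(\eta\,g^*(\eta)\dots (g^*)^{n^*_\eta-1}(\eta))^{m^*_\eta}$, a cyclic path of length $m^*_\eta n^*_\eta$, so $|\mathcal{B}_\eta|=m^*_\eta n^*_\eta-1$ (the proper initial submonomials are those of length $1,\dots,m^*_\eta n^*_\eta-1$, together with the length-zero one $e_x$ which we have counted separately; I will be careful about the bookkeeping of $e_x$ so as not to double-count). For $\mathcal{B}^c_\eta$, recall this is the set of paths $u\beta_i$ where $u\nu_i$ is a subpath of $B^*_\eta$ for some $i\in\{1,\dots,r\}$; the number of such configurations is governed by how many times an arrow $\nu_i$ with $\mathcal{O}^*(\nu_i)=\mathcal{O}^*(\eta)$ occurs as one traverses the cycle $B^*_\eta$, which is exactly $m^*_\eta n^\nu_\eta$ by the definition of $n^\nu_\eta$. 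Hence $|\mathcal{B}^c_\eta|=m^*_\eta n^\nu_\eta$. This mirrors the counting already carried out in Lemma \ref{lem:3.10} for the vertices $c_i$ and $d_i$, where the corresponding sets had sizes $m_{\delta_i}n^*_{\delta_i}-1$ and $m_{\delta_i}n^\nu_{\delta_i}-1$.

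Summing the contributions gives
$$|\mathcal{B}_x|=(m^*_\eta n^*_\eta-1)+m^*_\eta n^\nu_\eta+2=m^*_\eta(n^*_\eta+n^\nu_\eta)+1,$$
so to land the stated ``$+2$'' I must re-examine the bookkeeping: the cleanest route is to take $\mathcal{B}_\eta$ to consist of the nonzero proper initial submonomials of positive length (hence $m^*_\eta n^*_\eta-1$ of them), note that $e_x$ is listed separately, but observe that the element $B^*_\eta$ itself is \emph{not} a proper initial submonomial and is genuinely extra, and finally that $\eta f(\eta)$ is extra, giving $(m^*_\eta n^*_\eta-1)+m^*_\eta n^\nu_\eta+3$ --- which is still not matching. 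The resolution, and the one real subtlety here, is that when $\tilde\eta$ is virtual one has $n^\nu_\eta\ge 1$ forced and, crucially, $\eta f(\eta)$ coincides with one of the elements $u\beta_i\in\mathcal{B}^c_\eta$ or with a short initial submonomial depending on whether $n^*_\eta=1$; I will need to determine carefully which of $\{e_x,B^*_\eta,\eta f(\eta)\}$ overlaps with $\mathcal B_\eta\cup\mathcal B^c_\eta$ and adjust. The main obstacle, therefore, is purely combinatorial: pinning down the exact overlaps among the three distinguished elements and the two submonomial sets in the virtual case, using Proposition \ref{prop:3.7}(1)--(2) and Lemmas \ref{lem:3.6}--\ref{lem:3.8} to identify $\eta f(\eta)$ explicitly, so that the inclusion--exclusion produces precisely $m^*_\eta(n^*_\eta+n^\nu_\eta)+2$. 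Once the overlap structure is settled the arithmetic is immediate.
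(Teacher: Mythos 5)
Your approach is the same as the paper's: decompose $\mathcal{B}_x$ via Proposition \ref{prop:3.9}(2), count $|\mathcal{B}_\eta|=m^*_\eta n^*_\eta-1$ and $|\mathcal{B}^c_\eta|=m^*_\eta n^\nu_\eta$, and add the distinguished elements. The problem is purely arithmetical, and it derails the end of your argument. Proposition \ref{prop:3.9}(2) lists \emph{three} extra elements, $e_x$, $B^*_\eta$ and $\eta f(\eta)$; your first tally adds only $2$ for them, which is why you land on the wrong value $m^*_\eta(n^*_\eta+n^\nu_\eta)+1$. Your second tally,
$$(m^*_\eta n^*_\eta-1)+m^*_\eta n^\nu_\eta+3,$$
is already equal to $m^*_\eta(n^*_\eta+n^\nu_\eta)+2$, i.e.\ exactly the claimed formula, since $-1+3=+2$. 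You declare it ``still not matching'' and then go looking for a coincidence among the basis elements to repair a discrepancy that does not exist.

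The ``resolution'' you propose is therefore chasing a phantom, and is moreover false: Proposition \ref{prop:3.9}(2) asserts that the displayed union \emph{is} a basis of $e_x\Lambda(\xi)$, so its members are pairwise distinct; in particular $\eta f(\eta)$ does not coincide with any $u\beta_i\in\mathcal{B}^c_\eta$ nor with an initial submonomial of $B^*_\eta$ (this is the same phenomenon as the extra element $\bar{\alpha}f(\bar{\alpha})$ in Proposition \ref{prop:2.6}(1), which you correctly cite). If such a coincidence did occur, your count would come out as $m^*_\eta(n^*_\eta+n^\nu_\eta)+1$, contradicting the lemma. The side claim that $\tilde{\eta}$ virtual forces $n^\nu_\eta\geqslant 1$ is also unnecessary and not true in general ($n^\nu_\eta$ may well be $0$). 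Deleting the final ``resolution'' paragraph and simply evaluating your second tally correctly yields the paper's proof.
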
 

\begin{proof} We note that $m^*_\eta=m_\eta$, $|\mathcal{B}_\eta|=m_\eta n^*_\eta -1$ and $|\mathcal{B}^c_\eta|=m_\eta n^\nu_\eta$, 
so the required equality holds by Proposition \ref{prop:3.9}(2). \end{proof} 

\begin{lem}\label{lem:3.12} Let $x$ be the starting vertex of two arrows $\eta$ and $\tilde{\eta}$ in $Q(\xi)^*_1$, which are not 
virtual. Then $$|\mathcal{B}_x|=m^*_\eta(n^*_\eta+n^\nu_\eta)+m^*_{\tilde{\eta}}(n^*_{\tilde{\eta}}+n^\nu_{\tilde{\eta}}).$$
\end{lem}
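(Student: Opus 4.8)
The plan is to read the basis of $e_x\Lambda(\xi)$ off Proposition~\ref{prop:3.9}(1) and then evaluate the cardinality of each of the five pieces, exactly as was done (for a single arrow) in the proofs of Lemmas~\ref{lem:3.10} and~\ref{lem:3.11}. By hypothesis $x$ is the source of two arrows $\eta,\tilde\eta$ in $Q(\xi)^*_1$, so $x$ is one of the $2$-regular vertices of $Q(\xi)^*$; in particular $x\neq d_j$ for every $j$, and hence $\eta,\tilde\eta\notin\{\delta_1,\dots,\delta_r\}$. Since moreover $\eta$ and $\tilde\eta$ are assumed non-virtual, Proposition~\ref{prop:3.9}(1) applies and yields
$$\mathcal{B}_x=\mathcal{B}_\eta\cup\mathcal{B}^c_\eta\cup\mathcal{B}_{\tilde\eta}\cup\mathcal{B}^c_{\tilde\eta}\cup\{e_x,B^*_\eta\},$$
a disjoint union, so the proof reduces to a counting argument.

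First I would count the ``straight'' parts. The element $B^*_\eta=(\eta\,g^*(\eta)\dots(g^*)^{n^*_\eta-1}(\eta))^{m^*_\eta}$ is a cyclic path of length $m^*_\eta n^*_\eta$, whose proper initial submonomials of positive length are precisely its initial subpaths of lengths $1,\dots,m^*_\eta n^*_\eta-1$; thus $|\mathcal{B}_\eta|=m^*_\eta n^*_\eta-1$, and symmetrically $|\mathcal{B}_{\tilde\eta}|=m^*_{\tilde\eta}n^*_{\tilde\eta}-1$. Next I would count $\mathcal{B}^c_\eta$. There are exactly $n^\nu_\eta$ indices $i\in\{1,\dots,r\}$ with $\mathcal{O}^*(\nu_i)=\mathcal{O}^*(\eta)$, and each such arrow $\nu_i$ occurs precisely $m^*_\eta$ times in $B^*_\eta$ — once in each of its $m^*_\eta$ periods. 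Because $\eta\neq\delta_j$ for all $j$, the terminal arrow $(g^*)^{n^*_\eta-1}(\eta)=(g^*)^{-1}(\eta)$ of $B^*_\eta$ is not any $\nu_j$, so each of these occurrences of $\nu_i$ determines a \emph{proper} initial subpath $u$ with $u\nu_i$ initial in $B^*_\eta$, and the corresponding path $u\beta_i$ is a nonzero basis element; for distinct $i$ these are distinguished by the terminal arrow $\beta_i$, and for fixed $i$ by the length of $u$. Hence $|\mathcal{B}^c_\eta|=m^*_\eta n^\nu_\eta$, and likewise $|\mathcal{B}^c_{\tilde\eta}|=m^*_{\tilde\eta}n^\nu_{\tilde\eta}$. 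Adding the five contributions,
$$|\mathcal{B}_x|=(m^*_\eta n^*_\eta-1)+m^*_\eta n^\nu_\eta+(m^*_{\tilde\eta}n^*_{\tilde\eta}-1)+m^*_{\tilde\eta}n^\nu_{\tilde\eta}+2=m^*_\eta(n^*_\eta+n^\nu_\eta)+m^*_{\tilde\eta}(n^*_{\tilde\eta}+n^\nu_{\tilde\eta}),$$
which is the asserted formula.

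The only delicate point — and the step I expect to require the most care — is the equality $|\mathcal{B}^c_\eta|=m^*_\eta n^\nu_\eta$ with \emph{no} correction term. In contrast to Lemma~\ref{lem:3.10}, where $x=d_i$ forces $\eta=\delta_i$, so that the relation $A^*_{\delta_i}\beta_i=0$ of Lemma~\ref{lem:3.7} annihilates exactly one of the candidate monomials $u\beta_i$ and produces the term $-1$, here neither $\eta$ nor $\tilde\eta$ equals any $\delta_j$; consequently no candidate monomial $u\beta_i$ degenerates, and the naive count is correct. Everything else is the bookkeeping already carried out for one arrow in the proof of Lemma~\ref{lem:3.11}, now performed for the two non-virtual arrows meeting at $x$.
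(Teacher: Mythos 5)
Your proof is correct and follows the same route as the paper: apply Proposition \ref{prop:3.9}(1) and count $|\mathcal{B}_\eta|=m^*_\eta n^*_\eta-1$, $|\mathcal{B}^c_\eta|=m^*_\eta n^\nu_\eta$ (and likewise for $\tilde{\eta}$), then add the two elements $e_x$ and $B^*_\eta$. The only difference is that you spell out why $|\mathcal{B}^c_\eta|$ carries no $-1$ correction (namely $\eta\neq\delta_j$, so $B^*_\eta$ does not end in a $\nu_j$), a point the paper's terser proof leaves implicit; this is a welcome clarification, not a deviation.
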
 

\begin{proof} We have $|\mathcal{B}_\eta|=m^*_{\eta}n^*_{\eta}-1$, $|\mathcal{B}^c_\eta|=m^*_{\eta}n^\nu_{\eta}$, 
$|\mathcal{B}_{\tilde{\eta}}|=m^*_{\tilde{\eta}}n^*_{\tilde{\eta}}-1$ and $|\mathcal{B}^c_{\tilde{\eta}}|=m^*_{\tilde{\eta}} 
n^\nu_{\tilde{\eta}}$, and the required equality follows from Proposition \ref{prop:3.9}. \end{proof} 

\begin{rem}\label{rem:3.13} We note that if  $\varepsilon$ is a virtual arrow of $Q(\xi)^*_1$, then $m^*_\varepsilon(n^*_\varepsilon+ 
n^\nu_\varepsilon)=m_\varepsilon^* n_\varepsilon^* =2$. \end{rem} 

Summing up, we obtain the following theorem. 

\begin{theorem}\label{thm:3.14} $\Lambda(\xi)$ is a finite-dimensional algebra with 
$$\dim_K\Lambda(\xi)=\sum_{\eta\in Q(\xi)^*_1} m^*_\eta(n^*_\eta+n^\nu_\eta)+\sum_{i=1}^r m^*_{\delta_i}(n^*_{\delta_i}+n^\nu_{\delta_i}).$$
\end{theorem}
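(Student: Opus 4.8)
The plan is to assemble the dimension formula from the basis description in Proposition \ref{prop:3.9}, using the combinatorial bookkeeping lemmas \ref{lem:3.10}--\ref{lem:3.12}. Since $\Lambda(\xi) = KQ(\xi)/I(\xi)$ and $\{e_x\,:\,x\in Q(\xi)_0\}$ is a complete set of orthogonal idempotents summing to the identity, we have the decomposition $\Lambda(\xi) = \bigoplus_{x\in Q(\xi)_0} e_x\Lambda(\xi)$ of right modules, so it suffices to show first that each $e_x\Lambda(\xi)$ is finite-dimensional with the stated basis $\mathcal{B}_x$, and then to sum $\dim_K e_x\Lambda(\xi) = |\mathcal{B}_x|$ over all vertices $x$. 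The finite-dimensionality is already contained in Proposition \ref{prop:3.9}: the sets $\mathcal{B}_x$ listed there are finite (being built from proper initial submonomials of the cycles $B^*_\eta$, $B^*_{\alpha_i}$, $B^*_{\delta_i}$, each of finite length $m^*_\bullet n^*_\bullet$), so the first assertion follows once Proposition \ref{prop:3.9} is granted.

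For the dimension count I would partition the vertex set $Q(\xi)_0 = Q_0$ into four classes according to which case of Proposition \ref{prop:3.9} applies: (i) vertices $x=s(\eta)=s(\tilde\eta)$ with both $\eta,\tilde\eta\in Q(\xi)^*_1$ non-virtual; (ii) vertices $x=s(\eta)$ with $\tilde\eta\in Q(\xi)^*_1$ virtual; (iii) the vertices $c_i$, $i\in\{1,\dots,r\}$; and (iv) the vertices $d_i$, $i\in\{1,\dots,r\}$. Lemma \ref{lem:3.12} gives $|\mathcal{B}_x| = m^*_\eta(n^*_\eta+n^\nu_\eta) + m^*_{\tilde\eta}(n^*_{\tilde\eta}+n^\nu_{\tilde\eta})$ in case (i); Lemma \ref{lem:3.11} gives $|\mathcal{B}_x| = m^*_\eta(n^*_\eta+n^\nu_\eta) + 2$ in case (ii), where Remark \ref{rem:3.13} lets me rewrite the "$+2$" as $m^*_{\tilde\eta}(n^*_{\tilde\eta}+n^\nu_{\tilde\eta})$ for the virtual arrow $\tilde\eta$; and Lemma \ref{lem:3.10} gives $|\mathcal{B}_{c_i}| = |\mathcal{B}_{d_i}| = m^*_{\delta_i}(n^*_{\delta_i}+n^\nu_{\delta_i})$ in cases (iii) and (iv). The upshot is that $\sum_{x}|\mathcal{B}_x|$ becomes a sum of terms $m^*_\eta(n^*_\eta + n^\nu_\eta)$, where each non-virtual arrow $\eta\in Q(\xi)^*_1$ is counted once via the vertex $s(\eta)$ (classes (i), (ii)), each virtual arrow $\eta\in Q(\xi)^*_1$ is counted once (class (ii), after the rewriting), so that the first sum in the theorem, $\sum_{\eta\in Q(\xi)^*_1} m^*_\eta(n^*_\eta+n^\nu_\eta)$, accounts for all of classes (i), (ii), (iv) — the vertices $d_i$ contributing the $\eta=\delta_i$ terms — while the vertices $c_i$ of class (iii) contribute exactly the second sum $\sum_{i=1}^r m^*_{\delta_i}(n^*_{\delta_i}+n^\nu_{\delta_i})$, since $|\mathcal{B}_{c_i}| = |\mathcal{B}_{d_i}|$.

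The main obstacle is the careful accounting at the vertices $d_i$: in $Q(\xi)^*$ the vertex $d_i$ is the source of only the single arrow $\delta_i$ (with $\tilde\delta_i = \delta_i$), so it does not fall literally under the "two-arrow" cases of Proposition \ref{prop:3.9}(1)--(2), and one must make sure that the term $m^*_{\delta_i}(n^*_{\delta_i}+n^\nu_{\delta_i})$ coming from Lemma \ref{lem:3.10} is matched with exactly the summand indexed by $\eta=\delta_i$ in $\sum_{\eta\in Q(\xi)^*_1}$, and not double-counted against any contribution from $b_i = s(\nu_i)$. The quantities $n^\nu_\eta$ are designed precisely to absorb the extra $\mathcal{B}^c_\eta$-contributions (the paths ending in some $\beta_j$), so the verification reduces to checking, orbit by orbit in $\mathcal{O}(g^*)$, that $\sum_{\mathcal{O}^*(\nu_i)=\mathcal{O}^*}1 = n^\nu_{\eta}$ for $\eta\in\mathcal{O}^*$ and that no $\beta_j$-path is counted twice; this is the bookkeeping already encoded in Lemmas \ref{lem:3.10}--\ref{lem:3.12}, and once those are in hand the summation is routine. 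I would therefore present the proof as: quote Proposition \ref{prop:3.9} for the bases, quote Lemmas \ref{lem:3.10}--\ref{lem:3.12} and Remark \ref{rem:3.13} for the cardinalities, and then carry out the one-line-per-vertex-class summation to reach the displayed formula.
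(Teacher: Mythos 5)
Your proposal is correct and follows exactly the route the paper intends: the paper derives Theorem \ref{thm:3.14} by ``summing up'' the cardinalities $|\mathcal{B}_x|$ from Proposition \ref{prop:3.9} via Lemmas \ref{lem:3.10}--\ref{lem:3.12} and Remark \ref{rem:3.13}, which is precisely your vertex-by-vertex accounting, including the observation that the $d_i$ contribute the $\eta=\delta_i$ summands of the first sum while the $c_i$ produce the second sum.
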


Moreover, we have the following proposition.

\begin{prop}\label{prop:3.15} $\Lambda(\xi)$ is not isomorphic to a weighted triangulation algebra. \end{prop}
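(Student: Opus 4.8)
The plan is to tell $\Lambda(\xi)$ apart from all weighted triangulation algebras by comparing their Gabriel quivers. Recall from \cite{ES5} (see also Section~\ref{sec:0}) that the Gabriel quiver $Q_\Gamma$ of a weighted triangulation algebra $\Gamma=\Lambda(Q'',f'',m''_\bullet,c''_\bullet)$ is always at most $2$-regular, and that it equals the underlying triangulation quiver of $\Gamma$ precisely when it is $2$-regular. So it suffices to produce in $Q_{\Lambda(\xi)}$ a vertex which is the source, or the target, of at least three arrows; the finitely many configurations in which no such vertex exists I would handle separately.

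First I would read off $Q_{\Lambda(\xi)}$ from Definition~\ref{def:3.4}. Among the generators of $I(\xi)$, only $\alpha_i\tau_i$ and $\tau_i\beta_i$ are monomials of length $2$, while every other generator equates a path of length $2$ with a scalar multiple of a path $A^*_{\tilde\eta}$ of length $m^*_{\tilde\eta}n^*_{\tilde\eta}-1$. Hence an arrow $\eta$ of $Q(\xi)$ can lie in $R_{Q(\xi)}^2+I(\xi)$ only if $m^*_\eta n^*_\eta=2$, that is, only if $\eta$ is a virtual arrow of $Q(\xi)^*$; by Lemma~\ref{lem:3.3}, besides the arrows already virtual in $\Lambda$, these are exactly $\gamma_i=\sigma_i$ and $\tau_i$ whenever $m_{\alpha_i}=1$ and $n_{\alpha_i}=3$ (see Lemma~\ref{lem:3.6}), together with the two arrows $\tau_1,\tau_2$ in the configuration of Lemma~\ref{lem:3.1} with $r=2$. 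In particular, the arrows $\alpha_i,\beta_i,\delta_i,\nu_i$ always survive in $Q_{\Lambda(\xi)}$, and so does $\sigma_i$ unless $m_{\alpha_i}=1$ and $n_{\alpha_i}=3$.

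Now fix $i$, and suppose first that $m_{\alpha_i}\neq 1$ or $n_{\alpha_i}\neq 3$. In $Q$ the two arrows ending at $a_i$ are $\sigma_i$ and $\delta_i$, and the passage to $Q(\xi)$ reverses $\alpha_i$ so that it too ends at $a_i$; since $\sigma_i,\delta_i$ and $\alpha_i$ all survive and are pairwise distinct, $a_i$ is the target of three arrows of $Q_{\Lambda(\xi)}$, so $Q_{\Lambda(\xi)}$ is not at most $2$-regular and $\Lambda(\xi)$ is not a weighted triangulation algebra. For the configuration of Lemma~\ref{lem:3.1} with $r\geqslant 3$ one argues in the same spirit, using that $\tau_1$ survives and makes $a_1$ both a source and a target of three arrows. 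It therefore remains to exclude a weighted triangulation structure in the cases where $m_{\alpha_i}=1$ and $n_{\alpha_i}=3$ for every $i$, and in the case of Lemma~\ref{lem:3.1} with $r=2$.

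In these remaining cases $Q_{\Lambda(\xi)}$ is at most $2$-regular but not $2$-regular: each of the vertices $c_i,d_i$ is the source of exactly one arrow and the target of exactly one arrow ($\alpha_i,\beta_i$, resp.\ $\delta_i,\nu_i$; cf.\ Proposition~\ref{prop:3.9}(3)--(4)). If $\Lambda(\xi)\cong\Gamma=\Lambda(Q'',f'',m''_\bullet,c''_\bullet)$, then $c_i$ must carry in the triangulation quiver $Q''$ a virtual arrow with non-virtual partner $\alpha_i$, so that by Proposition~\ref{prop:2.6}(1) the projective $e_{c_i}\Gamma$ has dimension $m''_{\alpha_i}n''_{\alpha_i}+2$ and a prescribed submodule structure; comparing this --- and the analogous description at $d_i$ --- with the bases of $e_{c_i}\Lambda(\xi)$ and $e_{d_i}\Lambda(\xi)$ from Proposition~\ref{prop:3.9}, their common dimension $m^*_{\delta_i}(n^*_{\delta_i}+n^\nu_{\delta_i})$ from Lemma~\ref{lem:3.10}, and the explicit relations produced by Lemma~\ref{lem:3.6}, one arrives at a contradiction; the case $r=2$ of Lemma~\ref{lem:3.1} is settled the same way, the algebras in question being completely explicit. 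I expect this last step --- excluding a weighted triangulation structure in the handful of configurations where the Gabriel quiver happens to be at most $2$-regular --- to be the main technical obstacle, everything before it being a direct reading of Definition~\ref{def:3.4} and Lemmas~\ref{lem:3.1}, \ref{lem:3.3} and \ref{lem:3.6}.
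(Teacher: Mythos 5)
Your strategy is essentially the paper's: the published proof of Proposition~\ref{prop:3.15} is a one-liner invoking ``the shape of the quiver $Q(\xi)$ and relations of type (1)\dots (see also Lemma~\ref{lem:3.6})'', and your main line (the Gabriel quiver of a weighted triangulation algebra is at most $2$-regular, while $a_i$ is the target of the three surviving arrows $\sigma_i,\delta_i,\alpha_i$ of $Q_{\Lambda(\xi)}$) is exactly that, carried out correctly. Two comments on the exceptional cases, which is where your argument (like the paper's) stays at the level of a sketch. First, the case $m_{\alpha_i}=1$, $n_{\alpha_i}=3$ admits a cleaner finish than your comparison of projective bases: the Gabriel quiver of a weighted triangulation algebra is obtained from the $2$-regular quiver $Q''$ by deleting the virtual arrows, which form $g$-orbits that are either loops or $2$-cycles $x\rightleftarrows y$, so their deletion lowers in-degree and out-degree by the same amount at every vertex; hence every vertex of such a Gabriel quiver has equal in- and out-degree, whereas by Lemma~\ref{lem:3.6} the vertex $a_i$ of $Q_{\Lambda(\xi)}$ has in-degree $2$ (via $\delta_i$ and $\alpha_i$) but out-degree $1$ (only $\rho_i$, since $\tau_i$ and $\gamma_i=\sigma_i$ drop out). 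Second, the genuinely delicate configuration is Lemma~\ref{lem:3.1} with $r=2$ and $m_{\mathcal{O}(\alpha_1)}=1$, where $\tau_1,\tau_2$ are virtual in $Q(\xi)^*$ and $Q_{\Lambda(\xi)}$ becomes honestly $2$-regular with balanced degrees (this is $S(1,n,a,b,\xi,\eta)$ of Example~\ref{ex:4.3}, a higher spherical algebra); there one must really argue from the relations of type (1) that no triangulation structure $(f'',m''_\bullet,c''_\bullet)$ on this $2$-regular quiver reproduces $\Lambda(\xi)$, and your ``one arrives at a contradiction'' leaves that step as implicit as the paper does.
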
 

\begin{proof} This is a consequence of the shape of the quiver $Q(\xi)$ and relations of type (1) in Definition \ref{def:3.4} 
(see also Lemma \ref{lem:3.6}). \end{proof} 

\section{Examples}\label{sec:4} 

In this section we present several examples of virtually mutated weighted triangulation (surface) algebras. 

\begin{exmp}\label{ex:4.1} Let $D(m,\lambda)=\Lambda(Q,f,m_\bullet,c_\bullet)$ be the disc algebra of degree $m$ described 
in Example \ref{ex:2.7}. Hence $(Q,f)=(Q(D,\overrightarrow{T}),f)$ and we have two $g$-orbits $\mathcal{O}(\alpha)= 
(\alpha\mbox{ }\beta\mbox{ }\gamma\mbox{ }\nu\mbox{ }\delta\mbox{ }\rho)=\mathcal{O}(\nu)$ and $\mathcal{O}(\xi)=(\xi\mbox{ }\mu)$, 
where $\xi$ and $\mu$ are unique virtual arrows. We take $\xi$ as the chosen element of the orbit $\mathcal{O}(\xi)$. Then 
the virtually mutated algebra 
$$D(m,\lambda,\xi)=\Lambda(\xi)=\Lambda(Q,f,m_\bullet,c_\bullet,\xi)$$ 
is the algebra given by the quiver $Q(\xi)$ of the form 
$$\qquad\qquad\qquad\qquad\xymatrix@R=0.01cm@C=1.5cm{
&&2\ar[lddd]_{\alpha}&& \\
&&&& \\
&\ar@(ul,dl)[d]_{\rho}&&\ar@(ur,dr)[d]^{\gamma} & \\ 
&1\ar[rr]_{\tau}&&3\ar[lddd]^{\nu}\ar[luuu]_{\beta}&
&&&& \\
&&&& \\ 
&&&& \\
&&4\ar[luuu]^{\delta}&&}$$
and the relations: 
$$\nu\delta=\beta\alpha+\lambda(\gamma\nu\delta\rho\tau)^{m-1}\gamma\nu\delta\rho,\alpha\tau=0,\tau\beta=0,$$
$$\tau\nu=\lambda(\rho\tau\gamma\nu\delta)^{m-1}\rho\tau\gamma\nu,\delta\tau=
\lambda(\delta\rho\tau\gamma\nu)^{m-1}\delta\rho\tau\gamma,$$
$$\rho^2=\lambda(\tau\gamma\nu\delta\rho)^{m-1}\tau\gamma\nu\delta,\gamma^2=
\lambda(\nu\delta\rho\tau\gamma)^{m-1}\nu\delta\rho\tau,$$
$$\rho^2\tau=0,\gamma^2\nu=0,\delta\tau\gamma=0,$$
$$\delta\rho^2=0,\tau\gamma^2=0,\rho\tau\nu=0.$$ 
We note the following consequences of those relations: 
$$\rho^3=\lambda(\tau\gamma\nu\delta\rho)^m=\lambda(\rho\tau\gamma\nu\delta)^m=\tau\nu\delta,$$
$$\gamma^3=\lambda(\gamma\nu\delta\rho\tau)^m=\lambda(\nu\delta\rho\tau\gamma)^m=\nu\delta\tau,$$
$$\beta\alpha\rho=\nu\delta\rho,\gamma\beta\alpha=\gamma\nu\delta.$$
We observe that, according to Proposition \ref{prop:3.9}, $D(m,\lambda,\xi)$ has the basis $\mathcal{B}^\xi=\mathcal{B}^\xi_1\cup 
\mathcal{B}^\xi_2\cup \mathcal{B}^\xi_3 \cup \mathcal{B}^\xi_4$ given by the sets 
$$\mathcal{B}^\xi_1=\left\{(\rho\tau\gamma\nu\delta)^r\rho,(\rho\tau\gamma\nu\delta)^r\rho\tau,
(\rho\tau\gamma\nu\delta)^r\rho\tau\gamma,(\rho\tau\gamma\nu\delta)^r\rho\tau\gamma\beta,
(\rho\tau\gamma\nu\delta)^r\rho\tau\gamma\nu\right\}_{0\leqslant r\leqslant m-1}$$ 
$$\cup\left\{(\tau\gamma\nu\delta\rho)^r\tau,(\tau\gamma\nu\delta\rho)^r\tau\gamma,
(\tau\gamma\nu\delta\rho)^r\tau\gamma\beta,(\tau\gamma\nu\delta\rho)^r\tau\gamma\nu,
(\tau\gamma\nu\delta\rho)^r\tau\gamma\nu\delta\right\}_{0\leqslant r\leqslant m-1}$$
$$\cup\left\{e_1,(\rho\tau\gamma\nu\delta)^s,(\tau\gamma\nu\delta\rho)^t\right\}_{1\leqslant s\leqslant m,1\leqslant t\leqslant m-1},$$

$$\mathcal{B}^\xi_2=\left\{\alpha(\rho\tau\gamma\nu\delta)^r,\alpha(\rho\tau\gamma\nu\delta)^r\rho,
\alpha(\rho\tau\gamma\nu\delta)^r\rho\tau,\alpha(\rho\tau\gamma\nu\delta)^r\rho\tau\gamma\right
\}_{1\leqslant r\leqslant m-1}$$
$$\cup\left\{e_2,\alpha(\rho\tau\gamma\nu\delta)^s\rho\tau\gamma\beta, 
\alpha(\rho\tau\gamma\nu\delta)^{t-1}\rho\tau\gamma\nu
\right\}_{0\leqslant s\leqslant m-1,1\leqslant t\leqslant m-1},$$

$$\mathcal{B}^\xi_3=\left\{(\gamma\nu\delta\rho\tau)^r\gamma,(\gamma\nu\delta\rho\tau)^r\gamma\beta,(\gamma\nu\delta\rho\tau)^r\gamma\nu,
(\gamma\nu\delta\rho\tau)^r\gamma\nu\delta,(\gamma\nu\delta\rho\tau)^r\gamma\nu\delta\rho\right\}_{0\leqslant r \leqslant m-1}$$
$$\cup\left\{(\nu\delta\rho\tau\gamma)^r\beta,(\nu\delta\rho\tau\gamma)^r\nu, (\nu\delta\rho\tau\gamma)^r\nu\delta, 
(\nu\delta\rho\tau\gamma)^r\nu\delta\rho,(\nu\delta\rho\tau\gamma)^r\nu\delta\rho\tau\right\}_{0\leqslant r \leqslant m-1}$$ 
$$\cup\left\{e_3,(\gamma\nu\delta\rho\tau)^s,(\nu\delta\rho\tau\gamma)^t 
\right\}_{1\leqslant s \leqslant m,1\leqslant t\leqslant m-1},$$

$$\mathcal{B}^\xi_4=\left\{\delta(\rho\tau\gamma\nu\delta)^r,\delta(\rho\tau\gamma\nu\delta)^r\rho,\delta(\rho\tau\gamma\nu\delta)^r\rho\tau, 
\delta(\rho\tau\gamma\nu\delta)^r\rho\tau\gamma\right\}_{0\leqslant r \leqslant m-1}$$
$$\cup\left\{e_4,\delta(\rho\tau\gamma\nu\delta)^s\rho\tau\gamma\nu,
\delta(\rho\tau\gamma\nu\delta)^{t-1}\rho\tau\gamma\beta\right\}_{0\leqslant s \leqslant m-1,1\leqslant t\leqslant m-1}.$$ 

In particular, the Cartan matrix $C_{D(m,\lambda,\xi)}$ of $D(m,\lambda,\xi)$ is of the form 
$$\left[\begin{array}{cccc} 4m & 2m & 4m & 2m \\ 2m & m+1 & 2m & m-1 \\ 4m & 2m & 4m & 2m \\ 2m & m-1 & 2m & m+1
\end{array}\right].$$
and $\dim_K D(m,\lambda,\xi)=36m$. Independently, observe that $m^*_\eta=m$, $n^*_\eta=5$ and $n^\nu_\eta=1$ for $\eta\in Q(\xi)_1^*=
\{\gamma,\nu,\delta,\rho,\tau\}$, so we have 
$$\sum_{\eta\in Q(\xi)_1^*}m^*_\eta(n^*_\eta+n^\nu_\eta)+m^*_\delta(n^*_\delta+n^\nu_\delta)=5m(5+1)+m(5+1)=36m.$$

We also note that, for the choice $\mu\in\mathcal{O}(\xi)$, the algebra $D(m,\lambda,\mu)$ is isomorphic to $D(m,\lambda,\xi)$. 
\end{exmp}

\begin{exmp}\label{ex:4.2} Let $(Q,f)$ be the triangulation quiver 
$$\resizebox{1.3\textwidth}{0.04\textwidth}{$\qquad\qquad\qquad\qquad\quad\xymatrix@R=0.05cm@C=1.2cm{
&&1\ar[lddd]^{\alpha}\ar[dddddd]_{\epsilon}&& \\
&&&& \\
&&&\ar@(ur,dr)[d]^{\rho} & \\ 
5\ar@<-0.1cm>[r]_{\mu}\ar@<0.15cm>@/^/[rruuu]^{\delta}&2\ar@<-0.1cm>[l]_{\xi}\ar[rddd]^{\beta}&&4\ar@<-0.15cm>[luuu]_{\sigma}&
&&&& \\
&&&& \\ 
&&&& \\
&&3\ar@<0.15cm>@/^/[lluuu]^{\nu}\ar@<-0.09cm>[ruuu]_{\gamma}&&}$}$$

with $f$-orbits $(\alpha\mbox{ }\xi\mbox{ }\delta)$, $(\beta\mbox{ }\nu\mbox{ }\mu)$, $(\epsilon\mbox{ }\gamma\mbox{ }\sigma)$ 
and $(\rho)$. Then for the associated permutation $g=\bar{f}$ we have orbits 
$$\mathcal{O}(\alpha)=(\alpha\mbox{ }\beta\mbox{ }\gamma\mbox{ }\rho\mbox{ }\sigma),\,
\mathcal{O}(\nu)=(\nu\mbox{ }\delta\mbox{ }\epsilon),\,\mathcal{O}(\xi)=(\xi\mbox{ }\mu).$$

Let $m\in\mathbb{N}^*$ and $\lambda\in K^*$. We consider the weight function $m_\bullet:\mathcal{O}(g)\to\mathbb{N}^*$ and 
the parameter function $c_\bullet:\mathcal{O}(g)\to K^*$ given by 
$$m_{\mathcal{O}(\alpha)}=m,\, m_{\mathcal{O}(\nu)}=1,\, m_{\mathcal{O}(\xi)}=1,$$ 
$$c_{\mathcal{O}(\alpha)}=\lambda,\, c_{\mathcal{O}(\nu)}=1,\, c_{\mathcal{O}(\xi)}=1.$$ 
Then the associated weighted triangulation algebra $A=A(m,\lambda)=\Lambda(Q,f,m_\bullet,c_\bullet)$ is given by its 
Gabriel quiver $Q_A$ of the form  
$$\resizebox{1.3\textwidth}{0.04\textwidth}{$\qquad\qquad\qquad\qquad\quad\xymatrix@R=0.05cm@C=1.2cm{
&&1\ar[lddd]^{\alpha}\ar[dddddd]_{\epsilon}&& \\
&&&& \\
&&&\ar@(ur,dr)[d]^{\rho} & \\ 
5\ar[rruuu]^{\delta}&2 \ar[rddd]^{\beta}&&4\ar[luuu]_{\sigma}&
&&&& \\
&&&& \\ 
&&&& \\
&&3\ar[lluuu]^{\nu}\ar[ruuu]_{\gamma}&&}$}$$
and the relations: 
$$\beta\nu\delta=\lambda(\beta\gamma\rho\sigma\alpha)^{m-1}\beta\gamma\rho\sigma, 
\quad\alpha\beta\nu=\epsilon\nu,$$ 
$$\nu\delta\alpha=\lambda(\gamma\rho\sigma\alpha\beta)^{m-1}\gamma\rho\sigma\alpha, 
\quad\delta\alpha\beta=\delta\epsilon,$$
$$\epsilon\gamma=\lambda(\alpha\beta\gamma\rho\sigma)^{m-1}\alpha\beta\gamma\rho, 
\quad\gamma\sigma=\nu\delta,$$ 
$$\sigma\epsilon=\lambda(\rho\sigma\alpha\beta\gamma)^{m-1}\rho\sigma\alpha\beta, 
\quad \rho^2\sigma=0,$$ 
$$\rho^2=\lambda(\sigma\alpha\beta\gamma\rho)^{m-1}\sigma\alpha\beta\gamma,\quad\gamma\rho^2=0,$$
$$\alpha\beta\nu\delta\alpha=0,\beta\nu\delta\epsilon=0,\nu\delta\alpha\beta\nu=0,\delta\alpha\beta\gamma=0,$$
$$\epsilon\gamma\rho=0, \sigma\epsilon\nu=0, \delta\epsilon\gamma=0, \rho\sigma\epsilon=0,$$
$$\epsilon\nu\delta\alpha=0, \sigma\alpha\beta\nu=0, \beta\nu\delta\alpha\beta=0,\delta\alpha\beta\nu\delta=0.$$
Observe that we have no zero relations of the shape 
$$\delta\alpha\beta=\delta f(\delta)g(f(\delta))=0,\qquad\mbox{because }f^2(\delta)=\xi\mbox{ is virtual},$$
$$\beta\nu\delta=\beta f(\beta)g(f(\beta))=0,\qquad\mbox{because }f^2(\beta)=\mu\mbox{ is virtual},$$
$$\gamma\sigma\alpha=\gamma f(\gamma)g(f(\gamma))=0,\mbox{ because }f(\bar{\gamma})=\mu
\mbox{ is virtual and }m_{\bar{\gamma}}=1,n_{\bar{\gamma}}=3,$$
$$\alpha\beta\nu=\alpha g(\alpha )f(g(\alpha))=0,\qquad\mbox{because }f(\alpha)=\xi\mbox{ is virtual},$$
$$\nu\delta\alpha=\nu g(\nu)f(g(\nu))=0,\qquad\mbox{because }f(\nu)=\mu\mbox{ is virtual},$$
$$\beta\gamma\sigma=\beta g(\beta)f(g(\beta))=0,\mbox{ because }f^2(\beta)=\mu
\mbox{ is virtual and }m_{f(\beta)}=1,n_{f(\beta)}=3.$$
We also observe that $\gamma\sigma\alpha=\nu\delta\alpha$ and $\beta\gamma\sigma=\beta\nu\delta$. Moreover, 
$\dim_K A(m,\lambda)=m\cdot 5^2+ 3^2 + 2^2=25m+13$. In fact, by Proposition \ref{prop:2.6}, the Cartan matrix $C_{A(m,\lambda)}$ of 
$A(m,\lambda)$ is of the form 
$$\left[\begin{array}{ccccc} 
m+1 & m & m+1 & 2m & 1 \\ 
m & m+1 & m & 2m & 1 \\ 
m+1 & m & m+1 & 2m & 1 \\ 
2m & 2m & 2m & 4m & 0 \\ 
1 & 1 & 1 & 0 & 2 
 \end{array}\right].$$

We now take $\mathcal{O}=\mathcal{O}(\xi)$ and consider two virtually mutated algebras with respect to two possible 
choices of elements from $\mathcal{O}$. 

(1) Let first $A(\xi)=A(m,\lambda,\xi)=\Lambda(Q,f,m_\bullet,c_\bullet,\xi)$. Then the algebra $A(\xi)$ is given by the 
quiver $Q(\xi)$ of the form
$$\resizebox{1.3\textwidth}{0.04\textwidth}{$\qquad\qquad\qquad\qquad\qquad\xymatrix@R=0.05cm@C=1.2cm{
&&1\ar@<0.1cm>[dddddd]^{\epsilon}\ar@<-0.1cm>[dddddd]_{\tau}&& \\
&&&& \\
&&&\ar@(ur,dr)[d]^{\rho} & \\ 
5\ar[rruuu]^{\delta}&2 \ar[ruuu]_{\alpha}&&4\ar[luuu]_{\sigma}&
&&&& \\
&&&& \\ 
&&&& \\
&&3\ar[lluuu]^{\nu}\ar[ruuu]_{\gamma}\ar[luuu]_{\beta}&&}$}$$
and the relations: 
$$\nu\delta=\beta\alpha+\lambda(\gamma\rho\sigma\tau)^{m-1}\gamma\rho\sigma,\tau\nu=\epsilon\nu,
\delta\tau=\delta\epsilon,$$
$$\alpha\tau=0,\tau\beta=0,\gamma\sigma=\nu\delta,
\epsilon\gamma=\lambda(\tau\gamma\rho\sigma)^{m-1}\tau\gamma\rho,$$ 
$$\sigma\epsilon=\lambda(\rho\sigma\tau\gamma)^{m-1}\rho\sigma\tau,
\rho^2=\lambda(\sigma\tau\gamma\rho)^{m-1}\sigma\tau\gamma,$$
$$\delta\tau\gamma=0,\epsilon\gamma\rho=0,\sigma\epsilon\nu=0,\rho^2\sigma=0,$$
$$\delta\epsilon\gamma=0,\gamma\rho^2=0,\rho\sigma\epsilon=0.$$ 
Applying Proposition \ref{prop:3.9} we conclude that the Cartan matrix $C_{A(\xi)}$ of $A(\xi)$ is of the form 
$$\left[\begin{array}{ccccc} 
m+1 & 1 & m+1 & 2m & 1 \\ 
1 & 2 & 1 & 0 & 0 \\ 
m+1 & 1 & m+1 & 2m & 1 \\ 
2m & 0 & 2m & 4m & 0 \\ 
1 & 0 & 1 & 0 & 2 
 \end{array}\right].$$

In particular, we have $\dim_K A(\xi)=16m+16$. 

(2) Now, consider $A(\mu)=A(m,\lambda,\mu)=\Lambda(Q,f,m_\bullet,c_\bullet,\mu)$. Then $A(\mu)$ is given by the quiver 
$Q(\mu)$ of the form 
$$\resizebox{1.3\textwidth}{0.04\textwidth}{$\qquad\qquad\qquad\qquad\qquad\xymatrix@R=0.05cm@C=1.2cm{
&&1\ar[lddd]^{\alpha}\ar@<0.1cm>[dddddd]^{\epsilon}\ar[llddd]_{\delta}&& \\
&&&& \\
&&&\ar@(ur,dr)[d]^{\rho} & \\ 
5\ar[rrddd]_{\nu}&2 \ar[rddd]^{\beta}&&4\ar[luuu]_{\sigma}&
&&&& \\
&&&& \\ 
&&&& \\
&&3\ar[ruuu]_{\gamma}\ar@<0.1cm>[uuuuuu]^{\tau}&&}$}$$
and the relations (we note that $m_{\mathcal{\nu}}n_{\mathcal{\nu}}=3$): 
$$\alpha\beta=\delta\nu+c^*_{\tilde{\alpha}}A^*_{\tilde{\alpha}}=\delta\nu+A^*_\epsilon=\delta\nu+\epsilon,$$
$$\tau\alpha=c^*_{\tilde{\tau}}A^*_{\tilde{\tau}}=c_{\gamma}A^*_\gamma=
\lambda(\gamma\rho\sigma\alpha\beta)^{m-1}\gamma\rho\sigma\alpha,$$
$$\beta\tau=c_{\beta}A^*_\beta=\lambda(\beta\gamma\rho\sigma\alpha)^{m-1}\beta\gamma\rho\sigma,\tau\delta=0,\nu\tau=0,$$ 
$$\epsilon\gamma=\lambda(\alpha\beta\gamma\rho\sigma)^{m-1}\alpha\beta\gamma\rho,\sigma\epsilon=
\lambda(\rho\sigma\alpha\beta\gamma)^{m-1}\rho\sigma\alpha\beta,$$ 
$$\gamma\sigma=c^*_{\tilde{\gamma}}A^*_{\tilde{\gamma}}=c^*_\tau A^*_\tau=\tau,\rho^2=\lambda(\sigma\alpha\beta\gamma\rho)^{m-1}\sigma\alpha\beta\gamma,$$
$$\sigma\epsilon\tau=0,\epsilon\gamma\rho=0,\rho^2\sigma=0,\beta\tau\epsilon=0,$$
$$\tau\epsilon\gamma=0,\epsilon\tau\alpha=0,\gamma\rho^2=0,\rho\sigma\epsilon=0.$$ 
In particular, we have $\epsilon=\alpha\beta-\delta\nu$ and $\tau=\gamma\sigma$. Hence $A(\mu)$ is given by its Gabriel quiver 
$$\resizebox{1.3\textwidth}{0.04\textwidth}{$\qquad\qquad\qquad\qquad\qquad\xymatrix@R=0.05cm@C=1.2cm{
&&1\ar[lddd]^{\alpha}\ar[llddd]_{\delta}&& \\
&&&& \\
&&&\ar@(ur,dr)[d]^{\rho} & \\ 
5\ar[rrddd]_{\nu}&2 \ar[rddd]^{\beta}&&4\ar[luuu]_{\sigma}&
&&&& \\
&&&& \\ 
&&&& \\
&&3\ar[ruuu]_{\gamma}&&}$}$$ 
and the following relations: 
$$\alpha\beta\gamma=\delta\nu\gamma+\lambda(\alpha\beta\gamma\rho\sigma)^{m-1}\alpha\beta\gamma\rho,\beta\gamma\sigma=
\lambda(\beta\gamma\rho\sigma\alpha)^{m-1}\beta\gamma\rho\sigma,$$
$$\sigma\alpha\beta=\sigma\delta\nu+\lambda(\rho\sigma\alpha\beta\gamma)^{m-1}\rho\sigma\alpha\beta,\gamma\sigma\alpha=
\lambda(\gamma\rho\sigma\alpha\beta)^{m-1}\gamma\rho\sigma\alpha,$$
$$\rho^2=\lambda(\sigma\alpha\beta\gamma\rho)^{m-1}\sigma\alpha\beta\gamma,\gamma\sigma\delta=0,\nu\gamma\sigma=0,\rho^2\sigma=0,\gamma\rho^2=0,$$
$$\alpha\beta\gamma\sigma\alpha=0,\beta\gamma\sigma\alpha\beta=0, \gamma\sigma\alpha\beta\gamma=0,\sigma\alpha\beta\gamma\sigma=0.$$

Applying Lemma \ref{lem:3.6} and Proposition \ref{prop:3.9} we infer that the Cartan matrix $C_{A(\mu)}$ of $A(\mu)$ is of the form 
$$\left[\begin{array}{ccccc} 
m+1 & m & m+1 & 2m & m \\ 
m & m+1 & m & 2m & m-1 \\ 
m+1 & m & m+1 & 2m & m \\ 
2m & 2m & 2m & 4m & 2m \\ 
m & m-1 & m & 2m & m+1 
\end{array}\right].$$

In particular, $\dim_K A(\mu)=36m+4$.

We note that the algebras $A(\xi)$ and $A(\mu)$ are not isomorphic, since their Gabriel quivers are different, as well 
as their dimensions never coincide.
\end{exmp}

\begin{exmp}\label{ex:4.3} (\emph{Virtually mutated generalized spherical algebras}). 

We considerthe following triangulation $T(2)$ of the sphere $S^2$ in $\mathbb{R}^3$ 
$$\begin{tikzpicture}[scale=1.5]
\draw[thick] (0,-1)--(0,1); 
\draw[thick] (0,-1)--(0.5,0);
\draw[thick] (0,-1)--(-0.5,0);
\draw[thick] (-0.5,0)--(0,1);
\draw[thick] (0.5,0)--(0,1);
\draw[thick](0,0) circle (1);
\node() at (-0.5,0){$\bullet$};
\node() at (0.5,0){$\bullet$};
\node() at (0,-1){$\bullet$}; 
\node() at (0,1){$\bullet$}; 
\node() at (1.2,0){$1$};
\node() at (-1.2,0){$1$};
\node() at (-0.5,0.5){$2$};
\node() at (-0.1,0){$3$};
\node() at (0.5,0.5){$4$};
\node() at (-0.5,-0.5){$5$};
\node() at (0.5,-0.5){$6$};
\end{tikzpicture}$$ 
and the coherent orientation $\overrightarrow{T(2)}$ of triangles in $T(2)$: $(1\mbox{ }2\mbox{ }5)$, $(2\mbox{ }3\mbox{ }5)$, 
$(3\mbox{ }4\mbox{ }6)$ and $(4\mbox{ }1\mbox{ }6)$. Then the associated triangulation quiver $(Q,f)=(Q(S^2,\overrightarrow{T(2)}),f)$ 
is of the form 
$$\xymatrix@R=1.8cm@C=1.3cm{&& 1\ar[ld]^{\alpha}\ar[rrd]^{\rho} && \\ 
5\ar[rru]^{\delta}\ar@<-0.07cm>[r]_{\mu} & 2\ar[rd]^{\beta}\ar@<-0.07cm>[l]_{\xi}& &
4\ar[lu]^{\sigma}\ar@<-0.07cm>[r]_{\eta}&6\ar[lld]^{\omega}\ar@<-0.07cm>[l]_{\zeta} \\ 
&&3\ar[llu]^{\nu}\ar[ru]^{\gamma} &&}$$ 
with $f$-orbits $(\alpha\mbox{ }\xi\mbox{ }\delta)$, $(\beta\mbox{ }\nu\mbox{ }\mu)$, $(\gamma\mbox{ }\eta\mbox{ }\omega)$, 
$(\sigma\mbox{ }\rho\mbox{ }\zeta)$. Then the the permutation $g=\bar{f}$ has four orbits 
$$\mathcal{O}(\alpha)=(\alpha\mbox{ }\beta\mbox{ }\gamma\mbox{ }\sigma),\,\mathcal{O}(\rho)=(\rho\mbox{ }\omega\mbox{ }\nu\mbox{ }\delta),\,
\mathcal{O}(\xi)=(\xi\mbox{ }\mu),\, \mathcal{O}(\eta)=(\eta\mbox{ }\zeta).$$
Let $m,n\in\mathbb{N}^*$ and $a,b\in K^*$. We define the weight function $m_\bullet:\mathcal{O}(g)\to\mathbb{N}^*$ and 
the parameter function $c_\bullet:\mathcal{O}(g)\to K^*$ such that 
$$m_{\mathcal{O}(\alpha)}=m,\, m_{\mathcal{O}(\rho)}=n,\, m_{\mathcal{O}(\xi)}=1,\, m_{\mathcal{O}(\eta)}=1,$$ 
$$c_{\mathcal{O}(\alpha)}=a,\, c_{\mathcal{O}(\rho)}=b,\, c_{\mathcal{O}(\xi)}=1,\, c_{\mathcal{O}(\eta)}=1.$$
Hence the orbits $\mathcal{O}(\xi)$ and $\mathcal{O}(\eta)$ consist of virtual arrows. Moreover, if $m=n=1$, then 
we assume that $ab\neq 1$ (see \cite[Example 3.6]{ES5}). We consider the associated triangulation algebra 
$S(m,n,a,b)=\Lambda(Q,f,m_\bullet,c_\bullet)$, and call it a \emph{generalized spherical algebra}. Note that $S(1,1,a,b)$ 
is isomorphic to the non-singular spherical algebra $S(ab)$ introduced in \cite{ES5}. 

The algebra $S(m,n,a,b)$ is given by its Gabriel quiver $Q_S$ of the form 
$$\xymatrix@R=1.8cm@C=1.3cm{&& 1\ar[ld]^{\alpha}\ar[rrd]^{\rho} && \\ 
5\ar[rru]^{\delta}& 2\ar[rd]^{\beta} & &
4\ar[lu]^{\sigma}&6\ar[lld]^{\omega} \\ 
&&3\ar[llu]^{\nu}\ar[ru]^{\gamma} &&}$$ 
and the relations 
$$\alpha\beta\nu=b(\rho\omega\nu\delta)^{n-1}\rho\omega\nu,\quad 
\beta\nu\delta=a(\beta\gamma\sigma\alpha)^{m-1}\beta\gamma\sigma,$$ 
$$\delta\alpha\beta=b(\delta\rho\omega\nu)^{n-1}\delta\rho\omega,\quad 
\nu\delta\alpha=a(\gamma\sigma\alpha\beta)^{m-1}\gamma\sigma\alpha,$$ 
$$\gamma\sigma\rho=b(\nu\delta\rho\omega)^{n-1}\nu\delta\rho,\quad 
\sigma\rho\omega=a(\sigma\alpha\beta\gamma)^{m-1}\sigma\alpha\beta,$$ 
$$\omega\gamma\sigma=b(\omega\nu\delta\rho)^{n-1}\omega\nu\delta,\quad 
\rho\omega\gamma=a(\alpha\beta\gamma\sigma)^{m-1}\alpha\beta\gamma,$$
$$\alpha\beta\nu\delta\alpha=0,\beta\nu\delta\rho=0,\nu\delta\alpha\beta\nu=0,\delta\alpha\beta\gamma=0,$$
$$\gamma\sigma\rho\omega\gamma=0,\sigma\rho\omega\nu=0,\rho\omega\gamma\sigma\rho=0,\omega\gamma\sigma\alpha=0,$$
$$\beta\gamma\sigma\rho=0,\sigma\alpha\beta\nu=0,\delta\rho\omega\gamma=0,\omega\nu\delta\alpha=0,$$
$$\beta\nu\delta\alpha\beta=0,\delta\alpha\beta\nu\delta=0,\sigma\rho\omega\gamma\sigma=0,
\omega\gamma\sigma\rho\omega=0.$$
Moreover, a minimal set of relations defining $S(m,n,a,b)$ is given by the above eight commutativity relations and the 
four zero relations: 
$$\beta\nu\delta\rho=0,\delta\alpha\beta\gamma=0,\sigma\rho\omega\nu=0,\omega\gamma\sigma\alpha=0.$$
Further, we have 
$$\dim_K S(m,n,a,b)=16m+16n+4+4=16(m+n)+8.$$

We shall now show some representative families of virtually mutated algebras of $S(m,n,a,b)$.  

\begin{enumerate}[(1)]
\item Let $\mathcal{O}=\mathcal{O}(\xi)$ and $\xi$ be the chosen element of $\mathcal{O}$. We use the following notation 
$$S(m,n,a,b,\xi):=\Lambda(Q,f,m_\bullet,c_\bullet,\xi).$$ 
We note that $s(\xi)=2$. The algebra $S(m,n,a,b,\xi)$ is given by the quiver $Q(\xi)$ of the form 
$$\xymatrix@R=1.8cm@C=1.3cm{&& 1\ar[rrd]^{\rho}\ar[dd]^{\tau} && \\ 
5\ar[rru]^{\delta}& 2\ar[ru]_{\alpha} & &
4\ar[lu]^{\sigma}&6\ar[lld]^{\omega} \\ 
&&3\ar[llu]^{\nu}\ar[ru]^{\gamma}\ar[lu]_{\beta} &&}$$ 
and the relations: 
$$\nu\delta=\beta\alpha+a(\gamma\sigma\tau)^{m-1}\gamma\sigma,\alpha\tau=0,\tau\beta=0,$$
$$\tau\nu=b(\rho\omega\nu\delta)^{n-1}\rho\omega\nu,\delta\tau=b(\delta\rho\omega\nu)^{n-1}\delta\rho\omega,$$
$$\gamma\sigma\rho=b(\nu\delta\rho\omega)^{n-1}\nu\delta\rho,\sigma\rho\omega=a(\sigma\tau\gamma)^{m-1}\sigma\tau,$$
$$\omega\gamma\sigma=b(\omega\nu\delta\rho)^{n-1}\omega\nu\delta,\rho\omega\gamma=a(\tau\gamma\sigma)^{m-1}\tau\gamma,$$
$$\delta\tau\gamma=0,\rho\omega\gamma\sigma\rho=0,\gamma\sigma\rho\omega\gamma=0,$$
$$\delta\rho\omega\gamma=0,\tau\gamma\sigma\rho=0,\sigma\tau\nu=0.$$
It follows from Theorem \ref{thm:3.14} that 
$$\dim_K S(m,n,a,b,\xi)=9m+25n+4.$$
In particular, we have 
$$\dim_K S(1,n,a,b,\xi)=25n+13,\mbox{ and}$$ 
$$\dim_K S(m,1,a,b,\xi)=9m+29.$$

\item Let $\mathcal{O}_1=\mathcal{O}(\xi)$ and $\mathcal{O}_2=\mathcal{O}(\eta)$ and $(\xi,\eta)$ be the chosen element 
of $\mathcal{O}_1\times\mathcal{O}_2$. We set 
$$S(m,n,a,b,\xi,\eta):=\Lambda(Q,f,m_\bullet,c_\bullet,(\xi,\eta)).$$ 
Then obviously $s(\xi)=2=t(\alpha)$, $s(\eta)=4=t(\gamma)$ and $\alpha,\gamma$ belong to the same $g$-orbit. The algebra 
$S(m,n,a,b,\xi,\eta)$ is given by the quiver $Q(\xi,\eta)$ of the form 
$$\xymatrix@R=1.8cm@C=1.3cm{
&& 1\ar[rrd]^{\rho}\ar[rd]_{\sigma} \ar@<-0.1cm>[dd]_{\tau} && \\ 
5\ar[rru]^{\delta}& 2\ar[ru]_{\alpha} & &
4\ar[ld]^{\gamma}&6\ar[lld]^{\omega} \\ 
&&3\ar[llu]^{\nu}\ar[lu]_{\beta}\ar@<-0.1cm>[uu]_{\epsilon} &&}$$ 
and the relations: 
$$\nu\delta=\beta\alpha+a(\epsilon\tau)^{m-1}\epsilon,\alpha\tau=0,\tau\beta=0,$$
$$\tau\nu=b(\rho\omega\nu\delta)^{n-1}\rho\omega\nu,\delta\tau=b(\delta\rho\omega\nu)^{n-1}\delta\rho\omega,$$
$$\rho\omega=\sigma\gamma+a(\tau\epsilon)^{m-1}\tau,\gamma\epsilon=0,\epsilon\sigma=0,$$
$$\epsilon\rho=b(\nu\delta\rho\omega)^{n-1}\nu\delta\rho,\omega\epsilon=b(\omega\nu\delta\rho)^{n-1}\omega\nu\delta,$$
$$\tau\epsilon\rho=0,\epsilon\tau\nu=0.$$ 
Moreover, it follows from Theorem \ref{thm:3.14} that 
$$\dim_K S(m,n,a,b,\xi,\eta)=4m+36n.$$ 
Note that if $m\geqslant 2$ then the above quiver is the Gabriel quiver of $S(m,n,a,b,\xi,\eta)$. 

Now, assume that $m=1$. Then we have the equalities $\nu\delta=\beta\alpha+a\epsilon$ and $\rho\omega=\sigma\gamma+a\tau$, so 
$\tau,\epsilon$ are not occurring in the Gabriel quiver, and hence $S(1,n,a,b,\xi,\eta)$ is given by a quiver isomorphic 
to $Q_S$ and the following relations: 
$$\rho\omega\nu=\sigma\gamma\nu+ab(\rho\omega\nu\delta)^{n-1}\rho\omega\nu,\alpha\rho\omega=\alpha\sigma\gamma,$$
$$\delta\rho\omega=\delta\sigma\gamma+ab(\delta\rho\omega\nu)^{n-1}\delta\rho\omega,\rho\omega\beta=\gamma\sigma\beta,$$
$$\nu\delta\rho=\beta\alpha\rho+ab(\nu\delta\rho\omega)^{n-1}\nu\delta\rho,\gamma\nu\delta=\gamma\beta\alpha,$$
$$\omega\nu\delta=\omega\beta\alpha+ab(\omega\nu\delta\rho)^{n-1}\omega\nu\delta,\nu\delta\sigma=\beta\alpha\sigma,$$
$$(\rho\omega\nu\delta)^{n}\rho=0,(\nu\delta\rho\omega)^{n}\nu=0.$$
We observe that the last two zero relations are consequences of the zero relations of the form 
$(\rho\omega-\sigma\gamma)(\nu\delta-\beta\alpha)\rho=0$, $(\nu\delta-\beta\alpha)(\rho\omega-\sigma\gamma)\nu=0$, and 
some of the above relations. Therefore, we conclude that, for $n\geqslant 2$ and $\lambda=ab$, the algebra 
$S(1,n,a,b,\xi,\eta)$ is isomorphic to the so called higher spherical algebra $S(n,\lambda)$ investigated in \cite{ES6}. 

\item Let now $\mathcal{O}_1$ and $\mathcal{O}_2$ be as in previous case (2), but take $(\xi,\zeta)$ as the chosen 
element of $\mathcal{O}_1\times\mathcal{O}_2$. We consider the virtually mutated algebra 
$$S(m,n,a,b,\xi,\zeta):=\Lambda(Q,f,m_\bullet,c_\bullet,(\xi,\zeta)).$$ 
Note that $s(\xi)=2=t(\alpha)$ and $s(\zeta)=6=t(\omega)$ and $\alpha,\omega$ belong to different $g$-orbits in $(Q,f)$. 
The algebra $S(m,n,a,b,\xi,\zeta)$ is given by the quiver $Q(\xi,\zeta)$ of the form 
$$\xymatrix@R=1.8cm@C=1.3cm{&& 1\ar@<-0.1cm>[dd]_{\tau}\ar@<0.1cm>[dd]^{\epsilon} && \\ 
5\ar[rru]^{\delta}& 2\ar[ru]_{\alpha} & &
4\ar[lu]^{\sigma}&6\ar[llu]_{\rho} \\ 
&&3\ar[llu]^{\nu}\ar[ru]^{\gamma}\ar[lu]_{\beta}\ar[rru]_{\omega} &&}$$ 
and the relations: 
$$\nu\delta=\beta\alpha+a(\gamma\sigma\tau)^{m-1}\gamma\sigma,\alpha\tau=0,\tau\beta=0,$$
$$\gamma\sigma=\rho\omega+b(\nu\delta\epsilon)^{n-1}\nu\delta,\rho\epsilon=0,\epsilon\omega=0,$$
$$\tau\nu=b(\epsilon\nu\delta)^{n-1}\epsilon\nu,\delta\tau=b(\delta\epsilon\nu)^{n-1}\delta\epsilon,$$
$$\epsilon\gamma=a(\tau\gamma\sigma)^{m-1}\tau\gamma,\sigma\epsilon=a(\sigma\tau\gamma)^{m-1}\sigma\tau,$$
$$\delta\epsilon\gamma=0,\sigma\tau\nu=0.$$ 
Applying Theorem \ref{thm:3.14}, we conclude that 
$$\dim_K S(m,n,a,b,\xi,\zeta)=16(m+n).$$ 
\end{enumerate}

\end{exmp}

\section{Proof of Main Theorem}\label{sec:5} 

Let $\Lambda=\Lambda(Q,f,m_\bullet,c_\bullet)$ be a weighted triangulation algebra. For each vertex $i$ of the quiver $Q$, we 
denote by $P_i=e_i\Lambda$ the associated projective module in $\mod \Lambda$. Moreover, for any arrow $\theta$ from $j$ to $k$ 
in $Q$, we identify $\theta$ with the homomorphism $\theta:P_k\to P_j$ in $\mod\Lambda$ given by the left multiplication by $\theta$. 

Assume that $\mathcal{O}(g)$ contains a family $\mathcal{O}_1,\dots,\mathcal{O}_r$ of orbits with $|\mathcal{O}_i|=2$ and 
$m_{\mathcal{O}_i}=1$, for any $i\in\{1,\dots,r\}$, and $\xi=(\xi_1,\dots,\xi_r)\in\mathcal{O}_1\times\dots\times\mathcal{O}_r$. 
In Section \ref{sec:3} we defined the virtual mutation $\Lambda(\xi)=\Lambda(Q,f,m_\bullet,c_\bullet,\xi)$ of $\Lambda$ with 
respect to the sequence $\xi$ of virtual arrows. We use the notation established in Section \ref{sec:3}. In particular, for each 
$i\in\{1,\dots,r\}$, the triangulation quiver $(Q,f)$ contains a subquiver of the form 
$$\xymatrix@R=0.5cm@C=1.2cm{&c_i\ar[rd]^{\beta_i}\ar@<-0.1cm>[dd]_{\xi_i}& \\ 
a_i\ar[ru]^{\alpha_i}&&b_i\ar[ld]^{\nu_i} \\ &d_i\ar[lu]^{\delta_i}\ar@<-0.1cm>[uu]_{\mu_i}& }$$ 
with $f$-orbits $(\alpha_i\mbox{ }\xi_i\mbox{ }\delta_i)$ and $(\beta_i\mbox{ }\nu_i\mbox{ }\mu_i)$, and $\mathcal{O}_i=
(\xi_i\mbox{ }\mu_i)$. We also note that in the Gabriel quiver $Q_\Lambda$ of $\Lambda$ the virtual arrows do not occur, and 
hence $Q_\Lambda$ has subquivers 
$$\xymatrix@R=0.5cm@C=1.2cm{&c_i\ar[rd]^{\beta_i}& \\ 
a_i\ar[ru]^{\alpha_i}&&b_i\ar[ld]^{\nu_i} \\ 
&d_i\ar[lu]^{\delta_i}& }$$ 
where $\alpha_i$ is the unique arrow of $Q_\Lambda$ with target $c_i$ and $\beta_i$ is the unique arrow of $Q_\Lambda$ with 
source $c_i$, for $i\in\{1,\dots,r\}$. We consider the decomposition $\Lambda_\Lambda=P'\oplus P''$ in $\mod\Lambda$, where 
$$P'=\bigoplus_{i=1}^r P_{c_i}\oplus\bigoplus_{x\in Q_0\setminus\{c_1,\dots,c_r\}} P_x.$$ 
Moreover, we consider the following complexes in the homotopy category $K^b(P_\Lambda)$ of projective modules 
modules in $\mod \Lambda$: 
$$T^\xi_x:\xymatrix{0\ar[r]&P_x\ar[r]&0}$$ 
concentrated in degree $0$, for all vertices $x\in Q_0$ different from $c_1,\dots,c_r$, and 
$$T^\xi_{c_i}: \xymatrix{0\ar[r]&P_{c_i}\ar[r]^{\alpha_i}&P_{a_i}\ar[r]&0}$$ 
concentrated in degrees $1$ and $0$, for all $i\in\{1,\dots,r\}$. We also set 
$$T^\xi=\bigoplus_{x\in Q_0}T^\xi_x.$$

\begin{lem}\label{lem:5.1} $T^\xi$ is a tilting complex in $K^b(P_\Lambda)$. \end{lem}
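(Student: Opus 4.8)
The plan is to recognise $T^\xi$ as an instance of the Okuyama--Rickard tilting complex of Proposition~\ref{pro:1.5}. First I would check that $\Lambda$ satisfies the hypotheses of that proposition: it is symmetric by Theorem~\ref{thm:2.4}(2) (the singular cases having been excluded), it is basic by construction and indecomposable since $Q$ is connected, and $\operatorname{rk}K_0(\Lambda)=|Q_0|\geqslant 4$. I would then apply Proposition~\ref{pro:1.5} to the decomposition $\Lambda_\Lambda=P''\oplus P'$ with $P''=\bigoplus_{i=1}^r P_{c_i}$ and $P'=\bigoplus_{x\in Q_0\setminus\{c_1,\dots,c_r\}}P_x$, taking $Q'=\bigoplus_{i=1}^r P_{a_i}$ and $f=\bigoplus_{i=1}^r\alpha_i\colon P''\to Q'$ (left multiplication by $\alpha_i$ in each component). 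Since $f$ is block diagonal, the complexes $T_1$ and $T_2$ of Proposition~\ref{pro:1.5} are exactly $\bigoplus_{x\neq c_i}T^\xi_x$ and $\bigoplus_{i=1}^r T^\xi_{c_i}$, so that $T^\xi=T_1\oplus T_2$, and the lemma will follow once two points are settled: that $Q'$ lies in $\add(P')$ (so the decomposition is legitimate and proper), and that $f$ is a left $\add(P')$-approximation of $P''$.

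For the first point I would show $a_i\notin\{c_1,\dots,c_r\}$ for all $i$. Suppose $a_j=c_i$. Then $\alpha_j$, which is not virtual because $|\mathcal{O}(\alpha_j)|\geqslant 3$, is an arrow of $Q_\Lambda$ with source $c_i$, hence $\alpha_j=\beta_i$ (the unique such arrow), so $f(\alpha_j)=f(\beta_i)$, i.e.\ $\xi_j=\nu_i$; but $\xi_j$ is virtual whereas $\nu_i$, lying in a $g$-orbit of length $\geqslant 3$, is not, a contradiction. Hence $Q'\in\add(P')$ and, in particular, $\{c_1,\dots,c_r\}\subsetneq Q_0$, so both $P''$ and $P'$ are non-zero.

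For the approximation property it suffices, since $P'$ is a direct sum of the $P_x$ with $x\notin\{c_1,\dots,c_r\}$, to show that every homomorphism $h\colon P_{c_i}\to P_x$ with such an $x$ factors through $\alpha_i\colon P_{c_i}\to P_{a_i}$. Identifying $\Hom_\Lambda(P_{c_i},P_x)$ with $e_x\Lambda e_{c_i}$, it is spanned by residues of paths of $Q$ from $x$ to $c_i$; since $x\neq c_i$ each such path has positive length and ends in one of the two arrows of $Q$ into $c_i$, namely $\alpha_i$ or the virtual arrow $\mu_i$. Relation~(1) of Definition~\ref{def:2.3} applied to $\delta_i$ (for which $f(\delta_i)=\alpha_i$, $\bar{\delta_i}=\mu_i$ and $A_{\mu_i}=\mu_i$) gives $\delta_i\alpha_i=c_{\mu_i}\mu_i$ in $\Lambda$, so a path $q\mu_i$ equals $c_{\mu_i}^{-1}(q\delta_i)\alpha_i$ with $q\delta_i$ a path from $x$ to $a_i$. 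Thus $e_x\Lambda e_{c_i}=(e_x\Lambda e_{a_i})\alpha_i$, i.e.\ $h$ factors through $\alpha_i$; hence $\Hom_\Lambda(f,W)$ is surjective for every $W\in\add(P')$, $f$ is a left $\add(P')$-approximation of $P''$, and Proposition~\ref{pro:1.5} yields the claim.

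The step that carries the actual content is the last one: the reduction showing that every homomorphism out of $P_{c_i}$ into the remaining indecomposable projectives necessarily passes through the single arrow $\alpha_i$ (equivalently, the rewriting $\mu_i=c_{\mu_i}^{-1}\delta_i\alpha_i$). Everything else is formal, once one keeps track of the harmless scalar normalisation $c_{\mathcal{O}_i}=1$ adopted in Section~\ref{sec:3}.
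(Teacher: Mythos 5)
Your proposal is correct and follows the paper's own route: the paper likewise realises $T^\xi$ as the Okuyama--Rickard complex of Proposition~\ref{pro:1.5} for the decomposition separating off $\bigoplus_i P_{c_i}$, with the diagonal map $\bigoplus_i\alpha_i$ as the left approximation, justified by $\alpha_i$ being the unique arrow of $Q_\Lambda$ ending at $c_i$ (equivalently, $\mu_i=c_{\mu_i}^{-1}\delta_i\alpha_i$). Your write-up merely makes explicit two points the paper leaves implicit, namely that $a_i\notin\{c_1,\dots,c_r\}$ and the path-factorisation argument behind the approximation property.
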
 

\begin{proof} Let $\alpha:\bigoplus_{i=1}^r P_{c_i}\to\bigoplus_{i=1}^r P_{a_i}$ be the diagonal 
homomorphism given by the homomorphisms $\alpha_i:P_{c_i}\to P_{a_i}$. Since $\alpha_i$ is the 
unique arrow in $Q_\Lambda$ ending at $c_i$, for any $i\in\{1,\dots,r\}$, we conclude that $\alpha$ 
is a left $\add(P'')$-approximation of $P'$, so $T^\xi$ is indeed a tilting complex in $K^b(P_\Lambda)$, 
by Proposition \ref{pro:1.5}. \end{proof}

We define $\Gamma=\Gamma(\xi):=\End_{K^b(P_\Lambda)}(T^\xi)$. The following theorem is a direct 
consequence of Theorems \ref{thm:1.2}, \ref{thm:1.3}, \ref{thm:1.4} and \ref{thm:2.4}. 

\begin{theorem}\label{thm:5.2} The following statements hold.
\begin{enumerate}[$(1)$]
\item $\Gamma(\xi)$ is a finite-dimensional symmetric algebra. 

\item $\Gamma(\xi)$ is a representation-infinite tame algebra. 

\item $\Gamma(\xi)$ is a periodic algebra of period $4$.
\end{enumerate}
\end{theorem}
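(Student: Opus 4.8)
The plan is to obtain Theorem~\ref{thm:5.2} by transporting the relevant properties of $\Lambda$ across the derived equivalence produced by the tilting complex $T^\xi$, using Rickard's criterion and the invariance results collected in Section~\ref{sec:1}. The first step is to apply Lemma~\ref{lem:5.1}: $T^\xi$ is a tilting complex in $K^b(P_\Lambda)$, and since $\Gamma(\xi)=\End_{K^b(P_\Lambda)}(T^\xi)$ by definition, Theorem~\ref{thm:1.1} yields at once that $\Lambda$ and $\Gamma(\xi)$ are derived equivalent. Observe also that $\Gamma(\xi)$ is finite-dimensional, being the endomorphism ring in $K^b(P_\Lambda)$ of a bounded complex of finite-dimensional projective $\Lambda$-modules.

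The second step is to record that $\Lambda$ lies within the scope of Theorem~\ref{thm:2.4}. Under the standing hypotheses of Section~\ref{sec:3} --- $|Q_0|\geqslant 4$, the existence of the length-two weight-one orbits $\mathcal{O}_1,\dots,\mathcal{O}_r$, and $\Lambda$ not being a singular spherical algebra --- the algebra $\Lambda$ is none of the exceptional algebras excluded from Theorem~\ref{thm:2.4}: the triangle algebra is ruled out by $|Q_0|\geqslant 4$, the spherical one by our running assumption, and one verifies that $\Lambda$ is neither a singular disc nor a singular tetrahedral algebra. Consequently, $\Lambda$ is a finite-dimensional symmetric tame periodic algebra of period $4$; moreover $\Lambda$ is representation-infinite, which is part of the structure theory of weighted triangulation algebras developed in \cite{ES5}.

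In the third step the three assertions follow formally. For $(1)$: $\Lambda$ is symmetric and derived equivalent to $\Gamma(\xi)$, so $\Gamma(\xi)$ is symmetric by Theorem~\ref{thm:1.2}; together with finite-dimensionality this also shows $\Gamma(\xi)$ is self-injective. For $(3)$: $\Lambda$ is periodic of period $4$, so by Theorem~\ref{thm:1.3} the algebra $\Gamma(\xi)$ is periodic of the same period $4$. For $(2)$: $\Lambda$ and $\Gamma(\xi)$ are derived equivalent self-injective algebras, hence $\Gamma(\xi)$ is tame by Theorem~\ref{thm:1.4}$(1)$; and since a derived equivalence of self-injective algebras induces a stable equivalence (recalled in Section~\ref{sec:1}), which preserves the number of indecomposable non-projective modules, $\Gamma(\xi)$ is representation-infinite because $\Lambda$ is.

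Once Lemma~\ref{lem:5.1} is in place the argument is largely bookkeeping, so I do not anticipate a serious obstacle; the points needing care --- and hence the likeliest sources of friction --- are two. First, one must check that the standing hypotheses genuinely exclude every exceptional algebra of Theorem~\ref{thm:2.4}, so that $\Lambda$ really is symmetric, tame and periodic of period $4$. Second, the passage from tameness to full representation type requires a small detour, since Theorem~\ref{thm:1.4} as stated transfers only tameness and polynomial growth: representation-infiniteness must be carried across via the induced stable equivalence, using that $\Lambda$ itself is representation-infinite.
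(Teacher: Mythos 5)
Your proposal is correct and follows exactly the route the paper takes: Lemma~\ref{lem:5.1} plus Theorem~\ref{thm:1.1} give the derived equivalence of $\Lambda$ and $\Gamma(\xi)$, Theorem~\ref{thm:2.4} supplies the properties of $\Lambda$, and Theorems~\ref{thm:1.2}, \ref{thm:1.3} and \ref{thm:1.4} transport them (the paper states the theorem as a ``direct consequence'' of these results). Your extra care in carrying representation-infiniteness across the induced stable equivalence, rather than only tameness, is a point the paper leaves implicit and is handled correctly.
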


We shall prove now that the algebras $\Lambda(\xi)$ and $\Gamma(\xi)$ are isomorphic. 

Observe first that $\tilde{P_x}=\Hom_{K^b(P_\Lambda)}(T^\xi,T^\xi_x)$, $x\in Q_0$, form a complete family of 
pairwise non-isomorphic indecomposable projective modules in $\mod\Gamma$. Moreover, we define the following 
morphisms in $K^b(P_\Lambda)$ between indecomposable direct summands of $T^\xi$: 
$$\hat{\alpha_i}:T^\xi_{a_i}\to T^\xi_{c_i} \qquad\mbox{given by }id:P_{a_i}\to P_{a_i},$$
$$\hat{\beta_i}:T^\xi_{c_i}\to T^\xi_{b_i} \qquad\mbox{given by }\nu_i\delta_i-c_{\gamma_i}A'_{\gamma_i}:P_{a_i}\to P_{b_i},$$ 
$$\hat{\tau_i}:T^\xi_{b_i}\to T^\xi_{a_i} \qquad\mbox{given by }\alpha_i\beta_i:P_{b_i}\to P_{a_i},$$ 
for all $i\in\{1,\dots,r\}$, and 
$$\hat{\eta}:T^\xi_{t(\eta)}\to T^\xi_{s(\eta)} \qquad\mbox{given by }\eta:P_{t(\eta)}\to P_{s(\eta)},$$ 
for any arrow $\eta\in Q_1$ different from the arrows $\alpha_i,\beta_i$, $i\in\{1,\dots,r\}$. 

Then applying covariant functor $\Hom_{K^b(P_\Lambda)}(T^\xi,-)$ to above morphisms, we obtain the following 
homomorphisms between indecomposable projective modules in $\mod\Gamma$: 
$$\alpha_i=\Hom_{K^b(P_\Lambda)}(T^\xi,\hat{\alpha_i}):\widetilde{P}_{a_i}\to\widetilde{P}_{c_i},$$
$$\beta_i=\Hom_{K^b(P_\Lambda)}(T^\xi,\hat{\beta_i}):\widetilde{P}_{c_i}\to\widetilde{P}_{b_i},$$ 
$$\tau_i=\Hom_{K^b(P_\Lambda)}(T^\xi,\hat{\tau_i}):\widetilde{P}_{b_i}\to\widetilde{P}_{a_i},$$ 
for all $i\in\{1,\dots,r\}$, and 
$$\eta=\Hom_{K^b(P_\Lambda)}(T^\xi,\hat{\eta}):\widetilde{P}_{t(\eta)}\to\widetilde{P}_{s(\eta)},$$ 
for any arrow $\eta\in Q_1$ different from $\alpha_i,\beta_i$, $i\in\{1,\dots,r\}$. We observe also that these 
homomorphisms correspond to the arrows of the quiver $Q(\xi)$ defining the algebra $\Lambda(\xi)$. 

We will identify the homomorphisms $\alpha_i,\beta_i,\tau_i$ and $\eta$ with the elements of $e_{c_i}(\rad\Gamma)e_{a_i}$, 
$e_{b_i}(\rad\Gamma)e_{c_i}$, $e_{a_i}(\rad\Gamma)e_{b_i}$ and $e_{s(\eta)}(\rad\Gamma)e_{t(\eta)}$, respectively, 
corresponding to them. We note the following obvious fact.  

\begin{lem}\label{lem:5.3} The elements $\alpha_i,\beta_i,\tau_i$, $i\in\{1,\dots,r\}$, and 
$\eta\in Q_1$, different from $\alpha_i,\beta_i$, $i\in\{1,\dots,r\}$, generate $\rad\Gamma$.
\end{lem}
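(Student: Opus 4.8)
Since the modules $\widetilde{P}_x=\Hom_{K^b(P_\Lambda)}(T^\xi,T^\xi_x)$, $x\in Q_0$, form a complete family of pairwise non-isomorphic indecomposable projective $\Gamma$-modules, $\Gamma$ is basic and $\rad\Gamma=\bigoplus_{x,y\in Q_0}\operatorname{rad}_\Gamma(\widetilde{P}_y,\widetilde{P}_x)$, where $\operatorname{rad}_\Gamma(\widetilde{P}_y,\widetilde{P}_x)$ is identified with the space of non-isomorphisms $T^\xi_y\to T^\xi_x$ in $K^b(P_\Lambda)$. By Nakayama's lemma, a subset of $\rad\Gamma$ generates it as a two-sided ideal (equivalently, together with the $e_x$ it generates $\Gamma$ as a $K$-algebra) precisely when its image spans $\rad\Gamma/\rad^2\Gamma$. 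So the plan is to show that every radical morphism $\phi\colon T^\xi_y\to T^\xi_x$ in $K^b(P_\Lambda)$ is, modulo composites of two radical morphisms, a $K$-linear combination of those among $\hat\alpha_i,\hat\beta_i,\hat\tau_i,\hat\eta$ going from $T^\xi_y$ to $T^\xi_x$. I would organize this by the two types of indecomposable summands of $T^\xi$: the honest projectives $T^\xi_x=P_x$, $x\notin\{c_1,\dots,c_r\}$, and the two-term complexes $T^\xi_{c_i}\colon P_{c_i}\xrightarrow{\alpha_i}P_{a_i}$.

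For $x,y\notin\{c_1,\dots,c_r\}$ one has $\Hom_{K^b(P_\Lambda)}(T^\xi_x,T^\xi_y)=\Hom_\Lambda(P_x,P_y)=e_y\Lambda e_x$, and by Proposition \ref{prop:2.6} the radical of this space is spanned by nontrivial paths from $y$ to $x$ in $Q_\Lambda$. If such a path has an intermediate vertex $c_j$, then — $\alpha_j$ being the only arrow of $Q_\Lambda$ ending at $c_j$ and $\beta_j$ the only one starting at $c_j$ — every occurrence of $\alpha_j$ or $\beta_j$ in it is part of a consecutive subword $\alpha_j\beta_j$; factoring the path through these subwords and using that $\hat\tau_j$ is, by definition, the morphism given by $\alpha_j\beta_j\colon P_{b_j}\to P_{a_j}$ between the honest summands $T^\xi_{b_j},T^\xi_{a_j}$, one writes $\phi$ as a composite of such $\hat\tau_j$ and morphisms $\hat\eta$ with $\eta\in Q_1$, $\eta\neq\alpha_i,\beta_i$, all of which are on the list. (A path with no intermediate $c_j$, and not starting or ending at one, is directly a composite of $\hat\eta$'s.) Thus here $\phi$ already lies in the ideal generated by the listed elements.

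The remaining morphisms — those with source or target some $T^\xi_{c_i}$ — require a direct computation of $\Hom_{K^b(P_\Lambda)}(T^\xi_{c_i},T^\xi_u)$ and $\Hom_{K^b(P_\Lambda)}(T^\xi_u,T^\xi_{c_i})$ as chain maps modulo homotopy. Since $T^\xi_{c_i}$ is the only summand not concentrated in degree $0$, the homotopies and chain-map conditions are governed entirely by the structure map $\alpha_i$; combined again with $\alpha_i$ (resp.\ $\beta_i$) being the unique arrow of $Q_\Lambda$ ending at (resp.\ leaving) $c_i$, this shows that every radical morphism to or from $T^\xi_{c_i}$ is, modulo $\rad^2\Gamma$, a $K$-combination of $\hat\alpha_i\colon T^\xi_{a_i}\to T^\xi_{c_i}$, $\hat\beta_i\colon T^\xi_{c_i}\to T^\xi_{b_i}$ and composites of these with the morphisms treated in the honest case. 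A necessary preliminary is to check that $\hat\alpha_i,\hat\beta_i,\hat\tau_i$ really are morphisms in $K^b(P_\Lambda)$; for $\hat\beta_i$ (given by $\nu_i\delta_i-c_{\gamma_i}A'_{\gamma_i}$) this amounts to the identity $(\nu_i\delta_i-c_{\gamma_i}A'_{\gamma_i})\alpha_i=0$ in $\Lambda$, which follows from the defining relations of a weighted triangulation algebra, using $\delta_i\alpha_i=\mu_i$, $\nu_i\mu_i=c_{\gamma_i}A_{\gamma_i}$ and $A_{\gamma_i}=A'_{\gamma_i}\alpha_i$ — the last because the $g$-cycle of $\gamma_i$ ends with the subword $\alpha_i\beta_i$. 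Assembling the cases, the listed elements span $\rad\Gamma/\rad^2\Gamma$, hence generate $\rad\Gamma$. The main obstacle is precisely this last case: carrying out the $\Hom_{K^b(P_\Lambda)}$-computation around the two-term complexes cleanly, keeping track of the homotopy ambiguity, of the small-orbit exceptions ($|\mathcal{O}(\alpha_i)|=3$ or $|\mathcal{O}(\nu_i)|=3$) and of the virtual neighbours of $c_i$, and making sure that no defining relation of $\Lambda$ produces an unexpected vanishing.
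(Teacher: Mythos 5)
The paper offers no argument for this lemma at all --- it is introduced with ``We note the following obvious fact'' --- so there is no written proof to compare against; your proposal is the natural way to substantiate it, and everything you actually carry out is correct. In particular: the reduction to spanning $\rad\Gamma/\rad^2\Gamma$, the factorization of any path of $Q_\Lambda$ through an intermediate vertex $c_j$ into a product involving the subword $\alpha_j\beta_j=\hat\tau_j$ (valid because $\alpha_j$ is the unique arrow of $Q_\Lambda$ into $c_j$ and $\beta_j$ the unique arrow out of it), and the verification that $\hat\beta_i$ is a chain map via $\delta_i\alpha_i=\mu_i$, $\nu_i\mu_i=c_{\gamma_i}A_{\gamma_i}$ and $A_{\gamma_i}=A'_{\gamma_i}\alpha_i$ are all right. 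The case you defer is indeed where the content lies, but it streamlines more than you suggest. For morphisms \emph{into} $T^\xi_{c_i}$ nothing needs to be computed: a chain map $T^\xi_y\to T^\xi_{c_i}$ has degree-$1$ component in $e_{c_i}\Lambda e_{(\cdot)}$, and since every nontrivial path of $Q_\Lambda$ ending at $c_i$ terminates in $\alpha_i$, that component can be killed by a homotopy $s$ (as the homotopies change it exactly by $s\alpha_i$); the remaining degree-$0$ component then satisfies the relevant vanishing condition and the morphism is literally $\hat\alpha_i$ composed with a morphism between honest summands, which is covered by your first case. This also disposes of $\Hom(T^\xi_{c_j},T^\xi_{c_i})$ by reducing it to the outgoing case. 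The only genuinely nontrivial point is therefore the single computation that
$$\{\,w\in e_u\Lambda e_{a_i}\;:\;w\alpha_i=0\,\}\subseteq e_u\Lambda\cdot\bigl(\nu_i\delta_i-c_{\gamma_i}A'_{\gamma_i}\bigr),$$
i.e.\ that every morphism $T^\xi_{c_i}\to P_u$ factors through $\hat\beta_i$; this is where the bases of Proposition \ref{prop:2.6} must be used, and it is the one step you should write out to have a complete proof. I see no obstruction to it, and your identification of the small-orbit and virtual-neighbour configurations as the places to be careful is apt.
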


%%\begin{proof}
%%\end{proof} 

In the next proposition we use notation established for definition of the algebra $\Lambda(\xi)$. 

\begin{prop}\label{prop:5.4} The relations $(1)$-$(5)$ defining the algebra $\Lambda(\xi)$ hold also in $\Gamma(\xi)$. \end{prop}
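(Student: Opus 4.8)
The plan is to verify each family of relations defining $\Lambda(\xi)$ by computing the corresponding composites of the morphisms $\hat{\alpha_i},\hat{\beta_i},\hat{\tau_i},\hat{\eta}$ in $K^b(P_\Lambda)$ up to homotopy, and then applying the fully faithful functor $\Hom_{K^b(P_\Lambda)}(T^\xi,-)$ to transport the identities to $\Gamma(\xi)$. Since $T^\xi_x$ is concentrated in degree $0$ for $x \neq c_i$ while $T^\xi_{c_i}$ sits in degrees $1$ and $0$, a morphism into or out of $T^\xi_{c_i}$ is a pair (or a single map) of module homomorphisms, and two such morphisms are equal in the homotopy category precisely when they differ by a homotopy, i.e.\ by a map factoring through $\alpha_i\colon P_{c_i}\to P_{a_i}$ in the appropriate degree. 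So the whole proof reduces to: (i) composing the explicitly given module homomorphisms inside $\mod\Lambda$ using the defining relations of $\Lambda$ from Definition~\ref{def:2.3} and the basis description in Proposition~\ref{prop:2.6}; and (ii) recognizing when a discrepancy is null-homotopic through $\alpha_i$.

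I would organize the verification relation-by-relation following Definition~\ref{def:3.4}. For the relations of type (1), the key computation is $\hat{\tau_i}\hat{\beta_i}$ and $\hat{\beta_i}\hat{\alpha_i}$: note $\hat{\alpha_i}$ is $\mathrm{id}_{P_{a_i}}$ as a chain map $T^\xi_{a_i}\to T^\xi_{c_i}$, so $\hat{\beta_i}\hat{\alpha_i}$ is represented by $\nu_i\delta_i - c_{\gamma_i}A'_{\gamma_i}\colon P_{a_i}\to P_{b_i}$, and one checks $\nu_i\delta_i - \beta_i\alpha_i - c^*_{\tilde\nu_i}A^*_{\tilde\nu_i}$ vanishes in $\Gamma$ because $\beta_i\alpha_i$ corresponds to $\hat\beta_i\hat\alpha_i$ after the identification, with the $A'_{\gamma_i}$ term matching $A^*_{\tilde\nu_i}$ under the rewriting $\alpha_j\beta_j\mapsto\tau_j$; similarly $\hat\alpha_i\hat\tau_i$ and $\hat\tau_i\hat\beta_i$ are given by $\alpha_i\beta_i$ precomposed/postcomposed appropriately and one uses $\alpha_i\beta_i$ landing in the part of $P_{a_i}$ killed after passing to the mapping cone description — in fact $\alpha_i(\alpha_i\beta_i)=0$ and $(\alpha_i\beta_i)$ as a chain map $T^\xi_{b_i}\to T^\xi_{a_i}$ composed with $\hat\alpha_i$ is null-homotopic, giving $\alpha_i\tau_i=0$ and $\tau_i\beta_i=0$ in $\Gamma$. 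For relations of types (2)--(5), for arrows $\eta\in Q_1$ away from the $c_i$ these are literally the relations of $\Lambda$ transported along the degree-$0$ part of $T^\xi$, using that $\Hom_{K^b(P_\Lambda)}(T^\xi,-)$ sends the relation $\eta f(\eta)-c_{\bar\eta}A_{\bar\eta}$ to its $\Gamma$-counterpart; the only subtlety is at arrows adjacent to the $a_i$, where a path through $a_i$ may be rewritten in $\Gamma$ as a path through $\tau_i$ (this is the content of the starred constructions $A^*,B^*$), so I would invoke Proposition~\ref{prop:3.7}, Lemma~\ref{lem:3.7}, and Lemma~\ref{lem:3.8} to match the socle-level terms.

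I expect the main obstacle to be the bookkeeping at the vertices $a_i$, $b_i$, $d_i$: an arrow like $\nu_i\colon P_{b_i}\to P_{d_i}$ or $\delta_i\colon P_{d_i}\to P_{a_i}$ borders the region that was modified, so composites such as $\delta_i$ followed by $\hat\tau_i$, or $\hat\beta_i$ followed by $\nu_i$, must be computed in $\Lambda$ and then reconciled with the starred paths in $\Lambda(\xi)$; in particular showing that the term $c_{\gamma_i}A'_{\gamma_i}$ appearing in $\hat\beta_i$ produces exactly $c^*_{\tilde\nu_i}A^*_{\tilde\nu_i}$ (and not a stray socle element) requires the precise length bookkeeping $|\mathcal{O}(\alpha_i)|,|\mathcal{O}(\nu_i)|\geqslant 3$ and the three cases of Lemma~\ref{lem:3.3}. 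The special cases $|\mathcal{O}(\alpha_i)|=3$ and $|\mathcal{O}(\nu_i)|=3$ (where $\gamma_i=\sigma_i$ or $\rho_i=\omega_i$) will each need a separate short check, for which I would lean on Lemma~\ref{lem:3.6}.

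Once Proposition~\ref{prop:5.4} is established, the identification $\Gamma(\xi)\cong\Lambda(\xi)$ follows: Lemma~\ref{lem:5.3} gives a surjective algebra homomorphism $\Lambda(\xi)=KQ(\xi)/I(\xi)\to\Gamma(\xi)$ (well-defined by Proposition~\ref{prop:5.4}), and it is an isomorphism by a dimension count, comparing $\dim_K\Lambda(\xi)$ from Theorem~\ref{thm:3.14} with $\dim_K\Gamma(\xi)=\dim_K\End_{K^b(P_\Lambda)}(T^\xi)$, the latter computed from the Cartan invariants of $\Lambda$ via the tilting complex $T^\xi$ (each $\widetilde{P}_x$ has a known composition series length read off from the $T^\xi_x$). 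I would present that dimension comparison as a separate short lemma following this proposition.
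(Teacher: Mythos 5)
Your plan follows essentially the same route as the paper's proof: identify $\hat{\beta_i}\hat{\alpha_i}$ with $\nu_i\delta_i-c_{\gamma_i}A'_{\gamma_i}$, observe that $\hat{\alpha_i}\hat{\tau_i}$ is null-homotopic through $\alpha_i$, and verify the remaining relations of types (2)--(4) case by case inside $\Lambda$ using Definition~\ref{def:2.3} and Proposition~\ref{prop:2.6}, with the rewriting $\alpha_j\beta_j\mapsto\tau_j$ matching $A'$ with $A^*$. The one imprecision is your explanation of $\tau_i\beta_i=0$: since $T^\xi_{a_i}$ is concentrated in degree $0$ there is no room for a homotopy for a morphism $T^\xi_{c_i}\to T^\xi_{a_i}$, so one must check that the degree-zero component $\alpha_i\beta_i(\nu_i\delta_i-c_{\gamma_i}A'_{\gamma_i})$ is literally zero in $\Lambda$, which the paper does via $\alpha_i\beta_i\nu_i\delta_i=\alpha_i\xi_i\delta_i=c_{\beta_i}B_{\alpha_i}=c_{\gamma_i}\alpha_i\beta_iA'_{\gamma_i}$.
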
 

\begin{proof} (1) We fix $i\in\{1,\dots,r\}$ and prove that relations from $(1)$ hold in three steps. 
First, we prove that the equality $\nu_i\delta_i=\beta_i\alpha_i+c^*_{\tilde{\nu_i}}A^*_{\tilde{\nu_i}}$ holds. Indeed, we 
have $\tilde{\nu_i}=\gamma_i$, so $c^*_{\tilde{\nu_i}}=c_{\gamma_i}$, and moreover 
$$\mathcal{O}(\alpha_i)=(\alpha_i\mbox{ }\beta_i\mbox{ }\gamma_i\dots g^{n_{\gamma_i}-3}(\gamma_i)),$$ 
hence $\mathcal{O}^*(\tau_i)=(\tau_i\mbox{ }\gamma_i\dots (g^*)^{-1}(\tau_i))$. It is easy to see that $A^*_{\gamma_i}$ is 
obtained from $A'_{\gamma_i}$ by replacing all paths of the form $\alpha_j\beta_j$ by $\tau_j$, $j\in\{1,\dots,r\}$. 
Further, $\beta_i\alpha_i$ in $\Gamma(\xi)$ is identified with the map $T^\xi_{a_i}\to T^\xi_{b_i}$ given by 
$v_i\delta_i-c_{\gamma_i}A'_{\gamma_i}:P_{a_i}\to P_{b_i}$. It is now clear that the equality $\beta_i\alpha_i=
\nu_i\delta_i-c^*_{\gamma_i}A^*_{\gamma_i}$ holds in $\Gamma(\xi)$, so we are done (note that $\tau_j$ as an element 
of $\Gamma(\xi)$ is identified with the map $T^\xi_{b_j}\to T^\xi_{a_j}$ given by $\alpha_j\beta_j:P_{b_j}\to P_{a_j}$). 

Since $\alpha_i\tau_i$ in $\Gamma$ is identified with the map $\hat{\alpha_i}\hat{\tau_i}:T^\xi_{b_i}\to 
T^\xi_{c_i}$ given by $\alpha_i\beta_i:P_{b_i}\to P_{a_i}$ (in degree $0$), we deduce that $\alpha_i\tau_i=0$ in $\Gamma$, 
because $\hat{\alpha_i}\hat{\tau_i}$ is homotopic to zero, as it is given by the homomorphism which factors through 
$\alpha_i$. 

The equality $\tau_i\beta_i=0$ holds in $\Gamma$, because $\hat{\tau_i}\hat{\beta_i}$ is given by 
$\alpha_i\beta_i(\nu_i\delta_i-c_{\gamma_i}A'_{\gamma_i}):P_{a_i}\to P_{a_i}$ and we have also the following equalities 
in $\Lambda$ 
$$\alpha_i\beta_i\nu_i\delta_i=\alpha_i\xi_i\delta_i=c_{\beta_i}\alpha_i A_{\beta_i}=c_{\beta_i}B_{\alpha_i}=
c_{\gamma_i}\alpha_i\beta_i A'_{\gamma_i}.$$

(2) Let $\eta$ be an arrow in $Q(\xi)^*_1$ different from $\nu_i$. Then $\eta=\tau_i$ or $\eta=\delta_i$ (for $i\in\{1,\dots,r\}$) 
or $\eta$ is an arrow of $Q_1$ different from $\alpha_i,\beta_i,\nu_i,\delta_i$, $i\in\{1,\dots,r\}$. 

If $\eta=\tau_i$, $i\in\{1,\dots,r\}$, then $\eta f^*(\eta)=\tau_i\nu_i$ and we have the equalities in $\Lambda$ 
$$\alpha_i\beta_i\nu_i=\alpha_i\xi_i=\alpha_i f(\alpha_i)=c_{\bar{\alpha}_i}A_{\bar{\alpha}_i}=c_{\rho_i}A_{\rho_i}.$$ 
Moreover, $\rho_i=\tilde{\tau}_i$, $c_{\rho}=c^*_{\tilde{\tau_i}}$, and $A^*_{\tilde{\tau_i}}$ is obtained from $A_{\rho_i}$ 
by replacing all paths $\alpha_j\beta_j$ by $\tau_j$, $j\in\{1,\dots,r\}$. Since $\tau_i$ is identified with $\alpha_i\beta_i$ 
in $\Lambda$, we conclude that the required equality $\eta f^*(\eta)=c^*_{\tilde{\eta}}A^*_{\tilde{\eta}}$ holds in $\Gamma$. 

For $\eta=\delta_i$, $i\in\{1,\dots,r\}$, we have $\eta f^*(\eta)=\delta_i\tau_i$ and the following equalities hold in 
$\Lambda$ $$\delta_i\alpha_i\beta_i=\mu_i\beta_i=\mu_i f(\mu_i)=c_{\bar{\mu_i}}A_{\bar{\mu_i}}=c_{\delta_i}A_{\delta_i},$$ 
because $\bar{\mu_i}=\delta_i$. Observe also that $\tilde{\delta}_i=\delta_i$, and $A^*_{\delta_i}$ is obtained from 
$A_{\delta_i}$ by replacing all paths $\alpha_j\beta_j$ by $\tau_j$, for $j\in\{1,\dots,r\}$. Hence the required equality 
$\eta f^*(\eta)=c^*_{\tilde{\eta}}A^*_{\tilde{\eta}}$ holds in $\Gamma$, because $\tau_i$ is given by $\alpha_i\beta_i$. 
 
Finally, assume that $\eta$ is in $Q_1$ and different from $\alpha_i,\beta_i,\nu_i,\delta_i$. Then also $f(\eta)\in Q_1$ is different 
from all these arrows, and the equality $\eta f(\eta)=c_{\bar{\eta}} A_{\bar{\eta}}$ holds in $\Lambda$. Moreover, if $\eta=\rho_i$, 
$i\in\{1,\dots,r\}$, then $\tilde{\eta}=\tau_i\neq\bar{\eta}=\alpha_i$, and otherwise $\tilde{\eta}=\bar{\eta}$, and in both 
cases we have $f^*(\eta)=f(\eta)$. Note that then $A^*_{\tilde{\eta}}=A_{\bar{\eta}}$ or $A^*_{\tilde{\eta}}$ is obtained 
from $A_{\bar{\eta}}$ by similar replacement as above, so the required equality $\eta f^*(\eta)=c^*_{\tilde{\eta}}A^*_{\tilde{\eta}}$ 
holds. 

(3) Let $\eta=\delta_i$ for some $i\in\{1,\dots,r\}$. Then we have $f^*(\eta)=\tau_i$, and $g^*(\tau_i)=\gamma_i$ or 
$g^*(\tau_i)=\tau_j$ for some $j\in\{1,\dots,r\}$. If $g^*(\tau_i)=\gamma_i$, then we have the equalities 
$$\eta f^*(\eta) g^*(f^*(\eta))=\delta_i\tau_i\gamma_i=\delta_i\alpha_i\beta_i\gamma_i=\mu_i\beta_i\gamma_i = 
\mu_if(\mu_i)g(f(\mu_i))=0.$$ 
For $g^*(\tau_i)=\tau_j$, the following equalities hold 
$$\eta f^*(\eta) g^*(f^*(\eta))=\delta_i\tau_i\tau_j=\delta_i\alpha_i\beta_i\alpha_j\beta_j = \mu_i\beta_i\alpha_j\beta_j = 
\mu_i f(\mu_i) g(f(\mu_i))\beta_j=0.$$ 
Therefore $\eta f^*(\eta) g^*(f^*(\eta))=0$ holds for $\eta=\delta_i$. 

Now assume $\eta$ is an arrow in $Q_1$ such that $f^2(\eta)$ is not virtual and $f(\bar{\eta})$ is not virtual if 
$m_{\bar{\eta}}=1$ and $n_{\bar{\eta}}=3$. Moreover, we assume that $\eta$ is different from $\nu_i,\tau_i$, and $\delta_i$, for $i\in\{1,\dots,r\}$. 
Clearly, then we have $f^*(\eta)=f(\eta)$. If also $g^*(f(\eta))=g(f(\eta))$, then $\eta f^*(\eta) g^*(f^*(\eta))=\eta f(\eta) g(f(\eta))=0$, because 
it is one of relations defining $\Lambda$. Assume that $g^*(f(\eta))\neq g(f(\eta))$. This is the case only when $g^*(f(\eta))=\tau_i$ for some 
$i\in\{1,\dots,r\}$, and hence $f(\eta)=g^{-1}(\alpha_i)=\sigma_i$ and $\eta=f^{-1}(\sigma_i)=f(\rho_i)$. But then we have 
$$\eta f^*(\eta) g^*(f^*(\eta))=\eta\sigma_i\tau_i=\eta\sigma_i\alpha_i\beta_i = \eta f(\eta) g(f(\eta))\beta_i=0,$$ 
because of the relations defining $\Lambda$ and restrictions imposed on $\eta$.

(4) Assume $\eta$ is an arrow in $Q(\xi)^*_1$ different from $\tau_i$ with $m_{\nu_i}n_{\nu_i}=3$, $(g^*)^{-1}(\nu_i)$, $\nu_i$, for 
$i\in\{1,\dots,r\}$, and $\eta\in Q_1$ such that $f(\eta)$ is virtual or $f^2(\eta)$ is virtual with $m_{f(\eta)}=1$ and $n_{f(\eta)}=3$. 
In particular, if $g^*(\eta)=g(\eta)$ and $f^*(g^*(\eta))=f(g(\eta))$, then $\eta g^*(\eta) f^*(g^*(\eta))=\eta g(\eta) f(g(\eta))=0$, by the 
restrictions imposed on $\eta$ and the zero relations defining $\Lambda$. 

If $g^*(\eta)\neq g(\eta)$, that is $g^*(\eta)=\tau_i$ for some $i\in\{1,\dots,r\}$, then 
$$\eta g^*(\eta) f^*(g^*(\eta))=\eta\tau_i\nu_i=\eta\alpha_i\beta_i\nu_i=\eta\alpha_i\xi_i=\eta g(\eta) f(g(\eta))=0.$$ 

Assume now that $\eta=\tau_i$ for some $i\in\{1,\dots,r\}$. Then $g^*(\eta)=\gamma_i$ or $g^*(\eta)=\tau_j$ for some 
$j\in\{1,\dots,r\}$. Suppose $g^*(\eta)=\gamma_i$. Then we have 
$$\eta g^*(\eta) f^*(g^*(\eta))=\tau_i\gamma_i f(\gamma_i)=\alpha_i\beta_i\gamma_i f(\gamma_i)=0,$$ 
because $\beta_i\gamma_i f(\gamma_i)=\beta_i g(\beta_i)f(g(\beta_i))=0$, due to assumptions imposed on $\tau_i$. If 
$g^*(\tau_i)=\tau_j$, then 
$$\eta g^*(\eta) f^*(g^*(\eta))=\tau_i\tau_j\nu_j=\alpha_i\beta_i\alpha_j\beta_j\nu_j=\alpha_i\beta_i\alpha_j\xi_j=
c_{\nu_i}\alpha_i\beta_i A_{\nu_i}=$$ 
$$=c_{\nu_i}\alpha_i\beta_i\nu_i\delta_i g(\delta_i)\dots \nu_j=
c_{\nu_i}\alpha_i\xi_i\delta_i g(\delta_i)\dots\nu_j=c_{\nu_i}c_{\alpha_i}B_{\alpha_i}g(\delta_i)\dots\nu_j=0,$$ 
because $B_{\alpha_i}$ belongs to $\soc\Lambda$ (Proposition \ref{prop:2.6}).

\end{proof}

\begin{theorem}\label{thm:5.5} The algebras $\Lambda(\xi)$ and $\Gamma(\xi)$ are isomorphic. \end{theorem}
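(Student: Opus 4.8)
The plan is to build an explicit $K$-algebra isomorphism $\varphi\colon\Lambda(\xi)\to\Gamma(\xi)$. First I would define $\varphi$ on the generators of $\Lambda(\xi)=KQ(\xi)/I(\xi)$: send $e_x$ to the idempotent of $\Gamma(\xi)$ corresponding to the indecomposable projective $\widetilde{P}_x=\Hom_{K^b(P_\Lambda)}(T^\xi,T^\xi_x)$, and send each arrow of $Q(\xi)$ to the element of $\rad\Gamma(\xi)$ attached to it just before Proposition~\ref{prop:5.4} (namely $\alpha_i,\beta_i,\tau_i$ for $i\in\{1,\dots,r\}$, obtained from $\hat{\alpha_i},\hat{\beta_i},\hat{\tau_i}$, and $\eta$ for the remaining arrows $\eta\in Q_1$). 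Proposition~\ref{prop:5.4} states precisely that the relations $(1)$--$(5)$ generating $I(\xi)$ hold among these images, so $\varphi$ is a well-defined homomorphism of algebras. By Lemma~\ref{lem:5.3} the images of the arrows generate $\rad\Gamma(\xi)$, and the images of the $e_x$ span $\Gamma(\xi)/\rad\Gamma(\xi)$ since the indecomposable summands of $T^\xi$ are pairwise non-isomorphic (so $\Gamma(\xi)$ is basic); hence $\varphi$ is surjective. It then remains to prove $\dim_K\Lambda(\xi)=\dim_K\Gamma(\xi)$, which upgrades the surjection $\varphi$ to an isomorphism.

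To obtain $\dim_K\Gamma(\xi)$ I would compute the Cartan matrix of $\Gamma(\xi)$ from that of $\Lambda$. Since $\Gamma(\xi)$ is basic, $\dim_K\Gamma(\xi)=\sum_{x,y\in Q_0}\dim_K\Hom_{K^b(P_\Lambda)}(T^\xi_y,T^\xi_x)$. As $T^\xi$ is a tilting complex (Lemma~\ref{lem:5.1}), each $\Hom_{K^b(P_\Lambda)}(T^\xi_y,T^\xi_x[i])$ with $i\neq0$ vanishes, so each term equals the value of the Euler form on $K_0(K^b(P_\Lambda))$, which in the basis $\{[P_z]\}$ is represented by the Cartan matrix $C_\Lambda$ (symmetric, since $\Lambda$ is symmetric by Theorem~\ref{thm:2.4}). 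Reading the classes off the complexes --- $[T^\xi_x]=[P_x]$ for $x\notin\{c_1,\dots,c_r\}$ and $[T^\xi_{c_i}]=[P_{a_i}]-[P_{c_i}]$ --- one gets $C_{\Gamma(\xi)}=S^{\mathrm{tr}}C_\Lambda S$, where $S$ is the matrix whose column indexed by $x$ is the coordinate vector of $[T^\xi_x]$; equivalently $\dim_K\Gamma(\xi)=v^{\mathrm{tr}}C_\Lambda v$ with $v=S\mathbf 1$ having entry $2$ at each $a_i$, entry $-1$ at each $c_i$, and entry $1$ at every other vertex. Here one uses that $a_1,\dots,a_r,c_1,\dots,c_r$ are pairwise distinct, which follows from $2$-regularity of $Q$ and the fact that $\xi_i,\mu_i$ are the only virtual arrows in the subquiver around $\xi_i$. (Alternatively, since all $T^\xi_x$ have length at most $2$, every $\Hom_{K^b(P_\Lambda)}(T^\xi_y,T^\xi_x)$ can be computed directly as chain maps modulo homotopy, using the bases of the $e_z\Lambda$ from Proposition~\ref{prop:2.6}.)

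The decisive, and most tedious, step is the numerical identity $v^{\mathrm{tr}}C_\Lambda v=\dim_K\Lambda(\xi)$, the right-hand side being given by Theorem~\ref{thm:3.14} (equivalently, by $\sum_x|\mathcal B_x|$ from Proposition~\ref{prop:3.9}). Writing $v=\mathbf 1+\sum_{i=1}^r(\mathbf e_{a_i}-2\mathbf e_{c_i})$ and using $\sum_z (C_\Lambda)_{zz'}=\dim_K e_{z'}\Lambda$, this reduces to a local analysis around each $\xi_i$: the dimension lost by collapsing the vertex $c_i$ (whose projective $e_{c_i}\Lambda$ carries the small basis of Proposition~\ref{prop:2.6}(1)) and by the sign change at $c_i$ must be compensated exactly by the passage from the orbit data $m_\eta,n_\eta$ of $\Lambda$ to the starred data $m^*_\eta,n^*_\eta,n^\nu_\eta$ of $\Lambda(\xi)$, i.e.\ by the substitution of the arrow $\tau_j$ for each subpath $\alpha_j\beta_j$. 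I expect the careful part to be tracking the orbits $\mathcal O(\alpha_i)$ through this operation as in Lemmas~\ref{lem:3.1} and~\ref{lem:3.3}; once the Cartan entries are matched, $\varphi$ is a surjection between algebras of equal finite $K$-dimension, hence an isomorphism --- which also transfers the conclusions of Theorem~\ref{thm:5.2} (and, via Theorem~\ref{thm:1.1}, the derived equivalence with $\Lambda$) to $\Lambda(\xi)$, yielding the Main Theorem.
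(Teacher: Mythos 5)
Your first half coincides with the paper's argument: the map on idempotents and arrows, well-definedness via Proposition \ref{prop:5.4}, and surjectivity via Lemma \ref{lem:5.3}, is exactly how the paper obtains that $\Gamma(\xi)$ is an epimorphic image of $\Lambda(\xi)$. Where you diverge is the dimension count. The paper does not compute $\dim_K\Gamma(\xi)$ numerically at all: it observes that $\Gamma(\xi)$ is symmetric (Theorem \ref{thm:5.2}) and satisfies the defining relations of $\Lambda(\xi)$, and then reruns the basis argument of Proposition \ref{prop:3.9} inside $\Gamma(\xi)$ — the candidate basis spans because the relations hold, and is independent because the socle elements $B^*_\eta$, $B^*_{\alpha_i}$ survive in the symmetric quotient — so $\Gamma(\xi)$ literally has the same basis as $\Lambda(\xi)$. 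This sidesteps any global Cartan-matrix identity.

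Your alternative via the Euler form is sound in principle: (T1) does kill the shifted Hom spaces, $C_\Lambda$ is symmetric, and $\dim_K\Gamma(\xi)=v^{\mathrm{tr}}C_\Lambda v$ with $v=\mathbf 1+\sum_i(\mathbf e_{a_i}-2\mathbf e_{c_i})$ (this checks out, e.g., against Example \ref{ex:4.1}: $v=(2,-1,1,1)$ gives $36m=\dim_K D(m,\lambda,\xi)$). But the proof as written stops exactly at the point that carries all the content: the identity $v^{\mathrm{tr}}C_\Lambda v=\dim_K\Lambda(\xi)$ is announced as ``decisive and most tedious'' and then only a reduction strategy is sketched, with the delicate degenerate configurations (coinciding orbits $\mathcal O(\alpha_i)=\mathcal O(\nu_i)$, the chains $b_i=a_{i+1}$ of Lemma \ref{lem:3.1}, the short orbits of Lemma \ref{lem:3.3}) deferred to ``I expect.'' Until that identity is established in full generality — or replaced by the paper's symmetric-algebra basis argument — the surjection $\varphi$ has not been upgraded to an isomorphism, so the proof is incomplete at its crucial step. (A minor point: the pairwise distinctness of $a_1,\dots,a_r$ is not needed and not obviously true in all configurations; the formula for $v$ as a sum of class vectors is what you actually use.)
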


\begin{proof} It follows from Theorem \ref{thm:5.2} that $\Gamma(\xi)$ is a finite-dimensional symmetric algebra. Moreover, 
by Lemma \ref{lem:5.3} and Proposition \ref{prop:5.4} we conclude that the Gabriel quivers $Q_{\Lambda(\xi)}$ and 
$Q_{\Gamma(\xi)}$ coincide, and the relations defining $\Lambda(\xi)$ are satisfied in $\Gamma(\xi)$. Hence $\Gamma(\xi)$ 
is an epimorphic image of the algebra $\Lambda(\xi)$. Finally, bacause $\Gamma(\xi)$ is symmetric, we may then conclude, as 
in Proposition \ref{prop:3.9}, that $\Gamma(\xi)$ has the same basis as $\Lambda(\xi)$, so $\dim_K\Lambda(\xi)=\dim_K\Gamma(\xi)$. 
Therefore, $\Lambda(\xi)$ and $\Gamma(\xi)$ are indeed isomorphic as $K$-algebras. 
\end{proof}

Concluding, the proof of Main Theorem is now complete. 

For each vertex $x\in Q(\xi)_0=Q_0$, we denote by $P^\xi_x$ the indecomposable projective module $e_x\Lambda(\xi)$ in 
$\mod \Lambda(\xi)$ at $x$. Consider the following complexes in the homotopy category $K^b(P_{\Lambda(\xi)})$ of projective 
modules in $\mod\Lambda(\xi)$: 
$$\hat{T}^\xi_x:\; \xymatrix{0\ar[r] & P^\xi_x\ar[r] & 0}$$ 
concentrated in degree $0$, for all vertices $x\neq c_1,\dots,c_r$, and 
$$\hat{T}^\xi_{c_i}:\; \xymatrix{0\ar[r] & P_{c_i}^\xi\ar[r]^{\beta_i}& P_{b_i}^\xi\ar[r] & 0}$$ 
concentrated in degrees $1$ and $0$, for all $i\in\{1,\dots,r\}$. We set 
$$\hat{T}^\xi:=\bigoplus_{x\in Q(\xi)_0}\hat{T}^\xi_x.$$ 
Moreover, let 
$$\hat{P}'=\bigoplus_{i=1}^r P^\xi_{c_i}\quad\mbox{and}\quad \hat{P}''=\bigoplus_{x\in Q(\xi)_0\setminus\{c_1,\dots,c_r\}}P^\xi_x.$$ 
Let $\beta:\bigoplus_{i=1}^r P^\xi_{c_i}\to\bigoplus_{i=1}^r P^\xi_{b_i}$ be the diagonal homomorphism given by the 
homomorphisms $\beta_i:P^\xi_{c_i}\to P^\xi_{b_i}$. Then $\beta$ is a left $\add(\hat{P}'')$-apprroximation of $\hat{P}'$ in 
$\mod \Lambda(\xi)$, since $\beta_i$ is the unique arrow in $Q(\xi)$ ending at $c_i$, for any $i\in\{1,\dots,r\}$. Hence, 
applying Proposition \ref{pro:1.5}, we obtain that $\hat{T}^\xi$ is a tilting complex in $K^b(P_{\Lambda(\xi)})$. We define 
$$\hat{\Lambda}(\xi)=\End_{K^b(P_{\Lambda(\xi)})}(\hat{T}^\xi).$$ 

\begin{theorem}\label{thm:5.6} The algebras $\Lambda$ and $\hat{\Lambda}(\xi)$ are isomorphic. \end{theorem}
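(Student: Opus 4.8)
The plan is to run the argument of Theorem~\ref{thm:5.5} in the opposite direction. Just as $T^\xi$ is, by Proposition~\ref{pro:1.5}, the mutation of $\Lambda$ at the summand $\bigoplus_{i=1}^{r}P_{c_i}$ and has endomorphism algebra $\Lambda(\xi)$, the complex $\hat{T}^\xi$ is the mutation of $\Lambda(\xi)$ at $\bigoplus_{i=1}^{r}P^\xi_{c_i}$, and it will undo the virtual mutation, so that its endomorphism algebra recovers $\Lambda$. First I would record the structural facts. Since $\hat{T}^\xi$ is a tilting complex in $K^b(P_{\Lambda(\xi)})$ (established just above), Rickard's Theorem~\ref{thm:1.1} shows that $\hat{\Lambda}(\xi)=\End_{K^b(P_{\Lambda(\xi)})}(\hat{T}^\xi)$ is derived equivalent to $\Lambda(\xi)$. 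Since $\Lambda(\xi)$ is, by the Main Theorem, a finite-dimensional symmetric representation-infinite tame periodic algebra of period $4$, Theorems~\ref{thm:1.2}, \ref{thm:1.3} and \ref{thm:1.4} give that $\hat{\Lambda}(\xi)$ has all these properties as well; in particular $\hat{\Lambda}(\xi)$ is symmetric, which is what will make the final step decisive.

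Next I would determine the Gabriel quiver and the relations of $\hat{\Lambda}(\xi)$ by the bookkeeping of Section~\ref{sec:5}. The modules $\hat{P}_x=\Hom_{K^b(P_{\Lambda(\xi)})}(\hat{T}^\xi,\hat{T}^\xi_x)$, $x\in Q_0$, form a complete family of pairwise non-isomorphic indecomposable projective $\hat{\Lambda}(\xi)$-modules. For each arrow $\vartheta$ of the Gabriel quiver $Q_\Lambda$ of $\Lambda$ I would write down an explicit chain map between indecomposable direct summands of $\hat{T}^\xi$ lifting $\vartheta$, mirroring the morphisms $\hat{\alpha_i},\hat{\beta_i},\hat{\tau_i},\hat{\eta}$ of Section~\ref{sec:5}: for $i\in\{1,\dots,r\}$ the arrow $\beta_i\colon c_i\to b_i$ of $Q_\Lambda$ is lifted by the chain map $\hat{T}^\xi_{b_i}\to\hat{T}^\xi_{c_i}$ equal to $\mathrm{id}_{P^\xi_{b_i}}$ in degree $0$ (recall that $\hat{T}^\xi_{c_i}$ has $P^\xi_{b_i}$ in degree $0$), the arrow $\alpha_i\colon a_i\to c_i$ is lifted by a chain map $\hat{T}^\xi_{c_i}\to\hat{T}^\xi_{a_i}$ given in degree $0$ by a suitable map $P^\xi_{b_i}\to P^\xi_{a_i}$ composing to zero with $\beta_i$ (for instance left multiplication by $\tau_i$, using the relation $\tau_i\beta_i=0$ in $\Lambda(\xi)$), and every remaining arrow $\eta$ of $Q_\Lambda$ --- which is also an arrow of $Q(\xi)$ not incident to any $c_i$ --- is lifted by left multiplication by $\eta$. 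Applying the covariant functor $\Hom_{K^b(P_{\Lambda(\xi)})}(\hat{T}^\xi,-)$ to these chain maps produces homomorphisms between the $\hat{P}_x$; arguing as for Lemma~\ref{lem:5.3} they generate $\rad\hat{\Lambda}(\xi)$, so the Gabriel quiver of $\hat{\Lambda}(\xi)$ has vertex set $Q_0$ and is contained in $Q_\Lambda$.

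The computational core --- and the step I expect to be the main obstacle --- is the analogue of Proposition~\ref{prop:5.4}: one has to verify that the defining relations of $\Lambda$ (Definition~\ref{def:2.3}) are satisfied among the elements of $\hat{\Lambda}(\xi)$ constructed above. The delicate feature is that the virtual arrows $\xi_i,\mu_i$, which are absent from $Q_\Lambda$ but equal the length-two paths $c_{\xi_i}^{-1}\beta_i\nu_i$ and $c_{\mu_i}^{-1}\delta_i\alpha_i$ in $\Lambda$, reappear in $\hat{\Lambda}(\xi)$ as images of composites of chain maps, and one must keep track of the homotopies coming from the contraction of the two-term complex $P^\xi_{c_i}\xrightarrow{\beta_i}P^\xi_{b_i}$ --- exactly as, in Proposition~\ref{prop:5.4}, the relations of $\Lambda(\xi)$ were traced through the contraction of $P_{c_i}\xrightarrow{\alpha_i}P_{a_i}$. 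The required triangle and commutativity relations (including $\alpha_i f(\alpha_i)=c_{\bar{\alpha}_i}A_{\bar{\alpha}_i}$ and the relations at the vertices $a_i,b_i,c_i,d_i$) will then follow from relation $(1)$ of Definition~\ref{def:3.4}, i.e. $\nu_i\delta_i-\beta_i\alpha_i=c^*_{\tilde{\nu_i}}A^*_{\tilde{\nu_i}}$, $\alpha_i\tau_i=0$ and $\tau_i\beta_i=0$, together with relations $(2)$--$(4)$ there and the socle identities of Proposition~\ref{prop:3.7} and Lemmas~\ref{lem:3.6} and \ref{lem:3.8}. This yields a surjective homomorphism of $K$-algebras $\Lambda\twoheadrightarrow\hat{\Lambda}(\xi)$.

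It remains to conclude by a dimension count, exactly as at the end of the proof of Theorem~\ref{thm:5.5}. Since $\hat{\Lambda}(\xi)$ is symmetric and its Gabriel quiver is $Q_\Lambda$, the description of the indecomposable projective modules over a weighted triangulation algebra given in Proposition~\ref{prop:2.6} applies verbatim to $\hat{\Lambda}(\xi)$, so $\dim_K e_x\hat{\Lambda}(\xi)=\dim_K e_x\Lambda$ for every vertex $x\in Q_0$, whence $\dim_K\hat{\Lambda}(\xi)=\sum_{\mathcal{O}\in\mathcal{O}(g)}m_\mathcal{O}n^2_\mathcal{O}=\dim_K\Lambda$ by Theorem~\ref{thm:2.4}(1). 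Therefore the surjection $\Lambda\twoheadrightarrow\hat{\Lambda}(\xi)$ is an isomorphism, as claimed.
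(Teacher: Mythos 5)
Your proposal is correct and follows essentially the same route as the paper: the same two-term tilting complex $\hat{T}^\xi$, the same lifts of the arrows (identity in degree $0$ for $\beta_i$, left multiplication by $\tau_i$ for $\alpha_i$ using $\tau_i\beta_i=0$, and left multiplication by $\eta$ otherwise), verification that the relations of $\Lambda$ hold via relation $(1)$ of Definition \ref{def:3.4}, and the conclusion that the endomorphism algebra is $\Lambda$. Your closing step (surjection plus a dimension count using symmetry, as in Theorem \ref{thm:5.5}) is if anything slightly more explicit than the paper's final sentence, but it is the same argument in substance.
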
 

\begin{proof} We observe that $\hat{P}_x=\Hom_{K^b(P_{\Lambda(\xi)})}(\hat{T}^\xi,\hat{T}^\xi_x)$, $x\in Q_0$, form a 
complete family of pairwise non-isomorphic indecomposable projective modules in $\mod\hat{\Lambda}(\xi)$. We define the 
following morphisms between indecomposable summands of $\hat{T}^\xi$ in $K^b(P_{\Lambda(\xi)})$: 
$$\hat{\alpha}_i:\hat{T}^\xi_{c_i}\to\hat{T}^\xi_{a_i},\quad\mbox{given by }\tau_i:P^\xi_{b_i}\to P^\xi_{a_i},$$
$$\hat{\beta}_i:\hat{T}^\xi_{b_i}\to\hat{T}^\xi_{c_i},\quad\mbox{given by }id:P^\xi_{b_i}\to P^\xi_{b_i},$$ 
$$\hat{\epsilon}_i:\hat{T}^\xi_{a_i}\to\hat{T}^\xi_{b_i},\quad\mbox{given by }\beta_i\alpha_i:P^\xi_{a_i}\to P^\xi_{b_i},$$ 
for all $i\in\{1,\dots,r\}$, and 
$$\hat{\eta}:\hat{T}^\xi_{t(\eta)}\to\hat{T}^\xi_{s(\eta)},\quad\mbox{given by }\eta:P^\xi_{t(\eta)}\to P^\xi_{s(\eta)},$$ 
for any arrow $\eta\in Q(\xi)_1$ different from the arrows $\alpha_i,\beta_i$, $i\in\{1,\dots,r\}$. We obtain then the 
homomorphisms between indecomposable projective modules in $\mod\hat{\Lambda}(\xi)$: 
$$\alpha_i=\Hom_{K^b(P_{\Lambda(\xi)})}(\hat{T}^\xi,\hat{\alpha}_i):\hat{P}_{c_i}\to\hat{P}_{a_i},$$ 
$$\beta_i=\Hom_{K^b(P_{\Lambda(\xi)})}(\hat{T}^\xi,\hat{\beta}_i):\hat{P}_{b_i}\to\hat{P}_{c_i},$$ 
$$\epsilon_i=\Hom_{K^b(P_{\Lambda(\xi)})}(\hat{T}^\xi,\hat{\epsilon}_i):\hat{P}_{a_i}\to\hat{P}_{b_i},$$ 
for all $i\in\{1,\dots,r\}$, and 
$$\eta=\Hom_{K^b(P_{\Lambda(\xi)})}(\hat{T}^\xi,\hat{\eta}_i):\hat{P}_{t(\eta)}\to\hat{P}_{s(\eta)},$$ 
for any arrow $\eta\in Q(\xi)_1$ different from $\alpha_i,\beta_i$, $i\in\{1,\dots,r\}$. Recall that we have in $\Lambda(\xi)$ 
the relations: 
$$\nu_i\delta_i=\beta_i\alpha_i+c_{\tilde{\nu}_i}A^*_{\tilde{\nu}_i}, \alpha_i\tau_i=0,\tau_i\beta_i=0,$$
$$\tau_i\nu_i=c_{\tilde{\tau}_i}A^*_{\tilde{\tau}_i}, \delta_i\tau_i=c_{\delta_i}A^*_{\delta_i},$$ 
for all $i\in\{1,\dots,r\}$. Moreover, we have in $K^b(P_{\Lambda(\xi)})$ the equalities 
$\hat{\tau_i}=\hat{\alpha_i}\hat{\beta_i}$, $\hat{\epsilon}_i\hat{\alpha}_i=0$ and $\hat{\beta}_i\hat{\epsilon}_i=0$, 
for any $i\in\{1,\dots,r\}$. Note that $\epsilon_i$ is not irreducible, so there is no arrow in $Q_{\hat{\Lambda}(\xi)}$ 
corresponding to $\epsilon_i$, for any $i\in\{1,\dots,r\}$.  

Consider the quiver $\hat{Q}(\xi)=(\hat{Q}(\xi)_0,\hat{Q}(\xi)_1,s,t)$ defined as follows. We take $\hat{Q}(\xi)_0= Q(\xi)_0=Q_0$, 
and the set $\hat{Q}(\xi)_1$ of arrows is obtained from the set $Q(\xi)_1$ of arrows of $Q(\xi)$ by the following operations: 
\begin{itemize}
\item replacing the arrows $\xymatrix{c_i\ar[r]^{\alpha_i}&a_i}$ and $\xymatrix{b_i\ar[r]^{\beta_i}&c_i}$ by the arrows 
$\xymatrix{a_i\ar[r]^{\alpha_i}&c_i}$ and $\xymatrix{c_i\ar[r]^{\beta_i}&b_i}$, 

\item removing the arrows $\xymatrix{a_i\ar[r]^{\tau_i}&b_i}$,  
\end{itemize}
for all $i\in\{1,\dots,r\}$. 

We observe that the quiver $\hat{Q}(\xi)$ is the quiver obtained from the triangulation quiver $(Q,f)$, defining the algebra 
$\Lambda=\Lambda(Q,f,m_\bullet,c_\bullet)$, by removing the virtual arrows $\xi_i$ and $\mu_i$, for $i\in\{1,\dots,r\}$. 
Moreover, it follows from the above relations that the radical of $\hat{\Lambda}(\xi)$ is generated by the arrows of $\hat{Q}(\xi)$. 
We also mention that for any arrow $\eta\in \hat{Q}(\xi)_1$ different from $\alpha_i,\beta_i$, $i\in\{1,\dots,r\}$, we have 
in $\hat{Q}(\xi)$ the path $\hat{A}_\eta$ obtained from the path $A^*_\eta$ in $Q(\xi)^*$ by replacing every arrow $\tau_i$ by the path $\alpha_i\beta_i$, and hence $\hat{A}_\eta$ coincides with the path $A_\eta$ in $Q$. Clearly, we have also in $\hat{Q}(\xi)$ the paths 
$\hat{A}_{\alpha_i}=A_{\alpha_i}$ and $\hat{A}_{\beta_i}=A_{\beta_i}$, for $i\in\{1,\dots,r\}$. 

Finally, note that the following relations in $\hat{\Lambda}(\xi)$ are consequences of the relations in $\Lambda(\xi)$ presented 
above: 
$$\nu_i\delta_i\alpha_i=\epsilon_i\alpha_i+c_{\tilde{\nu_i}}A^*_{\tilde{\nu_i}}\alpha_i=c_{\tilde{\nu_i}}\hat{A}_{\tilde{\nu_i}},$$ 
$$\beta_i\nu_i\delta_i=\beta_i\epsilon_i+c_{\tilde{\nu_i}}\beta_i A^*_{\tilde{\nu_i}}=
c_{\tilde{\nu_i}}\hat{A}_{\beta_i}=c_{\beta_i}\hat{A}_{\beta_i},$$
$$\delta_i\alpha_i\beta_i=\delta_i\tau_i=c_{\tilde{\delta_i}}A^*_{\tilde{\delta_i}}=c_{\delta_i}\hat{A}_{\delta_i},$$
$$\alpha_i\beta_i\nu_i=\tau_i\nu_i=c_{\tilde{\tau_i}}A^*_{\tilde{\tau_i}}=c_{\bar{\alpha}_i}\hat{A}_{\bar{\alpha}_i}.$$ 

Then we may conclude that the algebra $\hat{\Lambda}(\xi)$ is given by the quiver $\hat{Q}(\xi)$ and the same relations as the 
algebra $\Lambda$. Therefore, the algebras $\hat{\Lambda}(\xi)$ and $\Lambda$ are isomorphic. \end{proof} 

In the above notation we have $\mathcal{O}_i=(\xi_i\mbox{ }\mu_i)$, for any $i\in\{1,\dots,r\}$, and $\xi=(\xi_1,\dots,\xi_r)\in
\mathcal{O}_1\times\dots\times\mathcal{O}_r$ is a sequence of virtual arrows with $s(\xi_i)=c_i$ and $t(\xi_i)=d_i$. We may also consider 
the associated sequence $\mu=(\mu_1,\dots,\mu_r)\in\mathcal{O}_1\times\dots\times\mathcal{O}_r$ of arrows in the opposite direction, that is, 
$s(\mu_i)=d_i$ and $t(\mu_i)=c_i$, for $i\in\{1,\dots,r\}$. Then we have the following complexes in the homotopy category $K^b(P_\Lambda)$: 
$$T^{(\xi,\mu)}_x: 0 \to P_x \to 0,$$ 
concentrated in degree $0$, for all vertices $x\in Q_0$ different from $c_i,d_i$, $i\in\{1,\dots,r\}$, and 
$$T^{(\xi,\mu)}_{c_i}:\xymatrix{0 \ar[r] & P_{c_i}\ar[r]^{\alpha_i} & P_{a_i}\ar[r]&0,}$$
$$T^{(\xi,\mu)}_{d_i}:\xymatrix{0 \ar[r] & P_{d_i}\ar[r]^{\nu_i} & P_{b_i}\ar[r]&0,}$$ 
concentrated in degrees $1$ and $0$, for all $i\in\{1,\dots,r\}$. Applying arguments as in the proof of Lemma \ref{lem:5.1}, we obtain the 
following fact. 

\begin{lem}\label{lem:5.7} The complex $T^{(\xi,\mu)}:=\bigoplus_{x\in Q_0}T^{(\xi,\mu)}_x$ is a tilting complex in $K^b(P_\Lambda)$. \end{lem}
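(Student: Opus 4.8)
The plan is to deduce the statement from the Okuyama--Rickard construction, Proposition~\ref{pro:1.5}, exactly as in the proof of Lemma~\ref{lem:5.1}, but now mutating simultaneously at all the vertices $c_i$ \emph{and} all the vertices $d_i$. I would consider the decomposition $\Lambda_\Lambda = P' \oplus P''$ with
\[
P' = \bigoplus_{i=1}^r \bigl(P_{c_i} \oplus P_{d_i}\bigr), \qquad P'' = \!\!\bigoplus_{x \in Q_0 \setminus \{c_1,d_1,\dots,c_r,d_r\}}\!\! P_x,
\]
together with the homomorphism $f = \bigoplus_{i=1}^r (\alpha_i \oplus \nu_i) \colon P' \to Q' := \bigoplus_{i=1}^r (P_{a_i} \oplus P_{b_i})$ induced by the arrows $\alpha_i \colon P_{c_i} \to P_{a_i}$ and $\nu_i \colon P_{d_i} \to P_{b_i}$. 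Granting that this is a proper decomposition and that $f$ is a left $\add(P'')$-approximation of $P'$, Proposition~\ref{pro:1.5} yields that $T_1 \oplus T_2$ is a tilting complex in $K^b(P_\Lambda)$, where $T_1$ is $P''$ concentrated in degree $0$ and $T_2$ is the complex $P' \xrightarrow{f} Q'$ concentrated in degrees $1$ and $0$; unravelling the direct sums then identifies $T_1 \oplus T_2$ with $T^{(\xi,\mu)}$, as required.

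The first point to check is that the $2r$ vertices $c_1, d_1, \dots, c_r, d_r$ are pairwise distinct and that $Q'$ lies in $\add(P'')$, i.e.\ that none of the vertices $a_i, b_i$ occurs among those $2r$ vertices. Both follow from the same virtuality bookkeeping used in Section~\ref{sec:3}: if two of the labels $c_i, d_i$ named one vertex $v$, then comparing the two arrows of $(Q,f)$ starting at $v$ (one of $\{\beta_i, \xi_i\}$, resp.\ $\{\delta_i, \mu_i\}$, being virtual, while $\mathcal{O}(\alpha_i)$ and $\mathcal{O}(\nu_i)$ have length $\geqslant 3$) would force $\xi_i = \xi_j$ or $\mu_i = \mu_j$, hence $\mathcal{O}_i = \mathcal{O}_j$, contradicting the choice of pairwise different orbits --- the equality $c_i = d_i$ being excluded because the orbit $\mathcal{O}_i$ comes from a $2$-triangle disc with two distinct internal edges. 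Likewise, the two arrows leaving $a_i$, namely $\alpha_i$ and $\rho_i$, lie in $g$-orbits of length $\geqslant 3$ and are therefore non-virtual, and similarly the two arrows $\nu_i, \gamma_i$ leaving $b_i$; since each $c_j$ (resp.\ $d_j$) has a virtual outgoing arrow $\xi_j$ (resp.\ $\mu_j$), this gives $a_i, b_i \notin \{c_1,d_1,\dots,c_r,d_r\}$. In particular $P'' \neq 0$ (it contains $P_{a_1}$), so the decomposition is proper.

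It then remains to see that $f$ is a left $\add(P'')$-approximation of $P'$; as all the modules decompose, it suffices to prove that for every vertex $x \notin \{c_1,d_1,\dots,c_r,d_r\}$ and every $i$ the maps $\Hom_\Lambda(\alpha_i, P_x)$ and $\Hom_\Lambda(\nu_i, P_x)$ are surjective. Under the identification $\Hom_\Lambda(P_y, P_x) \cong e_x \Lambda e_y$ the map $\Hom_\Lambda(\alpha_i, P_x)$ is right multiplication by $\alpha_i$, from $e_x\Lambda e_{a_i}$ to $e_x\Lambda e_{c_i}$: indeed $\alpha_i$ is the unique arrow of the Gabriel quiver $Q_\Lambda$ ending at $c_i$ (the second arrow $\mu_i$ into $c_i$ being virtual, so that $\mu_i = c_{\mu_i}^{-1}\delta_i\alpha_i$ in $\Lambda$), and since $x \neq c_i$ every path from $x$ to $c_i$ in $\Lambda$ factors through $\alpha_i$, giving $e_x\Lambda e_{c_i} = (e_x\Lambda e_{a_i})\alpha_i$; the same argument with $\nu_i$ in place of $\alpha_i$ (using $\xi_i = c_{\xi_i}^{-1}\beta_i\nu_i$ and $x \neq d_i$) handles the $d_i$-components. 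Thus $f$ is a left $\add(P'')$-approximation and Proposition~\ref{pro:1.5} completes the argument. I do not expect a genuine obstacle here: the whole content lies in the indexing bookkeeping of the previous paragraph, and the only point deserving emphasis is that the two families of mutations, at the $c_i$'s and at the $d_i$'s, do not interfere with one another precisely because the $2r$ vertices $c_i, d_i$ are distinct (so that the "unique incoming arrow" computations at $c_i$ and at $d_i$ are independent).
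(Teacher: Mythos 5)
Your proof is correct and follows essentially the same route as the paper, which simply invokes the Okuyama--Rickard construction (Proposition \ref{pro:1.5}) exactly as in Lemma \ref{lem:5.1}, using that $\alpha_i$ and $\nu_i$ are the unique arrows of $Q_\Lambda$ ending at $c_i$ and $d_i$ respectively. The extra bookkeeping you supply (distinctness of the vertices $c_i,d_i$ and the fact that $a_i,b_i$ are not among them, via the virtual/non-virtual outgoing arrows) is left implicit in the paper but is accurate.
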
 

We define the algebra $\Lambda(\xi,\mu):=\End_{K^b(P_\Lambda)}(T^{(\xi,\mu)})$. 

\begin{theorem}\label{thm:5.8} The algebras $\Lambda$ and $\Lambda(\xi,\mu)$ are isomorphic. \end{theorem}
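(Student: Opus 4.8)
The plan is to argue exactly as in the proofs of Theorem~\ref{thm:5.5} and Theorem~\ref{thm:5.6}. By Lemma~\ref{lem:5.7}, $T^{(\xi,\mu)}$ is a tilting complex in $K^b(P_\Lambda)$, so by Theorem~\ref{thm:1.1} the algebra $\Lambda(\xi,\mu)=\End_{K^b(P_\Lambda)}(T^{(\xi,\mu)})$ is derived equivalent to $\Lambda$; since the weighted triangulation algebra $\Lambda$ is symmetric (Theorem~\ref{thm:2.4}), Theorem~\ref{thm:1.2} yields that $\Lambda(\xi,\mu)$ is a finite-dimensional symmetric algebra. It then remains only to produce an explicit isomorphism $\Lambda\cong\Lambda(\xi,\mu)$.

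For this I would compute $\End_{K^b(P_\Lambda)}(T^{(\xi,\mu)})$ by the recipe used in Proposition~\ref{prop:5.4} and Theorems~\ref{thm:5.5}--\ref{thm:5.6}: the modules $\Hom_{K^b(P_\Lambda)}(T^{(\xi,\mu)},T^{(\xi,\mu)}_x)$, $x\in Q_0$, form a complete family of pairwise non-isomorphic indecomposable projective $\Lambda(\xi,\mu)$-modules, and one exhibits explicit chain maps between the indecomposable summands $T^{(\xi,\mu)}_x$ which, after applying $\Hom_{K^b(P_\Lambda)}(T^{(\xi,\mu)},-)$, generate $\rad\Lambda(\xi,\mu)$ and realise a quiver isomorphic to the Gabriel quiver $Q_\Lambda$ of $\Lambda$ (with its virtual arrows removed) --- the isomorphism of quivers being the one that interchanges $c_i$ and $d_i$, which is forced by the fact that in $T^{(\xi,\mu)}$ both $T^{(\xi,\mu)}_{c_i}$ and $T^{(\xi,\mu)}_{d_i}$ are complexes of length $2$. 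Concretely, for an arrow $\eta\in Q_1$ not among the $\alpha_i,\beta_i,\nu_i,\delta_i$ --- equivalently, not incident with any vertex $c_i$ or $d_i$ --- the summands $T^{(\xi,\mu)}_{s(\eta)}$ and $T^{(\xi,\mu)}_{t(\eta)}$ are stalk complexes in degree $0$ and one uses the chain map given by $\eta$; around the $i$-th $2$-triangle disc one combines the identity maps $T^{(\xi,\mu)}_{a_i}\to T^{(\xi,\mu)}_{c_i}$ and $T^{(\xi,\mu)}_{b_i}\to T^{(\xi,\mu)}_{d_i}$ coming from the mutations at $c_i$ and at $d_i$ with the chain maps induced by the paths $\alpha_i\beta_i:P_{b_i}\to P_{a_i}$ and $\nu_i\delta_i:P_{a_i}\to P_{b_i}$, corrected by suitable scalar multiples of the $A'$-paths exactly as in the proof of Proposition~\ref{prop:5.4}, the corrections being chosen so that the defining relations of $\Lambda$ are satisfied. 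One then verifies these relations in $\Lambda(\xi,\mu)$ by the socle computations of Propositions~\ref{prop:2.6} and~\ref{prop:3.7} and the argument of Proposition~\ref{prop:5.4}, obtaining a surjective algebra homomorphism $\Lambda\to\Lambda(\xi,\mu)$. Finally, since $\Lambda(\xi,\mu)$ is symmetric, the argument of Proposition~\ref{prop:3.9} shows that it admits the same linear basis as $\Lambda$, namely the one of Proposition~\ref{prop:2.6}; hence $\dim_K\Lambda(\xi,\mu)=\dim_K\Lambda$ and the surjection above is an isomorphism.

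An equivalent way to organise the computation is to work relative to $\Lambda(\xi)$: the complex $T^{(\xi,\mu)}$ agrees with the tilting complex $T^\xi$ of Lemma~\ref{lem:5.1} in all summands except those indexed by the $d_i$, and under the derived equivalence $D^b(\mod\Lambda(\xi))\to D^b(\mod\Lambda)$ induced by $T^\xi$ it corresponds, summand by summand, to the Okuyama--Rickard tilting complex mutating $\Lambda(\xi)$ at the vertices $d_i$ along their unique incoming arrows $\nu_i$ (which are ``plain'' arrows of $Q(\xi)$, so that this correspondence is transparent); one then identifies the endomorphism algebra of that complex with $\Lambda$ by an argument parallel to the proof of Theorem~\ref{thm:5.6}. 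In either presentation the essential difficulty --- and the main obstacle --- is the same: keeping track of the chain maps and null-homotopies in the neighbourhood of a $2$-triangle disc once \emph{both} $c_i$ and $d_i$ have been mutated, and pinning down the relabelling $c_i\leftrightarrow d_i$ together with the correction terms so that the quiver and relations read off from $\End_{K^b(P_\Lambda)}(T^{(\xi,\mu)})$ are precisely those of the presentation of $\Lambda$ by $Q_\Lambda$.
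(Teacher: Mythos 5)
Your plan coincides with the paper's own proof: the paper likewise computes $\End_{K^b(P_\Lambda)}(T^{(\xi,\mu)})$ via explicit chain maps between the summands (including the auxiliary maps giving $\tau_i$ and $\varepsilon_i$, which turn out to be superfluous generators), reads off a presentation in which the roles of $c_i$ and $d_i$ are interchanged, and verifies the relations by exactly the socle arguments you cite, the hardest step being the vanishing of the cross terms produced by the corrected chain maps near a $2$-triangle disc. The one organisational point you leave implicit is that the relations obtained this way naturally present $\Lambda(\xi,\mu)$ as a weighted triangulation algebra $\Lambda'=\Lambda(Q',f',m'_\bullet,c'_\bullet)$ on the reflected triangulation quiver with a sign-twisted parameter $c'_{\mathcal{O}'(\xi_i)}=-1$, and only after the rescaling $\xi_i\mapsto-\xi_i$, $\mu_i\mapsto-\mu_i$ does one land on $\Lambda$ itself --- a sign that is hidden inside your unspecified ``correction terms''.
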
 

\begin{proof} Note that the modules $\tilde{P}_x=\Hom_{K^b(P_\Lambda)}(T^{(\xi,\mu)},T^{(\xi,\mu)}_x)$, $x\in Q_0$, form a complete family of 
pairwise non-isomorphic indecomposable projective modules in $\mod\Lambda(\xi,\mu)$. Observe also that we have the following morphisms in 
$K^b(P_\Lambda)$ between indecomposable direct summands of $T^{(\xi,\mu)}$: 
$$\hat{\alpha}_i:T^{(\xi,\mu)}_{a_i}\to T^{(\xi,\mu)}_{c_i},\mbox{ given by }id:P_{a_i}\to P_{a_i},$$
$$\hat{\beta}_i:T^{(\xi,\mu)}_{c_i}\to T^{(\xi,\mu)}_{b_i},\mbox{ given by }\nu_i\delta_i-c_{\gamma_i}A_{\gamma_i}':P_{a_i}\to P_{b_i},$$
$$\hat{\tau}_i:T^{(\xi,\mu)}_{b_i}\to T^{(\xi,\mu)}_{a_i},\mbox{ given by }\alpha_i\beta_i:P_{b_i}\to P_{a_i},$$
$$\hat{\nu}_i:T^{(\xi,\mu)}_{b_i}\to T^{(\xi,\mu)}_{d_i},\mbox{ given by }id:P_{b_i}\to P_{b_i},$$
$$\hat{\delta}_i:T^{(\xi,\mu)}_{d_i}\to T^{(\xi,\mu)}_{a_i},\mbox{ given by }\alpha_i\beta_i-c_{\rho_i}A'_{\rho_i}:P_{b_i}\to P_{a_i},$$
$$\hat{\varepsilon}_i:T^{(\xi,\mu)}_{a_i}\to T^{(\xi,\mu)}_{b_i},\mbox{ given by }\nu_i\delta_i:P_{a_i}\to P_{b_i},$$
for all $i\in\{1,\dots,r\}$, and 
$$\hat{\eta}:T^{(\xi,\mu)}_{t(\eta)}\to T^{(\xi,\mu)}_{s(\eta)},\mbox{ given by }\eta:P_{t(\eta)}\to P_{s(\eta)},$$ 
for any arrow $\eta\in Q_1$ different from the arrows $\alpha_i,\beta_i,\nu_i,\delta_i$, $i\in\{1,\dots,r\}$. Applying the covariant functor 
$\Hom_{K^b(P_\Lambda)}(T^{(\xi,\mu)},-)$ we obtain the following homomorphisms between indecomposable projective modules in $\mod\Lambda(\xi,\mu)$: 
$$\alpha_i=\Hom_{K^b(P_\Lambda)}(T^{(\xi,\mu)},\hat{\alpha}_i):\tilde{P}_{a_i}\to\tilde{P}_{c_i},$$
$$\beta_i=\Hom_{K^b(P_\Lambda)}(T^{(\xi,\mu)},\hat{\beta}_i):\tilde{P}_{c_i}\to\tilde{P}_{b_i},$$
$$\tau_i=\Hom_{K^b(P_\Lambda)}(T^{(\xi,\mu)},\hat{\tau}_i):\tilde{P}_{b_i}\to\tilde{P}_{a_i},$$
$$\nu_i=\Hom_{K^b(P_\Lambda)}(T^{(\xi,\mu)},\hat{\nu}_i):\tilde{P}_{b_i}\to\tilde{P}_{d_i},$$
$$\delta_i=\Hom_{K^b(P_\Lambda)}(T^{(\xi,\mu)},\hat{\delta}_i):\tilde{P}_{d_i}\to\tilde{P}_{a_i},$$
$$\varepsilon_i=\Hom_{K^b(P_\Lambda)}(T^{(\xi,\mu)},\hat{\varepsilon}_i):\tilde{P}_{a_i}\to\tilde{P}_{b_i},$$
for all $i\in\{1,\dots,r\}$, and 
$$\eta=\Hom_{K^b(P_\Lambda)}(T^{(\xi,\mu)},\hat{\eta}):\tilde{P}_{t(\eta)}\to\tilde{P}_{s(\eta)},$$ 
for all arrows $\eta\in Q_1$ different from $\alpha_i,\beta_i,\nu_i,\delta_i$, $i\in\{1,\dots,r\}$. It is not difficult to observe that the 
homomorphisms $\alpha_i,\beta_i,\tau_i,\nu_i,\delta_i,\epsilon_i$ and $\eta\in Q_1$ different than $\alpha_i,\beta_i,\nu_i,\delta_i$, 
$i\in\{1,\dots,r\}$, generate the radical $\rad\Lambda(\xi,\mu)$ of $\Lambda(\xi,\mu)$ (after obvious identification with the corresponding 
elements of the algebra). 

For each $i\in\{1,\dots,r\}$, we have the following relations 
$$\varepsilon_i=\beta_i\alpha_i+c_{\gamma_i}A^{**}_{\gamma_i}, \alpha_i\tau_i=0, \tau_i\beta_i=0,$$ 
$$\tau_i=\delta_i\nu_i+c_{\rho_i}A^{**}_{\rho_i}, \nu_i\varepsilon_i=0, \varepsilon_i\delta_i=0,$$ 
where the paths $A_{\gamma_i}^{**}$ and $A_{\rho_i}^{**}$ are obtained from the paths $A_{\gamma_i}'$ and $A_{\rho_i}'$ by replacing all subpaths 
$\alpha_j\beta_j$ by $\tau_j$, and all subpaths $\nu_j\delta_j$ by $\varepsilon_j$, $j\in\{1,\dots,r\}$. In particular, we conclude that the 
generators $\tau_i,\varepsilon_i$, $i\in\{1,\dots,r\}$, are superfluous, which means that the corresponding arrows do not occur in the Gabriel 
quiver $Q_{\Lambda(\xi,\mu)}$ of $\Lambda(\xi,\mu)$. Hence $Q_{\Lambda(\xi,\mu)}$ contains subquivers 
$$\xymatrix@R=0.5cm@C=1.2cm{\ar[rd]^{\sigma_i}&&c_i\ar[ld]_{\alpha_i}&& \\ 
&a_i\ar[ld]^{\rho_i}\ar[rd]_{\delta_i}&&b_i\ar[lu]_{\beta_i}\ar[ru]^{\gamma_i} 
& \\ &&d_i\ar[ru]_{\nu_i}&&\ar[lu]^{\omega_i} }$$ 
for $i\in\{1,\dots,r\}$, where possibly one of the equalities $\gamma_i=\sigma_i$ or $\rho_i=\omega_i$ holds. The above relations provide the 
following four relations in $\Lambda(\xi,\mu)$: 
$$(1)\qquad\qquad\nu_i\beta_i\alpha_i+c_{\gamma_i}\nu_i A^{**}_{\gamma_i}=0,\quad \beta_i\alpha_i\delta_i+c_{\gamma_i}A^{**}_{\gamma_i}\delta_i=0,$$ 
$$\qquad\qquad\quad\alpha_i\delta_i\nu_i+c_{\rho_i}\alpha_i A^{**}_{\rho_i}=0,\quad \delta_i\nu_i\beta_i+c_{\rho_i}A^{**}_{\rho_i}\beta_i=0.$$ 

Now, let $(Q',f')$ be the triangulation quiver obtained from the triangulation quiver $(Q,f)$ by replacing all subquivers 
$$\xymatrix@R=0.5cm@C=1.2cm{\ar[rd]^{\sigma_i}&&c_i\ar[rd]^{\beta_i}\ar@<-0.1cm>[dd]_{\xi_i}&& \\ 
&a_i\ar[ld]^{\rho_i}\ar[ru]^{\alpha_i}&&b_i\ar[ld]^{\nu_i}\ar[ru]^{\gamma_i} 
& \\ &&d_i\ar[lu]^{\delta_i}\ar@<-0.1cm>[uu]_{\mu_i}&&\ar[lu]^{\omega_i} }$$ 
by the subquivers 
$$\xymatrix@R=0.5cm@C=1.2cm{\ar[rd]^{\sigma_i}&&c_i\ar[ld]_{\alpha_i}\ar@<0.1cm>[dd]^{\mu_i}&& \\ 
&a_i\ar[ld]^{\rho_i}\ar[rd]_{\delta_i}&&b_i\ar[lu]_{\beta_i}\ar[ru]^{\gamma_i} 
& \\ &&d_i\ar[ru]_{\nu_i}\ar@<0.1cm>[uu]^{\xi_i}&&\ar[lu]^{\omega_i} }$$ 
for $i\in\{1,\dots,r\}$, with the $f'$-orbits $(\alpha_i\mbox{ }\delta_i\mbox{ }\xi_i)$ and $(\beta_i\mbox{ }\mu_i\mbox{ }\nu_i)$, 
and $f'(\eta)=f(\eta)$ for any arrow $\eta\in Q_1$ different from $\alpha_i,\beta_i,\nu_i,\delta_i,\xi_i,\mu_i$. Let $g'$ be the associated 
permutation $g':=\overline{f'}$ of $Q_1'$. Then we write $\mathcal{O}'(\eta)$ for the $g'$-orbit of an arrow $\eta$ in $Q_1'$, and 
$\mathcal{O}(g')$ for the set of all $g'$-orbits in $Q_1'$. It is clear from definition that 
$$\mathcal{O}'(\alpha_i)=(\alpha_i\mbox{ }\rho_i\mbox{ }\dots\mbox{ }\omega_i\mbox{ }\beta_i)\mbox{ and }
\mathcal{O}'(\nu_i)=(\nu_i\mbox{ }\gamma_i\mbox{ }\dots\mbox{ }\sigma_i\mbox{ }\delta_i),$$ 
and obviously $\mathcal{O}'(\xi_i)=(\xi_i\mbox{ }\mu_i)$, for $i\in\{1,\dots,r\}$. We may also define the weight and parameter functions 
$m'_\bullet:\mathcal{O}(g')\to\mathbb{N}^*$ and $c'_\bullet:\mathcal{O}(g')\to K^*$ of $(Q',f')$ setting as follows: 
$$m'_{\mathcal{O}'(\alpha_i)}=m_{\mathcal{O}(\nu_i)}, m'_{\mathcal{O}'(\nu_i)}=m_{\mathcal{O}(\alpha_i)}, m'_{\mathcal{O}'(\xi_i)}=1,$$ 
$$c'_{\mathcal{O}'(\alpha_i)}=c_{\mathcal{O}(\nu_i)}, c'_{\mathcal{O}'(\nu_i)}=c_{\mathcal{O}(\alpha_i)}, c'_{\mathcal{O}'(\xi_i)}=-1,$$ 
for $i\in\{1,\dots,r\}$, and $m'_{\mathcal{O}'(\eta)}=m_{\mathcal{O}(\eta)}$ and $c'_{\mathcal{O}'(\eta)}=c_{\mathcal{O}(\eta)}$ for the 
remaining $g'$-orbits $\mathcal{O}'(\eta)$ in $Q_1'$ - being then the $g$-orbits $\mathcal{O}'(\eta)=\mathcal{O}(\eta)$ in $Q_1$. 

We claim that the algebra $\Lambda(\xi,\mu)$ is isomorphic to the weighted triangulation algebra $\Lambda':=\Lambda(Q',f',m'_\bullet,c'_\bullet)$. 
Observe first that the Gabriel quivers $Q_{\Lambda(\xi,\mu)}$ and $Q_{\Lambda'}$ coincide. Let $\hat{A}_\eta$, for an arrow $\eta\in Q_1'$, denote 
the path $\hat{A}_\eta=(\eta g'(\eta)\dots (g')^{n_{\mathcal{O}'(\eta)}-1}(\eta))^{m_{\mathcal{O}'(\eta)}-1}\eta g'(\eta)\dots (g')^{n_{\mathcal{O}'(\eta)}-2}(\eta)$ 
in $Q'$ along the $g'$-orbit of $\eta$. By definition, for any $i\in\{1,\dots,r\}$, we have the equalities in $\Lambda'$: 
$$\nu_i\beta_i\alpha_i=(-\xi_i)\alpha_i=-c'_{\nu_i}\hat{A}_{\nu_i},\quad \beta_i\alpha_i\delta_i=\beta_i(-\mu_i)=-c'_{\gamma_i}\hat{A}_{\gamma_i},$$  
$$ \alpha_i\delta_i\nu_i=(-\mu_i)\nu_i=-c'_{\alpha_i}\hat{A}_{\alpha_i},\quad \delta_i\nu_i\beta_i=\delta_i(-\xi_i)=-c'_{\rho_i}\hat{A}_{\rho_i},$$ 
so the following four relations hold (in $\Lambda '$): 
$$(2)\qquad\qquad\nu_i\beta_i\alpha_i+c'_{\nu_i}\hat{A}_{\nu_i}=0,\quad \beta_i\alpha_i\delta_i+c'_{\gamma_i}\hat{A}_{\gamma_i}=0,$$  
$$\qquad\qquad\quad\mbox{ }\alpha_i\delta_i\nu_i+c'_{\alpha_i}\hat{A}_{\alpha_i}=0,\quad \delta_i\nu_i\beta_i+c'_{\rho_i}\hat{A}_{\rho_i}=0.$$  

We will show first that the above relations also hold in $\Lambda(\xi,\mu)$. According to the relations presented before (1), it is sufficient 
to prove that the following four equalities: $\hat{A}_{\nu_i}=\nu_i A^{**}_{\gamma_i}$, $\hat{A}_{\gamma_i}=A^{**}_{\gamma_i}\delta_i$, 
$\hat{A}_{\alpha_i}=\alpha_i A^{**}_{\gamma_i}$ and $\hat{A}_{\rho_i}=A^{**}_{\rho_i}\beta_i$ hold in $\Lambda(\xi,\mu)$, for $i\in\{1,\dots,r\}$ 
(the equalities of scalars are obvious from the definition of parameter function $c'_\bullet$). We only prove that $\hat{A}_{\nu_i}=\nu_i A_{\gamma_i}^{**},$ 
since the proof of the remaining three equalities is similar. 

First, observe that $\hat{A}_{\nu_i}$, regarded as a path in $Q_{\Lambda(\xi,\mu)}$, is given by a morphism $T^{(\xi,\mu)}_{a_i}\to T^{(\xi,\mu)}_{d_i}$ 
induced by a homomorphism $h:P_{a_i}\to P_{b_i}$ in $\mod\Lambda$, which is identified with the element in $\Lambda$ obtained from $A_{\gamma_i}'$ by 
exchanging each subpath $\alpha_j\beta_j$, $j\in\{1,\dots,r\}$, by $\alpha_j\beta_j-c_{\rho_j}A'_{\rho_j}$. But every path of the form $\rho= 
\gamma_i g(\gamma_i)\dots \sigma_j A'_{\rho_j}\gamma_j g(\gamma_j)$ is zero in $\Lambda$. Indeed, suppose first that $\rho_j=\omega_j$, or equivalently 
$n_{\nu_j}=3$. Then $A'_{\rho_j}=(\rho_j\nu_j\delta_j)^{m_{\nu_j}-1}\rho_j$ and $f(\rho_j)=f(\omega_j)=\gamma_j$, and hence $\rho$ admits either a 
proper subpath of the form $\sigma_j\rho_j\gamma_j=\sigma_j f(\sigma_j) f^2(\sigma_j)$, for $m_{\nu_j}=1$, or a (proper) subpath $\delta_j\rho_j\gamma_j= 
\delta_j g(\delta_j) f(g(\delta_j))$, otherwise. In the first case, the indicated subpath belongs to the (right) socle $\soc\Lambda$ of $\Lambda$ 
(see Proposition \ref{prop:2.6}(3)) and it is a proper subpath of $\rho$, so $\rho=0$ in $\Lambda$. In the second case, the indicated subpath is 
zero in $\Lambda$, due to relations of type (3) in the definition of weighted triangulation algebra (see Definition \ref{def:2.3}), because 
$f(\delta_j)=\alpha_j$ is not virtual and $n_{\alpha_j}\geqslant 4$ (note that we have $n_{\nu_j}=3$). Now, suppose that $n_{\nu_j}\geqslant 4$. 
If additionally $n_{\alpha_j}=3$, then $\gamma_j=\sigma_j$, $f^{2}(\sigma_j)=\omega_j$ is not virtual and $f(\bar{\sigma_j})=f(g(\omega_j))=f(\nu_j)=\mu_j$ 
is virtual but with $n_{\bar{\sigma_j}}=n_{g(\omega_j)}=n_{\nu_j}\geqslant 4$, and consequently, we have $\rho=0$ in $\Lambda$, since $\rho$ has 
a subpath $\sigma_j \rho_j g(\rho_j)=\sigma_j f(\sigma_j) g(f(\sigma_j))$ which equals zero in $\Lambda$, by relations of type (2) in Definition \ref{def:2.3}. 
Eventually, let $n_{\alpha_j}\geqslant 4$. Then $\rho$ admits subpaths of the forms: $\omega_j\gamma_j g(\gamma_j)=\omega_j f(\omega_j)g(f(\omega_j))$ 
and $\rho '= \omega'_j g(\omega '_j) f(g(\omega '_j)=\omega '_j \omega_j\gamma_j$, where $\omega_j'=g^{-1}(\omega_j)$. Here we have either 
$\rho '=0$ in $\Lambda$, by Definition \ref{def:2.3}(3), or the arrow $\omega_j'$ satisfies one of the following conditions: $f(\omega_j')$ is virtual 
or $f^2(\omega_j')$ is virtual with $m_{f(\omega_j')}=1$ and $n_{f(\omega_j')}=3$. Clearly $f(\omega_j')=f(g^{-1}(\omega_j))=\bar{\omega_j}=g(f^2(\omega_j))$ 
and $f^2(\omega_j')=f(\bar{\omega_j})$. In the case, when $f(\omega_j')=\bar{\omega_j}$ is a virtual arrow, we have 
$\omega_j f(\omega_j) g(f(\omega_j))=c_{\bar{\omega_j}} \bar{\omega_j} f(\bar{\omega_j})=c_{\bar{\omega_j}}c_{\omega_j}A_{\omega_j}$, and hence 
$\rho$ admits a proper subpath of the form $\rho_j\dots g^{-1}(\omega_j) A_{\omega_j}=\rho_j\dots g^{n_{\rho_j}-2}(\omega_j)B_{g^{-1}(\omega_j)}$, 
which is an element of the (left) socle of $\Lambda$, so $\rho=0$ in $\Lambda$. It remains to see that, if $f^2(\omega_j')=f(\omega_j)$ is virtual 
with $m_{f(\omega_j')}=1$ and $n_{f(\omega_j')}=3$, then $m_{\bar{\omega_j}}=1$ and $n_{\bar{\omega_j}}=3$, so $\rho$ admits a subpath of the form 
$$\gamma_j g(\gamma_j)\omega_j' \omega_j \gamma_j=\gamma_j g(\gamma_j) \omega_j' \omega_j f(\omega_j)=c_{\bar{\omega_j}} 
\gamma_j g(\gamma_j) \omega_j' A_{\bar{\omega_j}}= $$ 
$$c_{\bar{\omega_j}} \gamma_j g(\gamma_j) \omega_j' \bar{\omega_j} g(\bar{\omega_j})=c_{\bar{\omega_j}}c_{\bar{\omega_j}'} \gamma_j 
g(\gamma_j)\bar{\omega_j}' g(\bar{\omega_j}),$$ 
being zero in $\Lambda$, because $g(\gamma_j)\bar{\omega_j}' g(\bar{\omega_j})=g(\gamma_j) f(g(\gamma_j)) f^2(g(\gamma_j))$ belongs to the 
socle of $\Lambda$ (see Proposition \ref{prop:2.6}). This proves that indeed every path of the form $\rho$ is zero in $\Lambda$, so $h$ may be 
identified (in $\Lambda$) with $A_{\gamma_i}'$. In turn, by definition of morphisms $\tau_i$, we conclude that $\nu_i A^{**}_{\gamma_i}$ 
may be also identified with $A_{\gamma_i}'$, and therefore, we have $\hat{A}_{\nu_i}=\nu_i A^{**}_{\gamma_i}$ in $\Lambda(\xi,\mu)$. 
Other equalities are proven in a similar way. As a result, we infer that all relations of the above form (2) are satisfied in $\Lambda(\xi,\mu)$. 

All other relations in $\Lambda '$ of type $\eta f'(\eta)=c_{\bar{\eta}}\hat{A}_{\bar{\eta}}$ also hold in $\Lambda(\xi,\mu)$, which can be easy 
deduced from the definition of morphisms corresponding to arrows of $Q_{\Lambda(\xi,\mu)}$. In particular, using description of arrows and 
the above presented arguments one may also check that the remaining relations satisfied in $\Lambda '$ (see Definition \ref{def:2.3}(2) and (3)) 
hold in $\Lambda(\xi,\mu)$. Summing up, we have shown that $\Lambda(\xi,\mu)$ is isomorphic to $\Lambda '$. 

In the final step, we observe that in fact $\Lambda'$ is isomorphic to $\Lambda$. Namely, there is an isomorphism of $K$-algebras $\Psi:\Lambda '\to \Lambda$ 
induced by the isomorphism of path algebras $\Phi:KQ'\to KQ$ such that 
$$\Phi(e_{c_i})=e_{d_i}, \Phi(e_{d_i})=e_{c_i},\Phi(\alpha_i)=\delta_i, \Phi(\beta_i)=\nu_i,$$ 
$$\Phi(\nu_i)=\beta_i,\Phi(\delta_i)=\alpha_i, \Phi(\xi_i)=-\xi_i,\Phi(\mu_i)=-\mu_i,$$ 
$$\Phi(e_x)=e_x\mbox{ for all vertices }x\in Q_0'\mbox{ different from }c_i, d_i,$$ 
$$\Phi(\eta)=\eta\mbox{ for all arrows }\alpha\in Q_1'\mbox{ different from }\alpha_i,\beta_i,\nu_i,\delta_i,$$ 
for $i\in\{1,\dots,r\}$. \end{proof} 

\begin{rem} The above theorem explains why in Definition \ref{def:3.4} we consider only sequences $\xi=(\xi_1,\dots,\xi_r)$ of virtual arrows comming 
from pairwise different $g$-orbits (of length $2$). In fact, the Definition \ref{def:3.4} makes sense even if $\xi$ does not satisfy this assumption. 
But, using construction analoguous to the one described above (Lemma \ref{lem:5.7} and Theorem \ref{thm:5.8}) we can prove that, if $\xi$ is a sequence 
of virtual arrows (from orbits of length $2$) and $\xi_j=g(\xi_i)$ for some $i\neq j$, then the virtual mutation $\Lambda(\xi)$ (in a wider sense) is 
isomorphic to the virtual mutation $\Lambda(\hat{\xi})$, where $\hat{\xi}$ is a sequence obtained from $\xi$ by cancelling the arrows $\xi_i$ and $\xi_j$. 
The situation in which $\xi_j=\xi_i$, for some $i\neq j$, is covered by the Theorem \ref{thm:5.6}. \end{rem}

\section{Virtual edge deformations of weighted \\ surface algebras}\label{sec:6}

In this section, by a surface we mean a connected, compact, two-dimensional orientable real manifold, with or without 
boundary. It is well known that every surface $S$ admits an additional structure of a finite two-dimensional triangular 
cell complex and hence a triangulation, by the deep Triangulation Theorem (see for example \cite[Section 2.3]{Ca}). 

For a positive natural number $n$, we denote by $D^n$ the unit disc in the $n$-dimensional Euclidean space $\mathbb{R}^n$, 
formed by all points of distance $\leqslant 1$ from the origin. Then the boundary $\partial D^n$ of $D^n$ is the unit sphere 
$S^{n-1}$ in $\mathbb{R}^n$ formed by all points of distance $1$ from the origin. Furhter, by an $n$-cell we mean a 
topological space homeomorphic to the open disc $O^n=int(D^n)=D^n\setminus\partial D^n$. In particular, $S^0=\partial D^1$ 
consists of two points and $\partial D^0=D^0$ is a single point. We refer to \cite[Appendix]{Ha2} for some basic topological 
facts about cell complexes. 

Let $S$ be a surface. By a finite two-dimensional triangular cell complex structure on $S$ we mean a finite family of maps 
$\phi_i^n:D^n_i\to S$, with $n\in\{0,1,2\}$ and $D^n_i=D^n$, satisfying the following conditions:
\begin{enumerate}[(1)] 
\item each $\phi^n_i$ restricts to a homeomorphism $O^n\to\phi^n_i(O^n)$ and the $n$-cells $e^n_i:=\phi^n_i(O^n)$ of $S$ 
are pairwise disjoint and their union is $S$. 

\item For each two-dimensional cell $e^2_i$, $\phi^2_i(\partial D_i^2)$ is the union of $k$ $1$-cells and $m$ $0$-cells, where $k\in 
\{2,3\}$ and $m\in\{1,2,3\}$, and different from $\xymatrix{\bullet\ar@{-}@<0.1cm>[r]&\bullet\ar@{-}@<0.1cm>[l]}$. 
\end{enumerate}
Then the closures $\phi^2_i(D^2_i)$ of all $2$-cells $e_i^2$ are called \emph{triangles} of $S$, and the closures $\phi^1_i(D^1_i)$ 
of all $1$-cells $e^1_i$ are called \emph{edges} of $S$. The collection of all triangles is said to be a \emph{triangulation} 
of $S$. We assume that such a triangulation $T$ of $S$ has at least two different edges, so then $T$ is a finite collection 
of triangles of the form 
$$\begin{tikzpicture}
\draw[thick](-1,0)--(1,0);
\draw[thick](-1,0)--(0,1.67);
\draw[thick](1,0)--(0,1.67);
\node() at (-1,0){$\bullet$};
\node() at (1,0){$\bullet$};
\node() at (0,1.67){$\bullet$};
\node() at (-0.6,1){$a$};
\node() at (0.6,1){$a$};
\node() at (0,-0.25){$b$};

\draw[thick](-5,0)--(-3,0);
\draw[thick](-5,0)--(-4,1.67);
\draw[thick](-3,0)--(-4,1.67);
\node() at (-5,0){$\bullet$};
\node() at (-3,0){$\bullet$};
\node() at (-4,1.67){$\bullet$};
\node() at (-4.6,1){$a$};
\node() at (-3.4,1){$b$};
\node() at (-4,-0.25){$c$};

\draw[thick](3,1) circle (1);
\node() at (3,1){$\bullet$};
\node() at (4,1){$\bullet$};
\draw[thick](3,1)--(4,1);
\node() at (3.5,1.2){$a$};
\node() at (3,-0.25){$b$};

\node() at (1.5,1){$=$};
\node() at (-2,1){or};
\node() at (-4.5,-1){$a,b,c$ pairwise different};
\node() at (2,-1){$a,b$ different (self-folded triangle)};

\end{tikzpicture}$$ 
such that every edge is either the edge of exactly two triangles, is the self-folded edge, or lies on the boundary of $S$. We 
note that a given surface $S$ admits many finite two-dimensional triangular cell complex structures, and hence triangulations. 

In this section, by a directed triangulated surface we mean a pair $(S,\overrightarrow{T})$, where $S$ is a surface, $T$ a 
triangulation of $S$, and $\overrightarrow{T}$ a coherent orientation of triangles in $T$. We may assume that all triangles 
of $T$ have clockwise orientation in $\overrightarrow{T}$. Then the associated triangulation quiver $(Q(S,\overrightarrow{T}),f)$ 
is defined as follows:
\begin{itemize}
\item The set $Q(S,\overrightarrow{T})_0$ of vertices consists of the edges of $T$.
\item The set $Q(S,\overrightarrow{T})_1$ of arrows and permutation $f:Q(S,\overrightarrow{T})_1\to Q(S,\overrightarrow{T})_1$ 
are given by the orientation of triangles in $T$, namely 
\begin{enumerate}[(a)]
\item $\xymatrix{a\ar[rr]^{\alpha}& &b\ar[ld]^{\beta}\\ &c\ar[lu]^{\gamma}&}$ $\quad$ $f(\alpha)=\beta, f(\beta)=\gamma, f(\gamma)=\alpha$ 

for any oriented triangle $\Delta=(a b c )$ in $\overrightarrow{T}$, and $a,b,c$ pairwise different edges,
\item $\xymatrix@R=0.01cm{&\ar@(lu,ld)[d]_{\alpha}&\\& a\ar@<0.2cm>[r]^{\beta}  &b \ar@<0.1cm>[l]^{\gamma}\\ && }$ $\quad$ $f(\alpha)=\beta, f(\beta)=\gamma, f(\gamma)=\alpha$

for a self-folded triangle $\Delta=(aab)$ in $\overrightarrow{T}$ ($a,b$ different edges), and 
\item $\xymatrix@R=0.01cm{&\ar@(lu,ld)[d]_{\alpha}\\& a }$ $\quad$ $f(\alpha)=\alpha,$ 

for any boundary edge $a$ of $T$. 
\end{enumerate}
\end{itemize} 

\begin{df}\label{df:6.1} Let $(S,\overrightarrow{T})$ be a directed triangulated surface and $I$ a non-empty set of edges in $T$ 
(possibly all the edges). The \emph{blow-up} of $(S,\overrightarrow{T})$ at $I$ is the directed triangulated surface 
$(S,\overrightarrow{T_I})$ obtained from $(S,\overrightarrow{T})$ by replacing each edge $i\in I$ by a $2$-triangle disc 
$$\begin{tikzpicture}[scale=1.5]
\draw[thick] (0,-1)--(0,1); 
\draw[thick](0,0) circle (1);
\node() at (0,0){$\bullet$};
\node() at (0,-1){$\bullet$}; 
\node() at (0,1){$\bullet$}; 
\node() at (1.2,0){$b_i$};
\node() at (-1.2,0){$a_i$};
\node() at (-0.2,0.5){$c_i$};
\node() at (-0.2,-0.5){$d_i$};
\end{tikzpicture}$$
with $(a_i\mbox{ }c_i\mbox{ }d_i)$ and $(c_i\mbox{ }b_i\mbox{ }d_i)$ in $\overrightarrow{T_I}$. We note that since we require 
$\overrightarrow{T_I}$ to be a coherent orientation of triangles in $T_I$, this blow-up is uniquely determined by $I$ 
(and the coherent orientation $\overrightarrow{T}$ of triangles in $T$). \end{df} 

Let $(S,\overrightarrow{T_I})$ be the blow-up of a directed triangulated surface $(S,\overrightarrow{T})$ at a set $I$ of edges 
of $T$. Let $(Q,f)$ denote the triangulation quiver $(Q(S,\overrightarrow{T}),f)$ and $(Q^I,f^I)$ the triangulation quiver 
associated to $(S,\overrightarrow{T_I})$. We write $g:Q_1\to Q_1$ for the permutation associated to $f$ and $g^I:Q^I_1\to Q^I_1$ 
for the permutation associated to $f^I$. For an arrow $\eta\in Q^I_1$, we denote by $\mathcal{O}^I(\eta)$ the $g^I$-orbit of 
$\eta$ in $Q^I_1$. Moreover, let $\mathcal{O}(g^I)$ be the set of all $g^I$-orbits in $Q^I_1$. For each edge $i\in I$, we 
abbreviate $(S,\overrightarrow{T_i})=(S,\overrightarrow{T_{\{i\}}})$, $f_i=f^{\{i\}}$ and $g_i=g^{\{i\}}$. We mention that 
$(S,\overrightarrow{T_I})$ (respectively, $(Q^I,f^I)$) is obtained from $(S,\overrightarrow{T})$ (respectively, from $(Q,f)$) 
by an iterated application of single blow-ups at all edges from $I$ (in arbitrary fixed order).

We will illustrate the changes for single blow-up at an edge $i$ of $T$.

\begin{enumerate}[(1)] 
\item Assume $i$ is a common edge of two triangles in $T$ 
$$\begin{tikzpicture}
\draw[thick](-1,0)--(0,1);
\draw[thick](-1,0)--(0,-1);
\draw[thick](1,0)--(0,1);
\draw[thick](1,0)--(0,-1);
\draw[thick](0,-1)--(0,1);
\node() at (-0.7,0.6){$x$};
\node() at (-0.7,-0.6){$y$};
\node() at (0.7,0.6){$z$};
\node() at (0.7,-0.6){$t$};
\node() at (0.15,0){$i$};
\node() at (-1,0){$\bullet$};
\node() at (1,0){$\bullet$};
\node() at (0,1){$\bullet$};
\node() at (0,-1){$\bullet$};
\end{tikzpicture}$$
with $(x\mbox{ }i\mbox{ }y)$ and $(i\mbox{ }z\mbox{ }t)$ in $\overrightarrow{T}$, where possibly $x=y$ or $z=t$. Then 
$(Q,f)$ has arrows 
$$\xymatrix{x\ar[rd]^{\sigma_i}&&z \\ &i\ar[ld]^{\rho_i}\ar[ru]_{\gamma_i}& \\ y&&t\ar[lu]_{\omega_i}}$$
with $f(\sigma_i)=\rho_i$, $f(\omega_i)=\gamma_i$, $g(\sigma_i)=\gamma_i$ and $g(\omega_i)=\rho_i$. Then the blow-up 
$(S,\overrightarrow{T_i})$ contains triangles 
$$\begin{tikzpicture}[scale=1.5]
\draw[thick] (0,-1)--(0,1); 
\draw[thick](0,0) ellipse [x radius = 0.75, y radius = 0.97];
\node() at (0,0){$\bullet$};
\node() at (0,-0.975){$\bullet$}; 
\node() at (0,0.975){$\bullet$}; 
\node() at (0.95,0){$b_i$};
\node() at (-0.95,0){$a_i$};
\node() at (-0.2,0.5){$c_i$};
\node() at (-0.2,-0.5){$d_i$};

\draw[thick](-3,0)--(0,1);
\draw[thick](-3,0)--(0,-1);
\draw[thick](3,0)--(0,1);
\draw[thick](3,0)--(0,-1);

\node() at (-3,0){$\bullet$}; 
\node() at (3,0){$\bullet$};

\node() at (-2,0.5){$x$};
\node() at (-2,-0.5){$y$};
\node() at (2,0.5){$z$};
\node() at (2,-0.5){$t$}; 
\end{tikzpicture}$$
with $(x\mbox{ }a_i\mbox{ }y)$, $(a_i\mbox{ }c_i\mbox{ }d_i)$, $(c_i\mbox{ }b_i\mbox{ }d_i)$, $(b_i\mbox{ }z\mbox{ }t)\in 
\overrightarrow{T_i}$, so $(Q(S,\overrightarrow{T_i}),f_i)$ contains a subquiver of the form 
$$\xymatrix@R=0.5cm@C=1.2cm{\ar[rd]^{\sigma_i}&&c_i\ar[rd]^{\beta_i}\ar@<-0.1cm>[dd]_{\xi_i}&& \\ 
&a_i\ar[ld]^{\rho_i}\ar[ru]^{\alpha_i}&&b_i\ar[ld]^{\nu_i}\ar[ru]^{\gamma_i} 
& \\ &&d_i\ar[lu]^{\delta_i}\ar@<-0.1cm>[uu]_{\mu_i}&&\ar[lu]^{\omega_i} }$$
with $f_i$-orbits $(\alpha_i\mbox{ }\xi_i\mbox{ }\delta_i)$, $(\beta_i\mbox{ }\nu_i\mbox{ }\mu_i)$, $f_i(\sigma_i)=\rho_i$, 
$f_i(\omega_i)=\gamma_i$, and $g_i(\sigma_i)=\alpha_i$, $g_i(\alpha_i)=\beta_i$, $g_i(\beta_i)=\gamma_i$, $g_i(\omega_i)=\nu_i$, 
$g_i(\nu_i)=\delta_i$, $g_i(\delta_i)=\rho_i$, $g_i(\xi_i)=\mu_i$ and $g_i(\mu_i)=\xi_i$.

\item Assume that $i$ is a self-folded edge of the following self-folded triangle in $T$ 
$$\begin{tikzpicture}
\draw[thick](0,0) circle (1);
\draw[thick](0,0)--(0,-1);
\node() at (0,0){$\bullet$};
\node() at (0,-1){$\bullet$};
\node() at (0.2,-0.5){$i$};
\node() at (1.2,0){$x$};
\end{tikzpicture}$$
with $(i\mbox{ }i\mbox{ }x)\in\overrightarrow{T}$, and hence $(Q,f)$ admits a subquiver 
$$\xymatrix@R=0.01cm{&\ar@(lu,ld)[d]_{\rho_i}&\\& i\ar@<0.2cm>[r]^{\gamma_i}  &x \ar@<0.1cm>[l]^{\sigma_i}\\ && }$$
with $f$-orbit $(\rho_i\mbox{ }\gamma_i\mbox{ }\sigma_i)$, and $g(\rho_i)=\rho_i$, $g(\sigma_i)=\gamma_i$. 

Then the blow-up $(S,\overrightarrow{T_i})$ contains the triangles 
$$\begin{tikzpicture}[scale=1.2]
\draw[thick] (0,-1)--(0,1); 
\draw[thick](0,0) circle (1);
\node() at (0,0){$\bullet$};
\node() at (0,-1){$\bullet$}; 
\node() at (0,1){$\bullet$}; 
\node() at (0.8,0){$b_i$};
\node() at (-0.8,0){$a_i$};
\node() at (-0.2,0.5){$c_i$};
\node() at (-0.2,-0.5){$d_i$};
\draw[thick](0,0.5) circle (1.5);
\node() at (1.7,0){$x$};
\end{tikzpicture}$$
with $(x\mbox{ }b_i\mbox{ }a_i)$, $(a_i\mbox{ }c_i\mbox{ }d_i)$, $(c_i\mbox{ }b_i\mbox{ }d_i)$ in $\overrightarrow{T_i}$, 
and so $(Q(S,\overrightarrow{T_i}),f_i)$ has a subquiver of the form 
$$\xymatrix@R=1.5cm@C=2cm{&x\ar[rd]^{\omega_i}& \\
a_i\ar[ru]^{\rho_i}\ar[rd]^{\alpha_i}& &  b_i\ar[ll]_{\gamma_i}\ar[ldd]^{\nu_i} \\ 
&c_i\ar[ru]^{\beta_i}\ar@<-0.07cm>[d]_{\xi_i}& \\ 
&d_i\ar@<-0.07cm>[u]_{\mu_i}\ar[luu]^{\delta_i}&}$$
where $f_i$-orbits are $(\alpha_i\mbox{ }\xi_i\mbox{ }\delta_i)$, $(\beta_i\mbox{ }\nu_i\mbox{ }\mu_i)$, 
$(\sigma_i\mbox{ }\rho_i\mbox{ }\gamma_i)$. Moreover, we have $g_i$-orbits $(\xi_i\mbox{ }\mu_i)$, $(\alpha_i\mbox{ }\beta_i\mbox{ }\gamma_i)$ 
and $(\delta_i\mbox{ }\rho_i\dots \omega_i\mbox{ }\nu_i)$. 

\item Assume $i$ is a border edge of $T$, so we have in $T$ a triangle 
$$\begin{tikzpicture}
\draw[thick](-1,0)--(0,1);
\draw[thick](-1,0)--(0,-1);
\draw[thick](0,-1)--(0,1);
\node() at (-0.7,0.6){$x$};
\node() at (-0.7,-0.6){$y$};
\node() at (0.15,0){$i$};
\node() at (-1,0){$\bullet$};
\node() at (0,1){$\bullet$};
\node() at (0,-1){$\bullet$};
\end{tikzpicture}$$
with $(x\mbox{ }i\mbox{ }y)$ in $\overrightarrow{T}$ and possibly $x=y$. Then $(Q,f)$ contains a subquiver 
$$\xymatrix@R=0.01cm{x\ar[rdd]^{\sigma_i}&& \\ &\ar@(ur,dr)[d]^{\gamma_i}& \\ 
&i\ar[ldd]^{\rho_i}& \\ && \\ y&&}$$
with $f(\gamma_i)=\gamma_i$, $f(\sigma_i)=\rho_i$, $g(\sigma_i)=\gamma_i$, $g(\gamma_i)=\rho_i$. 

The blow-up $(S,\overrightarrow{T_i})$ is then containing the triangles 
$$\begin{tikzpicture}[scale=1.5]
\draw[thick] (0,-1)--(0,1); 
\draw[thick](0,0) ellipse [x radius = 0.75, y radius = 0.97];
\node() at (0,0){$\bullet$};
\node() at (0,-0.975){$\bullet$}; 
\node() at (0,0.975){$\bullet$}; 
\node() at (0.95,0){$b_i$};
\node() at (-0.95,0){$a_i$};
\node() at (-0.2,0.5){$c_i$};
\node() at (-0.2,-0.5){$d_i$};

\draw[thick](-3,0)--(0,1);
\draw[thick](-3,0)--(0,-1);

\node() at (-3,0){$\bullet$}; 

\node() at (-2,0.5){$x$};
\node() at (-2,-0.5){$y$}; 
\end{tikzpicture}$$ 
with $(x\mbox{ }a_i\mbox{ }y)$, $(a_i\mbox{ }c_i\mbox{ }b_i)$, $(c_i\mbox{ }b_i\mbox{ }d_i)$ in $\overrightarrow{T_i}$, and 
$b_i$ the boundary edge. In this case $(Q(S,\overrightarrow{T_i}),f_i)$ contains a subquiver of the form 
$$\qquad\qquad\qquad\xymatrix@R=0.01cm@C=1.5cm{
x\ar[rddd]^{\sigma_i}&&c_i\ar[rddd]^{\beta_i}\ar@<-0.1cm>[dddddd]_{\xi_i}&& \\
&&&& \\
&&&\ar@(ur,dr)[d]^{\gamma_i} & \\ 
&a_i\ar[lddd]^{\rho_i}\ar[ruuu]^{\alpha_i}&&b_i\ar[lddd]^{\nu_i}&
&&&& \\
&&&& \\ 
&&&& \\
y&&d_i\ar[luuu]^{\delta_i}\ar@<-0.1cm>[uuuuuu]_{\mu_i}&&}$$
with $f_i$-orbits $(\alpha_i\mbox{ }\xi_i\mbox{ }\delta_i)$, $(\beta_i\mbox{ }\nu_i\mbox{ }\mu_i)$, $(\gamma_i)$, 
$f(\sigma_i)=\rho_i$, and the $g_i$-orbits $(\xi_i\mbox{ }\mu_i)$ and 
$(\alpha_i\mbox{ }\beta_i\mbox{ }\gamma_i\mbox{ }\nu_i\mbox{ }\delta_i\mbox{ }\rho_i\dots\sigma_i)$.
\end{enumerate} 

Therefore, for the single blow-up $(S,\overrightarrow{T_i})$ of $(S,\overrightarrow{T})$ at an edge $i$, the set 
$\mathcal{O}(g_i)$ of the $g_i$-orbits in $Q(S,\overrightarrow{T_i})_1$ has the following structure: we have the new orbit 
$\mathcal{O}_i=(\xi_i\mbox{ }\mu_i)$ of length $2$, one or two orbits in $\mathcal{O}(g)$ are extended by the arrows 
$\alpha_i,\beta_i,\nu_i,\delta_i$, and the remaining orbits in $\mathcal{O}(g)$ become orbits in $\mathcal{O}(g_i)$. 

In general, for an arbitrary non-empty set $I$ of edges of $T$, the set $\mathcal{O}(g^I)$ of all $g^I$-orbits in $Q^I_1=
Q(S,\overrightarrow{T_I})_1$ consists of the new orbits $\mathcal{O}_i=(\xi_i\mbox{ }\mu_i)$, $i\in I$, of length $2$, and 
extensions $\mathcal{O}^I(\eta)$ of the $g$-orbits $\mathcal{O}(\eta)$, $\eta\in Q_1$. Hence, for any choice of a weight 
function $m_\bullet:\mathcal{O}(g)\to\mathbb{N}^*$ and a parameter function $c_\bullet:\mathcal{O}(g)\to K^*$, we may 
consider the weight function $m^I_\bullet:\mathcal{O}(g^I)\to\mathbb{N}^*$ and the parameter function 
$c^I_\bullet:\mathcal{O}(g^I)\to K^*$ by setting 
\begin{itemize}
\item $m^I_{\mathcal{O}_i}=1$ and $c^I_{\mathcal{O}_i}=1$, for any $i\in I$, and 
\item $m^I_{\mathcal{O}^I(\eta)}=m_{\mathcal{O}(\eta)}$ and $c^I_{\mathcal{O}^I(\eta)}=c_{\mathcal{O}(\eta)}$, for any arrow 
$\eta\in Q_1$. \end{itemize}

\begin{df}\label{df:6.2} The weighted surface algebra $\Lambda(S,\overrightarrow{T_I},m^I_\bullet,c^I_\bullet)$ is said to 
be the \emph{blow-up} of the weighted surface algebra $\Lambda(S,\overrightarrow{T},m_\bullet,c_\bullet)$ at the set $I$ of edges 
of $T$.\end{df} 

Let $\Lambda_I=\Lambda(S,\overrightarrow{T_I},m^I_\bullet,c^I_\bullet)$ be the blow-up of a weighted surface algebra 
$\Lambda=\Lambda(S,\overrightarrow{T},m_\bullet,c_\bullet)$ at a non-empty set $I$ of edges of $T$. Then for each edge 
$i\in I$, we have the pair of virtual arrows 
$$\xymatrix{d_i\ar@<-0.1cm>[r]_{\mu_i} & c_i\ar@<-0.1cm>[l]_{\xi_i}}$$
from the $g^I$-orbit $\mathcal{O}_i=(\xi_i\mbox{ }\mu_i)$. Moreover, let $\epsilon:I\to\{-1,1\}$ be an arbitrary function. We 
assign to $\epsilon$ a sequence of virtual arrows $\underline{\epsilon}=(\epsilon_i)_{i\in I}$, with $\epsilon_i\in\mathcal{O}_i$, 
where for any $i\in I$, we set 
$$\epsilon_i:=\left\{\begin{array}{cc}\xi_i & \quad\mbox{if }\epsilon(i)=1 \\ \mu_i & \quad\mbox{if }\epsilon(i)=-1. \end{array} \right.$$

\begin{df}\label{df:6.3} Let $\Lambda=\Lambda(S,\overrightarrow{T},m_\bullet,c_\bullet)$ be a weighted surface algebra, $I$ 
a non-empty set of edges of $T$, and $\epsilon:I\to\{-1,1\}$ a function. Then the virtual mutation $\Lambda^\epsilon_I:= 
\Lambda(S,\overrightarrow{T}_I,m^I_\bullet,c^I_\bullet,\underline{\epsilon})$ of the weighted surface algebra $\Lambda_I=
\Lambda(S,\overrightarrow{T}_I,m^I_\bullet,c^I_\bullet)$ is said to be a \emph{virtual edge deformation} of $\Lambda=
\Lambda(S,\overrightarrow{T},m_\bullet,c_\bullet)$ at the set $I$ of edges in $T$. \end{df} 

We have the following direct consequence of the main result of this paper. 

\begin{theorem}\label{thm:6.4} Let $\Lambda^\epsilon_I=\Lambda(S,\overrightarrow{T}_I,m^I_\bullet,c^I_\bullet,\underline{\epsilon})$ 
be a virtual edge deformation of a weighted surface algebra $\Lambda=\Lambda(S,\overrightarrow{T},m_\bullet,c_\bullet)$. Then 
$\Lambda^\epsilon_I$ is a finite-dimensional, symmetric, tame periodic algebra of period $4$.
\end{theorem}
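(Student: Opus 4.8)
The plan is to reduce Theorem~\ref{thm:6.4} directly to the Main Theorem. By Definition~\ref{df:6.3}, the virtual edge deformation $\Lambda^\epsilon_I$ is, by its very construction, the virtual mutation $\Lambda_I(\underline{\epsilon})=\Lambda(S,\overrightarrow{T_I},m^I_\bullet,c^I_\bullet,\underline{\epsilon})$ of the weighted surface algebra $\Lambda_I=\Lambda(S,\overrightarrow{T_I},m^I_\bullet,c^I_\bullet)$, the blow-up of $\Lambda$ at $I$ in the sense of Definition~\ref{df:6.2}. Hence, once I check that $\underline{\epsilon}$ is an admissible sequence of virtual arrows in the sense of Section~\ref{sec:3} and that $\Lambda_I$ satisfies the standing hypotheses imposed there, the theorem follows at once from parts $(1)$, $(4)$, $(5)$ of the Main Theorem, which state precisely that a virtual mutation of a weighted surface algebra is finite-dimensional, symmetric, representation-infinite tame, and periodic of period $4$.

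First I would verify the admissibility of $\underline{\epsilon}$. By the analysis of single blow-ups carried out in Section~\ref{sec:6} (cases $(1)$--$(3)$), each edge $i\in I$ contributes a new pair of virtual arrows $\xi_i,\mu_i$ forming a $g^I$-orbit $\mathcal{O}_i=(\xi_i\mbox{ }\mu_i)$ of length $2$, and by the definition of $m^I_\bullet$ one has $m^I_{\mathcal{O}_i}=1$; moreover these orbits are pairwise different for distinct $i\in I$, since they are attached to the pairwise different vertices $c_i,d_i$. Thus $\underline{\epsilon}=(\epsilon_i)_{i\in I}$ with $\epsilon_i\in\mathcal{O}_i$ is exactly a sequence of virtual arrows of the type required in Definition~\ref{def:3.5}.

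Next I would check that $\Lambda_I$ is not one of the exceptional weighted surface algebras excluded in Section~\ref{sec:3}, namely that $|Q^I_0|\geqslant 4$ and that $\Lambda_I$ is not a singular spherical algebra (and, for the application of Theorem~\ref{thm:2.4} underlying the Main Theorem, also not a singular disc, triangle or tetrahedral algebra). The first point is automatic: a blow-up at a non-empty set $I$ replaces each edge $i$ by the four edges $a_i,b_i,c_i,d_i$, so $Q^I$ has at least four vertices. For the second, I would argue that the blown-up directed surface $(S,\overrightarrow{T_I})$ contains a $2$-triangle disc, whereas the quivers underlying the singular disc, triangle, tetrahedral and spherical algebras are explicitly described small quivers not of this shape; comparing quivers (and, in the spherical case, using in addition that $m^I_{\mathcal{O}_i}=1$ on each new orbit, together with the restrictions recalled in Example~\ref{ex:4.3}) rules these cases out.

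Having verified the hypotheses, the assertion of Theorem~\ref{thm:6.4} is an immediate consequence of the Main Theorem applied to $\Lambda_I(\underline{\epsilon})$. The only genuinely non-routine step is the case analysis excluding the exceptional algebras $\Lambda_I$; everything else is a direct invocation of results already established. Alternatively, one could first apply Theorem~\ref{thm:2.4} to see that $\Lambda_I$ itself is finite-dimensional, symmetric, tame and periodic of period $4$, and then transport these properties along the derived equivalence $\Lambda_I\sim\Lambda_I(\underline{\epsilon})$ provided by the Main Theorem, using Theorems~\ref{thm:1.2}, \ref{thm:1.3} and \ref{thm:1.4} --- but since the Main Theorem already packages all of this, the first route is the most economical.
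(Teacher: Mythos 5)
Your proposal is correct and is essentially the paper's own argument: the paper states Theorem \ref{thm:6.4} as ``a direct consequence of the main result of this paper,'' since by Definition \ref{df:6.3} the virtual edge deformation $\Lambda^\epsilon_I$ is literally a virtual mutation of the blow-up $\Lambda_I$ along the sequence $\underline{\epsilon}$ of virtual arrows from the new pairwise distinct length-$2$ orbits $\mathcal{O}_i=(\xi_i\mbox{ }\mu_i)$ with $m^I_{\mathcal{O}_i}=1$. Your additional verifications (that $|Q^I_0|\geqslant 4$ and that $\Lambda_I$ is not one of the excluded singular algebras) are routine checks the paper leaves implicit, and they go through as you describe.
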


\begin{exmp}\label{ex:6.5} Let $(S,\overrightarrow{T})$ be the directed surface given by the torus $S=\mathbb{T}$ with the 
triangulation $T$ and orientation $\overrightarrow{T}$ of triangles as follows
$$\begin{tikzpicture} 
\draw[thick](-1,1)--(1,1)--(1,-1)--(-1,-1)--(-1,1);
\draw[thick](-1,-1)--(1,1);
\node() at (-1,-1){$\bullet$};
\node() at (-1,1){$\bullet$};
\node() at (1,-1){$\bullet$};
\node() at (1,1){$\bullet$}; 

\node() at (0,1.2){$2$};
\node() at (0,-1.2){$2$}; 
\node() at (-1.2,0){$1$};
\node() at (1.2,0){$1$}; 
\node() at (0.45,0.15){$3$};
\draw(-0.6,0) arc (-120:210:0.4);
\draw(-0.4,0.1)--(-0.6,0)--(-0.45,-0.15);
\draw(0.1,-0.8) arc (-120:210:0.4);
\draw(0.3,-0.7)--(0.1,-0.8)--(0.25,-0.95);
 
\end{tikzpicture}$$ 
Then $(Q(S,\overrightarrow{T}),f)$ is the Markov triangulation quiver (see \cite[Section 5]{ES4} and \cite{Lam}) 
$$\xymatrix@R=1.5cm{1 \ar@<-0.1cm>[rr]_{\sigma}\ar@<0.1cm>[rr]^{\gamma} & & 
2\ar@<-0.1cm>[ld]_{\eta}\ar@<0.1cm>[ld]^{\omega} \\ 
& 3\ar@<-0.1cm>[lu]_{\phi}\ar@<0.1cm>[lu]^{\rho} & }$$
with the $f$-orbits $(\gamma\mbox{ }\omega\mbox{ }\rho)$ and $(\sigma\mbox{ }\eta\mbox{ }\phi)$, given by the upper and the 
lower oriented triangles of $\overrightarrow{T}$, respectively. We note that $\mathcal{O}(g)$ consists of one $g$-orbit 
$\mathcal{O}(\gamma)=(\gamma\mbox{ }\eta\mbox{ }\rho\mbox{ }\sigma\mbox{ }\omega\mbox{ }\phi)$. Let $I=\{2,3\}$ be the chosen 
set of edges of $T$. Then the blow-up $(S,\overrightarrow{T_I})$ of $(S,\overrightarrow{T})$ at $I$ is of the form
$$\begin{tikzpicture}[scale=2] 
\draw[thick](-1,1)--(1,1)--(1,-1.3)--(-1,-1.3)--(-1,1);
\draw[thick](-1,-1.3)--(1,1);
\node() at (-1,-1.3){$\bullet$};
\node() at (-1,1){$\bullet$};
\node() at (1,-1.3){$\bullet$};
\node() at (1,1){$\bullet$};
\node() at (0,1){$\bullet$};
\node() at (0,-1.3){$\bullet$};
\node() at (0,-0.15){$\bullet$}; 

\draw[thick](1,1) arc (60:120:2);   
\draw[thick](-1,1) arc (-110:-70:2.95);
\draw[thick](1,1) arc (120:158:4.7);
\draw[thick](-1,-1.3) arc (-60:-22:4.7);
\draw[thick](1,-1.3) arc (60:120:2);   
\draw[thick](-1,-1.3) arc (-110:-70:2.95);

\node() at (-1.2,0){$1$};
\node() at (1.2,0){$1$}; 

\node() at (0,1.4){$a_2$};
\node() at (-0.3,1.12){$d_2$};
\node() at (0.3,1.08){$c_2$}; 
\node() at (0.5,-1){$a_2$};
\node() at (-0.3,-1.17){$d_2$};
\node() at (0.3,-1.22){$c_2$};

\node() at (-0.5,0){$a_3$};
\node() at (-0.27,-0.25){$d_3$};
\node() at (0.22,0.25){$c_3$};
 
\node() at (-0.4,0.75){$b_2$};
\node() at (0.35,-0.35){$b_3$};
\node() at (0,-1.6){$b_2$};
\end{tikzpicture}$$
with the following orientation of triangles of $T_I$: $(1\mbox{ }b_2\mbox{ }a_3)$, $(1\mbox{ }a_2\mbox{ }b_3)$, 
$(a_2\mbox{ }c_2\mbox{ }d_2)$, $(c_2\mbox{ }b_2\mbox{ }d_2)$, $(a_3\mbox{ }c_3\mbox{ }d_3)$, $(c_3\mbox{ }b_3\mbox{ }d_3)$. 
Hence the associated triangulation quiver $(Q^I,f^I)$ of $(S,\overrightarrow{T_I})$ is of the form 
$$\xymatrix@C=0.7cm@R=1.9cm{
&&a_2\ar[ld]^{\alpha_2}\ar[rr]^{\eta}&&b_3\ar[ld]^{\phi}\ar[rrd]^{\nu_3}&& \\ 
d_2\ar[rru]^{\delta_2}\ar@<-0.1cm>[r]_{\mu_2} & c_2\ar@<-0.1cm>[l]_{\xi_2}\ar[rd]^{\beta_2}&& 
1\ar[lu]^{\sigma}\ar[ld]_{\gamma}&& 
c_3\ar@<-0.1cm>[r]_{\xi_3}\ar[lu]^{\beta_3} & d_3\ar@<-0.1cm>[l]_{\mu_3}\ar[lld]^{\delta_3} \\ 
&&b_2\ar[llu]^{\nu_2}\ar[rr]_{\omega}&&a_3\ar[lu]_{\rho}\ar[ru]^{\alpha_3}&& }$$ 
with $f^I$-orbits $(\gamma\mbox{ }\omega\mbox{ }\rho)$, $(\sigma\mbox{ }\eta\mbox{ }\phi)$, 
$(\alpha_2\mbox{ }\xi_2\mbox{ }\delta_2)$, $(\beta_2\mbox{ }\nu_2\mbox{ }\mu_2)$, 
$(\alpha_3\mbox{ }\xi_3\mbox{ }\delta_3)$, $(\beta_3\mbox{ }\nu_3\mbox{ }\mu_3)$. Moreover, the set $\mathcal{O}(g^I)$ of 
$g^I$-orbits in $Q^I_1$ consists of the two orbits 
$$\mathcal{O}_2=\mathcal{O}^I(\xi_2)=(\xi_2\mbox{ }\mu_2)=\mathcal{O}^I(\mu_2)\mbox{ and }
\mathcal{O}_3=\mathcal{O}^I(\xi_3)=(\xi_3\mbox{ }\mu_3)=\mathcal{O}^I(\mu_3)$$ 
of length $2$, an the following orbit of length $14$ 
$$\mathcal{O}^I(\gamma)=
(\gamma\mbox{ }\nu_2\mbox{ }\delta_2\mbox{ }\eta\mbox{ }\nu_3\mbox{ }\delta_3\mbox{ }\rho\mbox{ }\sigma\mbox{ }\alpha_2\mbox{ }\beta_2
\mbox{ }\omega\mbox{ }\alpha_3\mbox{ }\beta_3\mbox{ }\phi).$$ 

Let $m_\bullet:\mathcal{O}(g)\to\mathbb{N}^*$ and $c_\bullet:\mathcal{O}(g)\to K^*$ be the weight and parameter functions 
with $m_{\mathcal{O}(\gamma)}=1=c_{\mathcal{O}(\gamma)}$. Then the weight function $m^I_\bullet:\mathcal{O}(g^I)\to\mathbb{N}^*$ 
and the parameter function $c^I_\bullet:\mathcal{O}(g^I)\to K^*$ are both constantly equal $1$. Consider also the associated 
weighted surface algebras 
$$\Lambda=\Lambda(S,\overrightarrow{T},m_\bullet,c_\bullet)\mbox{ and }\Lambda_I=
\Lambda(S,\overrightarrow{T_I},m^I_\bullet,c^I_\bullet).$$ 
It follows from general theory (Theorem \ref{thm:2.4}) that 
$$\dim_K\Lambda= 6^2 =36\quad\mbox{and}\quad\dim_K\Lambda_I=2^2+2^2+14^2=204.$$

Let $\epsilon:I\to\{-1,1\}$ be the function with $\epsilon(2)=\epsilon(3)=1$. Then the associated sequence of virtual 
arrows $\underline{\epsilon}\in\mathcal{O}_2\times\mathcal{O}_3$ is equal to $(\xi_2,\xi_3)$. Take the virtual edge 
deformation algebra $\Lambda^\epsilon_I$ of $\Lambda$ at $I$, with respect to $\epsilon$. Directly from Definition \ref{def:3.4} we 
obtain that the algebra $\Lambda^\epsilon_I$ is given by the quiver $Q^I(\underline{\epsilon})$ of the form 
$$\xymatrix@C=0.7cm@R=1.4cm{
&&a_2\ar[rr]^{\eta}\ar[dd]^{\tau_2}&&b_3\ar[ld]^{\phi}\ar[rrd]^{\nu_3}\ar[rd]_{\beta_3}&& \\ 
d_2\ar[rru]^{\delta_2} & c_2\ar[ru]_{\alpha_2}&& 
1\ar[lu]^{\sigma}\ar[ld]_{\gamma}&& 
c_3\ar[ld]_{\alpha_3} & d_3\ar[lld]^{\delta_3} \\ 
&&b_2\ar[llu]^{\nu_2}\ar[rr]_{\omega}\ar[lu]_{\beta_2}&&a_3\ar[lu]_{\rho}\ar[uu]^{\tau_3}&& }$$
and the relations: 
\begin{enumerate}[(1)]
\item $\nu_2\delta_2=\beta_2\alpha_2+\omega\tau_3\phi\gamma\nu_2\delta_2\eta\nu_3\delta_3\rho\sigma$, 
$\nu_3\delta_3=\beta_3\alpha_3+\phi\gamma\nu_2\delta_2\eta\nu_3\delta_3\rho\sigma\tau_2\omega$, $\newline$ 
$\alpha_2\tau_2=0$, $\alpha_3\tau_3=0$, $\tau_2\beta_2=0$, $\tau_3\beta_3=0$,  

\item $\tau_2\nu_2=\eta\nu_3\delta_3\rho\sigma\tau_2\omega\tau_3\phi\gamma\nu_2$, 
$\tau_3\nu_3=\rho\sigma\tau_2\omega\tau_3\phi\gamma\nu_2\delta_2\eta\nu_3$, $\newline$  
$\delta_2\tau_2=\delta_2\eta\nu_3\delta_3\rho\sigma\tau_2\omega\tau_3\phi\gamma$, 
$\delta_3\tau_3=\delta_3\rho\sigma\tau_2\omega\tau_3\phi\gamma\nu_2\delta_2\eta$, $\newline$
$\gamma\omega=\sigma\tau_2\omega\tau_3\phi\gamma\nu_2\delta_2\eta\nu_3\delta_3$, 
$\omega\rho=\nu_2\delta_2\eta\nu_3\delta_3\rho\sigma\tau_2\omega\tau_3\phi$, $\newline$
$\rho\gamma=\tau_3\phi\gamma\nu_2\delta_2\eta\nu_3\delta_3\rho\sigma\tau_2$, 
$\sigma\eta=\gamma\nu_2\delta_2\eta\nu_3\delta_3\rho\sigma\tau_2\omega\tau_3$, $\newline$
$\eta\phi=\tau_2\omega\tau_3\phi\gamma\nu_2\delta_2\eta\nu_3\delta_3\rho$, 
$\phi\sigma=\nu_3\delta_3\rho\sigma\tau_2\omega\tau_3\phi\gamma\nu_2\delta_2$, 

\item $\gamma\omega\tau_3=0$, $\omega\rho\sigma=0$, $\rho\gamma\nu_2=0$, $\sigma\eta\nu_3=0$, $\eta\phi\gamma=0$, 
$\phi\sigma\tau_2=0$,

\item $\omega\tau_3\nu_3=0$, $\rho\sigma\eta=0$, $\sigma\tau_2\nu_2=0$, $\phi\gamma\omega=0$, 

\item $\alpha_2\eta\nu_3\delta_3\rho\sigma\tau_2\omega\tau_3\phi\gamma\nu_2=0$, 
$\alpha_3\rho\sigma\tau_2\omega\tau_3\phi\gamma\nu_2\delta_2\eta\nu_3=0$, $\newline$
$\delta_2\eta\nu_3\delta_3\rho\sigma\tau_2\omega\tau_3\phi\gamma\beta_2=0$,
$\delta_3\rho\sigma\tau_2\omega\tau_3\phi\gamma\nu_2\delta_2\eta\beta_3=0$.
\end{enumerate} 

We also observe that we have only one $g^*$-orbit of arrows in the quiver $Q^I(\underline{\epsilon})^*$ consisting of all 
arrows of $Q^I(\underline{\epsilon})$ different from $\alpha_2,\beta_2,\alpha_3,\beta_3$. Hence, for any arrow $\theta$ in 
$Q^I(\underline{\epsilon})^*_1$ we have $n^*_{\theta}=10$, while $n^\nu_{\theta}=2$, and clearly $m^*_\theta=1$. Then, 
applying Theorem \ref{thm:3.14}, we conclude that 
$$\dim_K\Lambda^\epsilon_I=\sum_{\theta\in Q^I(\underline{\epsilon})^*_1} m^*_\theta(n^*_\theta+n^\nu_\theta)+ 
m^*_{\delta_2}(n^*_{\delta_2}+n^\nu_{\delta_2})+m^*_{\delta_3}(n^*_{\delta_3}+n^\nu_{\delta_3})=$$
$$=12(12+2)+(12+2)+(12+2)=196.$$
\end{exmp}

\begin{exmp}\label{ex:6.6} Let $(S,\overrightarrow{T})$ be the collection of three self-folded triangles 
$$\begin{tikzpicture}[scale=1.5]
\draw[thick](-1,0)--(0,0.835)--(1,0);
\draw[thick](0,0.835)--(0,1.8); 

\draw[thick](0.1,0) arc (-170:80:0.8);
\draw[thick](0.1,0)--(0,0.835)--(1.05,0.93);

\draw[thick](-1.05,0.93) arc (100:350:0.8);
\draw[thick](-0.12,0)--(0,0.835)--(-1.05,0.93); 

\draw[thick](0.45,1) arc (-60:240:0.9);
\draw[thick](0.45,1)--(0,0.835)--(-0.45,1);

\node() at (0,0.835){$\bullet$};
\node() at (-1,0){$\bullet$};
\node() at (1,0){$\bullet$};
\node() at (0,1.8){$\bullet$};

\node() at (0.1,1.3){$1$};
\node() at (0.8,0.5){$2$};
\node() at (-0.8,0.5){$3$};
\node() at (1.1,1.4){$4$};
\node() at (2,0){$5$};
\node() at (-2,0){$6$};

\end{tikzpicture}$$
with $(1\mbox{ }1\mbox{ }4)$, $(2\mbox{ }2\mbox{ }5)$, $(3\mbox{ }3\mbox{ }6)$ and $(4\mbox{ }5\mbox{ }6)$ in $\overrightarrow{T}$. 
Then the associated triangulation quiver $(Q(S,\overrightarrow{T}),f)$ is of the form 
$$\xymatrix@R=0.01cm{
& &&&&& \\ 
& && 1\ar@<0.1cm>[dddd]^{\gamma_1}\ar@(lu,ru)[u]^{\rho_1}&&& \\ 
& &&&&& \\ 
&&&&&& \\ 
&&&&&& \\
& && 4\ar[rddd]^{\phi}\ar@<0.1cm>[uuuu]^{\sigma_1}&&& \\
&&&&&&\\&&&&&&\\ 
&  & 6\ar[ruuu]^{\psi}\ar@<0.1cm>[ldd]^{\sigma_3} &&5\ar[ll]^{\omega}\ar@<0.1cm>[rdd]^{\sigma_2}& & \\ 
&  &  &&&   & \\ 
&3\ar@<0.1cm>[ruu]^{\gamma_3}\ar@(ul,dl)[d]_{\rho_3} &     &&&  2\ar@<0.1cm>[luu]^{\gamma_2}\ar@(ur,dr)[d]^{\rho_2} & \\ 
&&&&&& \\ }$$
with $f$-orbits $(\gamma_1\mbox{ }\sigma_1\mbox{ }\rho_1)$, $(\gamma_2\mbox{ }\sigma_2\mbox{ }\rho_2)$, 
$(\gamma_3\mbox{ }\sigma_3\mbox{ }\rho_3)$ and $(\phi\mbox{ }\omega\mbox{ }\psi)$. Moreover, $\mathcal{O}(g)$ consists of the 
orbits $\mathcal{O}(\rho_1)=(\rho_1)$, $\mathcal{O}(\rho_2)=(\rho_2)$, $\mathcal{O}(\rho_3)=(\rho_3)$ and $\mathcal{O}(\gamma_1)=
(\gamma_1\mbox{ }\phi\mbox{ }\sigma_2\mbox{ }\gamma_2\mbox{ }\omega\mbox{ }\sigma_3\mbox{ }\gamma_3\mbox{ }\psi\mbox{ }\sigma_1)$. 
We take the weight function $m_\bullet:\mathcal{O}(g)\to \mathbb{N}^*$ and the parameter function 
$c_\bullet:\mathcal{O}(g)\to K^*$ defined as follows: 
$$m_{\mathcal{O}(\rho_1)}=m_{\mathcal{O}(\rho_2)}=m_{\mathcal{O}(\rho_3)}=m\geqslant 3\mbox{ and }m_{\mathcal{O}(\gamma_1)}=1,$$ 
$$c_{\mathcal{O}(\rho_1)}=c_{\mathcal{O}(\rho_2)}=c_{\mathcal{O}(\rho_3)}=1\mbox{ and }c_{\mathcal{O}(\gamma_1)}=\lambda\in K^*.$$

Let $I=\{1,2,3\}$ be the chosen set of edges of $T$. Then the blow-up $(S,\overrightarrow{T_I})$ of $(S,\overrightarrow{T})$ 
at $I$ is of the form 
$$\begin{tikzpicture}[scale=2.2]
\draw[thick](-1,0)--(0,0.835)--(1,0);
\draw[thick](0,0.835)--(0,1.8); 

\draw[thick](0.1,0) arc (-170:80:0.8);
\draw[thick](0.1,0)--(0,0.835)--(1.05,0.93);

\draw[thick](-1.05,0.93) arc (100:350:0.8);
\draw[thick](-0.12,0)--(0,0.835)--(-1.05,0.93); 

\draw[thick](0.45,1) arc (-60:240:0.9);
\draw[thick](0.45,1)--(0,0.835)--(-0.45,1);

\draw[thick](0,0.835) arc (-50:50:0.65);
\draw[thick](0,1.83) arc (130:230:0.65);

\draw[thick](1,0) arc (0:100:0.84);
\draw[thick](0,0.835) arc (-180:-80:0.84);

\draw[thick](-1,0) arc (-100:0:0.84);
\draw[thick](0,0.835) arc (80:180:0.85);

\node() at (0,0.835){$\bullet$};
\node() at (-1,0){$\bullet$};
\node() at (1,0){$\bullet$};
\node() at (0,1.8){$\bullet$};
\node() at (0.5,0.4175){$\bullet$};
\node() at (0,1.3){$\bullet$};
\node() at (-0.5,0.4175){$\bullet$};

\node() at (-0.4,1.5){$a_1$};
\node() at (0.4,1.5){$b_1$};
\node() at (0.1,1.55){$c_1$};
\node() at (0.1,1.2){$d_1$};

\node() at (0.85,0.7){$a_2$};
\node() at (0.2,0.15){$b_2$};
\node() at (0.75,0.3){$c_2$};
\node() at (0.42,0.6){$d_2$};

\node() at (-0.45,-0.05){$a_3$};
\node() at (-0.85,0.7){$b_3$};
\node() at (-0.55,0.2){$c_3$};
\node() at (-0.22,0.55){$d_3$};

\node() at (1,1.4){$4$};
\node() at (1.9,0){$5$};
\node() at (-1.9,0){$6$};

\end{tikzpicture}$$
with the following orientation of triangles: $(4\mbox{ }5\mbox{ }6)$, $(b_1\mbox{ }a_1\mbox{ }4)$, $(b_2\mbox{ }a_2\mbox{ }5)$, 
$(b_3\mbox{ }a_3\mbox{ }6)$, and $(a_i\mbox{ }c_i\mbox{ }d_i), (c_i\mbox{ }b_i\mbox{ }d_i)$, for $i\in I$. Hence the associated 
triangulation quiver $(Q^I,f^I)=(Q(S,\overrightarrow{T_I}),f^I)$ is of the form 
$$\xymatrix@C=1.5cm@R=1.1cm{
&&&     d_1\ar@<0.25cm>[rdd]^{\delta_1}\ar@<-0.1cm>[d]_{\mu_1}    &&& \\ 
&&&     c_1\ar[ld]^{\beta_1}\ar[u]_{\xi_1}   &&& \\ 
&&   b_1\ar@<0.25cm>[ruu]^{\nu_1}\ar[rr]_{\rho_1} & &  a_1\ar[lu]^{\alpha_1}\ar[ld]^{\gamma_1} && \\
&&&       4\ar[lu]^{\sigma_1}\ar[rd]^{\phi}     &&& \\
& a_3\ar[r]^{\gamma_3}\ar[d]_{\alpha_3} & 6\ar[ru]^{\psi}\ar[d]^{\sigma_3} & & 
5\ar[ll]^{\omega}\ar[r]^{\sigma_2} & b_2\ar[ld]^{\rho_2}\ar@<0.25cm>[rdd]^{\nu_2} & \\ 
& c_3\ar[r]_{\beta_3}\ar@<-0.1cm>[ld]_{\xi_3} & b_3\ar[lu]^{\rho_3}\ar@<0.25cm>[lld]^{\nu_3} && a_2\ar[u]^{\gamma_2}\ar[r]_{\alpha_2} 
& c_2\ar[u]_{\beta_2}\ar@<-0.1cm>[rd]_{\xi_2} & \\
d_3\ar@<0.25cm>[ruu]^{\delta_3}\ar@<-0.1cm>[ru]_{\mu_3}&&&
&&& d_2\ar@<0.25cm>[llu]^{\delta_2}\ar@<-0.1cm>[lu]_{\mu_2}}$$
with $f^I$-orbits $(\phi\mbox{ }\omega\mbox{ }\psi)$ and $(\alpha_i\mbox{ }\xi_i\mbox{ }\delta_i)$, $(\beta_i\mbox{ }\nu_i\mbox{ }\mu_i)$, 
$(\rho_i\mbox{ }\gamma_i\mbox{ }\sigma_i)$, for $i\in\{1,2,3\}$. Further, the set $\mathcal{O}(g^I)$ of all $g^I$-orbits in 
$(Q^I,f^I)$ consists of the orbits 
$$\mathcal{O}_i=(\xi_i\mbox{ }\mu_i)\quad\mbox{ }\mathcal{O}^I(\alpha_i)=(\alpha_i\mbox{ }\beta_i\mbox{ }\rho_i), i\in\{1,2,3\},$$ 
and one larger orbit of length $15$
$$\mathcal{O}^I(\gamma_1)=(\gamma_1\mbox{ }\phi\mbox{ }\sigma_2\mbox{ }\nu_2\mbox{ }\delta_2\mbox{ }\gamma_2\mbox{ }\omega\mbox{ }
\sigma_3\mbox{ }\nu_3\mbox{ }\delta_3\mbox{ }\gamma_3\mbox{ }\psi\mbox{ }\sigma_1\mbox{ }\nu_1\mbox{ }\delta_1).$$
Moreover, the weight function $m^I_\bullet:\mathcal{O}(g^I)\to\mathbb{N}^*$ and the parameter function 
$c^I_\bullet:\mathcal{O}(g^I)\to K^*$ are given by $m^I_{\mathcal{O}_i}=1$, $m^I_{\mathcal{O}^I(\alpha_i)}=m$, $c^I_{\mathcal{O}_i}=1$, 
$c^I_{\mathcal{O}^I(\alpha_i)}=1$, for $i\in\{1,2,3\}$, and $m^I_{\mathcal{O}^I(\gamma_1)}=1, c^I_{\mathcal{O}^I(\gamma_1)}=\lambda$. 
Consider the associated weighted surface algebras 
$$\Lambda=\Lambda(S,\overrightarrow{T},m_\bullet,c_\bullet)\mbox{ and }\Lambda_I=\Lambda(S,\overrightarrow{T_I},m^I_\bullet,c^I_\bullet)$$
It follows from Theorem \ref{thm:2.4} that 
$$\dim_K\Lambda=m+m+m+ 9^2=3m+81,\mbox{ and}$$
$$\dim_K\Lambda_I=4+4+4+m\cdot 3^2+m\cdot 3^2+m\cdot 3^2+15^2=27m+237.$$ 

Let $\epsilon:I\to\{-1,1\}$ be the function given by $\epsilon(1)=\epsilon(2)=1$ and $\epsilon(3)=-1$. Then the associated 
sequence of virtual arrows $\underline{\epsilon}\in\mathcal{O}_1\times\mathcal{O}_2\times\mathcal{O}_3$ is equal to 
$(\xi_1,\xi_2,\mu_3)$. Then the virtual edge deformation $\Lambda^\epsilon_I=\Lambda(S,\overrightarrow{T_I},m^I_\bullet,c^I_\bullet,\underline{\epsilon})$ 
of $\Lambda$ at $I$ (with respect to $\epsilon$) is defined by the quiver $Q^I(\underline{\epsilon})$ of the form 
$$\xymatrix@C=1.5cm@R=1.1cm{
&&&     d_1\ar@<0.25cm>[rdd]^{\delta_1}    &&& \\ 
&&&     c_1\ar[rd]^{\alpha_1}   &&& \\ 
&&   b_1\ar@<0.25cm>[ruu]^{\nu_1}\ar@<-0.15cm>[rr]_{\rho_1}\ar[ru]^{\beta_1} & &  
a_1\ar[ld]^{\gamma_1}\ar@<-0.1cm>[ll]_{\tau_1} && \\
&&&       4\ar[lu]^{\sigma_1}\ar[rd]^{\phi}     &&& \\
& a_3\ar@<-0.25cm>[ldd]_{\delta_3}\ar[r]^{\gamma_3}\ar[d]_{\alpha_3} & 6\ar[ru]^{\psi}\ar[d]^{\sigma_3} & & 
5\ar[ll]^{\omega}\ar[r]^{\sigma_2} & b_2\ar@<-0.1cm>[ld]_{\rho_2}\ar@<0.25cm>[rdd]^{\nu_2}\ar[d]^{\beta_2} & \\ 
& c_3\ar[r]_{\beta_3} & b_3\ar@<-0.1cm>[lu]_{\rho_3}\ar@<0.1cm>[lu]^{\tau_3} && 
a_2\ar[u]^{\gamma_2}\ar@<-0.1cm>[ru]_{\tau_2} 
& c_2\ar[l]^{\alpha_2} & \\
d_3\ar@<-0.25cm>[rru]_{\nu_3}&&&
&&& d_2\ar@<0.25cm>[llu]^{\delta_2}}$$
We recall also that the associated quiver $Q^I(\underline{\epsilon})^*$ is obtained from $Q^I(\underline{\epsilon})$ by removing 
the arrows $\alpha_1,\beta_1,\alpha_2,\beta_2,\nu_3,\delta_3$. Further, the set $\mathcal{O}^I(g^*)$ of all $(g^I)^*$-orbits of 
the associated permutation $(g^I)^*:Q^I(\underline{\epsilon})^*_1\to Q^I(\underline{\epsilon})^*_1$ consists of the orbits 
$$(\tau_1\mbox{ }\rho_1),(\tau_2\mbox{ }\rho_2),(\alpha_3\mbox{ }\beta_3\mbox{ }\rho_3),\mbox{ and}$$
$$(\tau_3\mbox{ }\gamma_3\mbox{ }\psi\mbox{ }\sigma_1\mbox{ }\nu_1\mbox{ }\delta_1\mbox{ }\gamma_1\mbox{ }\phi
\mbox{ }\sigma_2\mbox{ }\nu_2\mbox{ }\delta_2\mbox{ }\gamma_2\mbox{ }\omega\mbox{ }\sigma_3).$$
Moreover, the associated permutation $(f^I)^*:Q^I(\underline{\epsilon})^*_1\to Q^I(\underline{\epsilon})^*_1$ has the following 
orbits 
$$(\nu_1\mbox{ }\delta_1\mbox{ }\tau_1),(\gamma_1\mbox{ }\sigma_1\mbox{ }\rho_1),(\nu_2\mbox{ }\delta_2\mbox{ }\tau_2),
(\gamma_2\mbox{ }\sigma_2\mbox{ }\rho_2),(\alpha_3\mbox{ }\beta_3\mbox{ }\tau_3),(\gamma_3\mbox{ }\sigma_3\mbox{ }\rho_3),(\phi\mbox{ }\omega\mbox{ }\psi).$$
Then it follows from Definition \ref{def:3.4} that the algebra $\Lambda^\epsilon_I$ is given by the quiver 
$Q^I(\underline{\epsilon})$ and the following relations (here we abbreviate $f^*:=(f^I)^*$ and $g^*:=(g^I)^*$):
\begin{enumerate}[(1)] 
\item $\nu_1\delta_1=\beta_1\alpha_1+(\rho_1\tau_1)^{m-1}\rho_1$, $\alpha_1\tau_1=0$, $\tau_1\beta_1=0$, $\newline$ 
$\nu_2\delta_2=\beta_2\alpha_2+(\rho_2\tau_2)^{m-1}\rho_2$, $\alpha_2\tau_2=0$, $\tau_2\beta_2=0$, $\newline$ 
$\alpha_3\beta_3=\delta_3\nu_3+\lambda\gamma_3\psi\sigma_1\nu_1\delta_1\gamma_1\phi\sigma_2\nu_2\delta_2\gamma_2\omega\sigma_3$, $\nu_3\tau_3=0$, $\tau_3\delta_3=0$, 

\item $\delta_1\tau_1=\lambda\delta_1\gamma_1\phi\sigma_2\nu_2\delta_2\gamma_2\omega\sigma_3\tau_3\gamma_3\psi\sigma_1$, 
$\tau_1\nu_1=\lambda\gamma_1\phi\sigma_2\nu_2\delta_2\gamma_2\omega\sigma_3\tau_3\gamma_3\psi\sigma_1\nu_1$, 
$\newline$ 
$\rho_1\gamma_1=\lambda\nu_1\delta_1\gamma_1\phi\sigma_2\nu_2\delta_2\gamma_2\omega\sigma_3\tau_3\gamma_3\psi$, 
$\sigma_1\rho_1=\lambda\phi\sigma_2\nu_2\delta_2\gamma_2\omega\sigma_3\tau_3\gamma_3\psi\sigma_1\nu_1\delta_1$, 
$\newline$ 
$\delta_2\tau_2=\lambda\delta_2\gamma_2\omega\sigma_3\tau_3\gamma_3\psi\sigma_1\nu_1\delta_1\gamma_1\phi\sigma_2$, 
$\tau_2\nu_2=\lambda\gamma_2\omega\sigma_3\tau_3\gamma_3\psi\sigma_1\nu_1\delta_1\gamma_1\phi\sigma_2\nu_2$, 
$\newline$ 
$\rho_2\gamma_2=\lambda\nu_2\delta_2\gamma_2\omega\sigma_3\tau_3\gamma_3\psi\sigma_1\nu_1\delta_1\gamma_1\phi$, 
$\sigma_2\rho_2=\lambda\omega\sigma_3\tau_3\gamma_3\psi\sigma_1\nu_1\delta_1\gamma_1\phi\sigma_2\nu_2\delta_2$, 
$\newline$
$\rho_3\gamma_3=\lambda\tau_3\gamma_3\psi\sigma_1\nu_1\delta_1\gamma_1\phi\sigma_2\nu_2\delta_2\gamma_2\omega$,
$\sigma_3\rho_3=\lambda\psi\sigma_1\nu_1\delta_1\gamma_1\phi\sigma_2\nu_2\delta_2\gamma_2\omega\sigma_3\tau_3$, 
$\newline$ 
$\gamma_1\sigma_1=(\tau_1\rho_1)^{m-1}\tau_1$, $\gamma_2\sigma_2=(\tau_2\rho_2)^{m-1}\tau_2$, 
$\gamma_3\sigma_3=(\alpha_3\beta_3\rho_3)^{m-1}\alpha_3\beta_3$, $\newline$ 
$\beta_3\tau_3=(\beta_3\rho_3\alpha_3)^{m-1}\beta_3\rho_3$, 
$\tau_3\alpha_3=(\rho_3\alpha_3\beta_3)^{m-1}\rho_3\alpha_3$, 

\item $\eta f^*(\eta) g^*(f^*(\eta))=0$ for any arrow $\eta$ from the set 
$$\{\rho_1,\gamma_1,\sigma_1,\psi,\phi,\omega,\rho_2,\gamma_2,\sigma_2,\rho_3,\gamma_3,\sigma_3\},$$ 
 
\item $\eta g^*(\eta) f^*(g^*(\eta))=0$ for any arrow $\eta$ from the set 
$$\{\delta_1,\tau_1,\rho_1,\gamma_1,\psi,\phi,\omega,\delta_2,\tau_2,\rho_2,\gamma_2,
\beta_3,\tau_3,\gamma_3,\sigma_3\},$$ 
 
\item $\alpha_1\gamma_1\phi\sigma_2\nu_2\delta_2\gamma_2\omega\sigma_3\tau_3\gamma_3\psi\sigma_1\nu_1=0$, 
$\delta_1\gamma_1\phi\sigma_2\nu_2\delta_2\gamma_2\omega\sigma_3\tau_3\gamma_3\psi\sigma_1\beta_1=0$, $\newline$ 
$\alpha_2\gamma_2\omega\sigma_3\tau_3\gamma_3\psi\sigma_1\nu_1\delta_1\gamma_1\phi\sigma_2\nu_2=0$, 
$\delta_2\gamma_2\omega\sigma_3\tau_3\gamma_3\psi\sigma_1\nu_1\delta_1\gamma_1\phi\sigma_2\beta_2=0$, $\newline$ 
$\nu_3(\rho_3\alpha_3\beta_3)^{m-1}\rho_3\alpha_3=0$, $(\beta_3\rho_3\alpha_3)^{m-1}\beta_3\rho_3\delta_3=0$.
\end{enumerate}

Moreover, applying Theorem \ref{thm:3.14} we have 
$$\dim_K\Lambda^\epsilon_I=4m+4m+12m+14\cdot 16+16+16+4m=24m+256.$$
\end{exmp}

\begin{rem}\label{rem:6.7} We would like to mention that there are virtual edge deformations $\Lambda^\epsilon_I$ of weighted 
surface algebras $\Lambda$ whose Gabriel quivers contain an arbitrary large number of subquivers of the forms 
$$\xymatrix@R=1.5cm@C=2cm{&\bullet \ar[rd]^{\sigma}& \\
a\ar[ru]^{\gamma}\ar@<-0.1cm>[rr]_{\tau}& &  b\ar@<-0.1cm>[ll]_{\rho}\ar[ldd]^{\nu}\ar[ld]_{\beta}\\ 
&c\ar[lu]_{\alpha}& \\ 
&d\ar[luu]^{\delta}&}\quad\mbox{and}\quad\xymatrix@R=1.5cm@C=2cm{&d\ar[ldd]_{\nu} & \\ 
& c\ar[ld]^{\beta} \\ 
b\ar@<0.1cm>[rr]^{\tau}\ar@<-0.1cm>[rr]_{\rho} & & \ar[ld]^{\gamma} 
a\ar[luu]_{\delta}\ar[lu]^{\alpha} \\ 
& \ar[lu]^{\sigma} \bullet &}$$
We may obtain such algebras in the following way. 

Let $(S,\overrightarrow{T})$ be a directed triangulated surface with non-empty boundary and $X$ a fixed set of boundary edges 
of $T$. Then one may enlarge $(S,\overrightarrow{T})$ to the directed surface $(S(X),\overrightarrow{T(X)})$ by gluing each 
edge $x\in X$ with the self-folded triangle 
$$\begin{tikzpicture}[scale=1.3]
\draw[thick](0,0) circle (1);
\draw[thick](0,0)--(0,-1);
\node() at (0,0){$\bullet$};
\node() at (0,-1){$\bullet$};
\node() at (0.3,-0.5){$i(x)$};
\node() at (1.2,0){$x$};
\end{tikzpicture}$$
Next, we take the set $I(X)$ of all created self-folded edges $i(x)$, $x\in X$, and the associated blow-up 
$(S(X),\overrightarrow{T(X)}_{I(X)})$, so we have in $\overrightarrow{T(X)}_{I(X)}$ the triangles 
$$\begin{tikzpicture}[scale=1.2]
\draw[thick] (0,-1)--(0,1); 
\draw[thick](0,0) circle (1);
\node() at (0,0){$\bullet$};
\node() at (0,-1){$\bullet$}; 
\node() at (0,1){$\bullet$}; 
\node() at (0.8,0){$b_x$};
\node() at (-0.8,0){$a_x$};
\node() at (-0.2,0.5){$c_x$};
\node() at (-0.2,-0.5){$d_x$};
\draw[thick](0,0.5) circle (1.5);
\node() at (1.7,0){$x$};
\end{tikzpicture}$$
with orientation $(x\mbox{ }b_x\mbox{ }a_x)$, $(a_x\mbox{ }c_x\mbox{ }d_x)$ and $(c_x\mbox{ }b_x\mbox{ }d_x)$ in 
$\overrightarrow{T(X)}_{I(X)}$, for all $x\in X$. Hence the associated triangulation quiver 
$(Q(S(X),\overrightarrow{T(X)}_{I(X)}),f^{I(X)})$ contains subquivers 
$$\xymatrix@R=0.5cm@C=1.2cm{&c_x\ar[rd]^{\beta_x}\ar@<-0.1cm>[dd]_{\xi_x}& \\ 
a_x\ar[ru]^{\alpha_x}&&b_x\ar[ld]^{\nu_x} \\ &d_x\ar[lu]^{\delta_x}\ar@<-0.1cm>[uu]_{\mu_x}& }$$
for $x\in X$. We also note that the triangulation quiver $(Q(S(X),\overrightarrow{T(X)}),f^X)$ has the subquivers 
$$\xymatrix@R=0.01cm{&\ar@(lu,ld)[d]_{\rho_x}&\\& i(x)\ar@<0.2cm>[r]^{\gamma_x}  &b \ar@<0.1cm>[l]^{\sigma_x}\\ && }$$
with $f^X$-orbits $(\gamma_x\mbox{ }\sigma_x\mbox{ }\rho_x)$, and hence $g^X(\rho_x)=\rho_x$, for all $x\in X$, where $g^X$ 
is the permutation induced by $f^X$. Finally, we take a weight function $m^X_\bullet:\mathcal{O}(g^X)\to\mathbb{N}^*$ with 
value $m\geqslant 3$ on all the orbits of $\rho_x$, $x\in X$, and an arbitrary parameter function $c^X_\bullet:\mathcal{O}(g^X)\to K^*$. 
Then the suitable edge deformations $\Lambda^\epsilon_{I(X)}$ of 
$\Lambda=\Lambda(S(X),\overrightarrow{T(X)},m^X_\bullet,c^X_\bullet)$ with respect to $I(X)$ and functions 
$\epsilon:I(X)\to\{-1,1\}$ provide the required algebras. 

We also note that the directed triangulated surfaces and algebras considered in Example \ref{ex:6.6} are special cases of the 
above procedures, with the starting directed triangulated surface being the single triangle 
$$\begin{tikzpicture}
\draw[thick](-1,0)--(0,1.67)--(1,0)--(-1,0);
\node() at (-1,0){$\bullet$};
\node() at (1,0){$\bullet$};
\node() at (0,1.67){$\bullet$};
\node() at (-0.7,1){$4$};
\node() at (0.7,1){$5$};
\node() at (0,-0.4){$6$};
\draw(-0.2,0.3) arc (-120:210:0.4);
\draw(0,0.4)--(-0.2,0.3)--(-0.05,0.15);
\end{tikzpicture}$$
and the set $X=\{4,5,6\}$ of its boundary edges. 
\end{rem}

\end{document}